\definecolor{melon}{rgb}{0.3, 0.2, 1}
\definecolor{amethyst}{rgb}{0.6, 0.4, 1}
\definecolor{darkspringgreen}{rgb}{0.09, 0.45, 0.27}
\newtheorem{thm}{Theorem}[section]
\newtheorem{cor}[thm]{Corollary}
\newtheorem{prop}[thm]{Proposition}
\newtheorem{lem}[thm]{Lemma}
\theoremstyle{definition}
\newtheorem{defn}[thm]{Definition}
\newtheorem{exmp}[thm]{Example}
\newtheorem{defnlem}[thm]{Definition-Lemma}
\theoremstyle{remark}
\newtheorem{rem}[thm]{Remark}
\let\c@equation\c@thm
\numberwithin{equation}{section}
\newcommand*\bigcdot{\mathpalette\bigcdot@{.5}}
\newcommand*\bigcdot@[2]{\mathbin{\vcenter{\hbox{\scalebox{#2}{$\m@th#1\bullet$}}}}}
\def\subsection{\@startsection{subsection}{3}
 \z@{.5\linespacing\@plus.7\linespacing}{.1\linespacing}
 {\bfseries}}
\newcommand{\Z}{\mathbb{Z}}
\newcommand{\N}{\mathbb{N}}
\newcommand{\R}{\mathbb{R}}
\newcommand{\calA}{\mathcal{A}}
\newcommand{\rngB}{\mathbb{Z}[\mathbb{P}]}
\newcommand{\calC}{\mathcal{C}}
\newcommand{\calD}{\mathcal{D}}
\newcommand{\calF}{\mathcal{F}}
\newcommand{\calP}{\mathcal{P}}
\newcommand{\mndP}{\mathscr{P}}
\newcommand{\calT}{\mathcal{T}}
\newcommand{\frakd}{\mathfrak{d}}
\newcommand{\frakm}{\mathfrak{m}}
\newcommand{\frakp}{\mathfrak{p}}
\newcommand{\frakD}{\mathfrak{D}}
\renewcommand{\phi}{\varphi}
\DeclareMathOperator{\sh}{sh}
\DeclareMathOperator{\Supp}{Supp}
\DeclareMathOperator{\cut} { \setminus}
\DeclareMathOperator{\BL}{BL}
\DeclareMathOperator{\CG}{CG}
\DeclareMathOperator{\Scat}{Scat}
\DeclareMathOperator{\Sing}{Sing} % singular locus of sd
\DeclareMathOperator{\wt}{wt}
\newcommand{\east}{\mathbf{E}}
\newcommand{\north}{\mathbf{N}}
\newcommand{\aone}{d_1}
\newcommand{\atwo}{d_2}
\newcommand{\clco}[3]{\lambda(#1, #2; #3)} % change-of-lattice coefficients
\newcommand{\bline}{\beta} % broken line
\newcommand{\dvect}{{\mathbf d}} % d-vector for rk 2 cluster monomials
\newcommand{\mnd}{\mathbb M} % monoid rk 2
\newcommand{\cpv}{\widetilde x} % cluster pre-variable
\newcommand{\Din}{\mathfrak{D}_{\mathrm{in}}}
\@date \else {\vskip3ex \centering\footnotesize\@date\par\vskip1ex}\fi
\else \@footnotetext{\@setdate}\fi}
\newcommand{\vrec}[4]{
  \FPeval{\newz}{trunc(#3,0)-1} 
  \foreach \i in {0,...,\newz} {
    \fill[cyan] (#1 + 0.2, #2  + 0.2 + #4 * \i) -- (#1 + 0.2, #2 + #4 - 0.2 + #4 * \i) -- (#1 + 0.8, #2 + #4 - 0.2 + #4 * \i) -- (#1 + 0.8, #2 + 0.2 + #4 * \i) -- (#1 + 0.2, #2 + 0.2 + #4 * \i);
  }
}
\newcommand{\hrec}[4]{
  \FPeval{\newz}{trunc(#3,0)-1} 
  \foreach \i in {0,...,\newz} {
    \fill[red] (#1 - 0.2 - #4 * \i, #2 + 0.2) -- (#1 - #4 + 0.2 - #4 * \i, #2 + 0.2) -- (#1 - #4 + 0.2 - #4 * \i, #2 + 0.8) -- (#1 - 0.2 - #4 * \i, #2 + 0.8) -- (#1 - 0.2 - #4 * \i, #2 + 0.2);
  }
}
\title{Positivity of generalized cluster scattering diagrams}
\author{Amanda Burcroff}
\address{Department of Mathematics, Harvard University, Cambridge, MA 02138, USA}
\email{\href{mailto:aburcroff@math.harvard.edu}{aburcroff@math.harvard.edu}}
\author{Kyungyong Lee}
\address{Department of Mathematics, University of Alabama, Tuscaloosa, AL 35487, USA and Korea
Institute for Advanced Study, Seoul 02455, Republic of Korea}
\email{\href{mailto:kyungyong.lee@ua.edu}{kyungyong.lee@ua.edu} \& \href{mailto:klee1@kias.re.kr}{klee1@kias.re.kr}}
\author{Lang Mou}
\address{Department of Mathematics, University of Cologne, 50923 Köln, Germany}
\email{\href{mailto:langmou@math.uni-koeln.de}{langmou@math.uni-koeln.de}}
\begin{document}

\begin{abstract}
  We introduce a new class of combinatorial objects, named \emph{tight gradings}, which are certain nonnegative integer-valued functions on maximal Dyck paths. Using tight gradings, we derive a manifestly positive formula for any wall-function in a rank-2 generalized cluster scattering diagram. We further prove that any consistent rank-2 scattering diagram is positive with respect to the coefficients of initial wall-functions. Moreover, our formula yields explicit expressions for relative Gromov--Witten invariants on weighted projective planes and the Euler characteristics of moduli spaces of framed stable representations on complete bipartite quivers. Finally, by leveraging the rank-2 positivity, we show that any higher-rank generalized cluster scattering diagram has positive wall-functions, which leads to a proof of the positivity of the Laurent phenomenon and the strong positivity of Chekhov--Shapiro's generalized cluster algebras.
\end{abstract}

\maketitle

\setcounter{tocdepth}{1}
\vspace{-.3in}
\tableofcontents

\section{Introduction} \label{sec: intro}

In Fomin--Zelevinsky's cluster algebras \cite{FZ}, cluster variables are iteratively generated rational functions in $n$ algebraically independent variables that are proven to remain Laurent polynomials, a property known as the \emph{Laurent phenomenon} \cite{FZlaurent}. It was conjectured originally in \cite{FZ} that the non-trivial coefficients in these Laurent polynomials are positive integers. The positivity of the Laurent phenomenon turned out to be a deep property that has driven plentiful research employing wide range of techniques across combinatorics, representation theory and algebraic geometry including \cite{Nak11, Qin12, MSW, Davison}. In this paper, we obtain new positivity results for the generalized cluster algebras of Chekhov and Shapiro \cite{CS} where polynomial type exchange relations (as opposed to binomials) are used to generate cluster variables. Our proofs are built upon a novel counting formula for wall-functions to rank-2 generalized cluster scattering diagrams and the subsequent positivity of wall-functions in higher ranks.

Lee and Schiffler \cite{LSpositive} first proved the positivity for all skew-symmetric cluster algebras. Their proof is of an algebraic combinatorial nature and uses an explicit rank-2 formula of cluster variables in terms of combinatorics on Dyck paths \cite{LS}. This breakthrough led to Lee--Li--Zelevinsky's construction of \emph{greedy bases} of all rank-2 cluster algebras \cite{LLZ} and the subsequent generalization by Rupel \cite{Rupgengreed}. Cluster monomials, which are monomials of cluster variables in one cluster, form only a proper subset of the greedy basis except for finite-type cases. So it is a significant feature of greedy bases that they give explicit combinatorial information even outside the discrete cluster dynamics.

In their seminal work \cite{GHKK}, Gross, Hacking, Keel, and Konstevich solved a number of conjectures fundamental to the structures of cluster algebras by bringing in novel techniques from log Calabi--Yau mirror symmetry \cite{GHK}. Scattering diagrams \cite{KS,GS}, also known as wall-crossing structures, play a crucial role in this study. In particular, the authors constructed \emph{theta functions} through cluster scattering diagrams and showed that the theta functions include all cluster monomials. The positivity of the Laurent phenomenon follows from the positivity of wall-functions in cluster scattering diagrams.

A main theme of \cite{GHKK} is to address Fock--Goncharov's cluster duality \cite{FG06,FG09}, in which it was conjectured that the algebra of functions on a cluster variety $V$ has a canonical basis containing global monomials parametrized by the tropical points $V^\vee(\Z^T)$ of the mirror cluster variety $V^\vee$. Although this conjecture is false in general \cite{GHKcluster}, theta functions do serve as a topological basis that exists in a certain formal limit. For $V$ an $\mathcal A$-type cluster variety, global monomials are precisely cluster monomials and they are parametrized by the tropical points inside the cluster complex $\Delta^+$, which can be realized as a simplicial cone complex in the ambient real vector space of a cluster scattering diagram. The cluster complex $\Delta^+$ is usually infinite but not complete. As noted in \cite{GHKK}, it is precisely the rich structure outside $\Delta^+$ that scattering diagram technique brings into the the cluster algebra picture.

Our discoveries stem from an effort to bridge the explicit combinatorial terms developed in \cite{LS, LSpositive, LLZ, Rupgengreed} and the more abstract constructions of \cite{GHKK} through scattering diagrams. Such a precedent is that greedy bases and theta bases are actually the same in rank 2 \cite{CGM}. However, the explicit scattering diagram structure outside $\Delta^+$ is still elusive, even in rank 2. While one expects a non-trivial wall in every rational direction to appear in this region (\Cref{app: Badlands nonvanishing}), the wall-function in general is notoriously hard to compute. Our formula presented in \Cref{thm: tight grading formula intro} computes any such wall-function in terms of combinatorics on Dyck paths. In fact, the formula itself is far more general than the ordinary cluster situation. As explained in \Cref{subsec: formula wall-function intro}, the formula essentially computes the commutator of two arbitrary generators of the tropical vertex group \cite{KS, GPS}. As rank-2 scattering diagrams are already linked with other interesting objects \cite{GPS,RW}, our formula invites immediate applications to Gromov--Witten invariants and quiver representations; see \Cref{sec: application}.

An immediate consequence of \Cref{thm: tight grading formula intro} is that any wall-function has nonnegative integer coefficients. We extend this positivity to wall-functions in any rank-2 consistent scattering diagram (\Cref{thm: positivity rk 2 intro}) and in any higher-rank generalized cluster scattering diagram (\Cref{thm: positive wall-function intro}). The latter scattering diagrams encode structures of the associated generalized cluster algebras \cite{Mou,CKM}. The positivity of wall-functions allows us to prove the positivity of the Laurent phenomenon for any generalized cluster algebra, previously conjectured by Chekhov and Shapiro \cite[Section 5]{CS}. The \emph{strong positivity}, which says that the structure constants in the multiplication of theta functions are nonnegative, also follows.

We now give more detailed description of the main results in \Cref{subsec: formula wall-function intro} through \ref{subsec: positivity cluster intro}.

\subsection{Formula for wall-functions}\label{subsec: formula wall-function intro}

Let $\Bbbk$ be a field of characteristic zero. Following \cite{GPS}, we consider the automorphism group
\[
	A \coloneqq \operatorname{Aut}_{\Bbbk\llbracket t\rrbracket }(\Bbbk[x^{\pm 1}, y^{\pm 1}]\llbracket t\rrbracket )
\]
of the $1$-parameter formal family of the two-dimensional torus. For a non-zero vector $(a, b)\in \Z^2$ and a function $f\in \Bbbk[x^{\pm 1}, y^{\pm 1}]\llbracket t\rrbracket $ of the form
\[
    f = 1 + tx^ay^b \cdot g(x^ay^b, t), \quad g(z, t)\in \Bbbk[z]\llbracket t\rrbracket ,
\]
we define $T_{(a, b), f}\in A$ by
\[
    T_{(a, b), f}(x) = f^{-b}\cdot x \quad \text{and} \quad T_{(a, b), f}(y) = f^a\cdot y.
\]
Clearly $T_{(a, b),f}^{-1} = T_{(a, b), f^{-1}}\in A$.

The \emph{tropical vertex group} is the completion with respect to the maximal ideal $(t)\subseteq \Bbbk\llbracket t\rrbracket $ of the subgroup of $A$ generated by all elements of the form $T_{(a, b), f}$. This group and its close variants arose in mirror symmetry \cite{KS, GS}, and subsequently also play important roles in wall-crossing identities of invariants in triangulated categories \cite{KS08,KSsummary}.

One initial task to understand the tropical vertex group is to compute the ordered factorization of the commutator of two generators. This turns out to be a rather hard question, but investigations into it have led to links to interesting objects including relative Gromov--Witten invariants \cite{GPS,GP} and moduli spaces of stable quiver representations \cite{Rei08,Rei11,RW}. Our approach to this problem is combinatorial and is largely motivated by Fomin--Zelevinsky's cluster structure.

Let $P_1 = 1 + \sum_{k\geq 1}p_{1, k} x^k$ and $P_2 = 1 + \sum_{k\geq 1} p_{2, k} y^{k}$ be elements in $\Bbbk[x^{\pm 1}, y^{\pm 1}]\llbracket t\rrbracket $. Here $p_{i,k}\in \Bbbk\llbracket t\rrbracket $, $i = 1, 2$ are regarded as coefficients. As observed by Kontsevich and Soibelman \cite{KS}, there exists a unique ordered factorization of the commutator
\begin{equation}\label{eq: factorization tropical vertex}
    T_{(0, 1), P_2}^{-1} \cdot T_{(1, 0), P_1} \cdot T_{(0, 1), P_2} \cdot T_{(1, 0), P_1}^{-1} = \prod_{b/a\ \text{decreasing}}^{\rightarrow} T_{(a, b), f_{(a, b)}}.
\end{equation}
The right-hand side of \eqref{eq: factorization tropical vertex} is an infinite product ranging over all positive coprime pairs $(a, b)$. It is understood as the limit of finite products modulo each $(t^n)$ for $n\geq 1$.

\begin{thm}[Tight grading formula]\label{thm: tight grading formula intro}
    Fix coprime positive integers $(a,b)$.  For each $k \geq 1$, choose integers $(d_{1,k},d_{2,k})$ such that $ad_{2,k} - bd_{1,k} = \pm 1$, $ka \leq d_{1,k}$, and $kb \leq d_{2,k}$.  Then
    \begin{equation}\label{eq: tight grading formula intro}
        f_{(a, b)} = 1 + \sum_{k\geq 1}\sum_{\omega} \mathrm{wt}(\omega) x^{ka}y^{kb},  
    \end{equation}
    where the second sum is over all tight gradings $\omega$ on the Dyck path $\mathcal P(d_{1,k}, d_{2,k})$ of total horizontal weight $kb$ and total vertical weight $ka$, and $\mathrm{wt}(\omega)\in \Bbbk\llbracket t\rrbracket $ is the weight monomial in $\{p_{i, k}\mid i = 1, 2, k\geq 1\}$ associated to $\omega$.
\end{thm}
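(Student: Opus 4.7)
The plan is to recognize that \eqref{eq: factorization tropical vertex} is the consistency condition for a rank-$2$ scattering diagram $\mathfrak{D}$ whose only incoming walls are the horizontal line carrying $P_1$ and the vertical line carrying $P_2$. Under this dictionary, $f_{(a,b)}$ is the wall-function attached to the unique outgoing ray of slope $b/a$ in the consistent completion of $\mathfrak{D}$, so the proof of \Cref{thm: tight grading formula intro} reduces to a combinatorial computation of this single wall-function.

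\textbf{Step 1 (Reduction to elementary walls).} Factor $P_1 = \prod_{k \geq 1}(1 + q_{1,k} x^k)$ and $P_2 = \prod_{k\geq 1}(1 + q_{2,k} y^k)$ in the appropriate $t$-adic completion, where each $q_{i,k}$ is a universal polynomial in $\{p_{i,j}\}_j$. Replace the two initial walls by perturbed stacks of parallel elementary walls, one for each factor. Because the tropical vertex factorization is multiplicative and depends only on the input data, the outgoing wall-functions are unchanged; but each coefficient of $f_{(a,b)}$ now becomes a manifestly nonnegative-integer sum indexed by combinatorial ``scattering sequences'' arising from interactions of elementary walls.

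\textbf{Step 2 (Encoding by Dyck paths).} For fixed $k \geq 1$ and any choice of $(d_{1,k},d_{2,k})$ satisfying the hypotheses, the Dyck path $\mathcal{P}(d_{1,k},d_{2,k})$ is a lattice path hugging the ray of slope $b/a$; the constraint $a d_{2,k} - b d_{1,k} = \pm 1$ together with $ka \leq d_{1,k}$, $kb\leq d_{2,k}$ is exactly what makes it the ``thinnest'' such path capable of housing an $x^{ka}y^{kb}$ contribution. Attach to each east and north step of $\mathcal{P}(d_{1,k},d_{2,k})$ a nonnegative integer recording which elementary wall (and with what multiplicity) contributes at that step in a given scattering sequence. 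This produces a grading $\omega$ on the steps of $\mathcal{P}(d_{1,k},d_{2,k})$ with total horizontal weight $kb$ and total vertical weight $ka$. The tightness axiom is designed precisely so that this encoding descends to a bijection between scattering sequences and tight gradings, with different scattering sequences recorded by different gradings.

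\textbf{Step 3 (Weight identification and independence of the chosen $(d_{1,k},d_{2,k})$).} Verify directly from the bijection in Step 2 that the monomial weight of each scattering sequence equals $\mathrm{wt}(\omega)$, yielding \eqref{eq: tight grading formula intro}. For any other admissible pair $(d_{1,k}',d_{2,k}')$ the scattering-diagram side is unchanged, so the two generating functions over tight gradings must coincide; this a posteriori explains why the right-hand side of \eqref{eq: tight grading formula intro} depends on $(d_{1,k},d_{2,k})$ only through the total weights $(ka,kb)$.

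The main obstacle is Step 2: constructing the bijection and proving that the tightness condition is the precise combinatorial shadow of scattering-diagram consistency, i.e.\ the condition that forces every scattering contribution to be recorded exactly once. The cluster-algebra analogue of this matching was the content of \cite{LS,LSpositive,LLZ,Rupgengreed} in the greedy basis context; extending it to the fully general tropical vertex setting, where the coefficients $p_{i,k}$ are free and both $P_1,P_2$ carry infinitely many terms of arbitrary degree, requires a systematic account of which lattice positions and multiplicities may be simultaneously populated by a scattering sequence, and it is this analysis that motivates the definition of a tight grading.
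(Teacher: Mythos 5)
Your proposal diverges substantially from the paper's actual proof, and the divergence exposes two genuine gaps.

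The paper proves \Cref{thm: tight grading formula intro} (as the $m=1$ case of \Cref{thm: power shadow grading formula}) by a route that never touches the Gross--Pandharipande--Siebert factorization into elementary walls. Instead, the argument proceeds through theta functions and broken lines: first the generalized greedy basis is shown to equal the generalized theta basis (\Cref{thm: gen greedy equals theta}, extending \cite{CGM}); then the one-bending property (\Cref{lem: one-bending}, \Cref{cor: vectors one-bending}) isolates a \emph{unique} broken line with initial exponent $(-d_1,-d_2)$ whose weight is forced to be the wall-function coefficient $\lambda^{(m)}(ka,kb)$; finally a refined count (\Cref{lem: broken line to compatible grading refined equality}) shows by induction that these broken-line weighted counts $|\BL^t(ka,kb)|$ agree with compatible-grading weighted counts $|\CG^t(ka,kb)|$, with the $t=0$ case giving exactly the shadowed/tight gradings. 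No bijection between tropical/scattering objects and Dyck-path gradings is ever constructed; the equality emerges from an algebraic comparison via the already-established greedy $=$ theta identity (or, in the second proof of \Cref{subsec: second proof}, via matching recursions for the polynomials $d_\pm(p,q)$).

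Your Step 1 has a concrete problem. Factoring $P_1 = 1 + \sum_k p_{1,k}x^k$ as $\prod_{k\geq 1}(1 + q_{1,k}x^k)$ (or as $\prod_i(1 + s_i x)$ over roots, as in GPS and \cite{RW}) forces the new parameters $q_{1,k}$ (resp.\ $s_i$) to be polynomials in the $p_{1,j}$ that involve subtractions, or to live in an algebraic extension. Consequently an expansion of $f_{(a,b)}$ that is manifestly nonnegative in the $q_{i,k}$ or $s_i$ does \emph{not} become manifestly nonnegative after substituting back in terms of the original $p_{i,j}$---cancellations reappear. The paper explicitly contrasts its formula with the GPS full commutator formula precisely because the latter requires this binomial factorization while the former is ``direct in terms of the coefficients of $f_1, f_2$.'' So perturbing into elementary walls concedes the very positivity you are trying to establish and cannot be the mechanism that makes tight gradings appear.

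Your Step 2 is the other gap, and you acknowledge it yourself. Claiming that ``the tightness axiom is designed precisely so that this encoding descends to a bijection between scattering sequences and tight gradings'' states the conclusion rather than proving it. No bijection is constructed, and nothing in the proposal addresses why tight gradings on $\mathcal P(d_{1,k},d_{2,k})$ would correspond exactly to contributions from the perturbed scattering diagram, nor why the choice of $(d_{1,k},d_{2,k})$ with $ad_{2,k}-bd_{1,k}=\pm 1$ is the right one. In the paper this exact issue is what the one-bending property resolves: it is a geometric condition on broken lines, not a direct combinatorial bijection with tropical curves, and it is nontrivial to verify (\Cref{lem: one-bending}). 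Without a substitute for that mechanism, your Step 2 is a restatement of the theorem, not a proof.

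To salvage the proposal along its own lines you would need (i) to drop the elementary-wall factorization and work directly with broken lines in $\Scat(P_1,P_2)$, and (ii) to prove something like the one-bending property to pin down the unique contributing broken line and then relate it to tight gradings---which is essentially what \Cref{subsec: one-bending,subsec: first proof,subsec: second proof} of the paper do.
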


Tight gradings, the main combinatorial ingredients in \Cref{thm: tight grading formula intro}, have simple descriptions in terms of combinatorics on the Dyck path $\mathcal P(d_{1, k}, d_{2, k})$ given in \Cref{subsec: compatible grading} and \Cref{subsec: shadow and tight grading}. They form a subset of Rupel's compatible gradings \cite{Rupgengreed}, whose definition was inspired by the compatible pairs defined by Lee--Li--Zelevinsky in \cite{LLZ}. The formula \eqref{eq: tight grading formula intro} was announced in \cite[Theorem 3.5]{BLMshort} with a proof sketch. We give two different proofs in \Cref{subsec: first proof} and \Cref{subsec: second proof}.

\Cref{thm: tight grading formula intro} is a special case of \Cref{thm: power shadow grading formula}, where we give a combinatorial formula of the $m$-th power of $f_{(a, b)}$ for any $m\geq 1$ using more general \emph{shadowed gradings}. The more general formula will be used to compute Euler characteristics of quiver moduli spaces; see \Cref{cor: euler char by shadowed grading}. It also yields explicit expressions of relative Gromov--Witten invariants on weighted projective planes; see \Cref{thm: gw by shadowed grading}.

We give in \Cref{thm: positivity rk 2 non-standard} a formula for the commutator of two arbitrary generators $T_{(a_1, b_1), f_1}$ and $T_{(a_2, b_2), f_2}$ in the tropical vertex group. The difference with \Cref{thm: tight grading formula intro} is that the vectors $(a_1, b_1)$ and $(a_2, b_2)$ may only generate a proper rank-2 sub-lattice of $\Z^2$. One may compare this formula with Gross--Pandharipande--Siebert's \emph{full commutator formula} \cite[Section 5]{GPS}, which involves relative Gromov--Witten invariants defined through orbifold blowups of toric surfaces and a process factorizing $f_1,f_2$ into products of binomials. Our formula is direct in terms of the coefficients of $f_1, f_2$ and by counting combinatorial objects. It is thus much simpler from the perspective of computation and can be efficiently implemented through computer programs.

\subsection{Positivity of scattering diagrams}\label{subsec: positivity sd intro}

In the main body of the paper, the factorization \eqref{eq: factorization tropical vertex} is packed in a structure called a rank-2 \emph{consistent scattering diagram} determined by two initial \emph{lines}. More generally such a structure $\frakD = \Scat(\frakD_\mathrm{in})$ can start with a countable collection $\frakD_\mathrm{in}$ of lines in $\R^2$ and result in adding a collection $\frakD\cut\frakD_\mathrm{in}$ of \emph{rays} in $\R^2$ to achieve consistency. Lines and rays are uniformly called \emph{walls} and are attached with \emph{wall-functions}. Each $f_{(a, b)}$ in \eqref{eq: factorization tropical vertex} is the wall-function associated to the ray $\R_{\leq 0}(a, b)$ while $P_1$ and $P_2$ are associated to the coordinate axes. We will soon see this description in \Cref{subsec: gen cluster sd rk 2}.

The starting point of the remaining discussion is the positivity demonstrated by the tight grading formula \eqref{eq: tight grading formula intro}. The combined coefficient for each term $x^{ka}y^{kb}$ is a polynomial with nonnegative integer coefficients, that is, an element in
\[
    \N[p_{i,k}\mid i = 1, 2, k\geq 1],
\]
where $\N$ denotes $\Z_{\geq 0}$ throughout. It turns out that this positivity is shared by any consistent scattering diagram in rank 2.

\begin{thm}[\Cref{thm: full positivity rk 2}]\label{thm: positivity rk 2 intro}
    Let $\frakD_\mathrm{in}$ be an initial scattering diagram consisting of lines. Any coefficient of any wall-function in $\Scat(\frakD_\mathrm{in})$ can be expressed as a polynomial in the coefficients of wall-functions in $\frakD_\mathrm{in}$ with nonnegative integer coefficients.
\end{thm}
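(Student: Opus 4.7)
The plan is to reduce the computation of any wall-function in $\Scat(\frakD_\mathrm{in})$ to a succession of pairwise wall interactions via a perturbation argument, apply the non-standard tight grading formula (\Cref{thm: positivity rk 2 non-standard}) locally at each such interaction, and un-perturb while preserving positivity.

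First, fix $N \geq 1$ and work modulo $(t^N)$. Since every wall-function is $1$ modulo $t$ and only finitely many coefficients of wall-functions in $\frakD_\mathrm{in}$ can influence any given $t$-truncation, we may assume $\frakD_\mathrm{in}$ is finite for the purposes of studying coefficients at this order. A generic parallel translation of each initial line produces a perturbed diagram $\frakD_\mathrm{in}^\epsilon$ in general position: every pair of lines meets at a single point and no three are concurrent. I would then build $\Scat(\frakD_\mathrm{in}^\epsilon)$ order-by-order by processing intersection points in a consistent order (for example, inside nested concentric disks), inserting at each point the outgoing rays dictated by the ordered factorization of the local commutator, and iterating as these new rays encounter other walls further out.

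By \Cref{thm: positivity rk 2 non-standard}, every coefficient of each newly inserted wall-function is a polynomial with nonnegative integer coefficients in the coefficients of the two interacting wall-functions. Inductively on the processing order, the interacting wall-functions themselves already have coefficients in $\N[\text{coefficients of wall-functions in }\frakD_\mathrm{in}]$, and composing polynomial substitutions with nonnegative integer coefficients preserves this property. Hence every wall-function produced in $\Scat(\frakD_\mathrm{in}^\epsilon)$ has coefficients lying in $\N[\text{coefficients of wall-functions in }\frakD_\mathrm{in}]$.

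Finally, un-perturb by letting $\epsilon \to 0$: walls with equal direction and collinear supports merge, and their wall-functions combine multiplicatively (in rank $2$, wall-functions supported on parallel rays commute, so the path-ordered product reduces to an ordinary product). Since a product of polynomials with nonnegative integer coefficients is again such a polynomial, positivity survives the limit, yielding the claim for $\Scat(\frakD_\mathrm{in})$. The main obstacle is the bookkeeping needed to verify that the ordered processing genuinely terminates modulo each $(t^N)$ and that $\Scat(\frakD_\mathrm{in}^\epsilon)$ converges to $\Scat(\frakD_\mathrm{in})$ in the $(t)$-adic topology on each bounded region; these points are largely standard in rank-$2$ scattering diagram theory once the positive commutator formula of \Cref{thm: positivity rk 2 non-standard} supplies the key input at each pairwise interaction.
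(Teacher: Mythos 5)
Your strategy -- perturb the initial lines, apply \Cref{thm: positivity rk 2 non-standard} at each local pairwise interaction, then un-perturb -- is the same basic route as the paper's proof of \Cref{thm: full positivity rk 2}. The paper likewise reduces to local scattering at a singular point (Step 1), perturbs so that at most two initial lines meet at each point (Step 2), and applies the two-line positive commutator formula (Step 4, via \Cref{cor: two lines any monoid}). The un-perturbation step, where parallel wall-functions multiply, is also correct and is implicit in the paper's passage from $\frakD'_{k+1}$ back to $\frakD_{k+1}$.

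However, there is a genuine gap in the middle of your argument. After perturbing the \emph{initial lines} to general position, you implicitly assume that every interaction the algorithm encounters is a clean pairwise one. But the rays created by earlier interactions can themselves pass through a later singular point, so that a local scattering problem at some point $p$ involves more than two walls: two ``genuine'' initial lines (nontrivial mod $\frakm^2$) together with several passing rays whose wall-functions begin at order $\geq 2$. Your perturbation of the initial lines does not immediately rule this out, since the positions of those rays are determined by the perturbation and change as the order increases. The paper confronts this head-on in its \emph{Step 3: Reduction to two initial lines}, which handles a local cone diagram with $N$ lines of which only two are nontrivial mod $\frakm^2$. The key device there is the \emph{change of monoid trick}: by replacing $\mndP$ with a free monoid on one generator per monomial appearing in the truncated wall-functions, the problematic higher-order coefficients become order-one elements of a different monoid, so the induction hypothesis at a \emph{lower} order applies, letting the argument close. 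You could replace that step with a stronger genericity argument -- arranging the perturbation so that no three walls of $\Scat(\frakD_\mathrm{in}^\epsilon)_k$ are concurrent for \emph{every} finite $k$, which is a countable intersection of generic conditions and requires some care to justify in the scheme-theoretic rather than numerical setting -- but as written, the proposal neither states the needed genericity nor supplies the monoid-change trick, so the reduction to pairwise interactions is not established. Making one of those two arguments explicit would close the gap.
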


We next utilize \Cref{thm: positivity rk 2 intro} to show the positivity of wall-functions of certain scattering diagrams living in higher dimensions. There will be immediate implications on several positivity phenomena of generalized cluster algebras.

Briefly, a scattering diagram in $\R^N$ is a (countable) collection of codimension one rational polyhedral cones, each with an attached power series, referred to as \emph{walls} and their \emph{wall-functions}. The (ordinary) cluster scattering diagram of Gross--Hacking--Keel--Kontsevich \cite{GHKK} is determined by a finite collection of initial walls through a higher dimensional consistency condition that generalizes the equation \eqref{eq: factorization tropical vertex}. These scattering diagrams encode the information of Fomin--Zelevinsky's cluster algebras.

In more concrete terms, a \emph{generalized cluster scattering diagram} $\frakD_\mathbf s$ is the unique consistent completion of an initial collection of walls
\[
    \frakD_{\mathbf s, \mathrm{in}} = \{(H_i, f_i)\mid i = 1, \dots, n\}
\]
such that $\frakD_\mathbf s\cut \frakD_{\mathbf s, \mathrm{in}}$ consists only of non-incoming walls, a notion in higher dimensions generalizing rays in rank 2. Each $H_i$ is the coordinate hyperplane in $\R^{N}$ (for $n\leq N$) with vanishing $i$-th coordinate and
\[
    f_i = 1 + p_{i, 1}x^{w_i} + \dots + p_{i, \ell_i} x^{\ell_iw_i},
\]
where each $w_i$ is a (column) vector in $\Z^N$, $\ell_i$ is a positive integer, and $p_{i,k}$ are formal variables. The matrix $(\ell_1w_1, \dots, \ell_nw_n)$ is assumed to have full rank. Its top $n\times n$ submatrix needs to be skew-symmetrizable, as required for exchange matrices that govern mutations in cluster algebras. These scattering diagrams, constructed and studied in \cite{Mou,CKM}, generalize the cluster scattering diagrams of \cite{GHKK}, where each $P_i$ is a binomial $1 + x^{\ell_iw_i}$. They encode the information of Chekhov--Shapiro's generalized cluster algebras \cite{CS}.

A significant property of ordinary cluster scattering diagrams is the positivity of wall-functions \cite[Theorem 1.24]{GHKK}, which says that any wall-function is a (possibly infinite) product of binomials of the form $(1 + x^w)$. Thus, as a power series, every wall-function has nonnegative integer coefficients. The proof in \emph{loc.~cit} heavily relies on the assumption that the initial wall-functions are binomials. Our next main result shows the positivity of wall-functions in the polynomial case.

\begin{thm}[\Cref{thm: positive wall-function}]\label{thm: positive wall-function intro}
    Any wall-function in a generalized cluster scattering diagram $\frakD_\mathbf s$ has coefficients in $\N[p_{i, s}\mid 1\leq i \leq n, 1\leq s \leq \ell_i]$.
\end{thm}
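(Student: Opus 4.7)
\emph{Overall strategy.} My plan is to deduce the higher-rank statement from the rank-two positivity \Cref{thm: positivity rk 2 intro} by an order-by-order Kontsevich--Soibelman construction of $\frakD_\mathbf s$ combined with a slicing argument at codimension-two joints. Working with respect to a suitable $\frakm$-adic filtration (generated, e.g., by the monomials $x^{\ell_i w_i}$), one can build $\frakD_\mathbf s$ inductively by starting from $\frakD_{\mathbf s, \mathrm{in}}$, which is positive by hypothesis since $f_i \in \N[p_{i,s}][x^{w_i}]$, and then successively inserting walls to restore consistency modulo higher and higher powers of $\frakm$. The goal is to carry the positivity of wall-functions, namely that their coefficients lie in $\N[p_{i,s}]$, through every step of this construction.

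\emph{Inductive step via rank-two slicing.} Suppose $\frakD_\mathbf s$ has been built modulo $\frakm^n$ with every wall-function having coefficients in $\N[p_{i,s}]$. Failures of consistency at the next order are localized at codimension-two joints $\mathfrak{j}$, each sitting in the intersection of finitely many existing walls. Choose a generic point on $\mathfrak{j}$ and a rank-two plane $P$ transverse to $\mathfrak{j}$: restricting the current diagram to $P$ yields a rank-two scattering diagram whose initial lines are the traces of the walls passing through $\mathfrak{j}$. By the inductive hypothesis these initial wall-functions have coefficients in $\N[p_{i,s}]$, so \Cref{thm: positivity rk 2 intro}---applied with the $p_{i,s}$ serving as generic coefficients---produces a consistent completion in $P$ whose wall-functions again have coefficients in $\N[p_{i,s}]$. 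The new walls that must be inserted into $\frakD_\mathbf s$ through $\mathfrak{j}$ at the next order are precisely the new rays of this rank-two completion, and so they inherit the same positivity. Running over all joints and then all orders delivers the theorem.

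\emph{Main obstacle.} The technical heart of the argument is showing that the local rank-two slice at $\mathfrak{j}$ genuinely determines the higher-dimensional wall-function and that the contributions from different joints assemble into a single well-defined $\frakD_\mathbf s$ independent of the auxiliary choices of transverse planes. This depends on the existence and uniqueness of $\frakD_\mathbf s$ established in \cite{Mou, CKM}, together with verifying that the Lie-theoretic identification of the new walls of $\frakD_\mathbf s$ near $\mathfrak{j}$ with the new rays of the rank-two scattering at $\mathfrak{j}$ preserves the coefficient ring $\N[p_{i,s}]$, i.e.\ introduces no cancellations or sign-flips. Once this bookkeeping is in place, the rank-two input from \Cref{thm: positivity rk 2 intro} closes the induction and yields the desired positivity for every wall-function in $\frakD_\mathbf s$.
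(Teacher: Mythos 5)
Your overall strategy---an order-by-order construction, localizing to rank two at codimension-two joints, invoking the rank-two positivity \Cref{thm: positivity rk 2 intro}, and using the uniqueness of $\frakD_\mathbf s$---is exactly the approach the paper takes in \Cref{subsec: positivity higher rank}. However, what you call ``the main obstacle'' and then defer to ``bookkeeping'' is not bookkeeping at all: it is where essentially all of the technical content of the proof lies, and you have not supplied the ideas needed to resolve it. Three issues stand out. First, your rank-two slicing is not well-defined as stated. Different outgoing walls meeting a joint $\mathfrak j$ may meet it in different ways (containing $\mathfrak j$ in their interior versus being based at $\mathfrak j$ as a facet), and without control on this the local picture at $\mathfrak j$ does not uniquely determine how to modify and insert walls. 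The paper fixes this by maintaining structural inductive hypotheses beyond positivity---most importantly that every non-initial wall $\frakd$ is outgoing with support of the form $F_\frakd + \R_{\leq 0}\omega(-,n_\frakd)$ based at a single facet $F_\frakd$, and that each perpendicular joint lies in the base facet of at most one outgoing wall in each direction---together with a subdivision step ensuring each outgoing wall is based at exactly one perpendicular joint. Second, the rank-two slice $\overline{\frakD}_\mathfrak{j}$ at a joint contains rays as well as lines (rays coming from outgoing walls based at $\mathfrak j$); \Cref{thm: positivity rk 2 intro} is stated for initial scattering diagrams consisting of lines, so one must observe that the consistent completion depends only on the lines and can absorb the rays without affecting positivity. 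Third, and most seriously, you have not shown that the local insertions at the various joints reassemble into a single consistent diagram $\frakD_{k+1}$ equivalent to $(\frakD_\mathbf s)_{k+1}$. Consistency at each perpendicular joint does not by itself give global consistency; the paper proves this (in \Cref{lem: consistency overall}) by comparing $\frakD_{k+1}$ with the abstractly-defined $(\frakD_\mathbf s)_{k+1}$ via a ``difference scattering diagram'' and arguing that any discrepancy would force a nontrivial incoming wall, contradicting that $\frakD_{k+1}$ and $(\frakD_\mathbf s)_{k+1}$ have the same incoming walls.

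In short, you have correctly identified the paper's strategy and correctly located the hard part, but the hard part requires the full scaffolding of assumptions (A1)--(A4), the wall-subdivision step, the reduction from rays-and-lines to lines in rank two, and the global consistency argument, none of which follows automatically from the rank-two positivity theorem plus the uniqueness of $\frakD_\mathbf s$.
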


The proof of \Cref{thm: positive wall-function intro} relies on \Cref{thm: positivity rk 2 intro}, the rank-2 positivity in full generality. In \Cref{subsec: positivity higher rank}, we give an order-by-order construction of $\frakD_\mathbf s$ analogous to the algorithm in \cite[Appendix C]{GHKK}, where local scatterings are essentially governed by rank-2 behaviors. Then the proof undergoes a careful induction that ensures positivity at each order.

\begin{rem}
    It would be interesting to explore quiver representation-theoretic interpretations of $\frakD_\mathbf s$ beyond the rank-2 case (\Cref{subsec: euler char quiver}). Following Bridgeland \cite{Bri17}, stability scattering diagrams equivalent to a particular class of $\frakD_\mathbf s$ are constructed in \cite{LFM} through representations of Jacobian algebras whose quivers have loops. \Cref{thm: positive wall-function intro} then implies that the Euler characteristic type wall-function coefficients in \cite{LFM} are all nonnegative integers.
\end{rem}

\subsection{Positivity of generalized cluster algebras}\label{subsec: positivity cluster intro}

The remaining structural properties regarding the positivity of generalized cluster algebras follow from \Cref{thm: positive wall-function intro}, summarized below. This is a series of results extended from the ordinary case \cite{GHKK}.

\begin{thm}\label{thm: struct thm intro}
    Let $\mathcal A$ (resp. $\mathcal U$) denote the generalized cluster algebra (resp. upper cluster algebra) associated to a generalized cluster structure (with coefficients and frozen directions) of Chekhov--Shapiro as defined in \emph{\Cref{sec: gen cluster algebra}}.
    \begin{enumerate}
        \item[\emph{(1)}] Every cluster monomial in $\mathcal A$ is a Laurent polynomial in the cluster variables of any cluster with coefficients in $\N[\mathbb P]$ where $\mathbb P$ is the coefficient group. \emph{(\Cref{thm: positive Laurent})}
        \item[\emph{(2)}] When the cluster structure is generated by a seed data $\mathbf s$ \emph{(\Cref{subsec: gen cluster sd higher rk})} that determines $\frakD_\mathbf s$, we have the so-called theta functions $\{\vartheta_m\mid m\in \mathbb Z^N\}$ constructed from $\frakD_\mathbf s$ in certain formal completion $\widehat{\mathcal U^\oplus}$ (dependent on $\mathbf s$) of $\mathcal U$. Each $\vartheta_m$ is a formal Laurent series with coefficients in $\N[\mathbb P]$. The set of theta functions contain all cluster monomials. \emph{(\Cref{thm: cluster monomial theta})}  
        \item[\emph{(3)}] Theta functions form a topological basis of $\,\widehat{\mathcal U^\oplus}$ and satisfy the formal multiplicative rule
        \[
            \vartheta_{p_1}\cdot \vartheta_{p_2} = \sum_{q\in \mathbb Z^n} \alpha(p_1, p_2; q) \vartheta_{q}\in \widehat{\mathcal U^\oplus}, 
        \]
        where each structure constant $\alpha(p_1, p_2;q)$ is defined through broken lines and is in $\N[\mathbb P]$. \emph{(\Cref{thm: strong positivity})}
        \item[\emph{(4)}] The $\Z[\mathbb P]$-module
        \[
            \mathcal M\coloneqq \operatorname{span}_{\Z[\mathbb P]}\{\vartheta_m\mid m\in \Z^N, \vartheta_m\in \mathcal U\}
        \]
        is closed under multiplication, hence a $\Z[\mathbb P]$-algebra. There are natural inclusions $\mathcal A \subseteq \mathcal M \subseteq \mathcal U$. \emph{(\Cref{thm: middle cluster algebra})}
        \item[\emph{(5)}] Theta functions are characterized as the atomic elements in $\widehat{\mathcal U^\oplus}$ with respect to the scattering atlas. \emph{(\Cref{thm: atomicity})}
    \end{enumerate}
\end{thm}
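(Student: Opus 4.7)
The plan is to derive all five parts from the positivity of wall-functions in $\frakD_\mathbf s$ (\Cref{thm: positive wall-function intro}), following the template established by Gross--Hacking--Keel--Kontsevich in the ordinary binomial case \cite{GHKK} and adapted to the polynomial setting by extending the constructions of \cite{Mou,CKM}. The general scaffolding — broken lines, theta functions, and the relation to cluster monomials — has already been set up for $\frakD_\mathbf s$ in prior work; the one missing ingredient to transplant the whole GHKK positivity picture is precisely the positivity statement of \Cref{thm: positive wall-function intro}, which is now available.

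For part (2) I would define $\vartheta_m$ for $m\in\Z^N$ as the broken-line theta function $\sum_{\gamma}\mathrm{Mono}(\gamma)$, summed over broken lines in $\frakD_\mathbf s$ with initial exponent $m$ and fixed general endpoint. Each bend of a broken line multiplies its attached monomial by a monomial term of a wall-function, so by \Cref{thm: positive wall-function intro} every broken-line monomial lies in $\N[\mathbb P]\cdot x^{(\cdot)}$, proving the coefficient positivity of $\vartheta_m$; the formal Laurent series converges in $\widehat{\mathcal U^\oplus}$ by the usual order-filtration argument. That cluster monomials appear as theta functions is the generalized-cluster analogue of \cite[Theorem 7.5]{GHKK}, which follows from the identification of wall-crossing automorphisms along a mutation sequence with the generalized exchange relations (the key commutative diagram already established in \cite{Mou,CKM}) and the fact that in each cluster chamber $\vartheta_m$ has leading term $x^m$. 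Part (1) is then a corollary: expanding the theta function representation of a cluster monomial about the cluster chamber of any seed gives a Laurent expansion whose coefficients are sums of broken-line weights, hence lie in $\N[\mathbb P]$.

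For part (3), the structure constants $\alpha(p_1,p_2;q)$ are defined by counting pairs of broken lines with initial exponents $p_1,p_2$ meeting at a common generic basepoint, weighted by the product of their attached monomials; once again \Cref{thm: positive wall-function intro} makes each weight a monomial over $\N[\mathbb P]$, which establishes strong positivity. The topological basis property reduces to the observation that the leading-term map $\vartheta_m\mapsto x^m$ in any fixed cluster chamber is a bijection with $\Z^N$, so theta functions are linearly independent modulo each finite-order ideal and their $\Z[\mathbb P]$-span is dense. Part (4) then falls out formally: if $\vartheta_{p_1},\vartheta_{p_2}\in\mathcal U$ their product has a theta expansion from part (3), and each $\vartheta_q$ appearing with nonzero coefficient must itself lie in $\mathcal U$ by a support argument, using that the $\vartheta_q$ are linearly independent modulo lower order. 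The inclusions $\mathcal A\subseteq\mathcal M\subseteq\mathcal U$ come from part (2) (cluster variables are theta functions) and from the defining condition of $\mathcal M$.

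For part (5), atomicity is a characterization of $\vartheta_m$ as the unique nonzero element of $\widehat{\mathcal U^\oplus}$ whose Laurent expansion in every cluster chamber of the scattering atlas has coefficients in $\N[\mathbb P]$ and which is indecomposable as a sum of two such elements. Given parts (2) and (3), the argument proceeds exactly as in \cite[Proposition 7.18]{GHKK}: any universally positive element decomposes into a sum of theta functions via the broken-line basis, and indecomposability forces a single summand. The main obstacle throughout is not conceptual — the GHKK skeleton transfers rather mechanically — but bookkeeping: one must verify that the broken-line machinery, the identification of mutations with path-ordered products, and the order-by-order convergence all remain valid when wall-functions are arbitrary polynomials rather than binomials, and that the positivity established in \Cref{thm: positive wall-function intro} propagates cleanly through each step.
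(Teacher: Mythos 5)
Your proposal follows essentially the same route as the paper: all five parts are derived from the positivity of wall-functions (\Cref{thm: positive wall-function intro}) fed into the GHKK machinery of broken lines, theta functions, the cluster complex, and the middle algebra, with the mutation-invariance and cluster-monomial identifications imported from \cite{Mou,CKM}. One small citation correction: the atomicity argument (part (5)) follows Mandel \cite{Man17} rather than a GHKK proposition, and the ``cluster monomials are theta functions'' step is the analogue of \cite[Theorem 4.9]{GHKK} (not Theorem 7.5, which is the middle-algebra statement used for part (4)).
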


\Cref{thm: struct thm intro}(1) confirms the \emph{positivity of the Laurent phenomenon} for generalized cluster algebras, previously conjectured by Chekhov and Shapiro \cite{CS}. It is proven in \Cref{subsec: theta and positive} through \Cref{thm: struct thm intro}(2). We refer to \Cref{thm: struct thm intro}(3) as the \emph{strong positivity}.  There is an extensive discussion in \cite{GHKK} on when any of the inclusions $\mathcal A\subseteq \mathcal M \subseteq \mathcal U$ in the ordinary case becomes equality, which we expect to naturally carry over to the generalized case but do not discuss in this paper. \Cref{thm: struct thm intro}(5) is suggested to the authors by Ben Davison. The proof follows Mandel's \cite{Man17} in the ordinary case.

\subsection{Structure of the paper}

The remainder of the paper is organized as follows. In \Cref{sec: rk 2 sd and theta functions}, we review rank-2 scattering diagrams and important constructions such as broken lines and theta functions. Generalized cluster scattering diagrams in rank 2 are studied in detail in \Cref{subsec: gen cluster sd rk 2}, whose cluster complex structure leads to the definition of rank-2 generalized cluster algebras in \Cref{subsec: bl and theta}.

In \Cref{subsec: greedy elements}, we define greedy elements for rank-2 generalized cluster algebras through a positive recurrence of coefficients. Compatible gradings are defined in \Cref{subsec: compatible grading} and are used to express greedy elements in \Cref{subsec: greedy by cg}.

We establish the equality between greedy elements and theta functions in \Cref{sec: greedy = theta}, generalizing \cite{CGM}. This serves as a main ingredient in our first proof of \Cref{thm: power shadow grading formula} given in \Cref{subsec: first proof}.

In \Cref{sec: shadowed and tight grading}, we introduce the main combinatorial objects \emph{shadowed gradings} and \emph{tight gradings} and prove \Cref{thm: power shadow grading formula}, which is a more general formula covering \Cref{thm: tight grading formula intro}. Definitions and examples of shadowed and tight gradings are given in \Cref{subsec: shadow and tight grading}. Another important ingredient towards \Cref{thm: power shadow grading formula} is the \emph{one-bending property} regarding combinatorics of broken lines, introduced in \Cref{subsec: one-bending}. Two different proofs for \Cref{thm: power shadow grading formula} are given respectively in \Cref{subsec: first proof,subsec: second proof}.

In \Cref{sec: application}, we apply \Cref{thm: power shadow grading formula} to obtain combinatorial formulas for Euler characteristics of quiver moduli spaces and relative Gromov--Witten invariants.

\Cref{sec: gen cluster algebra,sec: gen cluster sd higher rk} deal with positivity properties of generalized cluster algebras. We review Chekhov--Shapiro's generalized cluster structure in \Cref{sec: gen cluster algebra} and state the positivity of Laurent phenomenon in \Cref{thm: positive Laurent}, whose proof is given in \Cref{subsec: theta and positive}. Generalized cluster scattering diagrams are defined in \Cref{subsec: gen cluster sd higher rk}. The cluster complex structure is described in \Cref{subsec: cluster complex high rk}. Other structural results regarding generalized cluster algebras such as the strong positivity and atomicity of theta functions are discussed in \Cref{subsec: strong positivity,subsec: atomicity}. 

In \Cref{sec: proof of positivity}, we fill in rather technical results on positivity of scattering diagrams. We establish the full positivity in rank 2 in \Cref{subsec: non-standard,subsec: full positivity rk 2}. The proof of \Cref{thm: positive wall-function}, the positivity of higher-rank generalized cluster scattering diagrams, is completed in \Cref{subsec: positivity higher rank}.

\Cref{app: generalized greedy element formula} contains the proof of the compatible grading formula for greedy elements in generalized cluster algebras.  In \Cref{app: Badlands nonvanishing}, we give a combinatorial proof that the Badlands wall-function coefficients are non-vanishing in rank-$2$ generalized cluster scattering diagrams whose initial wall-functions have positive coefficients.

\addtocontents{toc}{\protect\setcounter{tocdepth}{-1}}
\section*{Acknowledgments}
\addtocontents{toc}{\protect\setcounter{tocdepth}{1}}
AB was supported by the Jack Kent Cooke Foundation and NSF grant DGE 2140743. KL was supported by the University of Alabama, Korea Institute for Advanced Study, and the NSF grants DMS 2042786 and DMS 2302620. LM was supported by the Deutsche Forschungsgemeinschaft (DFG, German Research Foundation) -- Project-ID 281071066 -- TRR 191. The authors thank Gregg Musiker and Lauren Williams for helpful suggestions. Special thanks to Bernd Siebert for valuable discussions, and to Markus Reineke for pointing out a minor mistake on an earlier version. The authors are grateful to Ryota Akagi, H\"ulya Arguz, Ben Davison, Sergey Fomin, Michael Gekhtman, Mark Gross, Sean Keel, Fang Li, Li Li, Siyang Liu, Travis Mandel, Tomoki Nakanishi, Jie Pan, Fan Qin, Michael Shapiro and Salvatore Stella for their correspondences and helpful discussions.

\section{Rank 2 scattering diagrams and theta functions}\label{sec: rk 2 sd and theta functions}

In this section, we review rank-2 scattering diagrams, broken lines and theta functions. Generalized cluster scattering diagrams in rank 2 are studied in detail in \Cref{subsec: gen cluster sd rk 2}, whose cluster complex structure leads to the definition of rank-2 generalized cluster algebras in \Cref{subsec: bl and theta}.

\subsection{Scattering diagrams in rank 2}\label{subsec: sd in rk 2}

We introduce basic notions of rank-2 scattering diagrams following \cite[Section 6.3]{G11}. Fix a rank-$2$ lattice $M \cong \Z^2$ and let $N = \operatorname{Hom}(M, \Z)$. Let $\mndP$ be a finitely generated monoid equipped with a map
\[
  r \colon \mndP \rightarrow M.
\]

Denote by $\mndP^\times$ the group of units in $\mndP$. Let $\mathfrak m$ denote both the monoid ideal $\mndP\cut \mndP^\times$ in $\mndP$ and the monomial ideal $\Bbbk[\mndP\cut \mndP^\times]$ in the monoid algebra $\Bbbk[\mndP]$ over a ground field $\Bbbk$ of characteristic zero. Let 
\[
  \widehat{\Bbbk[\mndP]}\coloneqq \lim_{\longleftarrow} \Bbbk[\mndP]/\mathfrak m^k
\]
be the $\mathfrak m$-adic completion of $\Bbbk[\mndP]$.

We say a monomial ideal $I$ in $\Bbbk[\mndP]$ is \emph{$\mathfrak m$-primary} if its radical $\sqrt{I}$ is $\mathfrak m$.

\begin{defn}\label{def: ray and line}
  Fix $r\colon \mndP \rightarrow M$ and let $I$ be an $\mathfrak m$-primary ideal.
  A \emph{ray} or \emph{line}, uniformly referred to as a \emph{wall}, over $\Bbbk[\mndP]/I$ or $\widehat{\Bbbk[\mndP]}$ is the data $\frakd$ of
      \begin{itemize}
        \item the \emph{support} $S_\frakd\subseteq M_\mathbb R \coloneqq M\otimes \mathbb R$ given by
        \[
          S_\frakd = b - \mathbb R_{\geq 0}m_{\frakd}
          \quad \text{or} \quad
          S_\frakd = b - \mathbb R m_{\frakd}
        \]
        for some $b \in M_\mathbb R$ and the \emph{direction} $-m_{\frakd}\in M\cut \{0\}$;
        \item the \emph{wall-function} $f_\frakd$ is an element in $\Bbbk[\mndP]/I$ or in $\widehat{\Bbbk[\mndP]}$ such that
        \[
          f_\frakd = 1 + \sum_{m\in \mndP({m_\frakd})} c_m x^m 
        \]
        where $\mndP({m_\frakd}) \coloneqq \{m\in \mndP \mid r(m)\in \mathbb R_{>0}m_\frakd\}$ and $f_\frakd \equiv 1 \mod \frakm$.
      \end{itemize}
\end{defn}

\begin{rem}
  A ray does not start at the origin when $b\neq 0$ and a line does not cross the origin when $b\notin \mathbb R m_\frakd$. A wall $\frakd$ is sometimes represented by the pair $(S_\frakd, f_\frakd)$, but two walls can have the same pair of support and wall-function.
\end{rem}

\begin{defn}\label{def: sd rk 2}
  A \emph{scattering diagram $\mathfrak D$ over $\Bbbk[\mndP]/I$} is a finite set of walls over $\Bbbk[\mndP]/I$. A \emph{scattering diagram $\mathfrak D$ over $\widehat{\Bbbk[\mndP]}$} is a countable set of walls over $\widehat{\Bbbk[\mndP]}$ such that for each $k\geq 0$, the subset
  \[
    \mathfrak D_k \coloneqq \{\mathfrak d\in \mathfrak D\mid f_\mathfrak d\not\equiv 1\mod \mathfrak m^{k+1}\}
  \]
  is a scattering diagram over $\Bbbk[\mndP]/\mathfrak m^{k+1}$.
\end{defn}

\begin{defn}
  The \emph{support} of $\frakD$ is defined as
  \[
    \operatorname{Supp}(\frakD) \coloneqq \bigcup_{\mathfrak d\in \mathfrak D} S_{\mathfrak d} \subseteq M_\mathbb R.
  \]
  The \emph{singular locus} $\operatorname{Sing}(\frakD)$ of $\frakD$ is the union of the boundary point of every ray and the zero-dimensional intersection locus of any pair of walls.
\end{defn}

Associated to a wall $\mathfrak d$ over $\Bbbk[\mndP]/I$ or $\widehat{\Bbbk[\mndP]}$ and a vector $v\in M_\mathbb R$ transversal to $S_\mathfrak d$ is the \emph{wall-crossing automorphism} $\mathfrak p_{v, \mathfrak d}\in \operatorname{Aut}(\Bbbk[\mndP]/I)$ or $\operatorname{Aut}(\widehat{\Bbbk[\mndP]})$. It is an algebra automorphism defined by
\begin{equation}\label{eq: wcs automorphism}
    \mathfrak p_{v, \frakd}(x^m) \coloneqq x^m f_\frakd^{n\cdot r(m)} \quad \text{for}\quad m\in \mndP,
\end{equation}
where $n\in \operatorname{Hom}(M, \mathbb Z)$ is primitive and orthogonal to $S_\frakd$ such that $n\cdot v<0$, where $\cdot$ denotes the natural pairing between $\operatorname{Hom}(M, \mathbb Z)$ and $M$. Clearly $\mathfrak p_{v, \frakd}^{-1} = \mathfrak p_{-v, \frakd}$.

A path (a smooth immersion) $\gamma \colon [0, 1]\rightarrow M_\mathbb R$ is called \emph{regular} with respect to a scattering diagram $\frakD$ over $\Bbbk[\mndP]/I$ if it is a smooth immersion such that
  \begin{itemize}
    \item $\gamma([0, 1])\cap \operatorname{Sing}(\frakD) = \varnothing$ and $\{\gamma(0), \gamma(1)\} \cap \operatorname{Supp}(\frakD) = \varnothing$;
    \item it crosses walls only transversally.
  \end{itemize}
For a regular path $\gamma$, let
\[
  0 < t_1 \leq \cdots \leq t_s < 1
\]
be a sequence of maximal length such that $\gamma(t_i)\in S_{\mathfrak d_i}$ for some wall $\mathfrak d_i\in \frakD$ and that if $t_i = t_j$, then $\mathfrak d_i$ and $\mathfrak d_j$ are two distinguished elements in $\frakD$.

  The \emph{path-ordered product} of $\gamma$ is defined to be
  \[
    \mathfrak p_{\gamma, \mathfrak D} \coloneqq \mathfrak p_{\dot{\gamma}(t_s), \mathfrak d_s} \circ \cdots \circ \mathfrak p_{\dot{\gamma}(t_1), \mathfrak d_1} \in \operatorname{Aut}(\Bbbk[\mndP]/I)\,,
  \]
  where $\dot{\gamma}(t)$ denotes the tangent vector $\frac{d\gamma(t)}{dt}$. The order of $\mathfrak p_{\dot{\gamma}(t_i), \mathfrak d_i}$ and $\mathfrak p_{\dot{\gamma}(t_j), \mathfrak d_j}$ in the product when $t_i = t_j$ is irrelevant because they commute in $\operatorname{Aut}(\Bbbk[\mndP]/I)$.

  A path $\gamma$ is \emph{regular} with respect to a scattering diagram $\mathfrak D$ over $\widehat{\Bbbk[\mndP]}$ if it is regular with respect to $\mathfrak D_k$ for each $k\geq 0$. The \emph{path-ordered product} of $\gamma$ is defined to be
  \[
    \mathfrak p_{\gamma, \mathfrak D} \coloneqq \lim_{\longleftarrow} \mathfrak p_{\gamma, \mathfrak D_k} \in \operatorname{Aut}(\widehat {\Bbbk[\mndP]})\,,
  \]
  where each $\mathfrak p_{\gamma, \mathfrak D_k}$ is considered to be in $\operatorname{Aut}(\Bbbk[\mndP]/\mathfrak m^{k+1})$.

\begin{defn}\label{def: equiv sd rk 2}
  Two scattering diagrams $\mathfrak D$ and $\mathfrak D'$ (both over $\Bbbk[\mndP]/I$ or $\widehat{\Bbbk[\mndP]}$) are said to be \emph{equivalent} and denoted as $\mathfrak D \equiv \mathfrak D'$ if 
  \[
    \mathfrak p_{\gamma, \mathfrak D} = \mathfrak p_{\gamma, \mathfrak D'}
  \]
  for any path $\gamma$ that is regular to both.
\end{defn}

In view of the above equivalence, it is convenient to consider a ``wall'' with support $S_\frakd = b + \mathbb R_{\geq 0}m_\frakd$ and wall-function $f_\frakd$. It does not qualify either as a line or a ray in \Cref{def: ray and line}, but is equivalent to the union of a line and a ray
\begin{equation}\label{eq: incoming part of line}
    \{(b + \mathbb R m_\frakd, f_\frakd)\} \cup \{(b + \mathbb R_{\leq 0}m_\frakd, f_\frakd^{-1}).
\end{equation}

\begin{defn}\label{def: consistent sd rk 2}
  A scattering diagram $\mathfrak D$ is called \emph{consistent} if the path-ordered product $\mathfrak p_{\gamma, \mathfrak D}$ is the identity for any regular path $\gamma$ that is a closed loop, or, equivalently, $\mathfrak p_{\gamma, \mathfrak D}$ only depends on the endpoints $\gamma(0)$ and $\gamma(1)$.
\end{defn}

\begin{rem}\label{rem: D1 is consistent}
  It is straightforward to see that $\frakD$ is consistent if and only if each $\frakD_k$ is consistent. By definition, $\frakD_0$ is empty. If $\frakD_1$ only consists of lines, then it is consistent because wall-crossing automorphisms in $\operatorname{Aut}(\Bbbk[\mndP]/\frakm^2)$ commute.
\end{rem}

The following theorem is due to Kontsevich and Soibelman \cite{KS}. The version adopted below can be found in \cite[Theorem 6.38]{G11} where an order-by-order algorithmic construction is given; see also \cite[Theorem 1.4]{GPS}.

\begin{thm}\label{thm: rank 2 consistent}
  Let $\mathfrak D$ be a scattering diagram (over $\Bbbk[\mndP]/I$ or $\widehat{\Bbbk[\mndP]}$). There always exists a consistent scattering diagram $\Scat(\mathfrak D)$ unique up to equivalence such that $\Scat(\mathfrak D)\cut \mathfrak D$ consists only of rays.
\end{thm}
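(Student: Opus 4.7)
The plan is to construct $\Scat(\mathfrak D)$ order by order with respect to the $\frakm$-adic filtration, mirroring the algorithmic approach of \cite[Theorem 6.38]{G11} and using the equivalence between scattering diagrams over $\widehat{\Bbbk[\mndP]}$ and compatible systems of scattering diagrams over $\Bbbk[\mndP]/\frakm^{k+1}$. So first I would reduce to the finite-order case: for each $k \geq 0$, let $\mathfrak D^{[k]}$ denote the truncation of $\mathfrak D$ modulo $\frakm^{k+1}$, and inductively construct a consistent scattering diagram $\widetilde{\mathfrak D}^{[k]}$ over $\Bbbk[\mndP]/\frakm^{k+1}$ such that $\widetilde{\mathfrak D}^{[k]} \setminus \mathfrak D^{[k]}$ consists only of rays, together with compatibility $\widetilde{\mathfrak D}^{[k+1]} \equiv \widetilde{\mathfrak D}^{[k]}$ after further reduction modulo $\frakm^{k+1}$. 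Passing to the limit then yields $\Scat(\mathfrak D)$.

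The base case $k = 0$ is trivial since $\mathfrak D_0 = \varnothing$, and $k = 1$ follows from \Cref{rem: D1 is consistent}. For the inductive step, assume $\widetilde{\mathfrak D}^{[k]}$ is given. I would form the candidate $\mathfrak D' := \widetilde{\mathfrak D}^{[k]} \cup \mathfrak D^{[k+1]}$, where wall-functions are now taken modulo $\frakm^{k+2}$, and measure its failure to be consistent. For any regular closed loop $\gamma$, the path-ordered product $\mathfrak p_{\gamma, \mathfrak D'}$ is the identity modulo $\frakm^{k+1}$ by the inductive hypothesis, so $\mathfrak p_{\gamma, \mathfrak D'} - \mathrm{id}$ lies in the derivations of $\Bbbk[\mndP]/\frakm^{k+2}$ taking values in $\frakm^{k+1}/\frakm^{k+2}$, and its logarithm lives in the graded piece of the relevant Lie algebra of degree $k+1$. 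Since $\mathfrak p_{\gamma, \mathfrak D'}$ depends only on the homotopy class of $\gamma$ in the complement of $\Sing(\mathfrak D')$, the obstruction is concentrated at the finitely many singular points relevant at order $k+1$.

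The key local step is that at each such singular point $p$, a small loop $\gamma_p$ around $p$ produces a Lie algebra element which decomposes canonically as a finite sum indexed by the primitive rational directions $m_0 \in M$, according to the grading by $r(m) \in M$ of the monomials appearing. Each such graded component can be cancelled by adjoining a single ray of direction $-m_0$ starting at $p$, whose wall-function is uniquely determined (modulo $\frakm^{k+2}$) by requiring the corresponding wall-crossing automorphism to match that component. Adding all these rays, at all singular points, produces $\widetilde{\mathfrak D}^{[k+1]}$, which by construction is consistent around every singular point and hence globally consistent modulo $\frakm^{k+2}$; compatibility with $\widetilde{\mathfrak D}^{[k]}$ is automatic since the added walls are of order $\geq k+1$.

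For uniqueness up to equivalence, I would run the same induction: any two completions $\Scat(\mathfrak D)$ and $\Scat'(\mathfrak D)$ satisfying the hypotheses agree at order $1$, and if they agree at order $k$ then their order-$(k+1)$ rays must each cancel the same local monodromy element at each singular point, so they agree at order $k+1$ after collecting rays with common support. The main obstacle I expect is the local decomposition step — namely, justifying that the graded Lie algebra element attached to $\gamma_p$ admits a unique decomposition into generators of the form $\log \mathfrak p_{v, \frakd}$ indexed by rays through $p$. This relies on the fact that in degree $k+1$ the derivations $x^m \partial_n$ with $n \cdot r(m) = 0$ form a basis of the relevant graded piece, so distinct primitive directions $m_0$ contribute linearly independent pieces; once this is set up the ray wall-functions are determined by solving a triangular system order by order within degree $k+1$ itself.
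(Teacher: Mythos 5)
Your plan correctly reconstructs the standard Kontsevich–Soibelman order-by-order construction, which is exactly what the paper invokes for this theorem: the paper gives no proof of its own, instead citing \cite{KS}, \cite[Theorem 6.38]{G11}, and \cite[Theorem 1.4]{GPS} for the algorithmic construction. The one point worth making explicit in a full write-up is that after adjoining the new order-$(k{+}1)$ rays one may create new singular points, but the monodromy around any such new point is automatically trivial modulo $\frakm^{k+2}$ because the new rays have wall-functions in $1 + \frakm^{k+1}$ and hence their wall-crossing automorphisms are central at that order; with that observation your global-from-local consistency argument closes.
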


The above theorem implies that $\Scat(\frakD)$ only depends on the lines in $\frakD$ because for any ray in $\frakD$ one can always add another ray with the exact same support but with the inverse wall-function to cancel out their contribution to any path-ordered product.

\Cref{thm: rank 2 consistent} is functorial in the following sense.

In the complete case, suppose that we have $\varphi \colon \widehat{\Bbbk[\mndP_1]}\rightarrow \widehat{\Bbbk[\mndP_2]}$ such that $\varphi(\widehat{\Bbbk[\mndP_1(m)]}) \subseteq \widehat{\Bbbk[\mndP_2(m)]}$ for any $m\in M\cut\{0\}$.
For $\mathfrak D$ a scattering diagram over $\widehat{\Bbbk[\mndP_1]}$, we have
\begin{equation}\label{eq: functoriality complete}
  \Scat(\varphi(\mathfrak D)) \equiv \varphi(\Scat(\mathfrak D)),
\end{equation}
as scattering diagrams over $\widehat{\Bbbk[\mndP_2]}$ by \Cref{thm: rank 2 consistent}, where the operation $\varphi$ keeps the support of a wall and applies the map $\varphi$ to the attached wall-function.

\subsection{Generalized cluster scattering diagrams}\label{subsec: gen cluster sd rk 2}

In this subsection, we study a particular class of scattering diagrams that are relevant to generalized cluster algebras in rank 2. They are fundamental to our study of positivity in more general cases.

Let $M = \mathbb Z^2$ with the standard unit vectors $e_1 = (1, 0)$ and $e_2 = (0, 1)$. Let $\ell_1$, $\ell_2$ be positive integers. Choose the monoid 
\[
    \mndP = \mndP_{\ell_1, \ell_2} = M\oplus \mnd_1 \oplus \mnd_2
\]
with $r\colon \mndP \rightarrow M$ the projection to $M$, where $\mnd_i$ is the (multiplicative) monoid of monomials in variables $\{p_{i, k}\mid 0\leq k \leq \ell_i\}$. We set conventionally $p_{i,0} = 1$, thus $\mnd_i \cong \mathbb N^{\ell_i}$. The monoid $\mnd_i$ embeds in $\mathbb P_i \cong \Z^{\ell_i}$ the group of Laurent monomials. Denote 
\[
    \mnd = \mnd_1 \oplus \mnd_2 \quad \text{and} \quad \mathbb P = \mathbb P_1 \oplus \mathbb P_2.
\]

To express monomials in $\Bbbk[\mndP]$, let $x$ and $y$ denote respectively the monomials associated to $e_1$ and $e_2$ in $M$.

\begin{defn}\label{def: gen cluster sd rk 2}
  The \emph{generalized cluster scattering diagram} associated to the pair $(\ell_1, \ell_2)$ is the consistent scattering diagram $\Scat(\mathfrak D_\mathrm{in})$ over $\widehat{\Bbbk[\mndP]}$ (up to equivalence) where
  \[
    \mathfrak D_\mathrm{in} \coloneqq \{(\mathbb Re_1, P_1), (\mathbb Re_2, P_2)\},
  \]
  \begin{equation}\label{eq: initial function rk 2}
    P_1 = 1 + \sum_{k = 1}^{\ell_1} p_{1, k}x^k \quad \text{and} \quad P_2 = 1 + \sum_{k = 1}^{\ell_2} p_{2, k}y^k.
  \end{equation}
\end{defn}

This scattering diagram (and its close variants) has been considered many times in different contexts, for example in \cite{GPS, GP, RW, Mou, CKM}. Since it is determined by the initial wall-functions $P_1$ and $P_2$, we denote it by $\Scat(P_1, P_2)$. This notation will be further extended to $\Scat(f_i\mid i\in I)$ for a collection of wall-functions $\{f_i\mid i\in I\}$, which represent the lines (through the origin) they determine.  This specializes to the \emph{ordinary cluster scattering diagram} $\frakD_{(\ell_1,\ell_2)}$ (cf. \cite{GHKK}) when $P_1 = 1 + x^{\ell_1}$ and $P_2 = 1 + y^{\ell_2}$.  

\begin{rem}
Note that we are using the $d$-vector version of the scattering diagram rather than the $g$-vector version, see \cite{CGM} for a comparison.
\end{rem}

Following the Kontsevich--Soibelman algorithm explained in the proof of \cite[Theorem 6.38]{G11}, one can choose $\mathfrak D = \Scat(P_1, P_2)$ (in its equivalent class) so that any added ray in $\mathfrak D\cut \mathfrak D_\mathrm{in}$ has support in the strict third quadrant and is pointed at the origin, i.e., of the form $\mathbb R_{\leq 0}(a, b)$ for $(a, b)\in \mathbb Z_{>0}^2$. Further, rays with identical support can be combined into one ray by multiplying their wall-functions together. Therefore, we can assume that $\mathfrak D$ has distinct rays with non-trivial wall-functions $f_{\mathbb R_{\leq 0}(a,b)}$. It is convenient to assign degrees $\deg p_{1, k} = \deg p_{2, k} = k$, and thus a multi-degree in $\mathbb Z^2$ to any (Laurent) monomials in $\mathbb P$ so that for a monomial term $px^{ka}y^{kb}$ in $f_{\mathbb R_{\leq 0}(a,b)}$, the multi-degree of $p$ is $(ka, kb)$ by homogeneity constraints.

A first step towards understanding the structure of $\frakD$ is the following \emph{mutation} operation. Take a representative of $\frakD$ as
\[
    \{(\mathbb R e_1, P_1)\}\cup\{(\mathbb R_{\geq 0} e_2, P_2)\} \cup \{(\mathbb R_{\leq 0}(a, b), f_{\mathbb R_{\leq 0}(a, b)})\mid a\geq 0, b>0\},
\]
where $(a, b)$ is over all primitive vectors such that $f_{\mathbb R_{\leq 0}(a, b)}$ is non-trivial. Namely we break the wall $(\mathbb Re_2, P_2)$ into two halves. The positive half is a wall in the sense of \eqref{eq: incoming part of line} and the negative half is included in the third subset of the above union.

Define the piecewise linear transformation $T\colon M_\mathbb R \rightarrow M_\mathbb R$ by for $(a_1, a_2)\in M_\mathbb R$,
\begin{equation}\label{eq: pl map T}
    T(m) \coloneqq \begin{cases}
        (a_1, a_2) \quad & a_2\geq 0, \\
        (a_1-\ell_1a_2, a_2) \quad & a_2\leq 0.
    \end{cases}
\end{equation}
Define the monoid $\mndP' \coloneqq M \oplus {\mnd}'_1 \oplus \mnd_2$ with the projection $r'\colon \mndP' \rightarrow M$, where ${\mnd}'_1$ is the monoid of monomials in $\{p_{1,k}/p_{1,\ell_1}\mid 0\leq k\leq \ell_1\}$. Let $P'_1 = P_1/(p_{1,\ell_1}x^{\ell_1})\in \widehat{\Bbbk[\mndP']}$.

Define the collection
\[
    T(\frakD) \coloneqq \{(\mathbb R(-e_1), P'_1)\} \cup \{(\mathbb R_{\geq 0} e_2, P_2)\} \cup \{(\mathbb R_{\leq 0}T((a, b)), T(f_{\mathbb R_{\leq 0}(a, b)}))\mid a\geq 0, b>0\},
\]
where $T(f_{\mathbb R_{\leq 0}(a, b)})(x, y) = f_{\mathbb R_{\leq 0}(a, b)}\left(x, y/(p_{1,\ell_1}x^{\ell_1})\right)$.

\begin{thm}\label{thm: mutation of sd rk 2}
    The set $T(\frakD)$ is a scattering diagram over $\widehat{\Bbbk[\mndP']}$. Furthermore it is equivalent to $\Scat(P'_1, P_2)$ as scattering diagrams over $\widehat{\Bbbk[\mndP']}$.
\end{thm}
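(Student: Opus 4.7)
The plan is to invoke the uniqueness statement in \Cref{thm: rank 2 consistent}: it suffices to verify that $T(\frakD)$ is a consistent scattering diagram over $\widehat{\Bbbk[\mndP']}$ whose non-ray walls are equivalent to the initial lines $(\mathbb R(-e_1), P'_1)$ and $(\mathbb R e_2, P_2)$ of $\Scat(P'_1, P_2)$.

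First I would check that each transformed wall has a wall-function living in $\widehat{\Bbbk[\mndP']}$: a monomial $c \cdot x^{ka}y^{kb}$ in $f_{\mathbb R_{\leq 0}(a,b)}$ is sent by $T$ to $c \cdot p_{1,\ell_1}^{-kb} \cdot x^{k(a-\ell_1 b)}y^{kb}$, and since $p_{1,j} = (p_{1,j}/p_{1,\ell_1})\cdot p_{1,\ell_1}$ with $1/p_{1,\ell_1} = p_{1,0}/p_{1,\ell_1} \in \mnd'_1$, the coefficient $c \cdot p_{1,\ell_1}^{-kb}$ lies in $\mnd'$ precisely when the total multiplicity of $p_{1,j}$ factors in $c$ is at most $kb$. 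This bound holds for the initial $P_1, P_2$ and is preserved inductively through the Kontsevich--Soibelman order-by-order construction of $\frakD$, since each scattering step introduces a $p_{1,j}$ factor only in tandem with an increment of the $y$-exponent of the generated monomial.

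The heart of the argument is showing that $T(\frakD)$ is consistent, by transporting consistency from $\frakD$. The PL map $T$ restricts to linear isomorphisms on each closed half-plane: $T_+ = \mathrm{id}$ on $\{a_2 \geq 0\}$ and $T_-\colon (a_1, a_2) \mapsto (a_1 - \ell_1 a_2, a_2)$ on $\{a_2 \leq 0\}$. These lift, on suitable localizations, to algebra automorphisms $\widetilde T_+ = \mathrm{id}$ and $\widetilde T_-\colon x \mapsto x,\ y \mapsto y/(p_{1,\ell_1}x^{\ell_1})$. For any wall $\frakd \in \frakD$ whose support lies entirely in one closed half-plane, a direct change-of-variable calculation yields $\mathfrak p_{Tv, T\frakd} = \widetilde T_\epsilon \circ \mathfrak p_{v, \frakd} \circ \widetilde T_\epsilon^{-1}$ for the relevant $\epsilon \in \{+, -\}$. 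The decisive boundary case is crossing the $x$-axis: in $\frakD$, crossing $(\mathbb R e_1, P_1)$ from top to bottom applies $y \mapsto yP_1$, while in $T(\frakD)$, crossing $(\mathbb R(-e_1), P'_1)$ applies $y \mapsto yP'_1$; since $P'_1 = P_1/(p_{1,\ell_1}x^{\ell_1})$, one has $yP'_1 = \widetilde T_-(yP_1)$, so crossing the mutated $x$-axis wall exactly implements the transition from $\widetilde T_+$ to $\widetilde T_-$. Concatenating these local compatibilities, any closed regular loop $\gamma'$ in $T(\frakD)$ lifts to a closed regular loop $\gamma = T^{-1}\gamma'$ in $\frakD$, and $\mathfrak p_{\gamma', T(\frakD)} = \widetilde T_\epsilon \circ \mathfrak p_{\gamma, \frakD} \circ \widetilde T_\epsilon^{-1} = \mathrm{id}$, where $\epsilon$ is determined by the half-plane containing $\gamma'(0) = \gamma'(1)$.

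Finally, the non-ray content of $T(\frakD)$ consists of $(\mathbb R(-e_1), P'_1)$ together with the pair $(\mathbb R_{\geq 0}e_2, P_2)$ and $(\mathbb R_{\leq 0}e_2, P_2^{-1})$; the latter ray arises from the $(a,b) = (0,1)$ entry in the third collection, with wall-function $P_2^{-1}$ inherited from the broken representation of the $y$-axis line described before the theorem statement. By~\eqref{eq: incoming part of line}, these two halves reassemble into the line $(\mathbb R e_2, P_2)$, so the non-ray data of $T(\frakD)$ matches the initial data of $\Scat(P'_1, P_2)$ up to equivalence. \Cref{thm: rank 2 consistent} then forces $T(\frakD) \equiv \Scat(P'_1, P_2)$. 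The main obstacle will be the consistency step, specifically the boundary compatibility across the $x$-axis: one must carefully verify that the identity $\widetilde T_-(yP_1) = yP'_1$ absorbs the PL kink of $T$ along $\mathbb R e_1$, and that the localizations required to define $\widetilde T_-$ do not introduce spurious monomials outside $\widehat{\Bbbk[\mndP']}$ (which ties back to the degree bound established in the first step).
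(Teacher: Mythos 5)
Your proposal follows the same overall route as the paper's proof: first verify that $T(\frakD)$ is a legitimate scattering diagram over $\widehat{\Bbbk[\mndP']}$, then verify consistency by transporting it from $\frakD$, identify the lines of $T(\frakD)$ with those of $\Scat(P'_1, P_2)$ via \eqref{eq: incoming part of line}, and finally invoke the uniqueness part of \Cref{thm: rank 2 consistent}. That high-level structure is exactly right.

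Where you genuinely diverge is in the consistency step. The paper simply cites \emph{Step II} of the proof of \cite[Theorem 1.24]{GHKK}. You instead spell out the mechanism: $T$ restricts to linear isomorphisms $T_\pm$ on each closed half-plane, each lifts to an algebra automorphism $\widetilde T_\pm$, the conjugation identity $\mathfrak p_{Tv,T\frakd} = \widetilde T_\epsilon \circ \mathfrak p_{v,\frakd}\circ \widetilde T_\epsilon^{-1}$ holds for walls interior to one half-plane, and the crossing of the $x$-axis itself absorbs the kink because $\widetilde T_-(yP_1) = yP'_1$. The key verification is worth recording: writing the path-ordered product of a small loop as $\Phi'_{n+1}\Phi'_n\cdots\Phi'_1$, with $\Phi'_1 = \widetilde T_-\Phi_1\widetilde T_+^{-1}$ (entering the lower half-plane), $\Phi'_i = \widetilde T_-\Phi_i\widetilde T_-^{-1}$ (walls below the $x$-axis), $\Phi'_n = \widetilde T_+\Phi_n\widetilde T_-^{-1}$ (exiting), the conjugators telescope to give $\mathfrak p_{\gamma',T(\frakD)} = \mathfrak p_{\gamma,\frakD} = \operatorname{id}$. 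This is a clean, more self-contained argument than the citation.

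On the first step (membership in $\widehat{\Bbbk[\mndP']}$), your analysis is actually sharper than the paper's stated criterion, but also less finished. You correctly observe that $pp_{1,\ell_1}^{-kb}$ belongs to $\mnd'_1\oplus\mnd_2$ precisely when the total exponent of the $p_{1,j}$'s in $p$ (i.e.\ $\sum_j a_{1,j}$ if $p = \prod_j p_{1,j}^{a_{1,j}}\cdot(\text{$p_2$-part})$) is at most $kb$. The multi-degree bound $ka \leq \ell_1 kb$ that the paper extracts from \cite[Corollary 6]{GP} is implied by this but not equivalent to it: e.g.\ $p_{1,1}^{ka}p_{1,\ell_1}^{-kb}$ passes the degree test whenever $a/b \leq \ell_1$ yet lies outside $\mnd'_1$ as soon as $a > b$, so the degree bound alone does not deliver membership. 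However, your proposed proof of the stronger bound — ``each scattering step introduces a $p_{1,j}$ factor only in tandem with an increment of the $y$-exponent'' — is a heuristic, not an argument: the Kontsevich--Soibelman iteration produces new wall-function terms from nested commutators in which the $p_{1,j}$-count and the $y$-exponent are both additive over the contributing factors, so the inequality \emph{is} inherited, but you would need to actually set up that induction (or derive the bound from the tight-grading formula, where $\sum_j a_{1,j}$ equals the number of nonzero-valued vertical edges, which the shadow condition forces to be at most the horizontal weight $kb$). Also, your remark that the bound ``holds for the initial $P_1, P_2$'' is misleading: $P_1$ itself violates $\sum a_{1,j}\leq kb$ term-by-term (its terms have one $p_{1,j}$-factor and $y$-exponent $0$), but that is irrelevant because the $x$-axis line is replaced wholesale by $(\mathbb R(-e_1),P'_1)$ rather than transformed through the third collection; you should phrase the base case of your induction in terms of the rays only.
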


\begin{proof}
    In the context of cluster scattering diagrams of Gross--Hacking--Keel--Kontsevich, this result is well-known as the \emph{mutation invariance} \cite[Theorem 1.24]{GHKK}. In principle one obtains a similar proof which involves generalizing the very hard Theorem 1.28 in \emph{loc.~cit}. Here we are in rank 2 so a simpler argument is available although more complex initial wall-functions are allowed.

    It suffices to show that (1) $T(\frakD)$ is indeed a scattering diagram over $\widehat{\Bbbk[\mndP']}$ and then (2) it is consistent and contains the same collection of lines as $\Scat(P'_1, P_2)$. To show (1), we check that each pair
    \[
        \left(\mathbb R_{\leq 0}T((a,b)), T(f_{\mathbb R_{\leq 0}(a, b)})\right)
    \]
    qualifies as a wall over $\widehat{\Bbbk[\mndP']}$. Let $px^{ka}y^{kb}$ be a monomial in a non-trivial $f_{\mathbb R_{\leq 0}(a, b)}$. It gets transformed by $T$ into 
    \[
        p p_{1,\ell_1}^{-kb}x^{ka-\ell_1kb}y^{kb},
    \]
    which can be induced by the map
    \begin{equation}\label{eq: monoid map T}
        T\colon M \oplus \mathbb P \rightarrow M \oplus \mathbb P, \quad (m, p_1, p_2) \mapsto (T(m), p_1p_{1,\ell_1}^{-\deg p_2}, p_2).
    \end{equation}
    It is enough to show that the first entry of the multi-degree of $pp_{1,\ell_1}^{-kb}$ is always non-positive. This is equivalent to 
    \[
        ka-\ell_1kb\leq 0 \iff a/b\leq \ell_1,
    \]
    which follows directly from \cite[Corollary 6]{GP}. This also implies that every ray of $T(\frakD)$ lies in the fourth quadrant. Notice that as in (\ref{eq: incoming part of line}) the wall $(\mathbb R_{\geq 0}e_2, P_2)$ in $T(\frakD)$ is equivalent to
    \[
        \{(\mathbb Re_2, P_2)\} \cup \{(\mathbb R_{\leq 0}e_2, P_2^{-1})\}.
    \]
    Hence $T(\frakD)$ and $\Scat(P'_1, P_2)$ can be regarded to have the same set of lines.

    It only remains to show the consistency, that is, $\mathfrak p_{\gamma, T(\frakD)} = \operatorname{id}$ for a simple loop $\gamma$ around the origin. This can be checked analogously as in \emph{Step II} of the proof of \cite[Theorem 1.24]{GHKK}, which essentially follows from the consistency of $\frakD$.
\end{proof}

One immediate consequence of \Cref{thm: mutation of sd rk 2} is 
\[
    f_{\mathbb R_{\leq 0}(\ell_1, 1)} = 1 + \sum_{k=1}^{\ell_2} p_{1, \ell_1}^{k}p_{2,k}x^{\ell_1k}y^{k},
\]
which follows from $T(f_{\mathbb R_{\leq 0}(\ell_1, 1)}) = P_2$. 

We also know from the proof of \Cref{thm: mutation of sd rk 2} that $\frakD$ has no ray in the interior of
\[
    \operatorname{span}_{\mathbb R_{\geq 0}}\{(-1, 0), (-\ell_1, -1)\}.
\]
Such a region will be called a ``chamber'' and many more can be obtained by applying iterative mutations of scattering diagrams as follows.

Let ${\mnd}'_2$ be the monoid of monomials in $\{p_{2,k}/p_{2,\ell_2}\mid 0\leq k\leq \ell_2\}$. Let $P'_2 \coloneqq P_2/(p_{2,\ell_2}y^{\ell_2})$. Denote for $i\in \mathbb Z$,
\[  \frakD^{(i)} \coloneqq \begin{cases}
        \Scat(P_1, P_2)\\
        \Scat(P'_1, P_2)\\
        \Scat(P'_1, P'_2)\\
        \Scat(P_1, P'_2)
    \end{cases}
    \ \text{over} \quad
    \mndP^{(i)}\coloneqq \begin{cases}
        M\oplus \mnd_1 \oplus \mnd_2 \\
        M\oplus \mnd_1' \oplus \mnd_2  \\
        M\oplus \mnd_1' \oplus \mnd_2' \\
        M\oplus \mnd_1 \oplus \mnd_2'
    \end{cases}
    \  \text{for} \quad
    i = \begin{cases}
        0 \mod 4\\
        1 \mod 4\\
        2 \mod 4\\
        3 \mod 4.
    \end{cases}
\]
This periodic sequence of scattering diagrams can be seen as obtained by iteratively applying mutations in \Cref{thm: mutation of sd rk 2}. We denote by $T_{i}$ the mutation operation turning $\frakD^{(i)}$ into $\frakD^{(i+1)}$ analogous to the mutation $T$ defined earlier from $\frakD^{(0)}$ to $\frakD^{(1)}$. It involves a piecewise linear map $T_i\colon M_\mathbb R \rightarrow M_\mathbb R$ as in (\ref{eq: pl map T}) and a group homomorphism $T_i\colon M\oplus \mathbb P \rightarrow M\oplus \mathbb P$ as in (\ref{eq: monoid map T}). The composition $T_{i+3}T_{i+2}T_{i+1}T_i$ does not act on $M_\mathbb R$ by the identity map. Hence it sends a wall $\frakd\in \frakD^{(i)}$ to another one with different support, giving non-trivial implication on the structure of $\frakD^{(i)}$.

We use the above idea to generate a $\mathbb Z$-sequence of ``chambers'' in $\frakD = \frakD^{(0)}$. A connected component in the complement of the closure of a scattering diagram's support is called a \emph{chamber}. Each $\frakD^{(i)}$ has a \emph{positive chamber} $\operatorname{int}(\mathcal C^{(i)})$, the interior of the cone $\mathcal C^{(i)}$ non-negatively spanned by the \emph{incoming} halves of the two initial walls, that is, the $(i+1)$-th (modulo $4$) quadrant in $M_\mathbb R = \mathbb R^2$. Define piecewise linear maps on $M_\mathbb R$ for $i\geq 1$
\[
    T^{(i)} \coloneqq T_{i-1}\cdots T_1T_0,\quad T^{(-i)} = T_{-i}^{-1}\cdots T_{-1}^{-1}, \quad \text{and} \quad T^{(0)} = \mathrm{id}.
\]
Define for $i\in \mathbb Z$ cones
\[
    \mathcal C_i \coloneqq \left(T^{(i)}\right)^{-1}\left(\mathcal C^{(i)}\right) \subseteq M_\mathbb R.
\]
Namely, we pull back $\operatorname{int}(\mathcal C^{(i)})$ via the piecewise linear transform $T^{(i)}$ to obtain a chamber $\operatorname{int}(\mathcal C_i)$ in $\frakD$. Every $\mathcal C_i$ and $\mathcal C_{i+1}$ intersect at their common boundary, which is either the support of a ray or a half line in $\frakD$. The associated wall-function can also be obtained by pulling back from the initial wall-functions in $\frakD^{(i)}$. The cones $\{\mathcal C_i\mid i\in \mathbb Z\}$ form a discrete part of $\frakD$, which is referred to as the \emph{cluster complex} of $\frakD$ where $\mathcal C_i$ is called a \emph{cluster cone} while its interior is called a \emph{cluster chamber}. The map $T^{(4)}\colon M_\mathbb R\rightarrow M_\mathbb R$ acts on the cluster complex such that $T^{(4)}(\mathcal C_{i+4}) = \mathcal C_i$.

When $\ell_1\ell_2<4$, there are only finitely many $\mathcal C_i$ and their union is $M_\mathbb R$. Thus there are finitely many rays in $\mathfrak D\cut \mathfrak D_\mathrm{in}$. We depict in \Cref{fig: G2} the case $\ell_1 = 3$ and $\ell_2 = 1$. All the remaining cases can be obtained by specializing certain $p_{i,j}$ to zero (by the functoriality) and possibly switching $\ell_1$ and $\ell_2$. In \Cref{fig: G2}, the added rays are
  \begin{align*}
    & \left( \mathbb R_{\leq 0}(3, 1),\ 1 + p_{1,3}p_{2,1} x^3y \right), \\
    & \left( \mathbb R_{\leq 0}(2, 1),\ 1 + p_{1,2}p_{2,1} x^2y + p_{1,1}p_{1,3}p_{2,1}^2 x^4 y^2 + p_{1,3}^2p_{2,1}^3 x^6y^3 \right), \\
    & \left( \mathbb R_{\leq 0}(3, 2),\ 1 + p_{1,3}p_{2,1}^2 x^3y^2 \right), \\
    & \left( \mathbb R_{\leq 0}(1, 1),\ 1 + p_{1,1}p_{2,1} xy + p_{1,2}p_{2,1}^2 x^2y^2 + p_{1,3}p_{2,1}^3 x^3 y^3 \right).
  \end{align*}

\begin{figure}
\begin{tikzpicture}[scale=.3]

\draw[] (-10,0)--(10,0);
\draw[] (0,-10)--(0,10);
\draw[] (-10, -10) -- (0, 0);
\draw[] (-10, -5) -- (0, 0);
\draw[] (-10, -20/3) -- (0, 0);
\draw[] (-10, -10/3) -- (0, 0);

\end{tikzpicture}

\caption{$\Scat(P_1, P_2)$, where $P_1 = 1 + p_{1,1}x + p_{1,2}x^2 + p_{1,3}x^3$ and $P_2 = 1 + p_{2,1}y$.}
\label{fig: G2}
\end{figure}
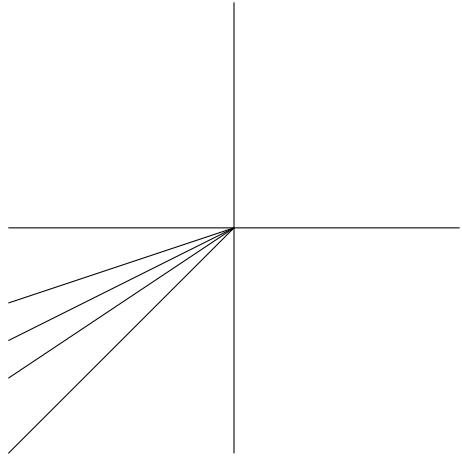

When $\ell_1\ell_2\geq 4$, the structure of the cluster complex is summarized in \Cref{thm: cluster complex struct rk 2}, for which we need some preparation. Define vectors $\dvect_n = (d_{1;n}, d_{2;n}) \in M = \Z^2$ by setting initially
\[
    \dvect_0 = e_2, \quad \dvect_1 = -e_1, \quad \dvect_2 = -e_2, \quad \dvect_3 = e_1,
\]
and for $n\neq 1, 2$ 
\begin{equation}\label{eq: recursion d vect}
    \dvect_{n+1} + \dvect_{n-1} = \begin{cases}
        \ell_1 \dvect_n \quad & \text{when $n$ is odd},\\
        \ell_2 \dvect_n \quad & \text{when $n$ is even}.
    \end{cases}
\end{equation}

The following properties of $d_n$ are easily obtained by induction.

\begin{lem}\label{lem: properties of dn}
    We have \emph{(1)} $\dvect_n\in \Z_{>0}^2$ for $n\neq 0, 1, 2, 3$; \emph{(2)} $\{\dvect_n, \dvect_{n+1}\}$ is a basis of $M$; \emph{(3)} for $n\neq 0, 1, 2, 3$,
    \[
        \begin{cases}
            d_{1;n}/\ell_1,\ (d_{2;n}-1)/\ell_2\in \N, \quad & n\equiv 0 \mod 4,\\
            (d_{1;n}+1)/\ell_1,\ d_{2;n}/\ell_2\in \N, \quad & n\equiv 1 \mod 4,\\
            d_{1;n}/\ell_1,\ (d_{2;n}+1)/\ell_2\in \N, \quad & n\equiv 2 \mod 4,\\
            (d_{1;n}-1)/\ell_1,\ d_{2;n}/\ell_2\in \N, \quad & n\equiv 3 \mod 4.
        \end{cases}
    \]
\end{lem}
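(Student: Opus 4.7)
The plan is to establish each of the three claims by short inductions grounded directly in the recursion \eqref{eq: recursion d vect}. As preparation I would extend the base cases by computing
\[
  \dvect_{-1} = (1, \ell_2), \quad \dvect_{-2} = (\ell_1, \ell_1\ell_2 - 1), \quad \dvect_4 = (\ell_1, 1), \quad \dvect_5 = (\ell_1\ell_2 - 1, \ell_2),
\]
which verify all three claims for small $|n|$ and anchor the subsequent inductions in both directions.

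For (2) I would prove the stronger statement $\det(\dvect_n, \dvect_{n+1}) = 1$ for all $n\in\Z$. Substituting $\dvect_{n+1} = \ell\,\dvect_n - \dvect_{n-1}$ into the determinant and using multilinearity gives $\det(\dvect_n, \dvect_{n+1}) = \det(\dvect_{n-1}, \dvect_n)$, so the constant value $1$ propagates from the base case $\det(\dvect_0, \dvect_1) = 1$. The recursion not holding at $n = 1, 2$ is harmless, as the three base determinants $\det(\dvect_0, \dvect_1)$, $\det(\dvect_1, \dvect_2)$, $\det(\dvect_2, \dvect_3)$ can be checked directly.

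For (3) I would induct on $n$, reducing the recursion separately modulo $\ell_1$ and $\ell_2$. For instance, when $n$ is odd, $d_{i;n+1} = \ell_1 d_{i;n} - d_{i;n-1}$ gives $d_{i;n+1} \equiv -d_{i;n-1}\!\pmod{\ell_1}$ and $d_{i;n+1} \equiv \ell_1 d_{i;n} - d_{i;n-1}\!\pmod{\ell_2}$; parallel congruences hold in the even case. The claimed divisibilities for $\dvect_{n+1}$ then follow from those of $\dvect_n$ and $\dvect_{n-1}$ by a direct case check over the four residue classes of $n$ modulo $4$; the bookkeeping is routine.

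Part (1) is the subtlest. Applying \eqref{eq: recursion d vect} twice and eliminating the intermediate term via $\dvect_{n-2} = \ell'\dvect_{n-1} - \dvect_n$ gives the period-$2$ relation
\[
  \dvect_{n+2} = (\ell_1\ell_2 - 2)\,\dvect_n - \dvect_{n-2}\quad \text{for } n \geq 4 \text{ (and symmetrically for } n \leq -1),
\]
so each of the four period-$2$ subsequences of $(d_{1;n}), (d_{2;n})$ satisfies $c_{k+2} = (\ell_1\ell_2 - 2)c_{k+1} - c_k$. In the regime $\ell_1\ell_2 \geq 4$ in which this lemma is applied, $c_{k+2} - c_{k+1} \geq c_{k+1} - c_k$, so once the initial difference is non-negative the subsequence is non-decreasing. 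The explicit base values above give strictly positive initial differences in every subsequence, so each is strictly increasing from a positive value, yielding $\dvect_n \in \Z_{>0}^2$ for $n \geq 4$; the backward direction is handled by the same argument run on negative indices. The main obstacle I anticipate is the degenerate case $\ell_1\ell_2 = 4$, where the period-$2$ recurrence becomes an arithmetic progression and monotonicity alone does not produce growth; here one must verify directly from the base values that the initial differences are already $\geq 1$, which they are.
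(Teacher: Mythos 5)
Your proposal is correct and fills in the induction the paper merely asserts; the determinant argument for (2), the mod-$\ell_i$ reduction for (3), and the telescoped period-two recurrence for (1) are all the right ideas, and they do work.

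Two details are worth tightening when you write this up. The inequality $c_{k+2}-c_{k+1}\geq c_{k+1}-c_k$ rearranges to $(\ell_1\ell_2-4)c_{k+1}\geq 0$, so it already presupposes $c_{k+1}\geq 0$; positivity of the subsequence and monotonicity of its differences must therefore be established by a single simultaneous induction rather than asserting the non-decreasing differences first and then deducing positivity. Relatedly, the four base vectors you list ($\dvect_{-1},\dvect_{-2},\dvect_4,\dvect_5$) supply only one member from each period-two subsequence; to compute the initial differences you also need the original seeds $\dvect_0,\dots,\dvect_3$ (or the next-out terms $\dvect_6,\dvect_7,\dvect_{-3},\dvect_{-4}$). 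Finally, you are right that $\ell_1\ell_2\geq 4$ is a genuine hypothesis: for $\ell_1\ell_2\leq 3$ part (1) is simply false (e.g.\ $\dvect_8=(0,1)$ when $(\ell_1,\ell_2)=(3,1)$, and $\dvect_5=(0,1)$ when $\ell_1=\ell_2=1$) and with it the $\N$-membership in part (3), though (2) holds unconditionally since $\det(\dvect_n,\dvect_{n+1})=1$ is independent of $\ell_1\ell_2$; this restriction is implicit in the paper's use of the lemma, which feeds into \Cref{thm: cluster complex struct rk 2}.
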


We define 
\begin{equation}\label{overlineP12}
  \overline{P}_1(z):=\frac{1}{p_{1,\ell_1}} z^{\ell_1}P_1(z^{-1})\quad \text{and} \quad \overline{P}_2(z):=\frac{1}{p_{2,\ell_2}} z^{\ell_2}P_2(z^{-1}).
\end{equation}
Notice that they both have constant term $1$. For $(u,v)\in \Z^2$, let $W((u,v))=p_{1,\ell_1}^{u/\ell_1} p_{2,\ell_2}^{v/\ell_2}$.

\begin{thm}\label{thm: cluster complex struct rk 2}
  Outside the closed cone ``Badlands''
  \[
    C_{\ell_1, \ell_2} \coloneqq \operatorname{span}_{\mathbb R_{\leq 0}}\left\{\left(2\ell_1, \ell_1 \ell_2 + \sqrt{\ell_1^2\ell_2^2 - 4\ell_1 \ell_2} \right), \left(\ell_1 \ell_2 + \sqrt{\ell_1^2\ell_2^2 - 4\ell_1 \ell_2}, 2\ell_2 \right)\right\},
  \]
  the rays in $\mathfrak D\cut \mathfrak D_\mathrm{in}$ are precisely
  \[
    \left\{\left(\R_{\geq 0}d_n, \zeta_n(x^{d_{1;n}}y^{d_{2;n}}) \right) \,\middle\vert\, n\in \Z\cut\{0,1,2,3\} \right\}
  \]
  where for $n\neq 1, 2$,
  \begin{equation*}
      \zeta_n(z) \coloneqq \begin{cases}
          P_2(W(\dvect_n-e_2)z) \quad & n \equiv 0 \mod 4, \\
          \overline P_1(W(\dvect_n + e_1)z) \quad & n \equiv 1 \mod 4, \\
          \overline P_2(W(\dvect_n + e_2)z) \quad & n \equiv 2 \mod 4, \\
          P_1(W(\dvect_n-e_1)z) \quad & n \equiv 3 \mod 4.
      \end{cases}
  \end{equation*}
\end{thm}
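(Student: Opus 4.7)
The plan is to iteratively apply the mutation invariance of \Cref{thm: mutation of sd rk 2} to generate the rays of $\frakD \setminus \frakD_\mathrm{in}$ from the initial coordinate walls of each $\frakD^{(n)}$, and then to perform an asymptotic analysis of the recursion \eqref{eq: recursion d vect} to verify that the cluster cones $\{\mathcal{C}_n\}_{n \in \Z}$ cover $M_\R \setminus C_{\ell_1,\ell_2}$. By construction, pulling back each initial wall of $\frakD^{(n)}$ via $(T^{(n)})^{-1}$ produces a wall of $\frakD$, and the non-initial rays that appear along the boundary between consecutive cluster cones $\mathcal{C}_{n-1}$ and $\mathcal{C}_n$ are exactly those required for the theorem.

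First, I would verify by induction on $|n|$ that the common boundary of $\mathcal{C}_{n-1}$ and $\mathcal{C}_n$ is the ray supported on $\R_{\geq 0} \dvect_n$, where $\dvect_n$ obeys the recursion \eqref{eq: recursion d vect}. The induction step reduces to applying the inverse of the shear \eqref{eq: pl map T} to a coordinate vector, and iterating this yields precisely the linear recursion defining $\dvect_n$. The positivity $\dvect_n \in \Z_{>0}^2$ for $n \notin \{0, 1, 2, 3\}$ from \Cref{lem: properties of dn} ensures that these rays are genuinely new and distinct from the initial walls, while the integrality properties stated there ensure that the wall-function expressions below are well-defined.

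Next, I would compute the wall-function $\zeta_n(x^{d_{1;n}} y^{d_{2;n}})$ by pulling back the appropriate initial wall-function of $\frakD^{(n)}$---namely $P_1$, $P_2$, $\overline{P}_1$, or $\overline{P}_2$ from \eqref{overlineP12}, depending on $n \bmod 4$---through the composition of monoid homomorphisms $T_0, T_1, \ldots, T_{n-1}$ defined in \eqref{eq: monoid map T}. Each mutation contributes a factor of $p_{1,\ell_1}^{-\deg p_2}$ or $p_{2,\ell_2}^{-\deg p_1}$ to the monomials it transports; the key computation is that these factors accumulate, when applied to the leading monomial $x^{d_{1;n}} y^{d_{2;n}}$, exactly into the quantity $W(\dvect_n \pm e_i)$ dictated by the theorem. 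This in turn follows from the recursion for $\dvect_n$ combined with the divisibility relations in \Cref{lem: properties of dn}.

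Finally, I would verify that the closed union $\bigcup_n \mathcal{C}_n$ equals $M_\R \setminus \operatorname{int}(C_{\ell_1,\ell_2})$. When $\ell_1 \ell_2 < 4$, the sequence $\{\dvect_n\}$ is periodic, the cluster complex is a complete fan (as in \Cref{fig: G2}), and $C_{\ell_1,\ell_2} = \{0\}$ since the radicand $\ell_1^2\ell_2^2 - 4\ell_1\ell_2$ is negative. When $\ell_1 \ell_2 \geq 4$, the two-step recursion matrix has real eigenvalues $\lambda_\pm = \tfrac{1}{2}\bigl(\ell_1 \ell_2 - 2 \pm \sqrt{\ell_1\ell_2(\ell_1\ell_2 - 4)}\bigr)$, and a direct eigenvector calculation shows that the directions of $\dvect_n / |\dvect_n|$ converge as $n \to \pm \infty$ to the two rays bounding $C_{\ell_1,\ell_2}$ in the theorem statement. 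Since the interiors of distinct cluster chambers are disjoint and no wall of $\frakD$ lies in the interior of any cluster chamber, every ray of $\frakD \setminus \frakD_\mathrm{in}$ lying outside $C_{\ell_1,\ell_2}$ must coincide with some $\R_{\geq 0} \dvect_n$. The main obstacle is the coefficient bookkeeping of the third paragraph: tracing how alternating compositions of the two monoid mutations produce the precise factor $W(\dvect_n \pm e_i)$, which is where the integrality statements in \Cref{lem: properties of dn} enter in an essential way.
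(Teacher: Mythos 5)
Your outline matches the paper's proof strategy: iterate the mutation of \Cref{thm: mutation of sd rk 2} to identify the cluster cones $\mathcal{C}_n = \operatorname{span}_{\mathbb{R}_{\leq 0}}\{\dvect_{n+1}, \dvect_{n+2}\}$ together with the wall-functions on their common boundaries, and then argue that the rays $\mathbb{R}_{\leq 0}\dvect_n$ sweep out everything outside $C_{\ell_1, \ell_2}$. The paper's own proof is much terser, delegating both the inductive bookkeeping and the asymptotics of $\dvect_n$ to \cite{GHKK}, \cite{CGM}, and \cite{GP}, so your plan fills in the same steps in the same order.
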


\begin{proof}
    When $P_1$ and $P_2$ are binomials, this structure is well-known from \cite[Example 1.15]{GHKK}; see also \cite[Section 4]{CGM}. When $P_1$ and $P_2$ are monic polynomials, this result can already be extracted from \cite[Section 5]{GP}. In the current setting, the stated description of rays is obtained by working out the cones $\mathcal C_n$, $n\in \mathbb Z$ and the wall-functions on their boundary rays. Inductively we have
    \[
        \mathcal C_n = \operatorname{span}_{\R_{\leq 0}}\{\dvect_{n+1}, \dvect_{n+2}\}.  
    \]
    The attached wall-functions on $\R_{\leq 0}\dvect_n$ are also obtained inductively from iterative mutations in \Cref{thm: mutation of sd rk 2}. Notice that there is no ray in the interior of $\mathcal C_n$. The limits of $\R_{\leq 0}\dvect_n$ when $n\rightarrow +\infty$ and $n\rightarrow - \infty$ are respectively the two boundaries of $C_{\ell_1, \ell_2}$ (which degenerates to a single ray when $\ell_1\ell_2 = 4$). Therefore we have exhausted all added rays outside $C_{\ell_1, \ell_2}$.
\end{proof}

The next two examples are when $\ell_1\ell_2 = 4$. In these cases, the cluster complex is infinite but the cone $C_{\ell_1, \ell_2}$ is one-dimensional.

\begin{exmp}\label{exmp: affine 2 2 wall-function RW}
  Let $\ell_1 = \ell_2 = 2$. We take the initial wall-functions
  \[
    P_1 = (1 + s_1x)(1 + s_2x) \quad \text{and} \quad P_2 = (1 + t_1y)(1 + t_2y),
  \]
  which are related to the polynomials in (\ref{eq: initial function rk 2}) by the substitution
  \[
    p_{1,1} = s_1 + s_2, \quad p_{2, 1} = t_1 + t_2, \quad p_{1,2} = s_1s_2, \quad p_{2,2} = t_1t_2.
  \]
  The closed cone $C_{2, 2}$ is a single ray $\mathbb R_{\leq 0}(1, 1)$. The associated wall-function is computed by Reineke and Weist \cite{RW} to be
  \begin{align*}
    f_{\mathbb R_{\leq 0}(1, 1)} & = \frac{(1 + s_1t_1xy)(1 + s_1t_2xy)(1 + s_2t_1xy)(1 + s_2t_2xy)}{(1 - s_1s_2t_1t_2x^2y^2)^4} \\
    & = \frac{(p_{1,1}p_{2,2}xy^2 + p_{2,1} y)(p_{1,2}p_{2,1}x^2y + p_{1,1}x)}{(1 - p_{1,2}p_{2,2} x^2y^2)^4} + \frac{1}{(1 - p_{1,2}p_{2,2} x^2y^2)^2}.
  \end{align*}
  We see that in the expansion (where $x$ and $y$ are omitted), every coefficient is a polynomial of $p_{1,1}, p_{1,2}, p_{2,1}, p_{2,2}$ with nonnegative integer coefficients:
  \begin{multline}\label{eq: affine 2 2 refined}
      \sum_{k\geq 0} (k+1) p_{1,2}^kp_{2,2}^k + \left[\binom{k+3}{3} + \binom{k+2}{3}\right] p_{1,1}p_{2,1}p_{1,2}^kp_{2,2}^k \\
      + \binom{k+3}{3}p_{2,1}^2p_{1,2}^{k+1}p_{2,2}^k + \binom{k+3}{3} p_{1,1}^2p_{1,2}^{k}p_{2,2}^{k+1}.
  \end{multline}
A combinatorial interpretation of each term is given in \Cref{thm: power shadow grading formula}. The objects corresponding to $(k+1)p_{1,2}^kp_{2,2}^k$ are classified in \Cref{exmp: affine 2 2 gradings}. These are precisely the terms that appear in the ordinary cluster case.
\end{exmp}

\begin{exmp}\label{exmp: affine 4 1 wall-function RW}
  Let $\ell_2= 4$ and $\ell_1 = 1$. We take the initial wall-functions
  \[
    P_1 = \prod_{i=1}^4(1 + s_ix) = 1 + p_{1,1}x + p_{1, 2}x^2 + p_{1,3}x^3 + p_{1, 4}x^4 \quad \text{and} \quad P_2 = 1 + p_{2,1}y.
  \]
  The closed cone $C_{4, 1}$ is a single ray $\mathbb R_{\leq 0}(2,1)$. The wall-function is again computed in \cite{RW} to be
  \[
    f_{\mathbb R_{\leq 0}(2,1)} = \dfrac{\prod_{1\leq i<j\leq 4}(1 + s_is_jp_{2,1}x^2y)}{(1 - p_{1,4}p_{2,1}^2x^4y^2)^4}.
  \]
  To simplify notations we let $x = y = p_{2,1} = 1$ as they can be recovered from homogeneity constraints. We can rewrite
  \[
    \frac{\prod_{1\leq i<j \leq 4}(1 + s_is_j)}{(1 - p_{1,4})^4} = \frac{1+p_{1,2}+p_{1,4}}{(1-p_{1,4})^2} + \frac{(p_{1,1}p_{1,4} + p_{1,3})(p_{1,1} + p_{1, 3})}{(1-p_{1,4})^4}.
  \]
  The power series expansion of $f_{\mathbb R_{\leq 0}(2, 1)}$ then has nonnegative integer coefficients. Explicitly the expansion (with $x$ and $y$ omitted) is
  \begin{multline}\label{eq: affine 4 1 refined}
      \sum_{k\geq 0} (2k+1) p_{1,4}^kp_{2,1}^{2k} +(k+1)p_{1,2}p_{1,4}^kp_{2,1}^{2k+1} + \binom{k+3}{3} p_{1,1}^2p_{1,4}^{k+1}p_{2,1}^{2k+3}\\
      +\left[\binom{k+3}{3} + \binom{k+2}{3} \right] p_{1,1}p_{1,3}p_{1,4}^kp_{2,1}^{2k+2} + \binom{k+3}{3} p_{1,3}^2p_{1,4}^kp_{2,1}^{2k+3}.
  \end{multline}
  A combinatorial interpretation of each term is given in \Cref{thm: power shadow grading formula}. The objects corresponding to $(2k+1)p_{1,4}^kp_{2,1}^{2k}$ are classified in
  \Cref{exmp: affine 4 1 gradings}, which are precisely the terms that appear in the ordinary cluster case.
\end{exmp}

When $\ell_1\ell_2>4$, the structure of $\frakD$ within $C_{\ell_1,\ell_2}$ is rather elusive. One aim of this paper is to understand the rays and their wall-functions within $C_{\ell_1,\ell_2}$. \Cref{thm: power shadow grading formula} in particular provides a combinatorial formula for the wall-function of every such ray. We further discuss in \Cref{app: Badlands nonvanishing} that every ray within $C_{\ell_1,\ell_2}$ is non-trivial and give a lower bound for wall-function coefficients.

\subsection{Broken lines and theta functions}\label{subsec: bl and theta}

Let $\mathfrak D$ be a scattering diagram over $\widehat{\Bbbk[\mndP]}$ as in \Cref{subsec: sd in rk 2}.

\begin{defn}
   Let $m_0 \in \mndP$ and $Q \in M_\R \cut \Supp(\frakD)$. A \emph{broken line} $\bline$ for $m_0$ with endpoint $Q$ is a piecewise linear continuous map 
   \[
    \bline\colon (-\infty,0] \to M_\R \cut \operatorname{Sing}(\mathfrak D)
   \]
   with values $-\infty = \tau_0 < \tau_1 < \cdots < \tau_{\ell+1} = 0$ and an associated monomial $c_i x^{m_i}$ (with $c_i\in \Bbbk$ and $m_i\in \mndP$) for each domain of linearity $(\tau_i, \tau_{i+1})$, $i = 0, \dots, \ell$ such that 
  \begin{enumerate}
    \item $\bline(0) = Q$ and $c_0 = 1$;
    \item $\dot{\bline}(\tau) = -r(m_i)$ for any $\tau \in (\tau_{i},\tau_{i+1})$ for each $i = 0, \dots, \ell$;
    \item $\bline$ transversally crosses (the support of) some wall at $\tau_i$ for $i = 1, \dots, \ell$, that is, neither $-r(m_{i-1})$ nor $-r(m_{i})$ is tangent to the wall.
    \item for $i = 0, \dots, \ell -1$, $c_{i+1}x^{m_{i+1}}$ is a monomial term (other than $c_ix^{m_i}$) of the power series
    \[
      c_ix^{m_i} \prod_{\{\mathfrak d\in \mathfrak D\mid \bline(\tau_i)\in S_\mathfrak d\}} f_\mathfrak d^{n_i\cdot r(m_i)},
    \]
    where $n_i$ is primitive in $N$ and orthogonal to any $\frakd$ with $\bline(\tau_i)\in S_\mathfrak d$ such that $n_i \cdot r(m_i)>0$.
  \end{enumerate}
\end{defn}

When $r(m_0) = 0$, there is only the ``static'' broken line at $Q$ for $m_0$ and any endpoint $Q$.

\begin{defn}\label{def: theta function rank 2}
  The \emph{theta function} associated to $m_0$ and $Q$ is
  \[
    \vartheta_{Q, m_0} \coloneqq \sum_\bline \operatorname{Mono}(\beta)
  \]
  where the (formal) sum is over all broken lines for $m_0$ with endpoint $Q$ and $\operatorname{Mono}(\beta) = c_\ell x^{m_\ell}$ is the last monomial of $\beta$.
\end{defn}

\begin{rem}
    The theta function $\vartheta_{Q, m_0}$ can be viewed as an element in $x^{m_0}\widehat{\Bbbk[\mndP]}$. In fact, broken lines and theta functions can be defined for $\frakD_k$ over $\Bbbk[\mndP]/\mathfrak m^{k+1}$. Then $\vartheta_{Q, m_0}$ is the limit of theta functions in $\frakD_k$ where each one is necessarily a finite sum in $x^{m_0}\Bbbk[\mndP]/\mathfrak m^{k+1}$. More detail is given in the proof of \cite[Proposition 3.4]{GHKK}.
\end{rem}

\begin{thm}[{\cite{CPS}, \cite[Theorem 3.5]{GHKK}}]\label{thm: CPS consistency}
  Let $\mathfrak D$ be a consistent scattering diagram, and let $m_0\in \mndP$ and $Q, Q'\in M_\mathbb R\cut \operatorname{Supp}(\mathfrak D)$ be two points. Suppose further that the coordinates of $Q$ are linearly independent over $\mathbb Q$, and the same is true for $Q'$. Then for any regular path $\gamma$ with endpoints $\gamma(0) = Q$ and $\gamma(1) = Q'$, we have
  \[
    \vartheta_{Q', m_0} = \mathfrak p_{\gamma, \mathfrak D}(\vartheta_{Q, m_0}).
  \]
\end{thm}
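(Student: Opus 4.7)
The strategy is reduction to a local setting followed by an explicit bijection between broken lines. First, since both sides of the claimed equality lie in $x^{m_0}\widehat{\Bbbk[\mndP]}$, it suffices to prove the identity modulo $\frakm^{k+1}$ for each $k\geq 0$; hence we may replace $\frakD$ by the finite consistent scattering diagram $\frakD_k$. By the hypothesis of consistency (\Cref{def: consistent sd rk 2}), $\mathfrak p_{\gamma, \frakD_k}$ depends only on the endpoints of $\gamma$, so we are free to homotope $\gamma$ within $M_\R \setminus \operatorname{Sing}(\frakD_k)$ and to decompose it as a concatenation of \emph{elementary} segments, each of which crosses exactly one wall $\frakd$ transversally at one non-singular point. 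Since both $\vartheta_{-, m_0}$ and the path-ordered product behave compatibly under concatenation of paths, it suffices to prove the identity for one such segment. We call this the \emph{elementary case}: $\gamma$ crosses a single wall $\frakd$ at one point $p$ in direction $v$, with $n\in N$ the primitive normal to $\frakd$ such that $n\cdot v<0$.

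For the elementary case, I plan to construct an explicit bijection
\[
    \Phi\colon \left\{(\beta, c'x^{m'}) \,\middle|\, \text{$\beta$ broken line for $m_0$ at $Q$, $c'x^{m'}$ a term of } \mathfrak p_{v,\frakd}(\Mono(\beta))\right\} \xrightarrow{\ \sim\ } \left\{\text{broken lines for $m_0$ at $Q'$}\right\}.
\]
Given a pair $(\beta, c'x^{m'})$, where $\beta$ has final monomial $cx^{m}$ and $c'x^{m'}$ is a term in the expansion $\mathfrak p_{v,\frakd}(cx^{m}) = cx^{m}\cdot f_{\frakd}^{\,n\cdot r(m)}$, I define $\Phi(\beta, c'x^{m'})$ as follows: extend $\beta$ past $Q$ in direction $-r(m)$ until it transversally meets $\frakd$ at some point $p_*$; if $c'x^{m'} = cx^{m}$ (the trivial term), continue straight through $p_*$ to $Q'$; otherwise bend at $p_*$ to direction $-r(m')$ and proceed to $Q'$. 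Item (4) of the broken-line definition is precisely the requirement that $c'x^{m'}$ be a term of the indicated expansion, so $\Phi(\beta,c'x^{m'})$ is a valid broken line ending at $Q'$ with final monomial $c'x^{m'}$.

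To invert $\Phi$, given a broken line $\beta'$ ending at $Q'$ with last linear segment of direction $-r(m'_{\ell'})$, examine whether that segment, traced backward from $Q'$, reaches $Q$ without hitting $\frakd$ (the trivial-term case, associated to the broken line at $Q$ that is $\beta'$ with the last segment truncated to end at $Q$), or instead meets $\frakd$ at some bending point, in which case the previous direction $-r(m'_{\ell'-1})$ continued past that point determines the broken line $\beta$ at $Q$, and the bending recovers the contributing term in $f_{\frakd}^{\,n\cdot r(m'_{\ell'-1})}$. Summing $\Mono$ over broken lines at $Q'$ through $\Phi$ then yields exactly $\mathfrak p_{v,\frakd}(\vartheta_{Q, m_0})$, completing the elementary case and, by concatenation, the full theorem.

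The principal obstacle is ensuring the bijection is clean: one must rule out degenerate behavior near $\frakd$ — e.g.\ the last segment of a broken line being parallel to $\frakd$, or accidental extra wall crossings between $Q$ and $Q'$, or ambiguity in identifying the ``last bending'' of $\beta'$. This is where the hypothesis of $\mathbb Q$-linear independence of the coordinates of $Q$ and $Q'$ enters crucially: it guarantees that $Q, Q'$ avoid every rational support, that no final monomial direction $r(m)$ is tangent to $\frakd$, and that the endpoints of broken lines never coincide with wall intersections. Together with the finiteness of $\frakD_k$ (so that only finitely many wall crossings occur in a bounded neighborhood and a sufficiently short elementary segment avoids all walls but $\frakd$), this genericity makes $\Phi$ manifestly a well-defined bijection, from which the identity follows.
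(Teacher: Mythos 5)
Your overall strategy is the right one and matches the Carl--Pumperla--Siebert/Gross--Hacking--Keel--Kontsevich approach: reduce modulo $\frakm^{k+1}$ to a finite diagram, use consistency to homotope and split $\gamma$ into segments each crossing a single wall, and then prove the elementary case by a weighted bijection between broken lines at $Q$ (together with a chosen term of the wall-crossing applied to the final monomial) and broken lines at $Q'$.

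However, the explicit map $\Phi$ as you describe it is not well defined, and this is a genuine gap rather than a cosmetic one. You instruct the reader to ``extend $\beta$ past $Q$ in direction $-r(m)$ until it transversally meets $\frakd$ at $p_*$'' and then ``proceed to $Q'$,'' but the ray $Q + \R_{\geq 0}(-r(m))$ does not pass through $Q'$ in general, and neither does the ray from $p_*$ in the bent direction $-r(m')$; so ``proceed to $Q'$'' has no meaning, and the inverse operation (``examine whether that segment, traced backward from $Q'$, reaches $Q$'') fails for the same reason. Moreover, when $n\cdot r(m) < 0$ (with $n$ the primitive normal used in $\mathfrak p_{v,\frakd}$), the forward extension of $\beta$ past $Q$ moves \emph{away} from $\frakd$ and never meets it, yet $f_\frakd^{\,n\cdot r(m)}$ is a nontrivial power series, so this case still needs to be matched against broken lines at $Q'$; your construction does not apply to it. The correct bijection works at the level of bend-types: a broken line is uniquely determined (when it exists) by its sequence of bend monomials and walls together with its endpoint, because the image can be reconstructed backward from the endpoint. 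One should therefore send $(\beta, c'x^{m'})$ to the broken line at $Q'$ whose bend-type is that of $\beta$ with one extra bend at $\frakd$ appended (or unchanged for the trivial term), reconstructed from $Q'$, and then argue, using that $Q$ and $Q'$ are close, generic, and separated only by $\frakd$, that this reconstruction produces a valid broken line, that the resulting contributions have the correct weights, and that both cases of $\operatorname{sgn}(n\cdot r(m))$ are covered. Those verification steps are the actual content of the elementary case and are missing here.
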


Let $\frakD$ be $\Scat(P_1, P_2)$ as in \Cref{subsec: gen cluster sd rk 2}. Define the \emph{weight} of a broken line $\beta$ to be
\begin{equation}\label{eq: weight of broken line}
    c(\beta) \coloneqq c_\ell x^{m_\ell}/x^{r(m_\ell)},
\end{equation}
a monomial in $\Bbbk[\mnd]$. Let $m(\beta) \coloneqq r(m_\ell)$. Thus $c(\beta)x^{m(\beta)} = \operatorname{Mono}(\beta)$.

\begin{cor}\label{cor: CPS consistency non strict}
    Let $m_0\in \mndP$. For any $m_\mathrm{fin}\in M$, the sum
    \[
        \sum_{m(\beta) = m_\mathrm{fin}} c(\beta)x^{m(\beta)}
    \]
    over all broken lines $\beta$ for $m_0$ with endpoint $Q$ such that $m(\beta) = m_\mathrm{fin}$ is independent of $Q$ chosen generally in the same $\mathcal C_n$ for $n\in \mathbb Z$. If $m_\mathrm{fin}$ is tangent to the ray $\mathcal C_n\cap \mathcal C_{n+1}$, then the sum is independent of $Q$ chosen generally in $\mathcal C_n\cup \mathcal C_{n+1}$.
\end{cor}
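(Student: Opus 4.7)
The plan is to derive this refinement of \Cref{thm: CPS consistency} by grading the equality of theta functions by the $M$-direction of the final monomials. For each generic endpoint $Q$, introduce the decomposition
\[
    \vartheta_{Q, m_0} = \sum_{m\in M} A_m(Q)\, x^m,
\]
where $A_m(Q) \in \widehat{\Bbbk[\mnd]}$ is the sum of the weights $c(\beta)$ from \eqref{eq: weight of broken line} over all broken lines $\beta$ for $m_0$ ending at $Q$ with $m(\beta) = m$. The corollary is precisely the assertion that $A_{m_\mathrm{fin}}(Q)$ is independent of $Q$ in the stated regions.

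For the first claim, take generic $Q, Q'\in \operatorname{int}(\mathcal C_n)$ with rationally independent coordinates. Since $\operatorname{int}(\mathcal C_n)$ is a connected component of $M_\R\cut \overline{\operatorname{Supp}(\frakD)}$, they can be connected by a regular path $\gamma$ staying inside $\operatorname{int}(\mathcal C_n)$. This path crosses no wall, so $\mathfrak p_{\gamma, \frakD} = \operatorname{id}$, and \Cref{thm: CPS consistency} gives $\vartheta_{Q', m_0} = \vartheta_{Q, m_0}$. Comparing the coefficients of $x^m$ yields $A_m(Q) = A_m(Q')$ for every $m\in M$, which is stronger than needed.

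For the second claim, pick generic $Q\in \operatorname{int}(\mathcal C_n)$ and $Q'\in \operatorname{int}(\mathcal C_{n+1})$, and a regular path $\gamma$ from $Q$ to $Q'$ crossing the shared ray $\mathfrak r\coloneqq \mathcal C_n\cap \mathcal C_{n+1}$ at a single point and meeting no other wall. Let $m_\mathfrak r\in M$ be a primitive tangent direction of $\mathfrak r$ and $n_\mathfrak r\in N$ the primitive normal oriented as in \eqref{eq: wcs automorphism}. Since every wall $\frakd$ with $S_\frakd\supseteq \mathfrak r$ has wall-function in $x^{m_\mathfrak r}$, the individual wall-crossing automorphisms commute and compose into
\[
    \mathfrak p_{\gamma, \frakD}(x^m) = x^m\, F^{n_\mathfrak r\cdot m},
    \qquad
    F \coloneqq \prod_{\frakd\,:\,S_\frakd\supseteq \mathfrak r} f_\frakd,
\]
a power series in $x^{m_\mathfrak r}$ with constant term $1$. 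The tangency hypothesis gives $n_\mathfrak r\cdot m_\mathrm{fin} = 0$, and every monomial appearing in $x^m F^{n_\mathfrak r\cdot m}$ has $M$-direction in $m + \Z_{\geq 0}\, m_\mathfrak r$; contributing to the $x^{m_\mathrm{fin}}$-coefficient therefore requires $m = m_\mathrm{fin} - k\, m_\mathfrak r$ for some $k\geq 0$, which forces $n_\mathfrak r\cdot m = 0$ and hence $F^{n_\mathfrak r\cdot m} = 1$, so only $m = m_\mathrm{fin}$ itself contributes, with coefficient $A_{m_\mathrm{fin}}(Q)$. Extracting the $x^{m_\mathrm{fin}}$-component on both sides of $\vartheta_{Q', m_0} = \mathfrak p_{\gamma, \frakD}(\vartheta_{Q, m_0})$ then gives $A_{m_\mathrm{fin}}(Q') = A_{m_\mathrm{fin}}(Q)$.

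The only delicate point is ruling out cross-contributions from monomials $x^m$ with $m\neq m_\mathrm{fin}$ via the wall-crossing automorphism in the second part; the tangency hypothesis makes this transparent by forcing $n_\mathfrak r\cdot m = 0$ on any contributing $m$, which collapses the wall-crossing to the identity.
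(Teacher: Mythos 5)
Your argument is correct and follows essentially the same route as the paper: the first claim is a direct application of \Cref{thm: CPS consistency} inside a single chamber, and the second claim isolates the single wall-crossing across $\mathcal C_n\cap\mathcal C_{n+1}$ and uses the fact that monomials tangent to the shared ray are fixed by it, so only $m=m_\mathrm{fin}$ can contribute. Your version merely spells out the computation $n_{\mathfrak r}\cdot m=0\Rightarrow F^{n_{\mathfrak r}\cdot m}=1$ a bit more explicitly than the paper does.
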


\begin{proof}
    The first half is a direct corollary of \Cref{thm: CPS consistency}. To see the second half, we choose general $Q\in \mathcal C_n$, $Q'\in \mathcal C_{n+1}$, and a regular path $\gamma$ with $\gamma(0) = Q$ and $\gamma(1) = Q'$. The path-ordered product $\mathfrak p_{\gamma, \frakD}$ is a single wall-crossing automorphism associated to the wall $\frakd$ whose support is $\mathcal C_n\cap \mathcal C_{n+1}$. Then any monomial $x^{m'}$ in the expansion of $\mathfrak p_{\gamma, \frakD}(x^m)$ must have that $r(m') = r(m) + km_\frakd$ for some $k\in \mathbb N$. In particular, $r(m')$ is tangent to $\mathcal C_n\cap \mathcal C_{n+1}$ if and only if  $r(m)$ is. Then the result follows by comparing the monomials containing $x^m$ with $r(m) = m_\mathrm{fin}$ on the two sides of $\vartheta_{Q', m_0} = \mathfrak p_{\gamma, \frakD}(\vartheta_{Q, m_0})$.
\end{proof}

\begin{prop}[cf. {\cite[Proposition 3.6]{GHKK}}]\label{prop: mutation broken line}
    The mutation $T$ in \Cref{thm: mutation of sd rk 2} defines a one-to-one correspondence $\beta\mapsto T(\beta)$ between broken lines for $m_0\in \mndP$ with endpoint $Q = (q_1, q_2)\in M_\mathbb R$ for $\frakD = \Scat(P_1, P_2)$ and broken lines for $T(m_0)$ with endpoint $T(Q)$ for $\frakD' = \Scat(P_1', P_2)$ such that
    \[
        \operatorname{Mono}(T(\beta)) = \begin{dcases}
            T(\operatorname{Mono}(\beta)) \quad & q_2\leq 0, \\
            \operatorname{Mono}(\beta) \quad & q_2\geq 0.
        \end{dcases}
    \]
\end{prop}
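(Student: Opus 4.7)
I would model the proof closely on \cite[Proposition 3.6]{GHKK}, replacing GHKK's binomial wall-function by our polynomial $P_1$. The idea is to define $T(\beta)$ by applying the piecewise linear map $T\colon M_\R \to M_\R$ of \eqref{eq: pl map T} to the underlying curve of $\beta$ and the monoid homomorphism $T\colon M \oplus \mathbb P \to M \oplus \mathbb P$ of \eqref{eq: monoid map T} to its attached monomials. Because $\beta$'s wall crossings include transverse crossings of $(\R e_1, P_1)$, every segment of $\beta$ is confined to one of the closed half-planes $\{a_2 \geq 0\}$ or $\{a_2 \leq 0\}$, and on each such segment I apply the appropriate linear piece of $T$ to the support and $T$ (as a monoid map) to the associated monomial. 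The resulting continuous piecewise linear path ends at $T(Q)$, starts at $-\infty$ in direction $-r(T(m_0))$, and lives inside the support of $T(\frakD) \equiv \frakD'$ from \Cref{thm: mutation of sd rk 2}.

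To confirm that $T(\beta)$ is a broken line in $\frakD'$, I would verify axioms (1)--(4) of the broken line definition. Axioms (1) and (2) follow directly from the construction and the compatibility $r \circ T = T \circ r$ on each linear piece, so that the segment direction $-r(T(m_i))$ is exactly the image of $-r(m_i)$ under the linear part of $T$ appropriate to that segment. Axiom (3) (transversality) holds because $T$ sends the non-initial walls of $\frakD$ bijectively onto the non-initial walls of $T(\frakD)$ while preserving transversality with any chosen segment.

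The main obstacle is verifying axiom (4) at crossings of the initial wall $(\R e_1, P_1)$, which I would handle by splitting them according to whether they occur at positive or negative $x$. On the negative $x$-axis, $(\R e_1, P_1)$ in $\frakD$ corresponds to $(\R(-e_1), P'_1)$ in $T(\frakD)$; using the factorization $P_1 = p_{1,\ell_1}\, x^{\ell_1} \cdot P'_1$ together with the shift $p_1 \mapsto p_1 p_{1,\ell_1}^{-\deg p_2}$ built into the monoid map $T$, each monomial term of $P_1^{n_i \cdot r(m_i)}$ picked by $\beta$ matches a term of $(P'_1)^{n_i \cdot r(T(m_i))}$ picked by $T(\beta)$, with the extra factor $(p_{1,\ell_1}\, x^{\ell_1})^{n_i \cdot r(m_i)}$ being precisely what the monoid map absorbs. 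On the positive $x$-axis there is no wall of $\frakD'$, and the crease of the piecewise linear $T$ supplies the geometric shift by a multiple of $e_1$ that matches this leading-term contribution, so the bending condition transfers consistently through the crease without requiring a wall in $\frakD'$.

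Finally, I would build the inverse $T^{-1}$ by the analogous construction using the inverse piecewise linear and monoid maps, establishing the bijection. The monomial identity then follows by cases on the sign of $q_2$: the terminal segment of $\beta$ lies in the upper (resp.\ lower) half-plane exactly when $q_2 \geq 0$ (resp.\ $q_2 \leq 0$), and on that segment the monoid map $T$ acts as the identity (resp.\ as the non-trivial shift), yielding $\operatorname{Mono}(T(\beta)) = \operatorname{Mono}(\beta)$ in the first case and $\operatorname{Mono}(T(\beta)) = T(\operatorname{Mono}(\beta))$ in the second.
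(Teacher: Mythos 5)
Your instinct to follow \cite[Proposition 3.6]{GHKK} is exactly what the paper does (its own proof is a one-line citation), but two of your assertions are incorrect and they are precisely where the real technical content lies. First, the claim that ``every segment of $\beta$ is confined to one of the closed half-planes $\{a_2\geq 0\}$ or $\{a_2 \leq 0\}$'' does not follow from the definition of a broken line. A linear portion $\beta|_{(\tau_i,\tau_{i+1})}$ is free to cross the line $\R e_1$ without registering a break, since picking the trivial term $1$ of $P_1^{\,n_i\cdot r(m_i)}$ produces no change of monomial and is therefore not recorded as a $\tau_j$. Consequently $T\circ\beta$ acquires genuine new creases at such crossings, and one must explicitly refine the domain of $\beta$ at the crease of $T$ and show that each new crease of $T(\beta)$ satisfies axiom (4) with respect to the wall that sits there. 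This refinement is exactly the step that GHKK carry out (their subintervals $L_j$), and it is the only non-trivial part of the argument; you cannot dispense with it.

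Second, ``on the positive $x$-axis there is no wall of $\frakD'$'' is false: the initial wall $(\R e_1, P'_1)$ of $\frakD' = \Scat(P'_1,P_2)$ is a full line, so it covers the positive $x$-axis with the non-trivial wall-function $P'_1 = P_1/(p_{1,\ell_1}x^{\ell_1})$. The correct bookkeeping is not that the crease substitutes for a missing wall; rather, the crease of $T$ together with a bend at $(\R e_1, P'_1)$ reproduces the bend of $\beta$ at $(\R e_1, P_1)$. Concretely, the factorization $P_1 = p_{1,\ell_1}x^{\ell_1}\,P'_1$ pairs the term of $P_1^{\,n\cdot r(m)}$ chosen by $\beta$ with a complementary term of $(P'_1)^{\,n\cdot r(T(m))}$ chosen by $T(\beta)$: a no-bend crossing of $\beta$ corresponds to the maximal bend of $T(\beta)$ (picking $(p_{1,\ell_1}^{-1}x^{-\ell_1})^{n\cdot r(m)}$), and vice versa, with the geometric shift produced by the crease matching the $x$-exponent change and the factor $p_{1,\ell_1}^{-\deg p_2}$ built into the monoid map $T$ matching the coefficient change. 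Without this matching, axiom (4) is simply not verified for $T(\beta)$ on the positive $x$-axis, so the argument as written has a gap.
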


\begin{proof}
    One constructs the bijection in the same way as in the proof of \cite[Proposition 3.6]{GHKK}.
\end{proof}

\begin{prop}\label{prop: theta function as monomial}
  Let $m_0\in \mathcal C_n\cap M$ for some $n\in \mathbb Z$ and $Q$ be in the interior of $\mathcal C_n$. Then
  \[
    \vartheta_{Q, m_0} = x^{m_0}.
  \]
\end{prop}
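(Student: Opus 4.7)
My plan is to prove the statement in three steps: (i) exhibit the trivial broken line, which contributes $x^{m_0}$; (ii) reduce the uniqueness claim to the case $n = 0$; (iii) complete the uniqueness argument in the first quadrant via a monotonicity argument on $r(m_i)$.

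For (i), I would invoke the proof of \Cref{thm: cluster complex struct rk 2}, which shows that $\mathcal{C}_n$ is a closed $2$-dimensional convex cone with apex at the origin and that its interior (being a chamber) is disjoint from $\operatorname{Supp}(\mathfrak D)$. Since $r(m_0) \in \mathcal{C}_n$ and $Q \in \operatorname{int}(\mathcal{C}_n)$, convexity of the cone gives that the entire ray $\{Q + s \cdot r(m_0) : s \geq 0\}$ lies in $\operatorname{int}(\mathcal{C}_n) \subseteq M_\R \setminus \operatorname{Supp}(\mathfrak D)$. Hence the single-segment map $\beta_0(\tau) = Q - \tau r(m_0)$ on $(-\infty, 0]$ is a valid broken line for $m_0$ ending at $Q$, contributing $x^{m_0}$ to $\vartheta_{Q, m_0}$.

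For (ii), I would apply \Cref{prop: mutation broken line} iteratively. The mutations $T_0, \dots, T_{n-1}$ (or their inverses, for $n < 0$) yield a bijection between broken lines for $(m_0, Q)$ in $\mathfrak D^{(0)}$ and broken lines for $(T^{(n)}(m_0), T^{(n)}(Q))$ in $\mathfrak D^{(n)}$, and this bijection takes $\mathcal{C}_n$ onto the positive chamber $\mathcal{C}^{(n)}$ of $\mathfrak D^{(n)}$. Each $\mathfrak D^{(n)}$ has the same structural form as $\mathfrak D^{(0)}$ and $\mathcal{C}^{(n)}$ is a coordinate quadrant, so after a rotational relabeling it suffices to prove uniqueness for $\mathfrak D = \Scat(P_1, P_2)$ in the case $\mathcal{C}_0 = \R_{\geq 0}^2$ with $Q = (q_1, q_2) \in (\R_{>0})^2$ and $r(m_0) = (a, b) \in \N^2$.

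For (iii), I would argue by contradiction. Suppose $\beta$ has $\ell \geq 1$ bending points. By \Cref{thm: mutation of sd rk 2} and \Cref{thm: cluster complex struct rk 2}, the walls of $\mathfrak D$ consist of the two coordinate axes together with rays $\R_{\leq 0}(a',b')$ with $(a',b') \in \Z_{>0}^2$, so all wall-direction vectors lie in $\N^2 \setminus \{0\}$. The first segment of $\beta$ is a ray from $\beta(\tau_1)$ in direction $r(m_0) \in \N^2$ extending to infinity; avoiding walls and the singular locus on this half-line forces $\beta(\tau_1)$ to lie on the positive $x$-axis or positive $y$-axis. Inspecting the bending rule at each wall type, $r(m_{i+1}) - r(m_i)$ is always a positive integer multiple of a wall direction, hence an element of $\N^2 \setminus \{0\}$. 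Starting from $r(m_0) \in \N^2$, the sequence $r(m_i)$ stays in $\N^2$ and strictly grows componentwise with each bending. Consequently the velocity $-r(m_i)$ has both coordinates non-positive and, after the first bending, at least one coordinate strictly negative. The broken line therefore leaves the open first quadrant monotonically and can never return, contradicting $\beta(0) = Q \in (\R_{>0})^2$.

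The main technical point I expect to verify carefully is the monotonicity in step (iii)—specifically, checking that each power-series expansion $f_\mathfrak{d}^{n_i \cdot r(m_i)}$ produces only exponent shifts in $\N^2 \setminus \{0\}$; this is immediate for $P_1$, $P_2$, and the non-incoming rays since their wall-functions involve only positive multiples of the wall direction. The remainder of the argument is forced by the geometry of the cluster chamber together with mutation invariance of broken lines.
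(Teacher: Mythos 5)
Your proposal follows the paper's proof structure exactly: exhibit the straight broken line, reduce to $n = 0$ via the mutation bijection of the paper's proposition on mutation and broken lines, and rule out bending by a monotonicity argument driven by the fact that every wall direction lies in $\N^2\setminus\{0\}$. Two small inaccuracies in step (iii) are worth flagging, though neither derails the argument. First, the claim that the first linear segment avoids walls, and hence that $\beta(\tau_1)$ must lie on a positive coordinate axis, is not justified by the definition: a broken line may cross a wall without bending (the attached monomial simply does not change), so the first actual bend can perfectly well sit on a third-quadrant ray or on a negative axis. Fortunately you never use this claim afterwards. Second, $r(m_i)$ does not strictly grow \emph{componentwise} with each bend — a bend at a coordinate-axis wall adds a positive multiple of $(1,0)$ or $(0,1)$, so only one coordinate strictly increases. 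The statement you actually need, and which does hold, is that $r(m_i)$ stays in $\N^2$ and strictly increases in at least one coordinate at each bend, so that $\dot\beta$ stays in $\Z_{\leq 0}^2$ throughout. From there the cleanest way to close the argument is to note that both coordinates of $\beta(\tau)$ are weakly decreasing, so $\beta(\tau_1) \geq Q$ componentwise, which would force the first bend point into the open first quadrant where $\frakD$ has no walls. The paper phrases the same monotonicity slightly differently — once $\beta$ is outside $\R_{\geq 0}^2$ with velocity in $\Z_{\leq 0}^2$ it can never return — but the content and the reduction to $n=0$ are identical to yours.
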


\begin{proof}
    There is always the straight broken line without bending which contributes to $x^{m_0}$ in $\vartheta_{Q, m_0}$. We need to show that this is the only broken line.

    When $n = 0$, $\mathcal C_0$ is the first quadrant. For a broken line $\beta$, if there is $t\in (-\infty, 0)$ such that $\dot{\beta}(t)\in \mathbb Z_{\leq 0}^2$ and $\beta(t)\notin \mathbb R_{\geq 0}^2$, then $\beta$ can never end at $Q$ in the interior of $\mathcal C_0$. This is because $\beta$ can further bend only at the negative $x$-axis, $y$-axis, or the rays in the third quadrant. So $\dot{\beta}(t)$ (whenever defined) always stays in $\mathbb Z_{\leq 0}^2$ and thus $\beta$ can never reach the first quadrant. Therefore we have proven the case $n = 0$.

    For any other $n\in \mathbb Z$, by applying \Cref{prop: mutation broken line} to a sequence of mutations we have a bijection between the broken lines for $m_0$ with endpoint $Q$ for $\frakD$ and the broken lines for $T^{(n)}(m_0)$ with endpoint $T^{(n)}(Q)$ for $\frakD^{(n)}$. Both $T^{(n)}(Q)$ and $T^{(n)}(m_0)$ are in the positive chamber of $\frakD^{(n)}$. There is thus only one straight broken line by the case $n=0$. It is pulled back to be the straight broken line $\beta$ in $\frakD$ with $\operatorname{Mono}(\beta) = x^{m_0}$. 
\end{proof}

Let $Q_+$ denote a general point in the first quadrant $\mathbb R_{\geq 0}^2$. It follows directly from \Cref{prop: theta function as monomial} and \Cref{thm: CPS consistency} that for any $m$ in $\mathcal C_n\cap M$,
\begin{equation}\label{eq: theta function path product}
  \vartheta_{Q_+, m} = \mathfrak p_{\gamma, \mathfrak D}(x^m),
\end{equation}
where $\gamma$ is a regular path from $Q\in \operatorname{int}(\mathcal C_n)$ to $Q_+$.

We now consider a sequence of theta functions known as \emph{cluster variables}. This is an unorthodox way to introduce cluster variables, but it will soon be clear that they satisfy (generalized) cluster exchange relations \cite{FZ, CS}. Denote $\vartheta_m = \vartheta_{Q_+, m}$.

\begin{defn}\label{defn: cluster variables}
    The \emph{cluster variables} $\{x_n\mid n\in \mathbb Z\}$ are defined to be
    \[
        x_n \coloneqq \vartheta_{-\dvect_n}.
    \]
\end{defn}

\begin{lem}\label{lem: exchange relation rk 2}
    The cluster variables satisfy the following cluster exchange relations
    \begin{enumerate}
        \item[\emph{(1)}] $x_0x_2 = P_1(x_1)$,
        \item[\emph{(2)}] $x_1x_3 = P_2(x_2)$,
        \item[\emph{(3)}] $x_{n-1}x_{n+1} = \bar \zeta_{n}(x_n)$ for $n\neq 1, 2$,
    \end{enumerate}
    where $\bar \zeta_n(z) \coloneqq z^{\deg(\zeta_n)}\zeta_n(1/z)$.
\end{lem}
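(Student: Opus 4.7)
My plan is to derive each identity by evaluating the relevant theta functions at a conveniently chosen point $Q$ in a cluster chamber and then transporting to $Q_+$ via \Cref{thm: CPS consistency}. Since $\mathfrak{p}_{\gamma, \frakD}$ is a $\Bbbk$-algebra automorphism, any polynomial identity among theta functions established at one basepoint $Q$ automatically yields the corresponding identity at $Q_+$. The key input is \Cref{prop: theta function as monomial}, which together with \Cref{cor: CPS consistency non strict} (to handle the case where $-\dvect_n$ lies on the common boundary of two adjacent cluster chambers) gives $\vartheta_{Q, m} = x^m$ whenever $m \in \mathcal C_k \cap M$ and $Q \in \operatorname{int}(\mathcal C_k)$.

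For (1), note $-\dvect_1 = e_1$ and $-\dvect_2 = e_2$ both lie in $\mathcal C_0 = \R_{\geq 0}^2$, so $x_1 = x$ and $x_2 = y$. Meanwhile $-\dvect_0 = -e_2$ lies in $\mathcal C_{-1}$ (the fourth quadrant): pick $Q \in \operatorname{int}(\mathcal C_{-1})$, so $\vartheta_{Q, -e_2} = y^{-1}$, and take a regular path from $Q$ to $Q_+$ crossing only the wall $(\R e_1, P_1)$. The wall-crossing formula \eqref{eq: wcs automorphism} gives $x_0 = y^{-1}P_1(x)$, hence $x_0 x_2 = P_1(x_1)$. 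Statement (2) is entirely symmetric.

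For (3) with $n \neq 1, 2$, the adjacent cluster chambers $\mathcal C_{n-2}$ and $\mathcal C_{n-1}$ meet along the ray $\R_{\leq 0}\dvect_n$ carrying wall-function $\zeta_n(x^{\dvect_n})$ by the proof of \Cref{thm: cluster complex struct rk 2} (this ray is a portion of an initial wall when $n \in \{0, 3\}$, and an added ray otherwise). For any $Q \in \operatorname{int}(\mathcal C_{n-1})$ we have $\vartheta_{Q,-\dvect_n} = x^{-\dvect_n}$ and $\vartheta_{Q,-\dvect_{n+1}} = x^{-\dvect_{n+1}}$. To compute $\vartheta_{Q,-\dvect_{n-1}}$, start from $Q'' \in \operatorname{int}(\mathcal C_{n-2})$, where $\vartheta_{Q'',-\dvect_{n-1}} = x^{-\dvect_{n-1}}$, and cross the single wall $\R_{\leq 0}\dvect_n$ to reach $Q$; the formula \eqref{eq: wcs automorphism} yields
\[
\vartheta_{Q, -\dvect_{n-1}} = x^{-\dvect_{n-1}} \cdot \zeta_n(x^{\dvect_n})^{\epsilon},
\]
where $\epsilon = \nu \cdot (-\dvect_{n-1})$ with $\nu \in N$ the primitive covector orthogonal to $\dvect_n$ satisfying $\nu \cdot v < 0$ for the crossing direction $v$.

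Since $\{\dvect_n, \dvect_{n+1}\}$ is a $\Z$-basis of $M$ by \Cref{lem: properties of dn} and $\dvect_{n-1} = \ell_\star \dvect_n - \dvect_{n+1}$ (with $\ell_\star \in \{\ell_1, \ell_2\}$ determined by the parity of $n$), we have $\epsilon = \nu \cdot \dvect_{n+1}$. A direct orientation check---noting that $-\dvect_{n+1}$ lies on the $\mathcal C_{n-1}$-side of the wall so $v$ can be chosen with positive component along $-\dvect_{n+1}$---pins down $\epsilon = +1$. Multiplying,
\[
\vartheta_{Q,-\dvect_{n-1}} \cdot \vartheta_{Q,-\dvect_{n+1}} = x^{-\ell_\star \dvect_n}\,\zeta_n(x^{\dvect_n}) = \bar\zeta_n(x^{-\dvect_n}) = \bar\zeta_n(\vartheta_{Q, -\dvect_n}),
\]
where the middle equality uses $\deg \zeta_n = \ell_\star$ together with the definition $\bar\zeta_n(z) = z^{\deg \zeta_n}\zeta_n(1/z)$. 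Transporting via $\mathfrak p_{\gamma, \frakD}$ from $Q$ to $Q_+$ completes the proof. The principal obstacle is the sign tracking: one must confirm $\epsilon = +1$, for if $\epsilon = -1$ the wall-crossing would produce $\zeta_n^{-1}$ (an infinite power series) rather than the polynomial $\zeta_n$, which would contradict the expected Laurent-polynomial character of the exchange relation.
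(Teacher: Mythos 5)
Your proposal is correct and follows essentially the same route as the paper's own proof: both derive the exchange relations by evaluating theta functions in cluster chambers via \Cref{prop: theta function as monomial}, using the wall-function identification from the proof of \Cref{thm: cluster complex struct rk 2}, and transporting across the relevant wall via \Cref{thm: CPS consistency}. The only difference is organizational (you establish the product identity at a fixed basepoint $Q\in\operatorname{int}(\mathcal C_{n-1})$ and then push the whole identity forward, whereas the paper applies the wall-crossing and the transport to $\mathcal C_0$ in two separate steps), and you spell out the orientation check $\nu\cdot\dvect_{n+1}=+1$, which the paper leaves implicit; this extra detail is correct.
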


\begin{proof}
  We first check (1) and (2). By \Cref{prop: theta function as monomial}, we have $x_1 = x$ and $x_2 = y$. Notice $x_0 = \vartheta_{(0, -1)}$ and $x_3 = \vartheta_{(-1, 0)}$. By \eqref{eq: theta function path product}, we have $x_0 = x_2^{-1}P_1(x_1)$ and $x_3 = x_1^{-1}P_2(x_2)$.

  For any $n\in \Z$, let $\gamma_n$ be a regular path going from the interior of $\mathcal C_{n}$ to the first quadrant $\mathcal C_0$. For some $n\neq 1, 2$, let $\gamma$ be a regular path going from $\mathcal C_{n-1}$ to $\mathcal C_{n-2}$ by crossing the wall $(\R_{\leq 0}d_n, \zeta_n)$. Then we have
  \begin{align*}
    \frakp_{\gamma, \frakD}(x^{-d_{1;n+1}}y^{-d_{2;n+1}}) & = x^{-d_{1;n+1}}y^{-d_{2;n+1}} \zeta_n(x^{d_{1;n}}y^{d_{2;n}}) \\
    & = x^{\deg (\zeta_n) d_{1; n} - d_{1; n+1}} y^{\deg (\zeta_n) d_{2; n}- d_{2; n+1}} \bar \zeta_n(x^{-d_{1; n}} y^{-d_{2;n}}) \\
    & = x^{d_{1;n-1}} y^{d_{2;n-1}} \bar \zeta_n(x^{-d_{1; n}} y^{-d_{2;n}}).
  \end{align*}
  Applying $\frakp_{\gamma_{n-2}, \frakD}$ to both sides of the above equation, we obtain
  \[
    \frakp_{\gamma_{n-1}, \frakD}\left(x^{-d_{1;n+1}}y^{-d_{2;n+1}}\right) = \frakp_{\gamma_{n-2}, \frakD}\left(x^{d_{1;n-1}}y^{d_{2;n-1}}\right) \bar\zeta_n\left(\frakp_{\gamma_{n-2},\frakD}\left(x^{-d_{1;n}}y^{-d_{2;n}}\right)\right).
  \]
  Now by \eqref{eq: theta function path product}, we have for $n\geq 3$,
  \[
    x_{n+1} = x_{n-1}^{-1} \bar \zeta_n(x_n).\qedhere
  \]
\end{proof}

\subsection{Theta functions are universal Laurent}\label{subsec: theta universal Laurent}

Recall the notations in \Cref{subsec: gen cluster sd rk 2}. In that situation the monoid $\mndP$ decomposes as $M \oplus \mnd$, where $\mnd = \mnd_1\oplus \mnd_2$ embeds in $\mathbb P$ the group of Laurent monomials in $\{p_{i,j}\mid i=1, 2, 1\leq j \leq \ell_i\}$. Elements in $\rngB$ are regarded as coefficients.

Let $\mathcal F$ denote the field of rational functions $\operatorname{Frac}(\Bbbk[\mndP])$. The generalized \emph{cluster algebra} $\mathcal A = \mathcal A(P_1, P_2)$ is defined to be the $\mathbb Z[\mathbb P]$-subalgebra in $\mathcal F$ generated by all cluster variables $\{x_n\mid n\in \mathbb Z\}$. The generalized \emph{upper cluster algebra} is defined to be 
\begin{equation}\label{eq: def upper algebra}
  \mathcal U = \mathcal U(P_1, P_2) \coloneqq \bigcap_{n\in \mathbb Z} \rngB[x_n^{\pm 1}, x_{n+1}^{\pm 1}] \subseteq \mathcal F.
\end{equation}
An element in $\mathcal U$ is called \emph{universal Laurent} as it belongs to every $\rngB[x_n^{\pm 1}, x_{n+1}^{\pm 1}]$.

\begin{prop}\label{prop: theta universal Laurent}
    For any $m\in M$, the theta function $\vartheta_m$ belongs to ${\mathcal U}$. Hence $\mathcal A\subseteq \mathcal U$.
\end{prop}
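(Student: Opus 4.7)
The plan is to show, for each $n \in \Z$, that $\vartheta_m \in \rngB[x_n^{\pm 1}, x_{n+1}^{\pm 1}]$; intersecting over $n$ then yields $\vartheta_m \in \mathcal U$. Fix $n$ and let $Q_{n-1}$ be a general point in $\operatorname{int}(\mathcal C_{n-1})$ and $\gamma$ a regular path from $Q_{n-1}$ to $Q_+$; set $\phi \coloneqq \frakp_{\gamma, \frakD}$. The first computation is the identification $\phi(x^{-d_n}) = x_n$ and $\phi(x^{-d_{n+1}}) = x_{n+1}$: both $-d_n$ and $-d_{n+1}$ lie in $\mathcal C_{n-1} \cap M$ on the two boundary rays of $\mathcal C_{n-1}$, so \Cref{prop: theta function as monomial} gives $\vartheta_{Q_{n-1}, -d_n} = x^{-d_n}$ and $\vartheta_{Q_{n-1}, -d_{n+1}} = x^{-d_{n+1}}$, while \Cref{thm: CPS consistency} combined with \Cref{defn: cluster variables} gives $\vartheta_{Q_+, -d_n} = x_n$ and $\vartheta_{Q_+, -d_{n+1}} = x_{n+1}$. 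Because $\{d_n, d_{n+1}\}$ is a $\Z$-basis of $M$ by \Cref{lem: properties of dn}(2), writing any $m' \in M$ uniquely as $m' = -a d_n - b d_{n+1}$ with $a, b \in \Z$ and using the multiplicativity of $\phi$ yields $\phi(x^{m'}) = x_n^a x_{n+1}^b$.

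Next, applying \Cref{thm: CPS consistency} to $m$ itself gives $\vartheta_m = \phi(\vartheta_{Q_{n-1}, m})$. Expanding $\vartheta_{Q_{n-1}, m} = \sum_\beta c_\beta\, x^{m(\beta)}$ over broken lines $\beta$ for $m$ with endpoint $Q_{n-1}$ and applying $\phi$ termwise produces the formal identity
\[
  \vartheta_m \;=\; \sum_\beta c_\beta\, x_n^{a_\beta} x_{n+1}^{b_\beta}, \qquad m(\beta) = -a_\beta d_n - b_\beta d_{n+1}.
\]
The real content of the proposition is that this \emph{a priori} formal sum lies in $\rngB[x_n^{\pm 1}, x_{n+1}^{\pm 1}]$, equivalently that $\vartheta_{Q_{n-1}, m}$ is already a Laurent polynomial in $x, y$ with $\rngB$-coefficients: only finitely many distinct final monomials $x^{m(\beta)}$ arise, and for each the combined coefficient is an element of $\rngB$ (i.e., a finite polynomial in the $p_{i,k}$, not a formal power series).

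This Laurent polynomial property is the main obstacle. The plan for it is to iterate the broken line mutation of \Cref{prop: mutation broken line} to reduce to the base case where the endpoint lies in the positive chamber $\mathcal C_0$ of an initial scattering diagram: each mutation step carries broken lines ending at $Q_{n-1}$ in $\frakD$ bijectively to broken lines ending at $T^{(n-1)}(Q_{n-1}) \in \mathcal C^{(n-1)}$ in $\frakD^{(n-1)} = \Scat(P_1', P_2')$, with the monomials transforming compatibly. For the base case, one analyzes broken lines geometrically: the support of $\frakD$ lies in the closure of the third quadrant together with the coordinate axes, so the requirement that the final segment of a broken line enter $\mathcal C_0$ and run straight to $Q_+$ sharply constrains both the admissible final monomials and the multiplicity of broken lines realizing each of them. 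The inclusion $\mathcal A \subseteq \mathcal U$ then follows at once, since $\mathcal A$ is generated as a $\rngB$-algebra by the cluster variables $x_n = \vartheta_{-d_n}$, each of which now lies in $\mathcal U$ as a theta function.
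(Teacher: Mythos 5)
Your proposal is correct and follows essentially the same route as the paper's own proof: it sets up the path-ordered product $\phi=\frakp_{\gamma,\frakD}$ carrying $x^{\pm\dvect_n},x^{\pm\dvect_{n+1}}$ to $x_n^{\pm 1},x_{n+1}^{\pm 1}$ via \Cref{prop: theta function as monomial} and \Cref{thm: CPS consistency}, correctly isolates finiteness of $\vartheta_{Q_{n-1},m}$ as the crux, and proposes exactly the paper's plan of iterating \Cref{prop: mutation broken line} to reduce to the positive chamber and then bounding geometrically. The only difference is organizational---the paper establishes the base-case finiteness at $Q_0$ first and then passes to $Q_n$, while you fix $n$ and work with $Q_{n-1}$ from the outset.
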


\begin{proof}
    This is a slight generalization from \cite[Theorem 3.9(b)(c)]{CGM} to our setting. For each $\mathcal C_n$, choose a general point $Q_n$ in its interior. If $m\in \mathbb N^2$, we have $\vartheta_m = \vartheta_{Q_0, m} = x^m$ by \Cref{prop: theta function as monomial}. If $m\notin \mathbb N^2$, for any broken line $\beta$ for $m$ with endpoint $Q_0$, the vector $m_\beta\in M = \mathbb Z^2$ has at least one negative coordinate. Then there are finitely many walls where any such $\beta$ can bend. Moreover only finitely many terms on these wall-functions are relevant. Then $\vartheta_m$ can be equivalently computed in $\frakD_k$ for a large $k$. Therefore, $\vartheta_m$ is a Laurent monomials. This shows $\vartheta_m\in \rngB[x_1^{\pm 1}, x_2^{\pm 1}]$ for any $m\in M$.

    For any $n\in \Z$, we first show that $\vartheta_{Q_n, m}$ is in $\rngB[x_1^{\pm 1}, x_2^{\pm 1}]$. By \Cref{prop: mutation broken line}, there is a piecewise linear transformation $T^{(n)}\colon M_\mathbb R\rightarrow M_\mathbb R$ that takes $\mathcal C_n$ to the positive chamber of $\frakD^{(n)}$, which induces a one-to-one correspondence between broken lines in $\frakD$ and those in $\frakD^{(n)}$. Since $T^{(n)}(Q_n)$ lies in the positive chamber of $\frakD^{(n)}$, there are finitely many broken lines by the first paragraph. Thus there are too only finitely many broken lines for $\vartheta_{Q_n, m}$, which means $\vartheta_{Q_n, m}\in \rngB[x_1^{\pm 1}, x_2^{\pm 1}]$.

    Each cluster variable $x_n$ corresponds to the ray $\mathcal C_{n-2}\cap \mathcal C_{n-1}$ in the cluster complex. The primitive integral generator of this ray is $-\dvect_n$ (see \Cref{defn: cluster variables}). Any pair $\{\dvect_n, \dvect_{n+1}\}$ is a basis of $M$ (\Cref{lem: properties of dn}). Then for any $m\in M$ we have
    \[
        \vartheta_{Q_n, m}\in \rngB[x^{\pm \dvect_{n+1}}, x^{\pm \dvect_{n+2}}].
    \]
    Therefore for a regular path $\gamma$ from $Q_n$ to $Q_0$, we have by \Cref{thm: CPS consistency} that
    \[
        \vartheta_{m} = \mathfrak p_{\gamma, \mathfrak D}(\vartheta_{Q_n, m})\in \rngB[x_{n+1}^{\pm 1}, x_{n+2}^{\pm 1}]. \qedhere
    \]
\end{proof}

\begin{defn}
    The set of theta functions $\{\vartheta_m\mid m\in M\}$ is called the \emph{generalized theta basis} of $\mathcal U$.
\end{defn}

\begin{rem}
    One can prove that theta functions $\{\vartheta_m \mid m\in M\}$ form a $\rngB$-basis of $\mathcal U$ analogously as in \cite{GHKK}. This also follows from the fact that theta functions equal so-called \emph{greedy elements}, proven in \Cref{sec: greedy = theta}, and that the greedy elements form a basis of $\mathcal U$.
\end{rem}

\section{Greedy bases for rank-2 generalized cluster algebras}\label{sec: greedy bases}

Following Lee--Li--Zelevinsky \cite{LLZ} in the cluster algebra setting and Rupel \cite{Rupgengreed} in the palindromic generalized cluster algebra setting, we define the generalized greedy basis for rank-$2$ generalized cluster algebras. We will then, following these previous works, show that the coefficients can be realized as counts of combinatorial objects called \emph{compatible gradings}.

\subsection{Generalized greedy elements}\label{subsec: greedy elements}

Let $\calT_k = \rngB[x_k^{\pm 1}, x_{k+1}^{\pm 1}]$ for $k\in \Z$. Therefore $\mathcal U = \bigcap_{k\in \mathbb Z} \calT_k$ from \eqref{eq: def upper algebra}.

\begin{defn}\label{def: pointed element}
  An element $x \in \calT_k$ is \emph{pointed} at $(\aone,\atwo) \in \Z^2$ in the $k^{\text{th}}$ cluster if it can be expressed in the form 
  $$x = x_k ^{-\aone} x_{k+1}^{-\atwo } \sum_{p,q \geq 0} c(p,q) x_k^p x_{k+1}^q$$
  for some $c(p,q) \in \rngB$ and $c(0,0) = 1$.
\end{defn}

An element pointed in the initial cluster $\calT_1$ is simply called \emph{pointed}.

\begin{defn}\label{defn: pi zeta}
  For a sequence of variables $p_\bullet = (p_{k})_{k=1}^{\ell}$ where $\ell\in \Z_{\geq 1}\cup \{\infty\}$ with $\deg p_{k} = k$, we write for any $m\geq 0$
  \[
    \left(1 + \sum_{k\geq 1}p_k x^k \right)^m = \sum_{n\geq 0} \pi_{m, n}(p_\bullet) x^n,
  \]
  defining $\pi_{m, n}(p_\bullet)$ as a homogeneous polynomial in $(p_{k})_{k}$ of degree $n$. We write for $m\geq 0$
  \[
    \left(1 + \sum_{k\geq 1}p_k x^k \right)^{-m} = \sum_{n\geq 0} \varsigma_{m, n}(p_\bullet) x^n,
  \]
   defining $\varsigma_{m, n}(p_\bullet)$ as a homogeneous polynomial in $(p_{k})_k$ of degree $n$.
\end{defn}

We take the conventions that $\pi_{m, 0}(p_\bullet) = \varsigma_{m, 0}(p_\bullet) = 1$ for any $m\in \mathbb Z$ and that $\pi_{m,n}(p_\bullet) = \varsigma_{m,n}(p_\bullet) = 0$ when $m<0$ and $n\neq 0$.

\begin{lem}
  For any $s\geq 1$
  \[
    \sum_{k = 0}^s \pi_{m, s-k}(p_\bullet) \varsigma_{m, k}(p_\bullet) = 0.
  \]
\end{lem}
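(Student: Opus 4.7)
The statement is essentially a formal identity that follows from expanding $P(x)^m \cdot P(x)^{-m} = 1$ as a Cauchy product, where $P(x) := 1 + \sum_{k \geq 1} p_k x^k$.

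My plan is the following. First, I would observe that since $P(x)$ has constant term $1$ in the formal power series ring $\Bbbk\llbracket p_1, p_2, \ldots \rrbracket \llbracket x \rrbracket$ (or, respecting the grading, in the completion with respect to the degree grading on $x$), both $P(x)^m$ and $P(x)^{-m}$ are well-defined formal power series in $x$ with coefficients that are polynomials in the $p_k$. By \Cref{defn: pi zeta}, these two series are
\[
    P(x)^m = \sum_{n \geq 0} \pi_{m,n}(p_\bullet) x^n \quad \text{and} \quad P(x)^{-m} = \sum_{n \geq 0} \varsigma_{m,n}(p_\bullet) x^n.
\]

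Second, I would multiply these two series together. Their product is identically $P(x)^m \cdot P(x)^{-m} = P(x)^0 = 1$ in the formal power series ring. On the other hand, the Cauchy product formula gives
\[
    P(x)^m \cdot P(x)^{-m} = \sum_{s \geq 0} \left(\sum_{k=0}^{s} \pi_{m, s-k}(p_\bullet) \varsigma_{m, k}(p_\bullet) \right) x^s.
\]
Comparing coefficients of $x^s$ on both sides, the $s = 0$ term gives $\pi_{m,0}\varsigma_{m,0} = 1 \cdot 1 = 1$ as expected, while for $s \geq 1$ the coefficient must vanish, which is precisely the claimed identity.

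There is no real obstacle here; the lemma is a direct application of the fact that multiplicative inverses of formal power series are characterized coefficient-by-coefficient by this convolution identity. The only point requiring a brief justification is that $\pi_{m,n}$ and $\varsigma_{m,n}$ are well-defined as polynomial coefficients (which is clear from the homogeneity in the grading $\deg p_k = k$, ensuring finite sums at each degree).
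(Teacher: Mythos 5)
Your proof is correct and takes essentially the same approach as the paper: both observe that $P(x)^m \cdot P(x)^{-m} = 1$ in the formal power series ring and then read off the $s$-th coefficient of the Cauchy product. Your write-up is simply a more spelled-out version of the same argument.
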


\begin{proof}
    By \Cref{defn: pi zeta}, we have $\left(\sum_{n\geq 0} \pi_{m, n}(p_\bullet) x^n\right)\left(\sum_{n\geq 0} \varsigma_{m, n}(p_\bullet) x^n\right) = 1$.  The statement then follows from isolating the $s^{\text{th}}$ coefficient of this product.
\end{proof}

The above lemma implies
\begin{equation}\label{eq: 1 dim recursion}
  \pi_{m, s}(p_\bullet) = \sum_{k = 1}^s - \pi_{m, s-k}(p_\bullet) \varsigma_{m, k}(p_\bullet).
\end{equation}
This can be seen as a recursive definition of $\pi_{m, s}(p_\bullet)$ for $m, s\geq 1$. The fact that any $\pi_{m, s}(p_\bullet)$ has nonnegative integer coefficients is less obvious from this definition. Next we consider a two-dimensional recursion generalizing (\ref{eq: 1 dim recursion}).

For an element $f$ in any polynomial ring $Y=\mathbb Z[y_1,...,y_n]$ where $y_1,...,y_n$ are algebraically independent, and for $e=(e_1,...,e_n)\in \N^n$, let $[y^e]f=[y_1^{e_1}\cdots y_n^{e_n}]f$ be the coefficient of $y_1^{e_1}\cdots y_n^{e_n}$ in $f$.   For a pair of elements $f$ and $g$ in  $Y$, we define
\[
    \operatorname{T-max}(f,g) \coloneqq \sum_{e\in \N^n} \max([y^e]f, [y^e]g)y^e.
\]

Consider two sequences of variables $p_{i,\bullet} = (p_{i,k})_{k=1}^{\ell_i}$ for $i = 1, 2$ and $\ell_i \in \Z_{\geq 1} \cup \{\infty\}$.

\begin{defn}\label{def: cpq}
    Fix $(\aone, \atwo )\in \mathbb Z^2$. Define $c(p, q)\in \mathbb Z[p_{1,\bullet}\cup p_{2,\bullet}]$ for any $(p, q)\in \mathbb N^2$ by the base case $c(0, 0) = 1$ and the recursion
  \[
    c(p, q) \coloneqq \operatorname{T-max} \left(\sum_{k=1}^p -c(p-k, q)\varsigma_{\atwo -q, k}(p_{1, \bullet}),\ \sum_{k = 1}^q -c(p, q-k) \varsigma_{\aone-p, k}(p_{2, \bullet}) \right).
  \]
\end{defn}

\begin{rem}\label{rem: cpq easy case}
    It is not hard to determine all $c(p, q)$ unless both $d_1$ and $d_2$ are positive. In fact, we have by \eqref{eq: 1 dim recursion} that
    \begin{enumerate}
        \item when $d_1\leq 0, d_2\leq 0$, every $c(p, q) = 0$ except $c(0, 0) = 1$;
        \item when $d_1\leq 0, d_2\geq 0$, every $c(p, q) = 0$ except $c(p, 0) = \pi_{d_2,p}(p_{1,\bullet})$;
        \item when $d_1\geq 0, d_2\leq 0$, every $c(p, q) = 0$ except $c(0, q) = \pi_{d_1,q}(p_{2, \bullet})$.
    \end{enumerate}
\end{rem}

\begin{defn}\label{def: dpq}
    Define $d_+(0, 0) = d_-(0, 0) \coloneqq c(0, 0) = 1$, and for $(p, q)\neq (0, 0)$,
    \begin{align*}
        d_+(p, q) \coloneqq c(p, q) - \sum_{k=1}^p -c(p-k, q)\varsigma_{\atwo-q, k}(p_{1, \bullet}), \\
        d_-(p, q) \coloneqq c(p, q) - \sum_{k = 1}^q -c(p, q-k) \varsigma_{\aone-p, k}(p_{2, \bullet}).
    \end{align*}
\end{defn}

By definition, both $d_\pm(p, q)$ are polynomials with nonnegative integer coefficients.

\begin{lem}\label{lem: cpq by dpq}
    We have
    \begin{align*}
        c(p, q) & = d_+(p, q) + \sum_{s = 1}^p  \pi_{\atwo-q, s}(p_{1,\bullet}) d_+(p-s, q)\\
        & = d_-(p, q) + \sum_{s=1}^q \pi_{\aone-p, s}(p_{2, \bullet}) d_-(p, q-s).
    \end{align*}
\end{lem}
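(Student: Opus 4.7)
The plan is to recognize that the defining relation between $c(p,q)$ and $d_+(p,q)$ is a one-variable convolution (in $p$, with $q$ held fixed), and to invert it using the formal inverse power series provided by the preceding lemma. Specifically, using the convention $\varsigma_{\atwo-q, 0}(p_{1,\bullet})=1$, \Cref{def: dpq} rewrites as
\[
    d_+(p,q) \;=\; \sum_{k=0}^{p} \varsigma_{\atwo-q,k}(p_{1,\bullet})\, c(p-k, q)
\]
for every $p\geq 0$ (for $p=0$ this correctly recovers $d_+(0,q)=c(0,q)$).

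Next I would pass to formal generating functions in a variable $t$:
\[
    C_q(t) = \sum_{p\geq 0} c(p,q)\,t^p, \qquad D_q(t)=\sum_{p\geq 0} d_+(p,q)\, t^p,
\]
\[
    \Sigma_q(t)=\sum_{k\geq 0}\varsigma_{\atwo-q,k}(p_{1,\bullet})\,t^k, \qquad \Pi_q(t)=\sum_{k\geq 0}\pi_{\atwo-q,k}(p_{1,\bullet})\,t^k.
\]
The displayed relation is then simply $D_q = \Sigma_q\cdot C_q$. The preceding lemma, combined with $\pi_{m,0}\varsigma_{m,0}=1$, says precisely that $\Pi_q\cdot \Sigma_q = 1$ as formal power series in $t$. (This identity is the defining inverse relation from \Cref{defn: pi zeta} when $\atwo-q\geq 0$, and it holds trivially in the remaining case since both $\Pi_q$ and $\Sigma_q$ reduce to the constant $1$ by the zero conventions adopted after \Cref{defn: pi zeta}.) Therefore $C_q = \Pi_q\cdot D_q$, and extracting the coefficient of $t^p$—isolating the $s=0$ term, which contributes $d_+(p,q)$—yields the first claimed equality.

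The second equality is obtained by the mirror argument: fixing $p$ instead of $q$, \Cref{def: dpq} likewise gives $d_-(p,q) = \sum_{k=0}^{q} \varsigma_{\aone-p,k}(p_{2,\bullet})\,c(p,q-k)$, and one inverts this convolution in $q$ via $\pi_{\aone-p,\bullet}(p_{2,\bullet})$. I do not anticipate any real obstacle; the only mildly delicate point is checking that the formal inverse identity $\Pi\cdot\Sigma = 1$ is available uniformly regardless of the sign of the degree parameter $\atwo-q$ (respectively $\aone-p$), and this is already baked into the conventions surrounding \Cref{defn: pi zeta}.
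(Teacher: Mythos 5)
Your proof is correct and takes essentially the same route as the paper: both rely on the $\pi$/$\varsigma$ orthogonality (the lemma preceding \Cref{lem: cpq by dpq}) to invert the convolution $d_+(p,q)=\sum_{k=0}^p\varsigma_{\atwo-q,k}\,c(p-k,q)$. The paper performs this inversion by explicit induction on $p$ (resp.\ $q$), whereas you package it as the formal generating-function identity $C_q=\Pi_qD_q$, which is the same calculation written more compactly.
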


\begin{proof}
    We prove the first equality by induction on $p$. By the definition of $d_+(p, q)$, it is equivalent to show
    \begin{equation}\label{eq: c(p,q) and d+(p,q)}
        \sum_{k=1}^p -c(p-k, q)\varsigma_{d_2-q, k}(p_{1, \bullet}) = \sum_{s=1}^p \pi_{\atwo-q, s}(p_{1, \bullet})d_+(p-s, q).
    \end{equation}
    We only need to deal with the case when $d_2 - q\geq 1$. Assume that the first equality is true for $c(0, q),\dots, c(p-1, q)$. Then we can write the left-hand side of \eqref{eq: c(p,q) and d+(p,q)} as
    \begin{align*}
        & \sum_{k=1}^p -\varsigma_{\atwo-q, k}(p_{1,\bullet})\sum_{t=0}^{p-k}\pi_{\atwo-q, t}(p_{1, \bullet}) d_+(p-k-t, q) \\
        = &\sum_{s=1}^p d_+(p-s, q) \sum_{k=1}^s-\pi_{\atwo-q, s-k}(p_{1, \bullet}) \varsigma_{\atwo-q, k}(p_{1,\bullet}).
    \end{align*}
    Applying \eqref{eq: 1 dim recursion}, the above equals the right-hand side of \eqref{eq: c(p,q) and d+(p,q)}.

    The second equality of the statement can be proven similarly by induction on $q$.
\end{proof}

A consequence of \Cref{lem: cpq by dpq} is that each $c(p, q)$ has nonnegative integer coefficients.

\begin{defn}\label{}
    For $(\aone ,\atwo )\in \Z^2$, we define the \emph{greedy element}
    \[
        x[\aone, \atwo] \coloneqq x_1^{-\aone}x_2^{-\atwo}\sum_{(p, q)\in \N^2} c(p, q)x_1^{p}x_2^q \in x_1^{-\aone}x_2^{-\atwo}\N[p_{1,\bullet}\cup p_{2,\bullet}]\llbracket x_1, x_2\rrbracket .
    \]
\end{defn}

According to \Cref{rem: cpq easy case}, the greedy elements are clear in the following cases
\begin{equation}\label{eq: cpq easy case}
    x[\aone, \atwo] = \begin{cases}
        x_1^{-\aone}x_2^{-\atwo} \quad & \aone\leq 0, \atwo \leq 0,\\
        x_1^{-\aone}x_2^{-\atwo}\left(1 + \sum_{k\geq 1}p_{1,k} x_1^k \right)^{\aone} \quad & \aone\geq 0, \atwo \leq 0,\\
        x_1^{-\aone}x_2^{-\atwo}\left(1 + \sum_{k\geq 1}p_{2,k} x_2^k \right)^{\atwo} \quad & \aone\leq 0, \atwo \geq 0.
    \end{cases}
\end{equation}

When $p_{1, \bullet}$ and $p_{2, \bullet}$ are both finite sequences, in the cases of \eqref{eq: cpq easy case}, $x[d_1,d_2]$ is a finite sum, thus a pointed element in $\mathcal T_1$ (\Cref{def: pointed element}). When $d_1$ and $d_2$ are positive, $x[d_1, d_2]$ is also a finite sum. This fact is non-trivial from the definition and is a consequence of the combinatorial formula of $x[d_1, d_2]$ given in \Cref{thm: generalized greedy compatible gradings}, where each $c(p, q)$ is interpreted as the weighted count of combinatorial objects called \emph{compatible gradings}.

\subsection{Compatible gradings}\label{subsec: compatible grading}

\begin{defn}
 A \emph{North-East (NE) path} $\calP$ is a lattice path in $(\mathbb{Z}\times \mathbb{R})\cup(\mathbb{R}\times \mathbb{Z})\subset \mathbb{R}^2$ starting at $(0,0)$ and ending at $(\aone , \atwo )\in \N^2$, proceeding by only unit north and east steps. We also view $\mathcal P$ as the set of its unit north and east steps, where we refer to each step as an \emph{edge}.
 \begin{itemize}
     \item Given a set $C$ of edges in $\calP$, we denote the set of horizontal edges (east steps) by $C_\east$ and the set of vertical edges (north steps) by $C_\north$.  We let $|C|$ denote the number of edges in $C$.
     \item For edges $e$ and $f$ in $\calP$, let $\overrightarrow{ef}$ denote the subpath proceeding east from $e$ to $f$ (including both $e$ and $f$), continuing cyclically around $\calP$ if $e$ is to the north or east of $f$.
 \end{itemize}
\end{defn}

Note that if $e = f$, then $\overrightarrow{ef} = \{e\}$.  If $e$ is the edge adjacent to $f$ and to the northeast of $f$, then $\overrightarrow{ef} = \calP$.

Consider the rectangle $R(d_1, d_2)$ with vertices $(0,0)$, $(0,\atwo )$, $(\aone ,0)$, and $(\aone , \atwo )$ with a main diagonal from $(0,0)$ to $(\aone , \atwo )$. A \emph{Dyck path} is an NE path from $(0,0)$ to $(\aone,\atwo)$ that never passes strictly above the main diagonal of $R(d_1, d_2)$.  The Dyck paths from $(0,0)$ to $(\aone ,\atwo )$ form a partially ordered set by comparing the heights at all vertices. The \emph{maximal Dyck path} $\calP(\aone ,\atwo )$ is the maximal element under this partial order. When $\aone$ and $\atwo$ are relatively prime, the maximal Dyck path $\calP(\aone ,\atwo )$ corresponds to the lower Christoffel word of slope $\atwo/\aone$. We label the horizontal edges from left to right by $u_1,u_2,\dots,u_{\aone}$ and the vertical edges from bottom to top by $v_1,v_2,\dots,v_{\atwo}$.

\begin{exmp} In \Cref{fig: compatible grading example}, the maximal Dyck path $\calP(6,4)$ is shown in the top left and $\calP(7,4)$ is shown in the top right.
\end{exmp}

Motivated by Lee--Schiffler \cite{LS},  Lee, Li, and Zelevinsky \cite{LLZ} introduced combinatorial objects called \emph{compatible pairs} to construct the \emph{greedy basis} for rank-$2$ cluster algebras, consisting of indecomposable positive elements including the cluster monomials. Rupel \cite{Rupgengreed, Rup2} extended this construction to the setting of \emph{generalized} rank-$2$ cluster algebras by defining \emph{compatible gradings}.

A \emph{grading} $\omega: \mathcal{P} \to \N$ assigns a nonnegative integer to every edge of the NE path $\mathcal{P}$.  For any grading $\omega$ and for any set of edges $C \subseteq \mathcal{P}$, let $\displaystyle \omega(C) \coloneqq \sum_{e\in C} \omega(e)$. 

\begin{defn}
Let $\calP$ be an NE path.  A grading $\omega: \calP \rightarrow \N$ is called \emph{compatible} if for every pair of $u\in \calP_\east$ and $v\in \calP_\north$ with $\omega(u)\omega(v)>0$, there exists an edge $e$ along the subpath $\overrightarrow{uv}$ so that at least one of the following holds:
  \begin{equation}\label{0407df:comp}
   \aligned
&  e\in \calP_\north \setminus\{v\} \quad \text{and} \quad |\overrightarrow{ue}_\north|= \omega\left( \overrightarrow{ue}_\east\right);\\
 &  e\in \calP_\east \setminus\{u\} \quad \text{and} \quad |\overrightarrow{ev}_\east|=\omega\left( \overrightarrow{ev}_\north\right).
  \endaligned
  \end{equation}

\end{defn} 
\begin{exmp}\label{main_exmp}
For each $i\in\{1,2,3\}$, let $\omega_i: \calP(i+5,4)\rightarrow \N$ be the grading given by $\omega_i(u_1)=\omega_i(u_2)=2$, $\omega_i(v_3)=\omega_i(v_4)=3$, and $\omega_i(e)=0$ for every edge $e$ in $\calP(i+5,4)\setminus\{u_1,u_2,v_3,v_4\}$ (see \Cref{fig: compatible grading example} for $i = 1,2$). Then $\omega_1$ is not compatible, but $\omega_2$ and $\omega_3$ are compatible. The main difference between $\omega_1$ and $\omega_2$ is that the edge $e=u_2$ in $\calP(7,4)$ satisfies the second condition in (\ref{0407df:comp}) for $u = u_1$ and $v = v_4$, as both sides of the equation equal $6$.
\end{exmp}

\subsection{Generalized greedy element formula}\label{subsec: greedy by cg}

We now give a combinatorial interpretation of the coefficients of generalized greedy elements using compatible gradings, as presented in \Cref{thm: generalized greedy compatible gradings}. This generalizes the work of Rupel \cite[Theorem 2.15]{Rup} to the non-palindromic setting, which will be needed in our discussion on the positivity of higher-rank scattering diagrams in \Cref{sec: proof of positivity}. The proof of \Cref{thm: generalized greedy compatible gradings}, which is inspired by Rupel's work, will be given in \Cref{app: generalized greedy element formula}.

Let $\mnd$ denote the multiplicative monoid of monomials in $p_{1,\bullet}\cup p_{2,\bullet}$.

\begin{defn}\label{def: weigth poly of grading}
  The \emph{weight} of a grading $\omega:\calP(\aone ,\atwo )\rightarrow\mathbb{Z}_{\ge0}$ is the monomial
  \[
    \text{wt}(\omega) \coloneqq \prod_{i=1}^{\aone} p_{2,\omega(u_i)} \prod_{j=1}^{\atwo} p_{1,\omega(v_j)}\in \mnd\,.
  \]
\end{defn}

Fix $(d_1, d_2)\in \mathbb Z^2$. Let $\CG(d_1, d_2, p, q) = \CG(p,q)$ denote the set of all compatible gradings $\omega$ on $\mathcal P([d_1]_+, [d_2]_+)$ such that $\omega(\calP_\east) = q$ and $\omega(\calP_\north) = p$. Define the weighted sum
\[
    |\CG(p, q)| \coloneqq \sum_{\omega\in \CG(p, q)} \text{wt}(\omega) \in \mathbb N[\mnd]\,.
\]

The coefficients $c(p, q)$ of a generalized greedy element are then given as weighted sums of compatible gradings on a fixed Dyck path with a fixed total vertical and horizontal grading.

\begin{thm}\label{thm: generalized greedy compatible gradings}
For any $(\aone ,\atwo ) \in \Z^2$, the greedy element $x[\aone ,\atwo ]$ can be computed as 
\begin{equation}\label{eq: greedy expansion} x[\aone ,\atwo ] = x_1^{-\aone }x_2^{-\atwo }\sum_{\omega} \wt(\omega) x_1^{\omega(\calP_\north)}x_2^{\omega(\calP_\east)}\,,\end{equation}
where the sum is over all compatible gradings $\omega$ on $\calP([\aone]_+ ,[\atwo]_+ )$. In other words, for any $(p, q)\in \N^2$, we have 
\[
    c(p, q) = |\CG(p, q)|\,.
\]
\end{thm}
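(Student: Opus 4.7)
The plan is to prove the theorem by strong induction on $p + q$, showing that $|\CG(p,q)|$ satisfies the same recursion that defines $c(p,q)$. The base case $(p,q) = (0,0)$ is immediate: the unique grading on $\calP([d_1]_+, [d_2]_+)$ assigning $0$ to every edge is vacuously compatible with weight $1$, matching $c(0,0) = 1$. The boundary cases when $d_1 \leq 0$ or $d_2 \leq 0$ can be dispatched directly, since $\calP([d_1]_+, [d_2]_+)$ degenerates to a single segment, the compatibility condition becomes vacuous, and the weighted sum over all gradings matches the closed forms in \eqref{eq: cpq easy case} via the defining identity of $\pi_{m,n}$ in \Cref{defn: pi zeta}.

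For the main case $d_1, d_2 \geq 1$, I would work with the reformulation in \Cref{lem: cpq by dpq} rather than the raw $\operatorname{T-max}$ definition, since it recasts $c(p,q)$ as the manifestly nonnegative expression
\[
    c(p,q) \;=\; d_+(p,q) + \sum_{s\geq 1}\pi_{d_2-q,s}(p_{1,\bullet})\, d_+(p-s, q),
\]
and symmetrically in the other direction. The combinatorial counterpart would be to single out a canonical ``block'' on the Dyck path --- roughly, a distinguished set of $d_2 - q$ vertical edges that compatibility forces to carry arbitrary independent $p_{1,\bullet}$-weighted gradings --- and to peel this block off in a weight-preserving bijection with a compatible grading on a smaller Dyck path with fewer vertical weight. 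The combinatorial interpretation of $\pi_{d_2-q,s}(p_{1,\bullet})$ as the generating function for arbitrary weight-$s$ gradings on $d_2 - q$ slots (weighted by $p_{1,\bullet}$) is exactly what such a peeling would produce. I would introduce combinatorial analogs $D_\pm(p,q)$ of $d_\pm(p,q)$ as weighted counts of compatible gradings satisfying an extremality condition forbidding further peeling in one direction, and verify $D_\pm(p,q) = d_\pm(p,q)$ inductively.

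The main obstacle is constructing the peeling bijection so that it respects the global, asymmetric compatibility condition. The challenge is twofold: first, identifying the correct set of vertical (resp. horizontal) edges whose grading can be freely chosen without creating new compatibility violations against the residual grading; second, and more delicate, establishing that every compatible grading admits at least one of the top-block or left-block decompositions with full weight, so that the combinatorial dichotomy $\min(D_+, D_-) = 0$ (componentwise in $\mnd$) matches $\min(d_+, d_-) = 0$ --- which is the combinatorial manifestation of the $\operatorname{T-max}$ in \Cref{def: cpq}. This dichotomy should reduce to a case analysis on the pair $(u,v)$ of east and north edges witnessing the binding compatibility constraint of \Cref{0407df:comp}; carrying it out carefully while tracking the generalized $\varsigma$/$\pi$ identities from \Cref{defn: pi zeta} is the heart of the argument, and is what truly generalizes the binomial-based combinatorics of Lee--Li--Zelevinsky and Rupel to the fully non-palindromic setting.
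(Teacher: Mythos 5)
Your proposal is correct in outline, but it takes a genuinely different route from the paper's own proof. The Appendix~A argument follows Rupel's algebraic strategy: it introduces scaled cluster variables, proves a Strong Laurent phenomenon (\Cref{thm: strong laurent}), establishes a reflection property of the compatible-grading sums under the automorphisms $\sigma_1,\sigma_2$ (\Cref{prop: reflection greedy}), and deduces that the combinatorial sum lies in $\calA(P_1,P_2)$ (\Cref{cor: gen greedy containment}), after which the remainder is taken verbatim from \cite[Theorem 2.15]{Rupgengreed}. Your approach is instead a direct combinatorial induction on $p+q$, reducing the identity $c(p,q)=|\CG(p,q)|$ to two elementary facts: the peeling decomposition $|\CG(p,q)|=\sum_{s\ge 0}\pi_{\atwo-q,s}(p_{1,\bullet})\,|\CG_+^0(p-s,q)|$ (and its mirror), and the dichotomy that one of $\CG_\pm^0(p,q)$ is empty for $(p,q)\neq(0,0)$. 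Given these, your induction closes exactly as you sketch: using the inductive hypothesis and the $\pi/\varsigma$ convolution identity of \Cref{defn: pi zeta}, the two arguments of the $\operatorname{T-max}$ in \Cref{def: cpq} become $|\CG(p,q)|-|\CG_+^0(p,q)|$ and $|\CG(p,q)|-|\CG_-^0(p,q)|$, and since both $|\CG_\pm^0|$ have nonnegative coefficients with one vanishing identically, the coefficientwise maximum is $|\CG(p,q)|$. You correctly identify the two combinatorial facts as the heart of the argument; reassuringly, both are proved in the paper, by purely combinatorial means and with no logical dependence on the theorem, as \Cref{lem: recursion of cgpm} and \Cref{lem: one of cgpm vanishes} in Section~5, where they play a different role (feeding the second proof of \Cref{thm: power shadow grading formula}). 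So your plan is not only feasible but fully executable from ingredients already present. The trade-off is that your route is more self-contained and elementary, bypassing the cluster-algebra machinery and the Laurent phenomenon entirely, while the paper's route leverages the reflection symmetry it needs anyway for \Cref{cor: gen greedy containment}. One caveat on phrasing: "every compatible grading admits at least one of the top-block or left-block decompositions with full weight" misstates the dichotomy slightly --- what is needed (and what \Cref{lem: one of cgpm vanishes} supplies) is that for fixed $(p,q)\neq(0,0)$ one of the sets $\CG_\pm^0(p,q)$ is empty, which is stronger than a per-grading statement and gives the coefficientwise $\min$ condition immediately.
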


When $\ell_1, \ell_2 < \infty$, we set in \Cref{def: weigth poly of grading} that $p_{1,k} = 0$ for any $k>\ell_1$ and $p_{2, k} = 0$ for any $k>\ell_2$. Then in this case, it follows from \eqref{eq: greedy expansion} that $x[d_1, d_2]$ is always a finite sum as there are only finitely many gradings with non-zero weights.

We will show in \Cref{cor: gen greedy containment} that $x[d_1, d_2]$ is not only a pointed element in $\calT_1$, but also belongs to any other $\calT_k$ (hence in $\mathcal U$ which actually equals $\mathcal A(P_1, P_2)$). Moreover, the set of all greedy elements
\[
    \{x[d_1, d_2]\mid (d_1, d_2)\in \Z^2\}
\]
is in fact a $\rngB$-basis of $\mathcal A(P_1, P_2)$. This statement generalizes \cite[Theorem 1.7(c)]{LLZ} and can be proven analogously as in \cite[Section 6]{LLZ}. This basis will be referred to as the \emph{generalized greedy basis}.

\section{The generalized greedy basis equals the generalized theta basis}\label{sec: greedy = theta}

It was shown by Cheung, Gross, Muller, Musiker, Rupel, Stella, and Williams that, in the setting of cluster algebras, the greedy basis equals the theta basis \cite{CGM}. Their proof is algebraic, using a characterization of greedy elements by their support and properties of rank-$2$ scattering diagrams. We extend their results to the generalized setting. 

\begin{thm}\label{thm: gen greedy equals theta}
The generalized greedy basis equals the generalized theta basis. More precisely, $\vartheta_{-(d_1, d_2)} = x[d_1, d_2]$ for any $(d_1, d_2)\in M$.
\end{thm}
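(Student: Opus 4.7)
The plan is to adapt the proof of Cheung--Gross--Muller--Musiker--Rupel--Stella--Williams \cite{CGM} from the ordinary rank-$2$ cluster setting to the generalized one, proceeding by case analysis on the signs of $(d_1, d_2) \in \Z^2$.

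When $d_1 \leq 0$ or $d_2 \leq 0$, the lattice point $-(d_1, d_2)$ lies in the closure of one of the cluster chambers $\mathcal{C}_0$, $\mathcal{C}_1$, or $\mathcal{C}_{-1}$ adjacent to the initial positive chamber. By \Cref{prop: theta function as monomial}, the theta function $\vartheta_{Q, -(d_1, d_2)}$ equals $x^{-(d_1, d_2)}$ for $Q$ in the interior of such a chamber. Then, by \Cref{thm: CPS consistency}, $\vartheta_{-(d_1,d_2)} = \vartheta_{Q_+, -(d_1,d_2)}$ is obtained by applying a single wall-crossing automorphism across the appropriate initial wall, which produces a Laurent monomial multiplied by a (possibly negative) power of $P_1(x_1)$ or $P_2(x_2)$. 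Comparing with \Cref{rem: cpq easy case} shows that this expression agrees with $x[d_1, d_2]$, settling all cases except $d_1, d_2 > 0$.

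For $d_1, d_2 > 0$, both elements lie in $\mathcal{U}$ (the theta function by \Cref{prop: theta universal Laurent}, and the greedy element by the finiteness given by \Cref{thm: generalized greedy compatible gradings} together with the mutation formulas behind \Cref{lem: exchange relation rk 2}) and are pointed at $(d_1, d_2)$ with leading coefficient $1$ (from the static/straight broken line on the theta side, and by construction on the greedy side). Following CGM, I would then invoke a characterization of $x[d_1, d_2]$ as the unique element of $\mathcal{U}$ pointed at $(d_1, d_2)$ whose Newton polytope in the initial cluster fits inside a prescribed region; equivalently, $x[d_1, d_2]$ is the unique \emph{indecomposable positive} pointed element at $(d_1, d_2)$ in the sense of \cite{LLZ}. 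To verify that $\vartheta_{-(d_1,d_2)}$ satisfies this characterization, I would expand it cluster by cluster using \Cref{prop: mutation broken line} to transport broken lines through the piecewise linear mutation maps $T^{(n)}$, then check that the support constraints forcing the Newton polytope bound remain intact across all mutations.

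The hard part will be establishing Newton polytope control for the theta function in the polynomial (rather than binomial) setting. In CGM the initial wall-functions are binomials, which keeps the broken line combinatorics tractable; here, the polynomial initial wall-functions $P_i$ introduce many more bending choices at each wall, and the support constraints must be verified against the intricate compatible grading conditions from \Cref{subsec: compatible grading}. A direct route would be to match broken-line contributions with compatible-grading contributions term by term, but this is essentially the content of the main formula \Cref{thm: power shadow grading formula} proved later in the paper; thus a cleaner line of argument is to establish the polytope bound via positivity and minimality, exactly as in the greedy-element characterization of \cite{LLZ}, and then invoke support-based uniqueness to conclude equality.
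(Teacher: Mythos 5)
Your overall strategy is the same as the paper's: use the case analysis on the signs of $(d_1, d_2)$, handle the ``easy'' cases directly, and for $d_1, d_2 > 0$ reduce to a support-based uniqueness characterization of the greedy element (\Cref{lem: greedy support containment}, following {\cite[Scholium~2.6]{CGM}}). Your treatment of the cases where $d_1 \leq 0$ or $d_2 \leq 0$ via \Cref{prop: theta function as monomial}, \Cref{thm: CPS consistency}, and \Cref{rem: cpq easy case} is a valid direct computation that reaches the same conclusion as the paper's support check; that part is fine.

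The substantive gap is in the case $d_1, d_2 > 0$. You correctly identify that one needs Newton-polytope (support) control on $\vartheta_{-(d_1,d_2)}$, but then you overestimate the difficulty: you worry that the passage from binomial to polynomial initial wall-functions ``introduces many more bending choices'' and therefore propose a workaround (propagating broken lines cluster-by-cluster through the mutation maps $T^{(n)}$, or proving equality term-by-term via the shadowed grading formula). Neither of these is necessary. The key observation, which the paper makes explicitly, is that the angular momentum argument of {\cite[Lemma~5.5]{CGM}} (reproduced here as \Cref{lem: broken line slope bounds}) depends only on the \emph{geometry} of the rank-2 scattering diagram---all walls lie along the coordinate axes or in the third quadrant, which still holds for $\Scat(P_1, P_2)$---and not on the number of monomials in each wall-function. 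The slope of a broken line of fixed-sign angular momentum is monotone in the bends, and this monotonicity, combined with the constraint that each bend shifts the exponent by a nonnegative multiple of the wall direction, yields the bounds $-d_2 \leq m_2 < 0$ and $-d_1 \leq m_1 \leq (d_1/d_2 - \ell_1) m_2$ (and the symmetric version) with no dependence on the specific coefficients $p_{i,k}$. Once you have \Cref{lem: broken line slope bounds}, the support of $\vartheta_{-(d_1,d_2)}$ lies in $R^\circ_{d_1, d_2}$ and the uniqueness lemma closes the argument; no further mutation bookkeeping or term-by-term matching is needed. Your roundabout verification, if carried out carefully, would likely also succeed, but it adds complication where the CGM machinery already applies verbatim---and, importantly, invoking \Cref{thm: power shadow grading formula} here would be circular in the structure of this paper, since the first proof of that theorem (\Cref{subsec: first proof}) uses \Cref{thm: gen greedy equals theta} as an input.
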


Before presenting the proof, we establish a couple of lemmas. First, we give a support condition that can be used to identify generalized greedy elements, following \cite[Scholium 2.6]{CGM}. A crucial part of this argument is the compatible grading formula for coefficients of generalized greedy elements, which is developed in the previous section (see \Cref{thm: generalized greedy compatible gradings}).

\begin{defn}
Let $R^\circ_{\aone ,\atwo }$ be a region in $M_\R = \R^2$ defined as follows:
\begin{enumerate}[\normalfont(1)]
  \item If $\aone ,\atwo \leq 0$, then $R^\circ_{\aone ,\atwo } = \{(-\aone ,-\atwo )\}$,
  \item If $\aone  \leq 0 < \atwo $, then $R^\circ_{\aone ,\atwo } = \{(m_1,-\atwo ): -\aone \leq m_1 \leq -\aone + \ell_1\atwo \}$,
  \item If $\atwo  \leq 0 < \aone$, then $R^\circ_{\aone ,\atwo } = \{(-\aone ,m_2): -\atwo \leq m_2 \leq -\atwo + \ell_2\aone \}$,
  \item Otherwise, consider the vertices $O = (0,0)$, $A = (-\aone + \ell_1 \atwo , -\atwo )$, $B = (-\aone ,-\atwo )$, and $C = (-\aone , -\atwo + \ell_2 \aone)$. Then $R^\circ_{\aone ,\atwo }$ is the interior of the polygon $OABC$ along with the closed line segments $AB$ and $BC$ (see \cite[Figure 1]{CGM}).
\end{enumerate}
\end{defn}

Given a Laurent polynomial $f = \sum_{m \in \Z^2} c_m x^m$ in $\Bbbk [\Z^{\ell_1} \oplus \Z^{\ell_2}][x_1^{\pm 1}, x_2^{\pm 1}]$, the support of $f$ is the set $\{m \in \Z^2 : c_m \neq 0\}$. Rupel showed that the region $R^\circ_{\aone ,\atwo }$ contains the support of the generalized greedy element $x[\aone ,\atwo ]$ when $P_1$ and $P_2$ satisfy certain restrictions. In fact, this result also holds with these restrictions removed.

\begin{prop}[{\cite[Proposition 4.22]{Rupgengreed}}]
  The support of the generalized greedy element $x[\aone ,\atwo ]$ is contained in $R^\circ_{\aone ,\atwo }$. 
\end{prop}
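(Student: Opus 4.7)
The result reduces to a case analysis on the signs of $d_1$ and $d_2$. Cases (1)--(3), in which at least one of $d_1, d_2$ is non-positive, follow immediately from the explicit formulas for greedy elements recorded in \Cref{eq: cpq easy case}: the resulting Laurent series are supported precisely on the specified regions (a single point, a horizontal segment, or a vertical segment).

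For case (4) with $d_1, d_2 > 0$, I would apply the compatible grading formula \Cref{thm: generalized greedy compatible gradings} to rewrite $c(p,q) = |\CG(d_1, d_2, p, q)|$, so that $c(p,q) \neq 0$ forces the existence of a compatible grading $\omega$ on $\calP(d_1, d_2)$ with $\omega(\calP_\east) = q$, $\omega(\calP_\north) = p$, and nonzero weight. Nonzero weight requires $\omega(v_j) \leq \ell_1$ for each vertical edge and $\omega(u_i) \leq \ell_2$ for each horizontal edge, since otherwise $p_{1,\omega(v_j)}$ or $p_{2,\omega(u_i)}$ would not appear among our formal variables. Summing these bounds yields $p \leq \ell_1 d_2$ and $q \leq \ell_2 d_1$, which places the point $(-d_1 + p,\, -d_2 + q)$ within the rectangle whose corners lie at $B$, $A$, $C$, and $B + (\ell_1 d_2, \ell_2 d_1)$.

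It remains to show the point also lies on the $B$-side of the lines $OA$ and $OC$, namely that
\[
  d_2\, p + (\ell_1 d_2 - d_1)\, q \leq \ell_1 d_2^2 \quad \text{and} \quad (\ell_2 d_1 - d_2)\, p + d_1\, q \leq \ell_2 d_1^2.
\]
By the symmetry swapping the roles of the two coordinates, it suffices to establish the first. I would argue by contradiction: if a compatible grading $\omega$ of totals $(p, q)$ violates this inequality, the structure of the maximal Dyck path $\calP(d_1, d_2)$ forces the existence of a pair $u \in \calP_\east$ and $v \in \calP_\north$ with $\omega(u), \omega(v) > 0$ such that no edge $e \in \overrightarrow{uv}$ satisfies the compatibility condition \eqref{0407df:comp}, contradicting compatibility of $\omega$.

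The main obstacle is pinpointing the obstructing pair $(u, v)$: this requires a pigeonhole-style analysis along consecutive segments of $\calP(d_1, d_2)$, exploiting the slope $d_2/d_1$ of the main diagonal to bound the vertical weight accumulated between successive horizontal edges relative to the horizontal weight. This is precisely the content of Rupel's argument in \cite[Proposition 4.22]{Rupgengreed}; the restrictions on $P_1, P_2$ imposed there enter only in securing the compatible grading formula for $c(p,q)$, and since \Cref{thm: generalized greedy compatible gradings} establishes that formula in full generality, Rupel's proof transfers verbatim.
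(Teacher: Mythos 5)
Your proposal matches the paper's own proof: both hinge on the single observation that the compatible grading formula (\Cref{thm: generalized greedy compatible gradings}) removes the palindromicity hypothesis, after which Rupel's support bound \cite[Proposition 4.22]{Rupgengreed} carries over unchanged; the extra detail you supply (the rectangle bounds $p\leq\ell_1 \atwo$, $q\leq\ell_2\aone$, and the two cutting inequalities for lines $OA$, $OC$) is a useful paraphrase of Rupel's argument rather than a different route. One caution when citing \eqref{eq: cpq easy case} for cases (1)--(3): as printed, its second and third branches have $P_1$ and $P_2$ interchanged (compare \Cref{rem: cpq easy case} and the shape of $R^\circ_{\aone,\atwo}$), so read literally they give segments of the wrong orientation; the correct forms, e.g.\ $x[\aone,\atwo]=x_1^{-\aone}x_2^{-\atwo}\bigl(1+\sum_{k\geq 1} p_{1,k}x_1^k\bigr)^{\atwo}$ when $\aone\leq 0\leq\atwo$, do land in $R^\circ_{\aone,\atwo}$.
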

\begin{proof}
  Rupel showed that this holds when $P_1$ and $P_2$ are palindromic polynomials by considering the generalized greedy coefficients as counts of compatible gradings ({\cite[Theorem 2.15]{Rupgengreed}). In the previous section, we extend this interpretation to the non-palindromic setting (see \Cref{thm: generalized greedy compatible gradings}). Rupel's argument then also holds without requiring $P_1$ and $P_2$ to be  palindromic.}
\end{proof}

We then obtain the following characterization of generalized greedy elements. 

\begin{lem}[{\cite[Scholium 2.6]{CGM}}]\label{lem: greedy support containment}
If $z \in \calA(P_1,P_2)$ is any element containing the monomial $x_1^{-\aone }x_2^{-\atwo }$ with coefficient $1$ and whose support is contained in $R^\circ_{\aone ,\atwo }$, then $z = x[\aone ,\atwo ]$.
\end{lem}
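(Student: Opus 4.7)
The plan is to follow the strategy of \cite[Scholium 2.6]{CGM} in the generalized setting, with the combinatorial input of \Cref{thm: generalized greedy compatible gradings} replacing the ordinary-case compatible pair formula. Set $w = z - x[d_1, d_2] \in \mathcal{A}(P_1, P_2)$, so that $\Supp(w) \subseteq R^\circ_{d_1, d_2}$ and the coefficient of $x_1^{-d_1} x_2^{-d_2}$ in $w$ is zero; the goal is to conclude $w = 0$.

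Using the fact noted after \Cref{thm: generalized greedy compatible gradings} that the generalized greedy elements $\{x[a,b] \mid (a,b) \in \Z^2\}$ form a $\rngB$-basis of $\mathcal{A}(P_1, P_2)$, I expand $w = \sum_{(a,b) \in \Z^2} \lambda_{a, b}\, x[a, b]$ with finitely many nonzero $\lambda_{a, b} \in \rngB$. Assume for contradiction that $S = \{(a,b) : \lambda_{a,b} \neq 0\}$ is nonempty, and pick a componentwise maximal element $(a, b) \in S$. Since each $x[a', b']$ is pointed at $(a', b')$, the monomial $x_1^{-a} x_2^{-b}$ can only appear in $x[a', b']$ when $a' \geq a$ and $b' \geq b$, so maximality yields $[x_1^{-a} x_2^{-b}]\,w = \lambda_{a, b} \neq 0$. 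Consequently $(-a, -b) \in R^\circ_{d_1, d_2}$, and the vanishing of $[x_1^{-d_1} x_2^{-d_2}]\,w$ forces $(a, b) \neq (d_1, d_2)$.

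The main obstacle is to rule out the existence of such an $(a, b)$. The key geometric claim, which is essentially the content of \cite[Scholium 2.6]{CGM} in this setting, is that whenever $(-a, -b) \in R^\circ_{d_1, d_2}$ and $(a, b) \neq (d_1, d_2)$, the region $R^\circ_{a, b}$ fails to embed in $R^\circ_{d_1, d_2}$; concretely, at least one of the extremal corners $(-a + \ell_1 b, -b)$ or $(-a, -b + \ell_2 a)$ of $R^\circ_{a, b}$ falls outside $R^\circ_{d_1, d_2}$, as verified by a direct slope comparison against the edges of the defining polygon $OABC$ of $R^\circ_{d_1, d_2}$, with a short case split on whether $(-a, -b)$ lies on segment $AB$, on segment $BC$, or in the interior. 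By \Cref{thm: generalized greedy compatible gradings}, the unique compatible gradings concentrating all weight on $\calP_\north$ or $\calP_\east$ assign nonzero coefficients $p_{1, \ell_1}^b$ or $p_{2, \ell_2}^a$ to these extremal monomials in $x[a, b]$. A refined maximality argument --- restricted to the sublattice of indices $(a', b') \in S$ that can contribute to the extremal monomial escaping $R^\circ_{d_1, d_2}$, and again selecting a maximum there --- then shows that this monomial cannot be fully cancelled by other greedy summands, producing an exponent in $\Supp(w)$ outside $R^\circ_{d_1, d_2}$, the desired contradiction. Hence $S = \varnothing$, so $w = 0$ and $z = x[d_1, d_2]$.
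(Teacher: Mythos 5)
Your approach (expand $w = z - x[d_1,d_2]$ in the generalized greedy basis, pick a componentwise maximal $(a,b)$ with $\lambda_{a,b}\neq 0$, then drive a contradiction from an escaping extremal monomial) is genuinely different from what the paper does — the paper simply defers to the proofs of \cite[Theorem 2.5 and Scholium 2.6]{CGM}, which argue directly from the pointed-element recursion and the Laurent property in the adjacent clusters $\calT_0$, $\calT_2$. Your basis-expansion route is conceptually attractive, and steps 1--3 (reducing to $w$, extracting $\lambda_{a,b}=[x_1^{-a}x_2^{-b}]w$, concluding $(-a,-b)\in R^\circ_{\aone,\atwo}$ and $(a,b)\neq(\aone,\atwo)$) are all sound. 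However, the final step is where the argument is genuinely incomplete.

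The ``refined maximality argument'' at the end is asserted, not proved, and filling it in is not routine. Concretely: the monomial at corner $A$ of $R^\circ_{a,b}$, namely $x_1^{-a+\ell_1 b}x_2^{-b}$ with coefficient $p_{1,\ell_1}^b$ in $x[a,b]$, can in general also receive contributions from $x[a',b']$ with $(a',b')\in S$, $a'<a$, $b'\geq b$, and $-a'+\ell_1 b'\geq -a+\ell_1 b$. Componentwise maximality of $(a,b)$ does \emph{not} exclude such $(a',b')$, so the total coefficient of this monomial in $w$ is a sum $\sum \lambda_{a',b'}\, c(a'-a+\ell_1 b,\, b'-b)$ over several indices, and nothing you have said shows this sum is nonzero. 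The remedy is to abandon componentwise maximality and instead maximize a linear functional adapted to the escaping corner — for instance, maximize $-a'+\ell_1 b'$ over $S$ and break ties by minimizing $b'$; one can then check that at the resulting $(a_0,b_0)$ the monomial $x_1^{-a_0+\ell_1 b_0}x_2^{-b_0}$ receives a contribution only from $x[a_0,b_0]$ itself (using the support containment $\Supp(x[a',b'])\subseteq R^\circ_{a',b'}$ to kill all other candidates), so the coefficient is $\lambda_{a_0,b_0}\,p_{1,\ell_1}^{b_0}\neq 0$. But then one must \emph{separately} verify that this point escapes $R^\circ_{\aone,\atwo}$, which is not the same geometric statement you set up under componentwise maximality. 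In short, the ordering you maximize over and the corner you target must be chosen together, and the phrase ``restricted to the sublattice $\dots$ and again selecting a maximum there'' does not pin that down.

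A smaller issue: your formula for the extremal corners, $(-a+\ell_1 b,-b)$ and $(-a,-b+\ell_2 a)$, tacitly assumes $a,b>0$. You can in fact rule out $a,b\leq 0$ simultaneously (since $R^\circ_{\aone,\atwo}$ does not meet the open first quadrant), but the cases $a\leq 0<b$ and $b\leq 0<a$ (where $R^\circ_{a,b}$ degenerates to a segment) do occur and need to be treated explicitly; only one of your two ``corners'' is meaningful there. The slope-comparison sketch for the geometric claim is plausible but needs this case split spelled out. Until both of these points are addressed, the argument is not a complete proof.
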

\begin{proof}
This follows directly from the proof of \cite[Theorem 2.5]{CGM} and the proof of \cite[Scholium 2.6]{CGM}, both of which carry through in the generalized cluster algebra setting.
\end{proof}

Thus, it is enough to show that the support of each generalized theta basis element is contained in $R^\circ_{\aone ,\atwo }$ for a corresponding choice of $\aone ,\atwo $. This is shown in the classical setting in \cite[Section 5]{CGM}, and their methods work essentially verbatim in the generalized case. Their results do not rely on the structure of the scattering diagram, other than that all walls are along the coordinate axes or in the third quadrant, which holds for the generalized cluster scattering diagrams. This culminates in bounds on the final exponent of the broken lines contributing to the generalized theta basis, yielding the desired bounds on the support of the theta basis elements.

\begin{defn}
Given a broken line $\beta$, fix a point $q = (q_1,q_2) \in M_\R$ on the linear portion $\beta|_{(\tau_{i-1},\tau_i)}$. The \emph{angular momentum of $\beta$ (at $q$)} is given by the dot product $(q_2, -q_1) \cdot r(m_i)$.  
\end{defn}

The angular momentum of a broken line does not depend on the choice of $q$, as shown in \cite[Lemma 5.3]{CGM}.  Broken lines of positive (resp. negative) angular momentum proceed counter-clockwise (resp. clockwise) around the origin. The behavior of broken lines terminating in the first quadrant can be characterized based on the sign of their angular momentum \cite[Lemma 5.4]{CGM}.  If $\beta$ is a broken line of positive (resp. negative) angular momentum, then the slopes of the linear domains of $\beta$ decrease (resp. increase) at each bend, except possibly at the boundary of the first quadrant.

Fix a general point $Q_+$ in the first quadrant of $M_{\R} = \R^2$, and denote $\vartheta_{Q_+,m} = \vartheta_m$ for $m \in M$.

\begin{lem}[{\cite[Lemma 5.5]{CGM}}]\label{lem: broken line slope bounds}
Let $\gamma$ be a broken line in $\Scat(P_1,P_2)$ which begins in the third quadrant, with endpoint $Q_+$ in the first quadrant. Denote the initial exponent by $(-\aone ,-\atwo )$ and the final exponent by $m(\gamma) = (m_1,m_2)$. 
\begin{enumerate}
  \item If $\gamma$ has positive angular momentum, then $-\atwo \leq m_2 < 0$ and 
  $$-\aone \leq m_1 \leq \left(\frac{\aone }{\atwo } - \ell_1\right)m_2\,,$$
  where the upper bound is achieved only when $(m_1,m_2) = (-\aone +\ell_1\atwo , -\atwo )$.
  \item If $\gamma$ has negative angular momentum, then $-\aone \leq m_1 < 0$ and 
  $$-\atwo \leq m_2 \leq \left(\frac{\atwo }{\aone } - \ell_2\right)m_1\,,$$
  where the upper bound is achieved only when $(m_1,m_2)= (-\aone , -\atwo + \ell_2\aone )$.
\end{enumerate}
\end{lem}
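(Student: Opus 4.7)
The plan is to adapt the proof of \cite[Lemma 5.5]{CGM} and its supporting \cite[Lemmas 5.3, 5.4]{CGM} to the generalized setting. The argument rests on three structural features of $\Scat(P_1, P_2)$, all of which remain valid here: every wall passes through the origin (the initial axes $\R e_1, \R e_2$ and the added rays $\R_{\leq 0}(a,b)$ with $a,b>0$); by condition (4) in the definition of a broken line, a bend at a wall $\frakd$ changes $r(m)$ by a positive integer multiple of the primitive wall direction $m_\frakd$; and every added ray satisfies $a\leq \ell_1 b$ and $b\leq \ell_2 a$ by \cite[Corollary 6]{GP}, as invoked in the proof of \Cref{thm: mutation of sd rk 2}. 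These are precisely the ingredients the CGM argument uses.

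First I would record angular-momentum conservation along broken lines. At a bend $\beta(\tau_i) = s\,m_{\frakd_i}$ (possible because every wall passes through the origin), one has $(\beta_2(\tau_i),-\beta_1(\tau_i))\cdot m_{\frakd_i}=0$, and since $r(m_{i+1})-r(m_i)$ is a positive integer multiple of $m_{\frakd_i}$, the expression $(\beta_2,-\beta_1)(\tau_i)\cdot r(m)$ agrees on the two sides of the bend. Next I would establish the slope-monotonicity statement corresponding to \cite[Lemma 5.4]{CGM}: a short algebraic computation at a bend in the third quadrant shows that the sign of the angular momentum forces the slope of $-r(m)$ to decrease (for positive angular momentum) or to increase (for negative angular momentum).

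With those two facts in hand I would classify the trajectories in case (1). Positive angular momentum forces the initial ray (of direction $(d_1,d_2)$) to lie strictly below the line through the origin of slope $d_2/d_1$, so after any sequence of bends at rays in Q3 the broken line must cross the negative $y$-axis (bending at $\R e_2$ with wall-function $P_2$), pass through Q4 (which contains no walls), and cross the positive $x$-axis (bending at $\R e_1$ with wall-function $P_1$) before arriving at $Q_+$. Writing
\[
(m_1+d_1,\,m_2+d_2) \;=\; K_x\, e_1 + K_y\, e_2 + \sum_i k_i(a_i,b_i),
\]
with $K_x, K_y, k_i \geq 0$ and Q3 bend directions satisfying $a_i\leq \ell_1 b_i$, the inequalities $m_1\geq -d_1$ and $m_2\geq -d_2$ are immediate, and $m_2<0$ comes from the last linear segment having to exit the $x$-axis upward into Q1. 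The power at the terminal $x$-axis bend equals $-m_2$, so since $\deg P_1=\ell_1$ we have $K_x\leq -\ell_1 m_2$, while $\sum_i k_i a_i \leq \ell_1 \sum_i k_i b_i$. Combining these bounds and re-expressing in terms of $m_2$ yields the stated upper bound on $m_1$, with the extremal configuration realised as the degenerate limit of a single $x$-axis bend of the maximal degree, giving $(m_1,m_2)=(-d_1+\ell_1 d_2,-d_2)$. Case (2) follows by symmetry under $(d_1,d_2)\leftrightarrow (d_2,d_1)$.

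The main obstacle will be the trajectory classification in the third step. Verifying that a broken line of positive angular momentum cannot spiral indefinitely inside Q3 through arbitrarily many Q3 ray bends, and that it must eventually cross into Q4 rather than re-enter Q2 or loop, requires combining slope monotonicity with the uniform bound $a/b\leq \ell_1$ on Q3 ray directions and the discreteness of the set of walls that the broken line actually meets. Once the trajectory is pinned down, the exponent bookkeeping in the last step is routine.
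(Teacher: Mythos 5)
Your overall scaffold matches the CGM argument the paper cites (angular-momentum conservation, slope monotonicity, trajectory classification, then exponent bookkeeping), but the final bookkeeping step as written does not yield the stated bound. Writing $(m_1+d_1,\,m_2+d_2) = K_x e_1 + K_y e_2 + \sum_i k_i(a_i,b_i)$ and setting $A := \sum_i k_i a_i$, $B := \sum_i k_i b_i$, the two constraints you combine — $K_x \leq -\ell_1 m_2$ and $A \leq \ell_1 B$ — give only $m_1 \leq -d_1 + \ell_1 d_2$, which is strictly weaker than $m_1 \leq (d_1/d_2 - \ell_1)m_2$ whenever $d_1 < \ell_1 d_2$ and $m_2 > -d_2$. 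For example, with $\ell_1 = \ell_2 = 1$ and $(d_1,d_2) = (1,2)$, your two constraints admit $(K_x, A, B, K_y) = (1,1,1,0)$, giving $(m_1,m_2) = (1,-1)$, which violates the lemma's bound $m_1 \leq \tfrac{1}{2}$.

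The missing ingredient is precisely the slope-monotonicity constraint you establish earlier but then do not carry into the bookkeeping: since the direction slope decreases across the Q3 bends from $d_2/d_1$ to $(d_2 - B)/(d_1 - A)$, and $d_1 - A > 0$ is forced (otherwise the broken line never reaches the negative $y$-axis), cross-multiplying gives $d_1 B \geq d_2 A$. Feeding $d_2 A \leq d_1 B \leq d_1(K_y + B)$ together with $K_x \leq -\ell_1 m_2$ into $d_2 m_1 \leq (d_1 - \ell_1 d_2)m_2$ closes the argument, and strict decrease of slope at every nontrivial bend pins down the equality case. By contrast, the ray-slope bound $a_i \leq \ell_1 b_i$ that you import from \cite[Corollary 6]{GP} is not needed here — the paper explicitly remarks that CGM's argument uses only that walls lie on the axes or in the third quadrant. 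The $\ell_1$ in the bound enters solely through $\deg P_1$ at the terminal $x$-axis bend; the $d_1/d_2$ enters through slope monotonicity. Relatedly, you flag trajectory classification as the main obstacle, but that is the routine part (positivity of the velocity's $x$-component, i.e.\ $d_1 - A_j > 0$, already rules out spiraling and fixes the Q3--Q4--Q1 itinerary); the step you call routine is where the subtlety actually lives.
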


We now have all the tools necessary to prove \Cref{thm: gen greedy equals theta}. 

\begin{proof}[Proof of \Cref{thm: gen greedy equals theta}]
We proceed by showing that the theta function $\vartheta_{(-\aone ,-\atwo )}$ satisfies the conditions of \Cref{lem: greedy support containment}, following the approach of \cite[Proof of Theorem 5.1]{CGM}. First, note that a broken line with the same initial and terminal momentum in $\Scat(P_1,P_2)$ must not bend, as any bending would increase one of the entries of the exponent. There is a unique broken line terminating at $Q_+$ that does not bend and has terminal momentum $(-\aone ,-\atwo )$, which contributes the term $x_1^{-\aone }x_2^{-\atwo }$ to $\vartheta_{(-\aone ,-\atwo )}$. Hence $x_1^{-\aone }x_2^{-\atwo }$ appears with coefficient $1$ in $\vartheta_{(-\aone ,-\atwo )}$. For the support condition, we split into four cases.
\begin{enumerate}[\normalfont(1)]
  \item If $\aone ,\atwo \leq 0$, then by \Cref{prop: theta function as monomial} we have $\vartheta_{(-\aone ,-\atwo )} = x_1^{-\aone }x_2^{-\atwo }$, so the support is simply $\{(-\aone ,-\atwo )\} = R^\circ_{\aone ,\atwo }$.
  \item If $\aone  \leq 0 < \atwo $, then a broken line with initial momentum $(-\aone ,-\atwo )$ terminating at $Q_+$ can bend only at the $x$-axis. This follows because any other bending would need to occur to the left of $Q_+$, and such a broken line can never move in the positive $x$ direction to return to $Q_+$. Thus, the terminal momentums of broken lines that contribute to $\vartheta_{(-\aone ,-\atwo )}$ are precisely $(-\aone ,-\atwo ) + i(\ell_1,0)$ for any integer $i \in \{0,1,\dots,\ell_1\}$. 
  \item If $\atwo  \leq 0 < \aone$, then an analogous argument to that in Case (2) applies.
  \item If $\aone ,\atwo > 0$, then we can apply \Cref{lem: broken line slope bounds} to conclude that the final exponent of any broken line $\gamma$ contributing to $\vartheta_{(-\aone ,-\atwo )}$ must have final exponent contained in $R^\circ_{\aone ,\atwo }$. 
\end{enumerate}  
Therefore, the terminal momentum of any broken line contributing to $\vartheta_{(-\aone ,-\atwo )}$, and hence the support of $\vartheta_{(-\aone ,-\atwo )}$, is contained in $R^\circ_{\aone ,\atwo }$. Applying \Cref{lem: greedy support containment}, we can conclude that $\vartheta_{(-\aone ,-\atwo )} = x[\aone ,\atwo ]$.
\end{proof}

\section{Shadowed gradings and tight gradings}\label{sec: shadowed and tight grading}

We define new combinatorial objects called \emph{tight gradings} that give a combinatorial interpretation of the wall-function coefficients in rank-$2$ generalized cluster scattering diagrams. By considering a larger class of gradings called \emph{shadowed gradings}, we also give a combinatorial interpretation of the coefficients of powers of the wall-functions. The latter is useful for computing Euler characteristics of quiver moduli (see \Cref{subsec: euler char quiver}) and relative Gromov--Witten invariant in toric surfaces (see \Cref{subsec: GW invariants}). 

In this section, we begin by defining shadowed gradings and tight gradings. We then describe certain conditions under which we can isolate a unique broken line that corresponds to a set of shadowed gradings. Utilizing these conditions, we relate the combinatorial approach of Lee--Li--Zelevinsky and the scattering diagram approach of Gross--Hacking--Keel--Kontsevich \cite{LLZ, GHKK}. This yields a directly computable, manifestly positive, and elementary but highly nontrivial formula describing rank-2 consistent scattering diagrams. 

\subsection{Shadowed gradings and tight gradings}\label{subsec: shadow and tight grading}

We now introduce shadowed gradings and tight gradings, which are subsets of the compatible gradings defined by Rupel \cite{Rupgengreed} and inspired by the compatible pairs of Lee--Li--Zelevinsky \cite{LLZ} (see \Cref{subsec: compatible grading}).  We first require the notion of the ``shadow" of a set of edges in an NE path.

\begin{defn}\label{def: shadow}
Fix an NE path $\calP$ and a grading $\omega \colon \calP \rightarrow \N$.
For each edge $e$ in $\calP$, we define its \emph{shadow}, denoted by $\sh(e)$ or $\sh(e;\omega)$, as follows. 
\begin{itemize}
    \item If $e$ is horizontal, then its shadow is $(\overrightarrow{ev})_\north$, where $v \in \calP_\north$ is chosen such that $|(\overrightarrow{ev})_\north| = \omega((\overrightarrow{ev})_\east)$ and $\overrightarrow{ev}$ has minimal length under this condition.  If no such $v$ exists, let $\sh(e) = \calP_\north$.
    \item If $e$ is vertical, then its shadow is $(\overrightarrow{ue})_\east$, where $u \in \calP_\east$ is chosen such that $|(\overrightarrow{ue})_\east| = \omega((\overrightarrow{ue})_\north)$ and $\overrightarrow{ue}$ has minimal length under this condition.  If no such $u$ exists, let $\sh(e) = \calP_\east$.

\end{itemize}  

For $S \subset \calP$, let the \emph{shadow} of $S$ be $\sh(S)=\bigcup_{e\in S} \sh(e)$.  
\end{defn}

Note that if $\omega(e) = 0$, then $\sh(e) = \emptyset$.

\begin{exmp}
  Consider $\omega_2$ as in \Cref{main_exmp}. Then $\sh(u_2) = \{v_1,v_2\}$ and $\sh(u_1)=\{v_1,\dots,v_4\} = \sh(\{u_1,u_2\})$. We also have $\sh(v_3)=\{u_4,u_5,u_6\}$ and $\sh(v_4)=\{u_2,u_3,\dots,u_7\}=\sh(\{v_3,v_4\})$.
\end{exmp}

We then define a subset of compatible gradings where the vertical edges with nonzero grading value are contained in the shadow of the horizontal edges, or vice versa.

\begin{defn}\label{main_def}
A \emph{shadowed grading} on $\calP$ is a compatible grading satisfying either
$$S_\east\subseteq\sh(\calP_\north) \quad \text{or} \quad S_\north\subseteq\sh(\calP_\east)\,,$$
where $S$ is the set of edges $e$ in $\calP$ with $\omega(e)>0$. 
\end{defn}

The notion of shadowed gradings generalizes the notion of tight gradings, as first defined in \cite{BLMshort} and partially motivated by \cite{BL}.

\begin{defn}
Let $\omega$ be a grading on $\calP$ with $p = \omega(\calP_\north)$ and $q = \omega(\calP_\east)$.
The grading $\omega$ is a \emph{tight grading} if $\omega$ is a shadowed grading satisfying $p \leq d_1$, $q \leq d_2$, and
$$pd_2 - qd_1 = \pm \gcd(p,q)\,.$$
\end{defn}

We now explain how to represent shadowed gradings as rectangular tilings. This is important for the illustration of shadowed and tight gradings. First, rotate (i.e. cyclically shift) the maximal Dyck path so that the shadow of each horizontal (resp. vertical) edge does not extend beyond the left (resp. top) boundary of the rotated path. Such a rotation always exists provided $\omega(\calP_\north)\leq d_1$ and $\omega(\calP_\east) \leq d_2$, which always holds for tight gradings. Then draw blue rectangles above each horizontal edge $e$ with total height equal to $|\sh(e)|
$, partitioned into the vertical grading values contributing to the shadow. Similarly, we draw red rectangles to the left of each edge in $\calP_\north$. The grading is compatible only if the resulting rectangles are non-overlapping. The condition on the shadows can be easily read off by seeing if every blue rectangle has a red rectangle above, or if every red rectangle has a blue rectangle to the left.

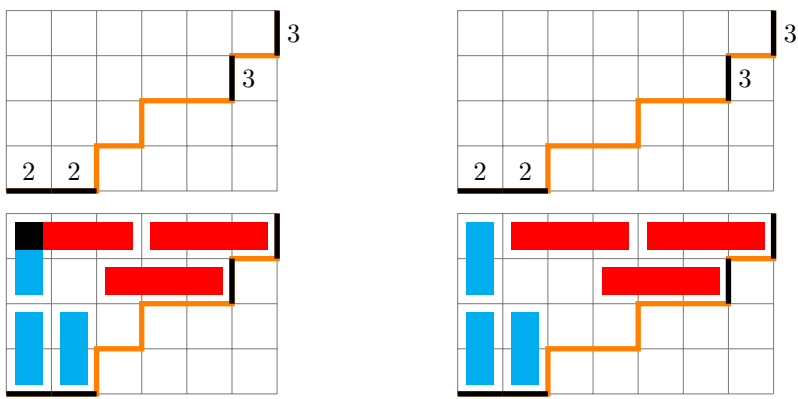
\begin{figure}[hb]
\centering
\begin{tikzpicture}[scale=.6]
\draw[step=1,color=gray] (0,0) grid (6,4);
\draw[line width=2,color=orange] (0,0)--(2,0)--(2,1)--(3,1)--(3,2)--(5,2)--(5,3)--(6,3)--(6,4);
\draw[line width=2pt] (0,0)--(1,0);
\draw (0.5,0.85) node[anchor=north] {\small $2$};
\draw[line width=2pt] (1,0)--(2,0);
\draw (1.5,0.85) node[anchor=north] {\small $2$};
\draw[line width=2pt] (6,4)--(6,3);
\draw (6,3.5) node[anchor=west] {\small $3$};
\draw[line width=2pt] (5,3)--(5,2);
\draw (5,2.5) node[anchor=west] {\small $3$};
\begin{scope}[shift={(10,0)}]
\draw[step=1,color=gray] (0,0) grid (7,4);
\draw[line width=2,color=orange] (0,0)--(2,0)--(2,1)--(4,1)--(4,2)--(6,2)--(6,3)--(7,3)--(7,4);
\draw[line width=2pt] (0,0)--(1,0);
\draw (0.5,0.85) node[anchor=north] {\small $2$};
\draw[line width=2pt] (1,0)--(2,0);
\draw (1.5,0.85) node[anchor=north] {\small $2$};
\draw[line width=2pt] (7,4)--(7,3);
\draw (7,3.5) node[anchor=west] {\small $3$};
\draw[line width=2pt] (6,3)--(6,2);
\draw (6,2.5) node[anchor=west] {\small $3$};
\end{scope}
\begin{scope}[shift={(0,-4.5)}]
\draw[step=1,color=gray] (0,0) grid (6,4);
\fill[cyan] (0.2,0.2)--(0.2,1.8)--(0.8,1.8)--(0.8,0.2)--(0.2,0.2);
\fill[cyan] (1.2,0.2)--(1.2,1.8)--(1.8,1.8)--(1.8,0.2)--(1.2,0.2);
\fill[cyan] (0.2,2.2)--(0.2,2+1.8)--(0.8,2+1.8)--(0.8,2+0.2)--(0.2,2+0.2);
\fill[red] (3.2-1,3.2-1)--(3.2-1,3.8-1)--(5.8-1,3.8-1)--(5.8-1,3+0.2-1)--(3.2-1,3.2-1);
\fill[red] (3.2+1-1,3.2)--(3.2+1-1,3.8)--(5.8+1-1,3.8)--(5.8+1-1,3+0.2)--(3.2+1-1,3.2);
\fill[red] (3.2+1-3-1,3.2)--(3.2+1-3-1,3.8)--(5.8+1-3-1,3.8)--(5.8+1-3-1,3+0.2)--(3.2+1-3-1,3.2);
\fill[black] (3.2+1-3-1,3.2)--(3.2+1-3-1,3.8)--(3.8+1-3-1,3.8)--(3.8+1-3-1,3+0.2)--(3.2+1-3-1,3.2);
\draw[line width=2,color=orange] (0,0)--(2,0)--(2,1)--(3,1)--(3,2)--(5,2)--(5,3)--(6,3)--(6,4);
\draw[line width=2pt] (0,0)--(1,0);
\draw[line width=2pt] (1,0)--(2,0);
\draw[line width=2pt] (6,4)--(6,3);
\draw[line width=2pt] (5,3)--(5,2);
\begin{scope}[shift={(10,0)}]
\draw[step=1,color=gray] (0,0) grid (7,4);
\draw[line width=2,color=orange] (0,0)--(2,0)--(2,1)--(4,1)--(4,2)--(6,2)--(6,3)--(7,3)--(7,4);
\draw[line width=2pt] (0,0)--(1,0);
\draw[line width=2pt] (1,0)--(2,0);
\draw[line width=2pt] (7,4)--(7,3);
\draw[line width=2pt] (6,3)--(6,2);
\fill[cyan] (0.2,0.2)--(0.2,1.8)--(0.8,1.8)--(0.8,0.2)--(0.2,0.2);
\fill[cyan] (1.2,0.2)--(1.2,1.8)--(1.8,1.8)--(1.8,0.2)--(1.2,0.2);
\fill[cyan] (0.2,2.2)--(0.2,2+1.8)--(0.8,2+1.8)--(0.8,2+0.2)--(0.2,2+0.2);
\fill[red] (3.2,3.2-1)--(3.2,3.8-1)--(5.8,3.8-1)--(5.8,3+0.2-1)--(3.2,3.2-1);
\fill[red] (3.2+1,3.2)--(3.2+1,3.8)--(5.8+1,3.8)--(5.8+1,3+0.2)--(3.2+1,3.2);
\fill[red] (3.2+1-3,3.2)--(3.2+1-3,3.8)--(5.8+1-3,3.8)--(5.8+1-3,3+0.2)--(3.2+1-3,3.2);
\end{scope}
\end{scope}
\end{tikzpicture}
\caption{In the top images, we depict gradings $\omega_1$ and $\omega_2$ on the Dyck paths $\calP(6,4)$ and $\calP(7,4)$ from \Cref{main_exmp}, where edges with no weight shown are assigned weight $0$. Below, we depict the corresponding rectangular tilings. The grading $\omega_1$ has overlapping rectangles and hence is not compatible, while the grading $\omega_2$ is.
} 
\label{fig: compatible grading example}
\end{figure}

\begin{rem}
The word ``tight" is coined by the tight space between blue and red rectangles.
\end{rem}

\begin{exmp}\label{tight_exmp}
(1) The grading $\omega_2$ as in \Cref{main_exmp} (and displayed on the right of \Cref{fig: compatible grading example}) is shadowed.  However, it is not tight because $(\aone ,\atwo )=(7,4)$ does not satisfy $pd_2 - qd_1 = \pm\gcd(p,q)$ for $(p,q)=(6,4)$.

\noindent (2) Let $(p,q)=(2,1)$ and $(\aone ,\atwo )=(3,1)$. Let $\omega \colon \mathcal P(3, 1)\rightarrow \N$ be a grading such that $\omega(u_1)=1$, $\omega(u_2)=\omega(u_3)=0$, and $\omega(v_1)=2$. Then $\omega$ is tight as displayed on the left of \Cref{fig: tight grading g2}.

\noindent (3) Let $(p,q)=(4,2)$ and $(\aone ,\atwo )=(5,2)$. Let $\omega \colon \mathcal{P}(5,2) \rightarrow \N$ be a grading such that $\omega(u_1)=\omega(u_2)=\omega(v_1)=1$, $\omega(v_2)=3$, and $\omega(u_3)=\omega(u_4)=\omega(u_5)=0$. Then $\omega$ is tight as displayed in the middle of \Cref{fig: tight grading g2}.

\noindent (4) Let $(p,q)=(6,3)$ and $(\aone ,\atwo )=(7,3)$. Let $\omega \colon \mathcal P(7, 3) \rightarrow \N$ be a grading such that $\omega(v_2)=\omega(v_3)=3$, $\omega(u_1)=\omega(u_2)=\omega(u_3)=1$, and $\omega(v_1)=\omega(u_4)=\omega(u_5)=\omega(u_6)=\omega(u_7)=0$. Then $\omega$ is tight as displayed on the right of \Cref{fig: tight grading g2}.

\noindent (5) Let $(p,q)=(12,8)$ and $(\aone ,\atwo )=(14,9)$. There are a total of 14 tight gradings $\omega$ such that $\omega(h)=2$ for exactly four horizontal edges $h$, $\omega(v)=3$ for exactly four vertical edges $v$, and $\omega(e)=0$ for all other edges on $\mathcal{P}(14,9)$. 
\end{exmp}

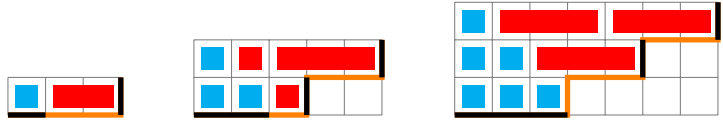
\begin{figure}[ht]
    \centering
    \begin{tikzpicture}[scale=.5]
        \draw[step=1,color=gray] (0,0) grid (3,1);
        \draw[line width=2,color=orange] (0,0)--(3,0)--(3,1);
        \fill[cyan] (0.2,0.2)--(0.2,0.8)--(0.8,0.8)--(0.8,0.2)--(0.2,0.2);
        \fill[red] (1.2,0.2)--(1.2,0.8)--(2.8,0.8)--(2.8,0.2)--(1.2,0.2);
        \draw[line width=2pt] (0,0)--(1,0);
        \draw[line width=2pt] (3,0)--(3,1);
    \end{tikzpicture}
    \quad\quad
    \begin{tikzpicture}[scale=.5]
        \draw[step=1,color=gray] (0,0) grid (5,2);
        \draw[line width=2,color=orange] (0,0)--(3,0)--(3,1)--(5,1)--(5,2);
        \fill[cyan] (0.2,0.2)--(0.2,0.8)--(0.8,0.8)--(0.8,0.2)--(0.2,0.2);
        \fill[cyan] (1+0.2,0.2)--(1+0.2,0.8)--(1+0.8,0.8)--(1+0.8,0.2)--(1+0.2,0.2);
        \fill[cyan] (0.2,1+0.2)--(0.2,1+0.8)--(0.8,1+0.8)--(0.8,1+0.2)--(0.2,1+0.2);
        \fill[red] (2.2,0.2)--(2.2,0.8)--(2.8,0.8)--(2.8,0.2)--(2.2,0.2);
        \fill[red] (2.2-1,1+0.2)--(2.2-1,1+0.8)--(2.8-1,1+0.8)--(2.8-1,1+0.2)--(2.2-1,1+0.2);
        \fill[red] (2.2,1+0.2)--(2.2,1+0.8)--(4.8,1+0.8)--(4.8,1+0.2)--(2.2,1+0.2);
        \draw[line width=2pt] (0,0)--(2,0);
        \draw[line width=2pt] (3,0)--(3,1);
        \draw[line width=2pt] (5,1)--(5,2);
    \end{tikzpicture}
    \quad\quad
    \begin{tikzpicture}[scale=.5]
        \draw[step=1,color=gray] (0,0) grid (7,3);
        \draw[line width=2,color=orange] (0,0)--(3,0)--(3,1)--(5,1)--(5,2)--(7,2)--(7,3);
        \fill[cyan] (0.2,0.2)--(0.2,0.8)--(0.8,0.8)--(0.8,0.2)--(0.2,0.2);
        \fill[cyan] (1+0.2,0.2)--(1+0.2,0.8)--(1+0.8,0.8)--(1+0.8,0.2)--(1+0.2,0.2);
        \fill[cyan] (2+0.2,0.2)--(2+0.2,0.8)--(2+0.8,0.8)--(2+0.8,0.2)--(2+0.2,0.2);
        \fill[cyan] (0.2,1+0.2)--(0.2,1+0.8)--(0.8,1+0.8)--(0.8,1+0.2)--(0.2,1+0.2);
        \fill[cyan] (1+0.2,1+0.2)--(1+0.2,1+0.8)--(1+0.8,1+0.8)--(1+0.8,1+0.2)--(1+0.2,1+0.2);
        \fill[cyan] (0.2,2+0.2)--(0.2,2+0.8)--(0.8,2+0.8)--(0.8,2+0.2)--(0.2,2+0.2);
        \fill[red] (2.2,1+0.2)--(2.2,1+0.8)--(4.8,1+0.8)--(4.8,1+0.2)--(2.2,1+0.2);
        \fill[red] (2.2-1,1+1+0.2)--(2.2-1,1+1+0.8)--(4.8-1,1+1+0.8)--(4.8-1,1+1+0.2)--(2.2-1,1+1+0.2);
        \fill[red] (2.2-1+3,1+1+0.2)--(2.2-1+3,1+1+0.8)--(4.8-1+3,1+1+0.8)--(4.8-1+3,1+1+0.2)--(2.2-1+3,1+1+0.2);
        \draw[line width=2pt] (0,0)--(3,0);
        \draw[line width=2pt] (5,1)--(5,2);
        \draw[line width=2pt] (7,2)--(7,3);
    \end{tikzpicture}
    \caption{The three tight gradings in \Cref{tight_exmp}(2)(3)(4).}
    \label{fig: tight grading g2}
\end{figure}

We now discuss two families of tight gradings with relevance to the rank-2 cluster scattering diagrams of affine type, i.e., $\Scat(1 + x^{\ell_1}, 1 + y^{\ell_2})$ where $\ell_1\ell_2 = 4$.  This connection is made explicit in \Cref{exmp: affine 2 2 wall-function} and \Cref{exmp: affine 4 1 wall-function}.

\begin{exmp}\label{exmp: affine 2 2 gradings}Let $P_1(x) = 1 + p_{1,2}x^2$ and $P_2(y) = 1 + p_{2,2}y^2$. For $k \in \N$, consider the path $\calP(2k,2k+1)$.  There are $k + 1$ tight gradings of weight $p_{1,2}^kp_{2,2}^k$, that is, tight gradings $\omega$ where $\omega(h) = 2$ for exactly $k$ horizontal edges $h$, $\omega(v) = 2$ for exactly $k$ vertical edges $v$, and $\omega(e) = 0$ for all other edges $e$. 

It is straightforward to describe these gradings explicitly as follows.  Fix $i \in \{0,1,\dots,k\}$.  Let $S_\east = \{u_{1 + i}, u_{2 + i}, \dots, u_{k + i}\}$ and let $S_\north = \{v_1,v_2,\dots,v_i\} \cup \{v_{k + 2 + i}, v_{k + 3 + i},\dots, v_{2k + 1}\}$, where the sets in the union can be empty if $i = 0$ or $i = k$.  Then the grading
$$\omega(e) = \begin{cases} 2 & \text{ if } e \in S_\east \cup S_\north\\ 0 & \text{ otherwise.} \end{cases}$$
is one of the desired tight gradings, and varying the choice of $i$ yields all such tight gradings.   

\begin{figure}[ht]
\centering
\begin{tikzpicture}[scale=.4]
\draw[step=1,color=gray] (0,0) grid (6,7);
\vrec{0}{0}{3}{2}
\vrec{1}{1}{2}{2}
\vrec{2}{2}{1}{2}
\hrec{5}{4}{1}{2}
\hrec{6}{5}{2}{2}
\hrec{6}{6}{3}{2}
\draw[line width=2,color=orange] (0,0)--(1,0)--(1,1)--(2,1)--(2,2)--(3,2)--(3,3)--(4,3)--(4,4)--(5,4)--(5,5)--(6,5)--(6,7);
\draw[line width=2pt] (0,0)--(1,0);
\draw[line width=2pt] (1,1)--(2,1);
\draw[line width=2pt] (2,2)--(3,2);
\draw[line width=2pt] (5,4)--(5,5);
\draw[line width=2pt] (6,5)--(6,7);

\begin{scope}[shift={(8,0)}]
\draw[step=1,color=gray] (0,0) grid (6,7);
\vrec{0}{0}{3}{2}
\vrec{1}{1}{2}{2}
\vrec{2}{2}{1}{2}
\hrec{5}{4}{1}{2}
\hrec{5}{5}{2}{2}
\hrec{6}{6}{3}{2}
\draw[line width=2,color=orange] (0,0)--(1,0)--(1,1)--(2,1)--(2,2)--(3,2)--(3,3)--(4,3)--(4,4)--(5,4)--(5,6)--(6,6)--(6,7);
\draw[line width=2pt] (0,0)--(1,0);
\draw[line width=2pt] (1,1)--(2,1);
\draw[line width=2pt] (2,2)--(3,2);
\draw[line width=2pt] (5,4)--(5,6);
\draw[line width=2pt] (6,6)--(6,7);
\end{scope}

\begin{scope}[shift={(16,0)}]
\draw[step=1,color=gray] (0,0) grid (6,7);
\vrec{0}{0}{3}{2}
\vrec{1}{1}{2}{2}
\vrec{2}{2}{1}{2}
\hrec{4}{4}{1}{2}
\hrec{5}{5}{2}{2}
\hrec{6}{6}{3}{2}
\draw[line width=2,color=orange] (0,0)--(1,0)--(1,1)--(2,1)--(2,2)--(3,2)--(3,3)--(4,3)--(4,5)--(5,5)--(5,6)--(6,6)--(6,7);
\draw[line width=2pt] (0,0)--(1,0);
\draw[line width=2pt] (1,1)--(2,1);
\draw[line width=2pt] (2,2)--(3,2);
\draw[line width=2pt] (4,4)--(4,5);
\draw[line width=2pt] (5,5)--(5,6);
\draw[line width=2pt] (6,6)--(6,7);
\end{scope}

\begin{scope}[shift={(24,0)}]
\draw[step=1,color=gray] (0,0) grid (6,7);
\vrec{0}{0}{3}{2}
\vrec{1}{1}{2}{2}
\vrec{2}{2}{1}{2}
\hrec{4}{4}{1}{2}
\hrec{5}{5}{2}{2}
\hrec{6}{6}{3}{2}
\draw[line width=2,color=orange] (0,0)--(1,0)--(1,1)--(2,1)--(2,2)--(3,2)--(3,4)--(4,4)--(4,5)--(5,5)--(5,6)--(6,6)--(6,7);
\draw[line width=2pt] (0,0)--(1,0);
\draw[line width=2pt] (1,1)--(2,1);
\draw[line width=2pt] (2,2)--(3,2);
\draw[line width=2pt] (4,4)--(4,5);
\draw[line width=2pt] (5,5)--(5,6);
\draw[line width=2pt] (6,6)--(6,7);
\end{scope}
\end{tikzpicture}
\caption{The four tight gradings in the setting of \Cref{exmp: affine 2 2 gradings} when $k = 3$.}
\end{figure}
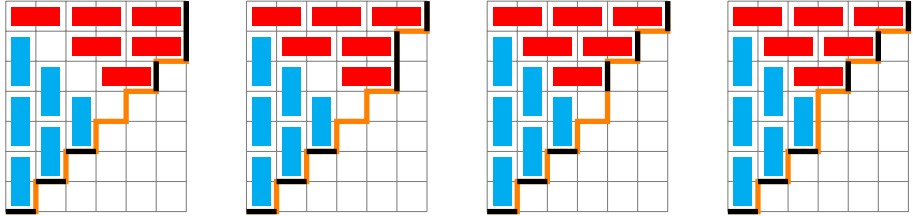
\end{exmp}

\begin{exmp}\label{exmp: affine 4 1 gradings} Let $P_1(x) = 1 + p_{1,4}x^4$ and $P_2(y) = 1 + p_{2,1}y$.
For $n \in \N$, consider the path $\calP(4k+1,2k)$.  There are $2k + 1$ tight gradings of weight $p_{1,4}^k p_{2,1}^{2k}$.  These are the tight gradings $\omega$ such that $\omega(h) = 1$ for exactly $2k$ horizontal edges $h$, $\omega(v) = 4$ for exactly $k$ vertical edges $v$, and $\omega(e) = 0$ for all other edges $e$. 

We can describe these gradings explicitly as follows.  Fix $i \in \{0,1,\dots,k - 1\}$ and $s \in \{0,1\}$, with the restriction that if $i = k$ then $s = 0$.   Let $S_\north = \{v_{k + 1 - i}, v_{k + 2 - i}, \dots, v_{2k - i}\}$ and let $S_\east = \{u_1, u_2, \dots, u_{4k - 2i - 1 }\} \cup \{u_{4k - 2i + s}\} \cup \{u_{4k - 2i + 2}, u_{4k - 2i + 3},\dots, u_{4k + 1}\}$, where the sets in the union can be empty if $i = 0$.  Then the grading
$$\omega(e) = \begin{cases} 2 & \text{ if } e \in S_\east \cup S_\north\\ 0 & \text{ otherwise.} \end{cases}$$
is one of the desired tight gradings, and varying the choice of $i$ yields all but one of the $2k+1$ such tight gradings.  The last tight grading can be obtained by considering $S_\east = \{u_{2k+2}, u_{2k+3},\dots, u_{4k+1}\}$ and $S_\north = \{v_1,v_2,\dots,v_k\}$, with the grading $\omega$ constructed as above.

\begin{figure}[ht]
\centering
\begin{tikzpicture}[scale=.3]
\draw[step=1,color=gray] (0,0) grid (9,4);
\vrec{0}{0}{4}{1}
\vrec{1}{0}{3}{1}
\vrec{2}{0}{1}{1}
\vrec{3}{1}{1}{1}
\hrec{7}{2}{1}{4}
\hrec{9}{3}{2}{4}
\draw[line width=2,color=orange] (0,0)--(3,0)--(3,1)--(5,1)--(5,2)--(7,2)--(7,3)--(9,3)--(9,4);
\draw[line width=2pt] (0,0)--(3,0);
\draw[line width=2pt] (3,1)--(4,1);
\draw[line width=2pt] (7,2)--(7,3);
\draw[line width=2pt] (9,3)--(9,4);

\begin{scope}[shift={(10.5,0)}]
\draw[step=1,color=gray] (0,0) grid (9,4);
\vrec{0}{0}{4}{1}
\vrec{1}{0}{3}{1}
\vrec{2}{0}{1}{1}
\vrec{4}{1}{1}{1}
\hrec{7}{2}{1}{4}
\hrec{9}{3}{2}{4}
\draw[line width=2,color=orange] (0,0)--(3,0)--(3,1)--(5,1)--(5,2)--(7,2)--(7,3)--(9,3)--(9,4);
\draw[line width=2pt] (0,0)--(3,0);
\draw[line width=2pt] (4,1)--(5,1);
\draw[line width=2pt] (7,2)--(7,3);
\draw[line width=2pt] (9,3)--(9,4);
\end{scope}

\begin{scope}[shift={(21,0)}]
\draw[step=1,color=gray] (0,0) grid (9,4);
\vrec{0}{0}{4}{1}
\vrec{1}{0}{1}{1}
\vrec{2}{1}{2}{1}
\vrec{3}{1}{1}{1}
\hrec{7}{2}{1}{4}
\hrec{9}{3}{2}{4}
\draw[line width=2,color=orange] (0,0)--(2,0)--(2,1)--(5,1)--(5,2)--(7,2)--(7,3)--(9,3)--(9,4);
\draw[line width=2pt] (0,0)--(2,0);
\draw[line width=2pt] (2,1)--(3,1);
\draw[line width=2pt] (3,1)--(4,1);
\draw[line width=2pt] (7,2)--(7,3);
\draw[line width=2pt] (9,3)--(9,4);
\end{scope}

\begin{scope}[shift={(31.5,0)}]
\draw[step=1,color=gray] (0,0) grid (9,4);
\vrec{0}{0}{4}{1}
\vrec{1}{0}{1}{1}
\vrec{2}{1}{2}{1}
\vrec{4}{1}{1}{1}
\hrec{7}{2}{1}{4}
\hrec{9}{3}{2}{4}
\draw[line width=2,color=orange] (0,0)--(2,0)--(2,1)--(5,1)--(5,2)--(7,2)--(7,3)--(9,3)--(9,4);
\draw[line width=2pt] (0,0)--(2,0);
\draw[line width=2pt] (2,1)--(3,1);
\draw[line width=2pt] (4,1)--(5,1);
\draw[line width=2pt] (7,2)--(7,3);
\draw[line width=2pt] (9,3)--(9,4);
\end{scope}

\begin{scope}[shift={(42,0)}]
\draw[step=1,color=gray] (0,0) grid (9,4);
\vrec{0}{0}{4}{1}
\vrec{1}{0}{1}{1}
\vrec{2}{1}{2}{1}
\vrec{3}{1}{1}{1}
\hrec{7}{2}{1}{4}
\hrec{9}{3}{2}{4}
\draw[line width=2,color=orange] (0,0)--(2,0)--(2,1)--(4,1)--(4,2)--(7,2)--(7,3)--(9,3)--(9,4);
\draw[line width=2pt] (0,0)--(2,0);
\draw[line width=2pt] (2,1)--(4,1);
\draw[line width=2pt] (7,2)--(7,3);
\draw[line width=2pt] (9,3)--(9,4);
\end{scope}
\end{tikzpicture}
\caption{The five tight gradings in the setting of \Cref{exmp: affine 4 1 gradings} when $k = 2$.}
\end{figure}
\end{exmp}

\subsection{One-bending property}\label{subsec: one-bending}

The \emph{bend-type} of a broken line is the tuple $(r(m_0), \dots, r(m_\ell))\in M^{\ell+1}$. In a scattering diagram $\frakD$, for a fixed endpoint $Q$, every broken line $\beta$ with a given bend-type has identical image in $M_\mathbb R$. This can be seen by working backwards from $Q$ to reconstruct the broken line.

Consider a generalized cluster scattering diagram $\frakD = \Scat(P_1, P_2)$.

\begin{defn}\label{def: one-bending}
    Let $(\aone , \atwo )\in \mathbb N^2$ and coprime $(a, b)\in \mathbb N^2$ such that
    \[
        \aone b - \atwo a \neq 0.
    \]
    Let $\beta$ be a (possible) broken line in $\frakD$ such that it has initial exponent $(-\aone, -\atwo)$, one unique bend at the wall $(\mathbb R_{\leq 0}(a, b), f_{\mathbb R_{\leq 0}(a, b)})$, and final exponent $(ka-\aone, kb-\atwo)$ for $k\geq 1$. We say that the pair $(\aone , \atwo )$ and $(ka, kb)$ satisfies the \emph{one-bending property} (with respect to $\frakD$) if any broken line that
    \begin{align}
        & \text{has initial exponent $(-\aone , -\atwo )$ and final exponent $(-\aone + ka, -\atwo + kb)$, and} \label{eq: one-bend 1}\\
        & \text{does not bend at positive coordinate axes} \label{eq: one-bend 2}
    \end{align}
    has the same bend-type as $\beta$.
\end{defn}

\begin{rem}\label{rem: one-bending nonempty}
Let $(d_1,d_2)$ and $(ka,kb)$ satisfy the one-bending property.  Depending on the choice of terminal point $Q \in M_\mathbb R$, there is either exactly one broken line or no broken lines satisfying \eqref{eq: one-bend 1} and \eqref{eq: one-bend 2} and terminating at $Q$. In particular, there will be one such broken line if 
\[Q \in \begin{cases}\left\{(q_1,q_2) \subseteq \R^2 : q_2 < \min\left(\frac{kb-d_2}{ka - d_1} q_1, \frac{d_2}{d_1} q_1\right)\right\} & \text{ if } d_1b-d_2a > 0,\\[10pt] \left\{(q_1,q_2) \subseteq \R^2 : q_2 > \max\left(\frac{kb-d_2}{ka - d_1} q_1, \frac{d_2}{d_1} q_1\right)\right\} & \text{ if } d_1b-d_2a < 0, \end{cases}\]
and no such broken lines otherwise.
\end{rem}

\begin{lem}\label{lem: one-bending}
  If $(\aone, \atwo)$ and $(ka, kb)$ satisfy
  \begin{equation*}
    \frac{kab+1}{kb^2}\geq \frac{\aone}{\atwo} > \frac{a}{b} \quad \text{or} \quad
    \frac{ka^2}{kab+1}\leq \frac{\aone}{\atwo} < \frac{a}{b},
  \end{equation*}
  then they satisfy the one-bending property.
\end{lem}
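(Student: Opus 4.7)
The key invariant is $s(v) \coloneqq v^{(1)} b - v^{(2)} a$; a direct calculation gives $s(v_0) = s(v_\ell) = -\delta$ where $\delta \coloneqq d_1 b - d_2 a > 0$ in the first case of the hypothesis, and each bend at $\R_{\leq 0}(A, B)$ with multiplicity $j \geq 1$ contributes $j(Ab - Ba)$ to $s$. By coprimality this contribution vanishes iff $(A, B) = (a, b)$, and is otherwise a nonzero integer; hence any $\beta'$ with a bend off the ray $\R_{\leq 0}(a, b)$ must balance bends of both signs of $Ab - Ba$.

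Any broken line $\beta'$ satisfying \eqref{eq: one-bend 1} and \eqref{eq: one-bend 2} has positive angular momentum: the computation $L = t\delta > 0$ at the unique bend of the model $\beta$, combined with the endpoint region of \Cref{rem: one-bending nonempty}, determines the sign of the conserved quantity $L$. By \cite[Lemma~5.4]{CGM} the slopes of the successive directions $v_i$ strictly decrease; moreover, angular momentum positivity at each bend reads $A_i v_{i-1}^{(2)} - B_i v_{i-1}^{(1)} > 0$. At the first bend this forces $A_1/B_1 < d_1/d_2$. If additionally $A_1/B_1 > a/b$, then $A_1/B_1$ lies in $(a/b,\, d_1/d_2]$, an interval of width $\delta/(d_2 b) \leq 1/(kb^2)$; any primitive $(A_1, B_1) \neq (a, b)$ in it satisfies $|A_1/B_1 - a/b| = |A_1 b - B_1 a|/(B_1 b) \geq 1/(B_1 b)$, forcing $B_1 \geq kb$. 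Then $v_1^{(2)} = j_1 B_1 - d_2 \geq kb - d_2 = v_\ell^{(2)}$; since every remaining bend adds a vector in $\Z_{\geq 0}^2$, one obtains $v_\ell^{(2)} \geq v_1^{(2)}$, forcing equality, $j_1 = 1$, $B_1 = kb$, and (by primitivity with $A_1/B_1 > a/b$) $A_1 \geq ka + 1$; but then $v_1^{(1)} \geq ka + 1 - d_1 > v_\ell^{(1)}$, contradicting $v_\ell^{(1)} \geq v_1^{(1)}$.

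A parallel argument run at the last bend (equivalently, at the first bend after applying the mutation $T$ of \Cref{thm: mutation of sd rk 2} to exchange the roles of $v_0$ and $v_\ell$) rules out any bend at a wall with $A/B < a/b$; the invariant $s$ satisfies $s(v_\ell) = s(v_0)$, which is what allows the first-bend analysis to be reused verbatim. Consequently every bend of $\beta'$ occurs at $\R_{\leq 0}(a, b)$; two distinct such bends would require an intermediate bend at another wall (a straight segment cannot re-cross a ray through the origin), which has just been excluded. Hence $\beta'$ has exactly one bend at $\R_{\leq 0}(a, b)$, and the exponent equation $v_\ell - v_0 = k(a, b) = j_1(a, b)$ forces $j_1 = k$, matching the bend-type of $\beta$. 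The main obstacle is carrying out the last-bend (or mutation-based) symmetric step rigorously while respecting the forward $\tau$-orientation of broken lines and verifying that the auxiliary endpoint region of \Cref{rem: one-bending nonempty} is compatible with the mutation.
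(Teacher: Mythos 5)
Your argument takes a genuinely different route from the paper, but as written it has a gap you yourself flag, and it implicitly relies on a fact you never state.

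The paper's proof is much more economical. It partitions \emph{all} bends of a putative $\beta$ into two groups according to whether the wall slope satisfies $a'/b' \ge a/b$ or $a'/b' < a/b$, writes the cumulated bending as $(a_1,b_1) + (a_2,b_2) = (ka,kb)$, and observes
\[
  \frac{kab+1}{kb^2} \;\geq\; \frac{d_1}{d_2} \;>\; \frac{a_1}{b_1} \;>\; \frac{a}{b} \;>\; \frac{a_2}{b_2},
\]
where $b_1 \le kb$. From this, $ab_1 + 1 > a_1 b > ab_1$ leaves no integer $a_1 b$, giving a contradiction in two lines. No angular momentum, no mutation, no case split on which bend to look at: the Farey/mediant estimate is applied once, to the aggregated vector $(a_1,b_1)$.

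You instead work bend-by-bend, using the conserved quantity $s(v) = v^{(1)}b - v^{(2)}a$ to establish that any deviation from $\R_{\le 0}(a,b)$ forces deviations of both signs, and then try to kill the positive-sign deviation at the first bend and the negative-sign one at the last bend. Two problems: (i) you never actually carry out the last-bend step --- you state it needs a mutation argument and explicitly call it "the main obstacle", so the proof is incomplete as written; (ii) even granting both endpoint constraints, the inference "hence every bend occurs at $\R_{\le 0}(a,b)$" uses the fact that walls are crossed in strictly decreasing $A/B$-order as $\tau$ increases. That is a consequence of positive angular momentum ($L = r^2\dot\theta > 0$ forces $\theta$ monotone, hence wall slopes monotone), but you only cite the slope-monotonicity of the \emph{direction} vectors from \cite[Lemma 5.4]{CGM}, which is not the same statement. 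Without the wall-ordering fact, an intermediate bend could sit at a slope in $(a/b, d_1/d_2)$ with the angular-momentum constraint at that bend being weaker than $A_i/B_i < d_1/d_2$, and your first-bend Farey estimate would not apply there.

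The irony is that your invariant $s$ plus the wall-ordering fact actually make the last-bend step unnecessary: if some bend lies off $\R_{\le 0}(a,b)$, then by $s$-balance there is a bend with $A/B > a/b$; by wall-ordering, the bend with maximal $A/B$ is the \emph{first} bend, which your first-bend argument already excludes from $(a/b, d_1/d_2]$. So once you state and justify the wall-ordering, your argument closes without the mutation step at all. Still, the paper's grouped-sum approach reaches the same conclusion with far less machinery, and I'd recommend it.
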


\begin{proof}
    We prove for the first case and the second case is similar. It suffices to show that there is no broken line satisfying (\ref{eq: one-bend 1}) and (\ref{eq: one-bend 2}) that bends at least twice. Suppose that there is one such $\beta$. Notice that $\beta$ can only bend at walls
    strictly below $\mathbb R_{\leq 0}(\aone , \atwo )$, that is, walls $\mathbb R_{\leq 0}(a', b')$ such that
    \[
        \frac{d_1}{d_2} > \frac{a'}{b'}.
    \]
    Denote the amount of bending cumulated between $\mathbb R_{\leq 0}(\aone , \atwo )$ and $\mathbb R_{\leq 0}(a, b)$ (inclusive) by $(a_1, b_1)\in \mathbb N^2$. Denote the amount of bending cumulated between $\mathbb R_{\leq 0}(a, b)$ (exclusive) and $\mathbb R_{\leq 0}(0, 1)$ (inclusive) by $(a_2 , b_2)\in \mathbb N^2$. Thus $(a_1 , b_1) + (a_2 , b_2) = (ka, kb)$. Now we have
    \[
        \frac{kab + 1}{kb^2} \geq \frac{\aone}{\atwo} > \frac{a_1}{b_1} > \frac{a}{b} > \frac{a_2}{b_2}.
    \]
    We note that since $\beta$ bends at least twice, the last three inequalities are all strict.
    We derive from $\dfrac{kab + 1}{kb^2} > \dfrac{a_1}{b_1} > \dfrac{a}{b}$ and $b_1 \leq kb$
    that
    \[
        ab_1 + \frac{b_1}{kb} > a_1 b > ab_1
        \implies ab_1 + 1 > a_1 b > ab_1.
    \]
    There is no integer solution for $(a_1 , b_1)$ and hence such a broken line $\beta$ does not exist.
\end{proof}

\begin{cor}\label{cor: vectors one-bending}
Let $(a, b)\in \mathbb N^2$ be coprime and $k\in \mathbb N$.
  \begin{enumerate}
    \item[\emph{(1)}] If $(\aone , \atwo ) = (kab + 1, kb^2)$ or $(ka^2, kab + 1)$, then $(\aone , \atwo )$ and $(ka, kb)$ satisfy the one-bending property.
    \item[\emph{(2)}] If $\aone \geq mka$ when $d_1b-d_2a<0$ or $\atwo \geq mkb$ when $d_1b-d_2a>0$ where $m = |\aone b-\atwo a|$, then $(\aone , \atwo )$ and $(ka, kb)$ satisfy the one-bending property.
    \item[\emph{(3)}] For any $m>0$, there exist infinitely many $(d_1, d_2)$ such that $(d_1, d_2)$ and $(ka, kb)$ satisfy the one-bending property and that $m = |d_1b - d_2a|$.
  \end{enumerate}
\end{cor}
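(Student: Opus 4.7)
The plan is to derive all three parts from Lemma~\ref{lem: one-bending} by direct verification of its hypotheses, which assert that $(d_1,d_2)$ and $(ka,kb)$ have the one-bending property whenever
\[
    \frac{kab+1}{kb^2}\geq \frac{d_1}{d_2} > \frac{a}{b} \quad \text{or} \quad
    \frac{ka^2}{kab+1}\leq \frac{d_1}{d_2} < \frac{a}{b}.
\]
So the whole proof is essentially a translation between these inequalities and the conditions listed in the corollary. No combinatorial work on broken lines is needed here; it was absorbed into Lemma~\ref{lem: one-bending}.

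For part (1), I would just substitute. For $(d_1,d_2)=(kab+1,kb^2)$, the ratio $d_1/d_2$ is exactly the upper bound $\frac{kab+1}{kb^2}$ of the first inequality, and the strict lower bound $d_1/d_2>a/b$ is clear since $b(kab+1)-a(kb^2)=b>0$. The case $(d_1,d_2)=(ka^2,kab+1)$ is symmetric and falls under the second inequality, with strictness coming from $a>0$. So Lemma~\ref{lem: one-bending} applies directly.

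For part (2), I would rewrite the inequalities of Lemma~\ref{lem: one-bending} using $m=|d_1b-d_2a|$. In the regime $d_1b-d_2a>0$, the condition $\frac{d_1}{d_2}\leq \frac{kab+1}{kb^2}$ is, after clearing denominators, equivalent to $d_2\geq kb(d_1b-d_2a)=mkb$, which is exactly the hypothesis. The case $d_1b-d_2a<0$ gives $d_1\geq mka$ in the same way. In either case, the strict comparison with $a/b$ is built into the sign of $d_1b-d_2a$, so the hypothesis of Lemma~\ref{lem: one-bending} is met.

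For part (3), I would use B\'ezout's identity: since $\gcd(a,b)=1$, there exist integers $(d_1^0,d_2^0)$ with $d_1^0 b-d_2^0 a=\pm m$. Then $(d_1^0+ta,\,d_2^0+tb)$ for $t\in\mathbb Z$ is another solution with the same value of $d_1b-d_2a$, hence of $m$. Choosing $t$ sufficiently large and positive makes both coordinates eventually exceed the thresholds $mka$ and $mkb$ required in part (2), so each such shift satisfies the one-bending property, yielding infinitely many $(d_1,d_2)$. The main conceptual content lies entirely in Lemma~\ref{lem: one-bending}; the only subtlety here is bookkeeping with the two sign cases for $d_1b-d_2a$, which matches the two inequalities in that lemma.
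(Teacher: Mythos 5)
Your proposal is correct and follows essentially the same route as the paper: reduce each part to checking the inequality hypothesis of Lemma~\ref{lem: one-bending}, with part (1) by direct substitution, part (2) by clearing denominators to rewrite the bound as $d_2\geq mkb$ (resp.\ $d_1\geq mka$), and part (3) by shifting a B\'ezout base solution by $N(a,b)$ and invoking part (2). The algebraic manipulation in (2) is presented slightly differently (you clear denominators outright, the paper first rewrites $\frac{kab+1}{kb^2}$ as $\frac{a}{b}+\frac{1}{kb^2}$), but the underlying argument and conclusion are identical.
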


\begin{proof}
  Statement (1) is a direct corollary of \Cref{lem: one-bending}. We prove (2) when $m = \aone b - \atwo a$, and the other case is similar. Again by \Cref{lem: one-bending}, it suffices to check 
  \[
    \frac{kab + 1}{kb^2} = \frac{a}{b} + \frac{1}{kb^2} \geq \frac{\aone}{\atwo} \iff \frac{1}{kb^2} \geq \frac{m}{\atwo b} \iff \atwo \geq mkb.
  \]

  For (3), choose some $(d_1, d_2)\in \mathbb N^2$ so that $m = |d_1b - d_2a|$. By (2), for sufficiently large $N$, $(d_1 + Na, d_2 + Nb)$ and $(ka, kb)$ satisfy the one-bending property.
\end{proof}

Consider the ordinary cluster scattering diagram $\mathfrak D_{(\ell_1, \ell_2)} = \Scat(1 + x^{\ell_1}, 1 + y^{\ell_2})$. More one-bending pairs can be found and will be useful in later applications. Notice that in this case, the total amount of bending of a broken line can only be of the form $(k\ell_1 \bar a, k \ell_2 \bar b)$,
where $k\in \mathbb N$ and $(\bar a, \bar b)\in \mathbb N^2$ is coprime.

\begin{lem}\label{lem: one-bending eg central}
  Let $(\bar a, \bar b)\in \mathbb N^2$ be coprime and $k\geq 1$.
  \begin{enumerate}
    \item If $\bar b \leq \ell_1$, then the vectors $(\aone ,\atwo ) = (k\ell_1 \bar a + 1, k \ell_2 \bar b)$ and $(k\ell_1\bar a, k\ell_2 \bar b)$ satisfy the one-bending property with respect to $\mathfrak D_{(\ell_1, \ell_2)}$.
    \item If $\bar a \leq \ell_2$, then the vectors $(\aone ,\atwo ) = (k\ell_1 \bar a, k \ell_2 \bar b + 1)$ and $(k\ell_1\bar a, k\ell_2 \bar b)$ satisfy the one-bending property with respect to $\mathfrak D_{(\ell_1, \ell_2)}$.
  \end{enumerate}
\end{lem}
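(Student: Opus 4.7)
The plan is to adapt the proof of Lemma~\ref{lem: one-bending} by incorporating an arithmetic constraint specific to the ordinary cluster scattering diagram $\mathfrak D_{(\ell_1,\ell_2)}$. The key structural observation is that the substitution $u = x^{\ell_1}$, $v = y^{\ell_2}$ identifies $\mathfrak D_{(\ell_1,\ell_2)} = \Scat(1+x^{\ell_1}, 1+y^{\ell_2})$ with $\Scat(1+u, 1+v)$ on the rescaled lattice, so every wall-function exponent in $\mathfrak D_{(\ell_1,\ell_2)}$ lies in $\ell_1 \mathbb Z \oplus \ell_2 \mathbb Z$. In particular, every bending increment at any wall crossing is of the form $(\ell_1 \alpha, \ell_2 \beta)$ with $\alpha, \beta \in \mathbb N$.

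For part (1), set $g = \gcd(\ell_1 \bar a, \ell_2 \bar b)$ and let $\mathfrak d = \mathbb R_{\leq 0}(\ell_1 \bar a/g, \ell_2 \bar b/g)$ be the wall of the designated one-bend broken line $\beta$. Suppose for contradiction that some broken line $\beta'$ satisfying~\eqref{eq: one-bend 1} and~\eqref{eq: one-bend 2} bends at least twice. Following the decomposition in Lemma~\ref{lem: one-bending}, the total bending splits as $(A_1,B_1) + (A_2,B_2) = (k\ell_1\bar a, k\ell_2\bar b)$, where $(A_1,B_1)$ aggregates bendings at walls of slope $\leq \ell_2\bar b/(\ell_1\bar a)$ (including $\mathfrak d$) and $(A_2,B_2)$ at walls of strictly greater slope. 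By the structural input, $A_1 = \ell_1 \tilde A_1$ and $B_1 = \ell_2 \tilde B_1$ for non-negative integers $\tilde A_1, \tilde B_1$, with $\tilde B_1 \leq k \bar b$.

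If $(A_2,B_2) \neq 0$, the slope inequalities of Lemma~\ref{lem: one-bending} become strict and yield
\[
  \frac{\ell_1\bar a}{\ell_2\bar b} \,<\, \frac{A_1}{B_1} \,<\, \frac{d_1}{d_2}.
\]
The left inequality gives $\bar b \tilde A_1 - \bar a \tilde B_1 \geq 1$ (being a positive integer), and the right, after clearing denominators, yields $\tilde B_1 > k\ell_1(\bar b \tilde A_1 - \bar a \tilde B_1) \geq k\ell_1$. Combined with $\tilde B_1 \leq k\bar b$, this forces $\bar b > \ell_1$, contradicting the hypothesis. If instead $(A_2,B_2) = 0$, all bending sums to a vector of slope exactly $\ell_2\bar b/(\ell_1\bar a)$; since each group-1 wall has slope at most this value, every bend must occur at $\mathfrak d$ itself. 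But a broken line crosses any given ray from the origin at most once (by transversality, the segment following a crossing leaves the ray and, not being parallel to it, cannot return), so $\beta'$ bends at $\mathfrak d$ at most once, contradicting multi-bending.

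Hence any broken line satisfying~\eqref{eq: one-bend 1} and~\eqref{eq: one-bend 2} bends exactly once; such a bend must be at a wall parallel to $(k\ell_1\bar a, k\ell_2\bar b)$, namely $\mathfrak d$, reproducing the bend-type of $\beta$. Part (2) follows by the same argument with the roles of $\ell_1$ and $\ell_2$ swapped, leading to the contradiction $\bar a > \ell_2$. The hard part will be correctly transporting the slope chain of Lemma~\ref{lem: one-bending} to our setting (in particular verifying strictness when $(A_2,B_2)\neq 0$); the decisive new ingredient is the divisibility $A_1 \in \ell_1 \mathbb Z$, $B_1 \in \ell_2 \mathbb Z$ furnished by the rescaling, which sharpens the integer inequality used in the proof of Lemma~\ref{lem: one-bending} into the required contradiction with $\bar b \leq \ell_1$.
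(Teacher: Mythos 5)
Your proof is correct and follows essentially the same argument as the paper's: both derive the contradiction from the fact that the bending increments in $\mathfrak D_{(\ell_1,\ell_2)}$ lie in $\ell_1\mathbb Z \oplus \ell_2\mathbb Z$, then run the slope-chain argument of \Cref{lem: one-bending} to get $b_1\bar a < a_1\bar b < b_1\bar a + b_1/(k\ell_1)$ and note that $b_1 \leq k\bar b \leq k\ell_1$ leaves no room for the integer $a_1\bar b$ (your reformulation $\tilde B_1 > k\ell_1 \geq k\bar b$ is the same inequality read the other way). The only genuine difference is that you explicitly dispose of the degenerate case $(A_2,B_2)=0$ — observing that a broken line cannot bend twice at the same ray — whereas the paper folds this into the unproved assertion in \Cref{lem: one-bending} that the three inequalities are strict; your treatment is a bit more careful but the underlying argument is the same.
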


\begin{proof}
  We prove (1) and the other case is similar. As in the proof of \Cref{lem: one-bending}, it suffices to show that there is no integer solution $a_1 \leq k\bar a$, $b_1\leq k\bar b$ for the inequality
  \begin{align*}
      \frac{k\ell_1 \bar a + 1}{k \ell_2 \bar b} > \frac{\ell_1 a_1}{\ell_2 b_1} > \frac{\ell_1 \bar a}{\ell_2 \bar b} \quad     \iff \quad b_1\bar a + \frac{b_1}{k\ell_1} > a_1\bar b > b_1\bar a.
  \end{align*}
  Since $b_1\leq k\bar b \leq k\ell_1$, we have that $a_1\bar b$ belongs to an open interval without any integer in it.
\end{proof}

The above lemma applies in particular to the \emph{central ray} $\mathbb R_{\leq 0}(\ell_1\bar a, \ell_2\bar b)$ with $(\bar a, \bar b) = (1, 1)$.

\subsection{Formula for wall-functions}\label{subsec: formula wall function}

Fix two positive integers $\ell_1, \ell_2$. Consider the generalized cluster scattering diagram 
\[
    \frakD = \Scat\left(1 + \sum_{k=1}^{\ell_1} p_{1,k}x^k, 1 + \sum_{k=1}^{\ell_2} p_{2,k}y^k\right).
\]
Let $(a, b)\in \mathbb N^2$ be coprime. Let $m$ be a positive integer. Then we can express the $m$-th power of the wall-function $f_{\mathbb R_{\leq 0}(a, b)}$ by
\begin{equation}\label{eq: m-th power of fab}
    \left(f_{\mathbb R_{\leq 0}(a, b)}\right)^m = 1 + \sum_{k\geq 1} \lambda^{(m)}(ka, kb) x^{ka}y^{kb},
\end{equation}
where each coefficient $\lambda^{(m)}(a,b)$ is a polynomial in $\{p_{i,j}\mid i=1, 2, 1\leq j\leq \ell_i\}$. When $m=1$, we simply denote $\lambda(ka, kb) = \lambda^{(1)}(ka, kb)$.

\begin{thm}[Shadowed grading formula]\label{thm: power shadow grading formula}
    \begin{equation}\label{eq: shadow grading formula}
        \left(f_{\mathbb R_{\leq 0}(a, b)}\right)^m = 1 + \sum_{k\geq 1}\sum_{\omega} \mathrm{wt}(\omega) x^{ka}y^{kb},  
    \end{equation}
    where the second sum is over all shadowed gradings on the Dyck path $\mathcal P(d_{1,k}, d_{2, k})$ of total horizontal weight $kb$ and total vertical weight $ka$ for any $(d_{1, k}, d_{2, k})\in \N^2$ such that
    \begin{itemize}
        \item $d_{1, k}\geq ka$ and $d_{2, k}\geq kb$,
        \item $(d_{1, k}, d_{2, k})$ and $(ka, kb)$ satisfy the one-bending property,
        \item $m = |ad_{2,k} - bd_{1,k}|$.
    \end{itemize}
\end{thm}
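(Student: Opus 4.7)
The plan is to compute the coefficient of $x_1^{ka - d_1}x_2^{kb - d_2}$ in the theta function $\vartheta_{-(d_1, d_2)}$ in two different ways, where $(d_1, d_2) = (d_{1,k}, d_{2,k})$ is chosen per the hypotheses of the theorem. By \Cref{thm: gen greedy equals theta} this theta function equals the generalized greedy element $x[d_1, d_2]$, which admits a combinatorial description via compatible gradings (\Cref{thm: generalized greedy compatible gradings}). Matching the two computations will produce the asserted formula \eqref{eq: shadow grading formula}.

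For the broken line computation, I work with a generic endpoint $Q_+$ in the first quadrant and consider broken lines with initial exponent $-(d_1, d_2)$ and final exponent $(ka - d_1, kb - d_2)$. The one-bending hypothesis guarantees that every such broken line bends exactly once, and must do so at the wall $(\R_{\leq 0}(a,b), f_{\R_{\leq 0}(a,b)})$; existence and uniqueness of a bending broken line for suitably placed $Q_+$ follows from \Cref{rem: one-bending nonempty}. A direct calculation of the wall-crossing automorphism shows that its effect on $x^{-(d_1, d_2)}$ is multiplication by $f_{\R_{\leq 0}(a,b)}^{n}$ with $n = |ad_2 - bd_1| = m$, and the unique monomial in $(f_{\R_{\leq 0}(a,b)})^m$ yielding the prescribed final exponent is $\lambda^{(m)}(ka, kb)\, x^{ka}y^{kb}$. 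Hence the coefficient of $x_1^{ka-d_1}x_2^{kb-d_2}$ in $\vartheta_{-(d_1, d_2)}$ is exactly $\lambda^{(m)}(ka, kb)$. On the greedy side, \Cref{thm: generalized greedy compatible gradings} immediately expresses this same coefficient as $\sum_\omega \wt(\omega)$, summed over all compatible gradings $\omega$ on $\calP(d_1, d_2)$ with $\omega(\calP_\north) = ka$ and $\omega(\calP_\east) = kb$.

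The remaining combinatorial task is to show that, under the assumed hypotheses on $(d_{1,k}, d_{2,k})$, the compatible gradings with these total horizontal and vertical weights are precisely the shadowed ones, with the branch $S_\east \subseteq \sh(\calP_\north)$ or $S_\north \subseteq \sh(\calP_\east)$ of \Cref{main_def} selected according to the sign of $ad_{2,k} - bd_{1,k}$. I expect this to be the main obstacle: one must translate the one-bending constraint --- that any second bending in the third quadrant is geometrically forbidden by the numerics of \Cref{lem: one-bending} --- into the combinatorial constraint that positively-weighted edges of any contributing compatible grading must lie in the shadow of opposite-direction weighted edges. A natural route is to analyze the Christoffel-word structure of $\calP(d_{1,k}, d_{2,k})$ under the bounds $d_{1,k} \geq ka$, $d_{2,k} \geq kb$, and to argue that any compatible grading violating the shadowed condition would carry enough ``slack'' to correspond to a multi-bending broken line with the same initial and terminal data, contradicting the one-bending hypothesis; conversely every shadowed grading can be exhibited as arising from a one-bending broken line, establishing both inclusions.
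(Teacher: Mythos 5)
Your proposal has a genuine gap in the broken-line computation, and a corresponding error in the combinatorial claim. The one-bending property (\Cref{def: one-bending}) only constrains broken lines that \emph{do not bend at the positive coordinate axes}; it says nothing about broken lines that do. For $Q_+$ in the first quadrant, the coefficient of $x_1^{ka-d_1}x_2^{kb-d_2}$ in $\vartheta_{-(d_1,d_2)}$ receives contributions from broken lines that bend at $\R_{\leq 0}(a,b)$ \emph{and then again at a positive coordinate axis}, and these contributions are in general nonzero. So that theta coefficient is not $\lambda^{(m)}(ka,kb)$; by \Cref{thm: gen greedy equals theta} and \Cref{thm: generalized greedy compatible gradings} it is $c(ka,kb)$, the weighted count of \emph{all} compatible gradings on $\calP(d_1,d_2)$ of the given totals. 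Correspondingly, the claim "compatible gradings with these totals are precisely the shadowed ones" is false: the shadowed gradings form a proper subset (the set $\CG^0(ka,kb)$ in the paper's notation), and the surplus compatible gradings in $\CG \setminus \CG^0$ are exactly what accounts for the extra multi-bending broken lines. Your proposed route for the combinatorial step — that a non-shadowed compatible grading would force a multi-bend broken line contradicting one-bending — cannot work, because the extra bend occurs at a positive axis, which the one-bending property explicitly permits.

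What is actually needed is a refined version of the equality you are trying to match: partition the compatible gradings by the weight $t$ carried outside the shadow, giving sets $\CG^t$, and partition the broken lines by the multiplicity $t$ of the bend at the relevant positive axis, giving sets $\BL^t$. The paper's \Cref{lem: broken line to compatible grading refined equality} proves $|\BL^t(ka,kb)| = |\CG^t(ka,kb)|$ for all $t$ by a downward induction on $kb - t$, using at each step the coarse equality $|\BL| = |\CG|$ from greedy $=$ theta, together with the factorizations $|\BL^t(ka,kb)| = \pi_{d_1-ka,t}|\BL^0(ka,kb-t)|$ and $|\CG^t(ka,kb)| = \pi_{d_1-ka,t}|\CG^0(ka,kb-t)|$. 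Only after this refinement does the one-bending property enter, and only to identify $|\BL^0(ka,kb)|$ with $\lambda^{(m)}(ka,kb)$ (\Cref{prop: broken line weight single bend}); \Cref{lem: one of cgpm vanishes} then identifies $\CG^0$ with the shadowed gradings. Your outline collapses the $t$-grading entirely, which is precisely where the proof lives.
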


For any tuple $\mathbf Q = (q_j)_{j=1}^\ell\in \mathbb N^\ell$, define the weighted sum $w(\mathbf Q) \coloneqq \sum_{j=1}^\ell jq_j$. For tuples $\mathbf Q_i = (q_{i, j})_{j=1}^{\ell_i}\in \mathbb N^{\ell_i}$, $i = 1, 2$, denote $p_i^{\mathbf Q_i} = \prod_{j=1}^{\ell_i} p_{i,j}^{q_{i,j}}$. Then the expression \eqref{eq: m-th power of fab} can be refined as
\begin{equation}
    \left(f_{\mathbb R_{\leq 0}(a, b)}\right)^m = \sum_{k\geq 0}\sum_{\substack{w(\mathbf Q_1)=ka \\ w(\mathbf Q_2) = kb}}\lambda^{(m)}_{a, b}(\mathbf Q_1, \mathbf Q_2)p_1^{\mathbf Q_1}p_2^{\mathbf Q_2} x^{ka} y^{kb}.
\end{equation}
It follows from \Cref{thm: power shadow grading formula} that for each $(\mathbf Q_1, \mathbf Q_2)$ such that $w(\mathbf Q_1) = ka, w(\mathbf Q_2) = kb$,
\[
    \lambda^{(m)}_{a, b}(\mathbf Q_1, \mathbf Q_2) = \left|\left\{\omega \,\middle\vert\, \text{$\omega$ is shadowed},\ \mathrm{wt}(\omega) = p_1^{\mathbf Q_1}p_2^{\mathbf Q_2}\right\}\right|,
\]
that is, the number of shadowed gradings on $\mathcal P(d_{1,k}, d_{2, k})$ whose weight polynomial is $p_1^{\mathbf Q_1}p_2^{\mathbf Q_2}$. The fact that this number computes $\lambda_{a, b}^{(m)}(\mathbf Q_1, \mathbf Q_2)$ implies it does not depend on the choice of $(d_{1, k}, d_{2, k})$. This turns out to be a rather delicate property of shadowed gradings from the view of combinatorics.

\begin{rem}\label{rem: tight grading case}
    When $m = 1$, any $d_{1, k}\geq ka$, $d_{2, k}\geq kb$ such that $|ad_{2,k}-bd_{2,k}| = 1$ satisfy the one-bending property by \Cref{cor: vectors one-bending}(2). In this case, shadowed gradings are precisely tight gradings and the formula \eqref{eq: shadow grading formula} was announced in \cite[Theorem 3.5]{BLMshort}.
\end{rem}

We can use the formula \eqref{eq: shadow grading formula} to directly calculate the wall-function of the single wall comprising the Badlands in an affine cluster scattering diagram.

\begin{exmp}\label{exmp: affine 2 2 wall-function}
Consider the cluster scattering diagram $\frakD_{(2,2)}$. It follows from \Cref{thm: power shadow grading formula} and \Cref{exmp: affine 2 2 gradings} that 
$$f_{\R_{\leq 0}(1,1)} = \sum_{k \geq 0} (k+1) x^{2k}y^{2k}\,.$$
This was known to experts \cite{KS08} and was shown using quiver representations by Reineke \cite{Rei08}. 
\end{exmp}

\begin{exmp}\label{exmp: affine 4 1 wall-function}
Consider the cluster scattering diagram $\frakD_{(4,1)}$. It follows from \Cref{thm: power shadow grading formula} and \Cref{exmp: affine 4 1 gradings} that 
$$f_{\R_{\leq 0}(2,1)} = \sum_{k \geq 0} (2k+1) x^{4k}y^{2k}\,.$$
This has been previously calculated by Reading \cite{Rea} using limits of $F$-polynomial expressions.
\end{exmp}

\subsection{Proof of \Cref{thm: power shadow grading formula}}\label{subsec: first proof}

We now prove the shadowed grading formula for the coefficients of powers of wall-functions, as described in \Cref{subsec: formula wall function}.  This proof relies on the fact that the generalized greedy and generalized theta bases coincide, a result established in \Cref{sec: greedy = theta}.  We use the one-bending property from \Cref{subsec: one-bending} to refine the equality of these bases, allowing us to find the shadowed gradings corresponding to a particularly nice broken line.  This broken line is chosen to have weight equal to a chosen coefficient of the power of a wall-function, yielding the desired formula in terms of shadowed gradings. 

For the entirety of this subsection, fix $a$, $b$, $k$, and $m$ as in \Cref{subsec: formula wall function}, and set $\aone \coloneqq d_{1,k}$ and $\atwo \coloneqq  d_{2,k}$. 

Recall that $\CG(p,q)$ is the set of compatible gradings $\omega$ on $\calP = \calP([d_1]_+,[d_2]_+)$ with $p = \omega(\calP_\north)$ and $q = \omega(\calP_\east)$.  We let $\CG_+^t(p,q) \subseteq \CG(p,q)$ consist of the compatible gradings with $\omega(\calP_\north \cut \sh(\calP_\east)) = t$. Similarly, we let $\CG_-^t(p,q) \subseteq \CG(p,q)$ consist of the gradings with $\omega(\calP_\east \cut \sh(\calP_\north)) = t$.  Let 
$$\CG^t(p,q) = \begin{cases}\CG_+^t(p,q) & \text{ if $b\aone \geq a\atwo$},\\ \CG_-^t(p,q) & \text{ otherwise,}\end{cases}$$
and let $|\CG^t(p,q)| = \sum_{\omega\in \CG^t(ka,kb)} \text{wt}(\omega) \in \mathbb N[\mnd]$.  We show in the following lemma that $\CG^0(p,q)$ is precisely the set of shadowed gradings in $\CG(p,q)$.

\begin{lem}\label{lem: one of cgpm vanishes}
    One of $\CG^0_{\pm}(p, q)$ is empty unless $p = q = 0$. Precisely, when $(p, q)\neq (0, 0)$,
    \begin{enumerate}
        \item[\emph{(1)}] $\CG^0_+(p, q) = \varnothing$ if $q\aone \leq p\atwo$; \\
        \item[\emph{(2)}] $\CG^0_-(p, q) = \varnothing$ if $q\aone \geq p\atwo$.
    \end{enumerate}
\end{lem}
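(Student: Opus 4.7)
The plan is to reduce (2) to (1) via the obvious involution on the Dyck path, and then prove the contrapositive of (1) by combining a walk interpretation of the shadow condition with the geometry of the maximal Dyck path.

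Swapping east and north edges of $\calP(d_1,d_2)$ (equivalently, transposing to $\calP(d_2,d_1)$ and exchanging $p\leftrightarrow q$) carries $\CG^0_+(p,q)$ bijectively onto $\CG^0_-(q,p)$ and the hypothesis $qd_1\leq pd_2$ into $pd_2\leq qd_1$; hence (2) follows from (1), and it suffices to prove (1). Proceeding by contrapositive, I would assume $\omega\in\CG^0_+(p,q)$ with $(p,q)\neq(0,0)$ and show $qd_1>pd_2$. If $p=0$, then $q\geq 1$ forces $qd_1>0=pd_2$, so one may assume $p\geq 1$.

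By the minimality of $T_i$ in the definition of $\sh(u_i)$, the condition $v_j\in\sh(u_i)$ has the following walk interpretation: starting from $u_i$ and proceeding cyclically east, the running $\omega$-weight of east edges stays at least the number of north edges traversed, at every north edge encountered up to and including $v_j$. Since both endpoints of $\overrightarrow{u_iT_i}$ are lattice points of the maximal Dyck path $\calP(d_1,d_2)$ and therefore satisfy $yd_1\leq xd_2$, one obtains the local inequality
\[
|\sh(u_i)|\,d_1 \;\leq\; r_i\,d_2,\qquad r_i \coloneqq |(\overrightarrow{u_iT_i})_\east|.
\]
Aggregating these local inequalities globally using the covering hypothesis $S_\north\subseteq\sh(\calP_\east)$ is designed to yield $pd_2<qd_1$.

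The main obstacle will be the aggregation step: the shadows $\sh(u_i)$ may overlap, and the individual weights $p_j, q_i$ can exceed $1$, so a naive count double-counts. The resolution is to construct a canonical assignment $\phi\colon S_\north\to S_\east$ sending each $v_j$ to an east edge $u_{\phi(j)}$ with $v_j\in\sh(u_{\phi(j)})$, and to allocate the $p_j$ units of weight at $v_j$ disjointly against $p_j$ units of east weight along the walk $\overrightarrow{u_{\phi(j)}v_j}$. The walk monotonicity from the shadow definition guarantees such an allocation exists, and the Dyck-path geometry contributes the $d_2/d_1$ factor. Strict inequality in the final bound comes from $\calP(d_1,d_2)$ lying strictly below the diagonal except at the corners $(0,0)$ and $(d_1,d_2)$, providing the positive slack that forces the dichotomy between $\CG^0_+$ and $\CG^0_-$.
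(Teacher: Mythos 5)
Your reduction of (2) to (1) via the east/north involution matches the paper's ``by symmetry,'' but the routes diverge immediately afterwards and yours has a genuine gap. The paper does not aggregate over all horizontal edges. It applies a weighted pigeonhole to the equality $\omega(\calP_\east)/\omega(\sh(\calP_\east)) = q/p$ to locate a \emph{single} horizontal edge $u$ with $\omega(u)/\omega(\sh(u)) \leq q/p$, picks the vertical edge $v$ with $|(\overrightarrow{uv})_\north| = \omega(u)$, uses compatibility at $(u,v)$ to force $|(\overrightarrow{uv})_\east| \geq \omega(\sh(u)) + 1$, and then invokes the modular description of the maximal Dyck path to obtain the strict \emph{weighted} inequality $\omega(\sh(u))\,d_2 < \omega(u)\,d_1$, which contradicts $q/p \leq d_2/d_1$. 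The entire contradiction lives at one edge, in terms of $\omega$-weights.

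Your argument breaks at the local inequality $|\sh(u_i)|\,d_1 \leq r_i\,d_2$. The two endpoints of $\overrightarrow{u_iT_i}$ being lattice points of $\calP(d_1,d_2)$ (hence each satisfying $y\,d_1 \leq x\,d_2$) does \emph{not} bound the slope of the subpath between them: the initial point may lie well below the diagonal while the terminal point sits on it, making the intermediate slope exceed $d_2/d_1$. Concretely, in $\calP(3,2)$ with edges $u_1,u_2,v_1,u_3,v_2$, set $\omega(u_3)=1$ and $\omega = 0$ elsewhere; this is a compatible shadowed grading with $T_3 = v_2$, $\sh(u_3)=\{v_2\}$, $r_3 = 1$, and $|\sh(u_3)|\,d_1 = 3 > 2 = r_3\,d_2$, so the claimed inequality fails. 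Independently, even if the inequality held, it compares edge \emph{counts}, whereas the target $p\,d_2 < q\,d_1$ compares the total gradings $p = \omega(\calP_\north)$ and $q = \omega(\calP_\east)$; the proposed assignment $\phi$ is meant to bridge counts and weights, but no mechanism is given, and it is not clear one exists short of reproving the modular estimate the paper uses. Replacing the naive slope bound with the genuine modular property of $\calP(d_1,d_2)$ and then tracking $\omega$-weights rather than counts is precisely what the paper's single-edge pigeonhole accomplishes more directly.
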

\begin{proof} 

By symmetry, it is enough to prove the first case, where we assume $\frac{q}{p} \leq \frac{d_2}{d_1}$.  Suppose such a compatible grading $\omega$ exists.   By assumption, we have $\frac{\omega(\calP_\east)}{\omega(\sh(\calP_\east))} = \frac{q}{p}$.  Hence there must be some horizontal edge $u$ such that $\frac{\omega(u)}{\omega(\sh(u))} \leq \frac{q}{p}$.  Choose $v$ to be the unique vertical edge such that $|(\overrightarrow{uv})_\north| = \omega(u)$.  

By the compatibility of $\omega$, we must have $|(\overrightarrow{uv})_\east| \geq \omega(\sh(u)) + 1$.   However, by the modular definition of the maximal Dyck path (see, for example, \cite[Lemma 1.3]{BLRS}), we then must have that $\omega(\sh(u)) d_2 < \omega(u) d_1$.  However, this yields a contradiction since we have $\frac{\omega(u)}{\omega(\sh(u))}  \leq \frac{q}{p} \leq \frac{d_2}{d_1}$.
\end{proof}

 Fix an endpoint $Q = (Q_1,Q_2) \in \R^2_{>0}$ such that the quantity $(-m+ka)Q_2 -(-n+kb)Q_1$ is positive if $b\aone \leq \atwo a$, and negative otherwise.  Let $\BL(ka,kb)$ denote the set of broken lines with initial exponent $(-\aone,-\atwo)$ and final exponent $(-\aone+ka, -\atwo+kb)$ that terminate at $Q$. Let $\BL^t(ka,kb)$ denote the subset of broken lines in $\BL(ka,kb)$ that bend at the positive $y$-axis with multiplicity $t$. Given a set $B$ of broken lines, let $|B| \coloneqq \sum_{\beta \in B} c(\beta)$.

\begin{lem}\label{lem: broken line to compatible grading refined equality}
  For all $t \in \{0,1,\dots,kb\}$, we have 
  $$|\BL^t(ka,kb)| = |\CG^t(ka,kb)|\,.$$
\end{lem}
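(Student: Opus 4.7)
The plan is to deduce this refined equality from the unrefined identity $|\BL(ka,kb)| = |\CG(ka,kb)|$, which follows by extracting the coefficient of $x_1^{-d_1+ka}x_2^{-d_2+kb}$ from the equation $\vartheta_{-(d_1,d_2)} = x[d_1,d_2]$ established in Theorem 4.1, combined with the compatible grading formula of Theorem 3.11. Both sides of this unrefined equality are polynomials in the initial wall-function coefficients $p_{i,k}$, and the goal is to match them stratum by stratum in $t$.

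To match the stratifications indexed by $t$, I would analyze how the correspondence between broken lines and compatible gradings refines. The geometry of $Q$, chosen so that $(-d_1+ka)Q_2 - (-d_2+kb)Q_1$ has the prescribed sign, forces broken lines in $\BL(ka,kb)$ to approach $Q$ from a specific angular sector, restricting how they can interact with the positive $y$-axis. Under the one-bending property of $(d_1,d_2)$, a broken line $\beta \in \BL^t(ka,kb)$ should admit a canonical decomposition into a ``backbone'' bending only at rays in the third quadrant, which for $t=0$ reduces (in the generic region described in Remark 5.2) to the unique canonical broken line with a single bend at $\R_{\leq 0}(a,b)$ of multiplicity $k$, together with $t$-worth of additional bending at the positive $y$-axis. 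On the compatible grading side, $\omega \in \CG^t(ka,kb)$ (in the case $bd_1 \geq ad_2$) decomposes into its shadowed part, recording the horizontal weights together with the vertical edges they cover, plus $t$-worth of ``free'' vertical weight outside the horizontal shadow $\sh(\calP_\east)$.

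The main obstacle is making this decomposition-matching rigorous and weight-preserving. Since the weight polynomials involve factors from both $P_1$ and $P_2$ (coming from bends at third-quadrant rays as well as at the positive axes), verifying weight preservation requires tracking which bending contributions on the broken-line side correspond to which edge-weight contributions on the grading side. The one-bending property of $(d_1,d_2)$ is essential here: it ensures that broken lines in $\BL(ka,kb)$ cannot bend at extraneous third-quadrant rays, which in turn forces the canonical backbone-plus-\mbox{$y$-axis-bending} decomposition, mirroring precisely the shadow-plus-free-weight decomposition on the grading side. I expect the refinement from $t=0$ to general $t$ can be carried out by an explicit extension of the bijection underlying Rupel's proof of Theorem 3.11, refined to track the multiplicity of positive $y$-axis bendings, with the symmetric argument handling the case $bd_1 < ad_2$.
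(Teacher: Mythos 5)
You have correctly identified the starting point (theta = greedy via \Cref{thm: gen greedy equals theta} plus the compatible grading formula \Cref{thm: generalized greedy compatible gradings}) and the two factorizations the paper uses: stripping off the last bend at the positive $y$-axis gives $|\BL^t(ka,kb)| = \pi_{d_1-ka,t}(p_{2,\bullet})\,|\BL^0(ka,kb-t)|$, and stripping off the vertical weight outside $\sh(\calP_\east)$ gives $|\CG^t(ka,kb)| = \pi_{d_1-ka,t}(p_{2,\bullet})\,|\CG^0(ka,kb-t)|$. So the lemma reduces to $|\BL^0(ka,kb-t)| = |\CG^0(ka,kb-t)|$ for all $t$. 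This is exactly where your proposal has a gap.

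Your plan is to obtain this matching by an explicit bijection, ``an explicit extension of the bijection underlying Rupel's proof of Theorem 3.11,'' but no such bijection exists to extend: Rupel's argument (and the version in \Cref{app: generalized greedy element formula}) is a recursion on coefficients, not a bijection between broken lines and gradings. More seriously, you appear to be leaning on the one-bending property to control the ``backbone'' for general $t$, but the one-bending property in \Cref{def: one-bending} is an assumption on the pair $(d_1,d_2)$ and $(ka,kb)$ only; it gives no control over broken lines with initial exponent $(-d_1,-d_2)$ and final exponent $(ka-d_1, kb-t-d_2)$ when $t>0$. For such totals the backbone can bend at many third-quadrant rays, so there is no canonical shape to pair against a shadowed grading, and the bijective approach stalls. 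The missing idea is a \emph{downward induction on $kb - t$}: apply theta = greedy not once but for each horizontal total $kb' \le kb$ to get $|\BL(ka,kb')| = |\CG(ka,kb')|$; write both sides as $\sum_s \pi_{d_1-ka,s}(p_{2,\bullet})\,|\BL^0(ka,kb'-s)|$ respectively $\sum_s \pi_{d_1-ka,s}(p_{2,\bullet})\,|\CG^0(ka,kb'-s)|$; then use the inductive hypothesis to cancel the $s>0$ terms and extract $|\BL^0(ka,kb')| = |\CG^0(ka,kb')|$, with the base case $kb'=0$ being a direct binomial computation. This purely algebraic induction sidesteps the need for any bijection and is what makes the paper's proof go through.
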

\begin{proof}
We handle the case when $b\aone \geq a\atwo$, with the other case following analogously.  We proceed by induction on the quantity $kb- t$. The base case $kb - t = 0$ is readily checked, as both quantities are simply a product of two binomial coefficients. 

For the inductive step, we first note that the sets $\CG(ka,kb-t)$ and $\BL(ka,kb-t)$ can be refined as $\CG(ka,kb-t) = \bigcup_{s = 0}^{kb-t} \CG^s(ka,kb-t)$ and $\BL(ka,kb-t) = \bigcup_{s = 0}^{kb-t} \BL^s(ka,kb- t)$.  Since these unions are disjoint, we have
\begin{align*}
  |\CG(ka,kb- t)| & = \sum_{s=0}^{kb - t} |\CG^s(ka,kb- t)|, \\
  |\BL(ka,kb- t)| & = \sum_{s=0}^{kb - t} |\BL^s(ka,kb- t)|\,.
\end{align*}

Applying \Cref{thm: gen greedy equals theta} yields
$$\sum_{s=0}^{kb - t} |\CG^s(ka,kb- t)| = \sum_{s=0}^{kb - t} |\BL^s(ka,kb- t)|\,.$$
Using the inductive hypothesis, we have $|\CG^s(ka,kb- t)| = |\BL^s(ka,kb- t)|$ for $s > 0$.  Hence we can also conclude $|\CG^0(ka,kb- t)| = |\BL^0(ka,kb- t)|$.  By separating out the contribution of the edges in $\calP_N \cut \sh(\calP_\east)$ to the weight of each compatible grading, we see
$$|\CG^t(ka,kb)| = \pi_{d_1 - ka, t}(p_{2,\bullet})|\CG^0(ka,kb- t)|\,.$$
We also have 
$$|\BL^t(ka,kb)| = \pi_{d_1 - ka, t}(p_{2,\bullet})|\BL^0(ka,kb- t)|\,,$$
which can be seen examining the preimage of the map taking each broken line in $\BL^t(ka,kb)$ to a broken line in $\BL^0(ka,kb- t)$ by forgetting the last bending. We can therefore conclude $|\BL^t(ka,kb)| = |\CG^t(ka,kb)|$, as desired.
\end{proof}

\begin{prop}\label{prop: broken line weight single bend}
We have $|\BL^0(ka,kb)| = \lambda^{(m)}(ka,kb)$. 
\end{prop}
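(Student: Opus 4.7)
The plan is to isolate a distinguished broken line $\beta_\ast \in \BL^0(ka, kb)$ whose weight is precisely $\lambda^{(m)}(ka, kb)$, and then to show that it is the only element of $\BL^0(ka, kb)$.

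First, by the hypothesis of \Cref{thm: power shadow grading formula} that $(d_1, d_2)$ and $(ka, kb)$ satisfy the one-bending property (\Cref{def: one-bending}), together with the specific placement of $Q$ fixed in the setup of the proposition, \Cref{rem: one-bending nonempty} produces a unique broken line $\beta_\ast$ with initial exponent $(-d_1, -d_2)$, final exponent $(-d_1 + ka, -d_2 + kb)$, and terminal point $Q$, which neither bends at the positive coordinate axes nor at any wall other than $(\R_{\leq 0}(a, b), f_{\R_{\leq 0}(a, b)})$, and bends there exactly once.

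Second, I compute $c(\beta_\ast)$ via the wall-crossing formula \eqref{eq: wcs automorphism}. The primitive normal $n$ to $(a, b)$ with $n \cdot (-d_1, -d_2) > 0$ satisfies $n \cdot (-d_1, -d_2) = |b d_1 - a d_2| = m$, so crossing the wall multiplies $x^{(-d_1, -d_2)}$ by $f_{\R_{\leq 0}(a, b)}^{\,m}$. Extracting the unique monomial term whose momentum exponent equals $(-d_1 + ka, -d_2 + kb)$ picks up the coefficient $\lambda^{(m)}(ka, kb)$ by \eqref{eq: m-th power of fab}, giving $\operatorname{Mono}(\beta_\ast) = \lambda^{(m)}(ka, kb)\cdot x^{(-d_1 + ka,\, -d_2 + kb)}$. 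Hence by \eqref{eq: weight of broken line}, $c(\beta_\ast) = \lambda^{(m)}(ka, kb)$.

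Finally, I argue that $\beta_\ast$ is the only element of $\BL^0(ka, kb)$. Any broken line in $\BL(ka, kb)$ that bends at neither positive coordinate axis must coincide with $\beta_\ast$ by the one-bending property, so the only remaining candidates in $\BL^0(ka, kb)$ are those that bend at the positive $x$-axis (since $\BL^0$ already excludes bending at the positive $y$-axis). I rule these out via the asymmetric placement of $Q$: the prescribed sign of $(-d_1 + ka)Q_2 - (-d_2 + kb)Q_1$ was chosen so that the terminal tangent direction $(d_1 - ka, d_2 - kb)$, traced backwards from $Q$, cannot emanate from the positive $x$-axis through any configuration of bendings compatible with initial momentum $(-d_1, -d_2)$. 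A symmetric argument handles the case $b d_1 \leq a d_2$ by exchanging the roles of the positive axes. The main obstacle will be formalizing this last geometric exclusion: one needs to verify that no broken line bending at the positive $x$-axis, possibly followed by further bends at rays in the third quadrant, can be compatible with $\beta(-\infty)$ pointing along $-(d_1, d_2)$ and terminating at $Q$ in the prescribed chamber. Once this is in place, combining the three steps yields $|\BL^0(ka, kb)| = c(\beta_\ast) = \lambda^{(m)}(ka, kb)$, as claimed.
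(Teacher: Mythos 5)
Your approach coincides with the paper's, and the step you flag can be closed; two points deserve attention.

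First, a small but consequential imprecision: each broken line carries a single monomial $c_\ell x^{m_\ell}$ with $c_\ell\in\Bbbk$ and $m_\ell\in\mndP$, whereas $\lambda^{(m)}(ka,kb)$ is in general a \emph{polynomial} in the $p_{i,j}$. So $\BL^0(ka,kb)$ is usually not a singleton --- it contains one broken line per monomial term of $\lambda^{(m)}(ka,kb)$, all sharing the same bend-type (one bend at $\R_{\leq 0}(a,b)$). What the one-bending property gives is that every $\beta\in\BL^0(ka,kb)$ has this bend-type, hence $|\BL^0(ka,kb)|=\sum_\beta c(\beta)=\lambda^{(m)}(ka,kb)$, which is what is needed. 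Your claim ``$c(\beta_\ast)=\lambda^{(m)}(ka,kb)$'' (like the paper's phrase ``consists of a single broken line'') should be read in this aggregate sense rather than literally.

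Second, the geometric exclusion you identify as the main obstacle is actually short, and in particular the configuration you worry about --- a positive $x$-axis bend ``possibly followed by further bends at rays in the third quadrant'' --- cannot occur at all. Track the $y$-component of the momentum forward in time: it starts at $-\atwo$, is unchanged across a positive $x$-axis bend (the wall-function $P_1$ contributes only $x$-exponents), strictly increases across any bend at a ray in the strict third quadrant, and terminates at $-\atwo+kb\leq 0$. So immediately after a positive $x$-axis bend the $y$-momentum is at most $-\atwo+kb\leq 0$, and strictly negative if any further bend were to occur; the line would then move into the upper half-plane, where the only wall it could reach is the positive $y$-axis, which $\BL^0$ forbids. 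Hence a positive $x$-axis bend, if present, must be the \emph{last} bend, and the final segment joins some $(q,0)$ with $q>0$ to $Q$ in direction $(\aone-ka,\atwo-kb)$. When $\atwo>kb$, solving gives $q=Q_1-Q_2(\aone-ka)/(\atwo-kb)$, and the prescribed sign $(-\aone+ka)Q_2-(-\atwo+kb)Q_1<0$ (case $b\aone>a\atwo$) forces $q<0$, a contradiction; when $\atwo=kb$ the final segment is horizontal at height $Q_2>0$ and cannot begin on the $x$-axis. The case $b\aone<a\atwo$ is symmetric with the roles of the axes exchanged. This closes the gap; the remainder of your argument is fine.
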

\begin{proof}
It follows directly from \Cref{cor: vectors one-bending}(2) and \Cref{rem: one-bending nonempty} (using our choice of $Q$) that $\BL^0(ka,kb)$ consists of a single broken line. This broken line, which we denote by $\beta$, bends with multiplicity $k$ at the wall of slope $\frac{b}{a}$ with no other bendings.  By hypothesis, we have $|(\aone ,\atwo ) \cdot (-b,a)| = m$. Thus, the weight associated to the bending of $\beta$ over $\frakd$ with multiplicity $k$ is given by the coefficient of $x^{kb}y^{ka}$ in $(f_{\frakd})^m$, which is precisely $\lambda(ka,kb)$. Since $\beta$ is the unique broken line in $\BL^0(ka,kb)$ and $\beta$ only bends once, we can conclude that $|\BL^0(ka,kb)| = |\beta| = \lambda^{(m)}(ka,kb)$, as desired.
\end{proof}

\begin{rem} In the cluster algebra case, a tight grading has weight $1$ if it takes values in $\{0,
\ell_1\}$ on the vertical edges and $\{0,\ell_2\}$ on the horizontal edges, and weight $0$ otherwise. If we restrict to considering only these gradings, then the total weight of gradings in $\CG^t(ka,kb)$ is the same as its cardinality. Thus, in order for $|\CG^t(ka,kb)|$ to be nonzero, $t$ must be a multiple of $\ell_1$.
\end{rem}

The first main theorem now readily follows.  

\begin{proof}[Proof of \Cref{thm: power shadow grading formula}]
By \Cref{lem: one of cgpm vanishes}, the gradings in $\CG^0(ka,kb)$ are precisely the shadowed gradings contributing to the formula in \Cref{thm: power shadow grading formula}. Hence the main result follows directly by combining the equalities in \Cref{lem: broken line to compatible grading refined equality} and \Cref{prop: broken line weight single bend} when $t = 0$.
\end{proof}

\subsection{Another proof of \Cref{thm: power shadow grading formula}}\label{subsec: second proof}

We provide an alternative proof that does not rely on the results in \Cref{sec: greedy = theta}.

We first prove some simple properties of shadowed gradings, allowing us to relate them to the the polynomials $d_+(p,q)$ and $d_-(p,q)$.

\begin{lem}\label{lem: recursion of cgpm}
    We have
    \begin{align*}
        |\CG(p,q)| &= \sum_{s=0}^p \pi_{\atwo - q, s}(p_{1, \bullet})\left|\CG^0_+(p-s, q)\right|\\
        &= \sum_{s=0}^q \pi_{\aone - p, s}(p_{2, \bullet})\left|\CG^0_-(p, q-s)\right|\,.
    \end{align*}
\end{lem}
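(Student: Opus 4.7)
The plan is to establish the first equality via a weight-preserving bijection between $\CG(p, q)$ and $\bigsqcup_{s=0}^{p} \CG^0_+(p - s, q) \times \mathcal{F}_s$, where $\mathcal{F}_s$ is the set of placements of nonnegative integer weights summing to $s$ on the ordered vertical edges lying outside the shadow. The second equality then follows by the parallel argument interchanging horizontal and vertical edges and working with $\CG^0_-$ and $\mathcal{P}_\east \setminus \sh(\mathcal{P}_\north)$ instead.

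First, I would establish the structural identity $|\sh(\mathcal{P}_\east;\omega)| = \omega(\mathcal{P}_\east) = q$ for every compatible grading $\omega$, which implies that the ``free'' vertical set $V := \mathcal{P}_\north \setminus \sh(\mathcal{P}_\east;\omega)$ has cardinality exactly $d_2 - q$ for any compatible $\omega$. This follows from the fact that each horizontal edge $u$ with $\omega(u) > 0$ contributes a contiguous block $\sh(u)$ of at least $\omega(u)$ vertical edges, and a careful analysis of how successive shadows align on the maximal Dyck path, governed by compatibility, shows the union has total size exactly $q$.

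Next, given $\omega \in \CG(p, q)$, set $s := \omega(V)$ and split $\omega$ into $(\omega', \eta)$, where $\omega'$ agrees with $\omega$ outside $V$ and vanishes on $V$, and $\eta := \omega|_V \in \mathcal{F}_s$. Since $\sh(\mathcal{P}_\east;\cdot)$ depends only on horizontal weights, we have $\sh(\mathcal{P}_\east;\omega') = \sh(\mathcal{P}_\east;\omega)$, so all positive-weight vertical edges of $\omega'$ sit inside the shadow, placing $\omega' \in \CG^0_+(p - s, q)$ once compatibility is verified. For the inverse, given $(\omega', \eta)$ one forms $\omega := \omega' + \eta$ (extending $\eta$ by zero outside $V$) and verifies compatibility pair-by-pair: for $(u, v)$ with $v \in \sh(\mathcal{P}_\east;\omega')$ the witness from $\omega'$ transfers; for $v \in V$ the vertical edge terminating $\sh(u)$ lies strictly before $v$ and serves as a witness for the first condition in \eqref{0407df:comp}. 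The weight factorization
\[
\wt(\omega) \;=\; \wt(\omega') \cdot \prod_{v \in V} p_{1,\eta(v)}
\]
together with $|V| = d_2 - q$ then yields the first identity upon summing over $s$.

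The hard part will be the shadow-size identity $|\sh(\mathcal{P}_\east;\omega)| = q$ and the bidirectional transfer of compatibility, particularly in boundary cases where $v$ sits at the terminal edge of $\sh(u)$ and only the second condition in \eqref{0407df:comp} is directly available as a witness; in those cases one must show that the validity of this witness is unaffected by the presence or absence of weight on the free vertical edges, which requires a careful tracking of how shadows interact with compatibility witnesses.
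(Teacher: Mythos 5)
Your decomposition — partitioning $\CG(p,q)$ by $s = \omega(\calP_\north\setminus\sh(\calP_\east))$, then splitting $\omega$ into the restriction $\omega'\in\CG^0_+(p-s,q)$ and a free weight assignment $\eta$ on $V = \calP_\north\setminus\sh(\calP_\east)$, with the factor $\pi_{\atwo-q,s}(p_{1,\bullet})$ absorbing the $\eta$-contribution — is exactly the argument the paper's terse two-sentence proof gives, and the ``hard parts'' you flag (the size identity $|V|=\atwo-q$, the bidirectional transfer of compatibility, and the condition-2 witnesses near $\sh(u)$'s terminal edge) are precisely what that proof leaves implicit. Two small remarks: the identity $|\sh(\calP_\east;\omega)|=q$ should carry the hypothesis $q\leq\atwo$ (when $q>\atwo$ one finds $\sh(\calP_\east)=\calP_\north$, so $V=\varnothing$ and both sides of the first equality collapse to $|\CG^0_+(p,q)|$ since $\pi_{\atwo-q,s}=0$ for $s>0$); and the witness transfer is cleanest if split into $v\in\sh(u)$ (where $\overrightarrow{uv}_\north\subseteq\sh(u)\subseteq\sh(\calP_\east)$ misses $V$, so condition-2 witnesses are unaffected by changing $\omega$ on $V$) versus $v\notin\sh(u)$ (where the terminal edge $v_u$ of $\sh(u)$ lies in $\overrightarrow{uv}_\north\setminus\{v\}$ and is a condition-1 witness depending only on horizontal weights).
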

\begin{proof} 
The first equality follows by viewing $s$ as total value of $\omega$ on the horizontal edges outside of $\sh(S_2)$, so $|\CG(p,q)| = \sum_{s = 0}^p |\CG^s_+(p-s,q)|$.  Each summand $\pi_{\atwo - q, s}(p_{1, \bullet})\left|\CG^0_+(p-s, q)\right|$ on the right hand side is then equal to $|\CG^s_+(p,q)|$, where $\pi_{\atwo - q, s}(p_{1, \bullet})$ accounts for the weight contributed by the horizontal edges outside $\sh(S_\north)$ and $|\CG^0_+(p-s, q)|$ accounts for the remaining weight of all vertical edges and the horizontal edges in $\sh(S_\north)$.  The second equality follows similarly, reversing the roles of vertical and horizontal edges.
\end{proof}

\begin{prop}\label{prop: cgpm = dpm}
    Fix $(\aone, \atwo)$. For any $(p, q)\in \mathbb N^2$, we have
    \[
        |\CG_+^0(p, q)| = d_+(p, q) \quad \text{and} \quad |\CG_-^0(p, q)| = d_-(p, q).
    \]
\end{prop}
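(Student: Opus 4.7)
The plan is to invert the two triangular recursions already established for $c(p,q)$ and $|\CG(p,q)|$ and compare them term-by-term. Specifically, \Cref{thm: generalized greedy compatible gradings} gives the identification $c(p,q) = |\CG(p,q)|$, \Cref{lem: recursion of cgpm} expands $|\CG(p,q)|$ as a linear combination of the $|\CG^0_\pm(p',q')|$, and \Cref{lem: cpq by dpq} expands $c(p,q)$ in exactly the same form with $d_\pm$ in place of $|\CG^0_\pm|$. The two recursions are upper-triangular in $p$ (respectively $q$), so each $|\CG^0_\pm(p,q)|$ and each $d_\pm(p,q)$ is uniquely determined from the value of $c(p,q)=|\CG(p,q)|$ together with strictly smaller data.

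Concretely, I will prove $|\CG^0_+(p,q)| = d_+(p,q)$ by induction on $p$ (the case of $d_-$ being symmetric). For the base case $p=0$, any grading in $\CG(0,q)$ has $\omega$ vanishing on $\calP_\north$, so the constraint $\omega(\calP_\north \cut \sh(\calP_\east)) = 0$ is automatic; hence $|\CG^0_+(0,q)| = |\CG(0,q)| = c(0,q) = d_+(0,q)$, where the last equality comes from the empty sum in \Cref{def: dpq}. For the inductive step, \Cref{lem: recursion of cgpm} (separating off the $s=0$ summand) gives
\[
|\CG(p,q)| \;=\; |\CG^0_+(p,q)| + \sum_{s=1}^p \pi_{d_2-q,s}(p_{1,\bullet})\,|\CG^0_+(p-s,q)|,
\]
while \Cref{lem: cpq by dpq} gives
\[
c(p,q) \;=\; d_+(p,q) + \sum_{s=1}^p \pi_{d_2-q,s}(p_{1,\bullet})\,d_+(p-s,q).
\]
Using $c(p,q)=|\CG(p,q)|$ from \Cref{thm: generalized greedy compatible gradings} and the inductive hypothesis $d_+(p-s,q) = |\CG^0_+(p-s,q)|$ for $1\le s\le p$, subtracting the two equations yields $|\CG^0_+(p,q)| = d_+(p,q)$.

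The same argument with the roles of horizontal and vertical edges swapped, inducting on $q$, gives $|\CG^0_-(p,q)| = d_-(p,q)$. There is no real obstacle here beyond making sure the base case is treated correctly and that the two recursions truly match coefficient-for-coefficient; the main content is in the auxiliary lemmas already proved above. Note that this proposition will be the key input for the alternative proof of \Cref{thm: power shadow grading formula}, since combined with \Cref{lem: one of cgpm vanishes} it identifies the shadowed-grading sums $|\CG^0(p,q)|$ with explicit polynomials $d_\pm(p,q)$ that can be matched with wall-function coefficients via a broken-line computation analogous to \Cref{prop: broken line weight single bend}.
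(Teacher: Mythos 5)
Your proposal is correct and is essentially the same argument as the paper's proof, which also invokes \Cref{lem: recursion of cgpm}, \Cref{lem: cpq by dpq}, and the identity $c(p,q) = |\CG(p,q)|$ from \Cref{thm: generalized greedy compatible gradings}, and proceeds by induction. You have simply spelled out the mechanics more explicitly (choosing $p$ as the induction variable for the $+$ case, isolating the $s=0$ term, and subtracting the two triangular recursions), which is a fine unwinding of the paper's terser statement.
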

\begin{proof} 
Recall $d_\pm(p, q)$ from \Cref{def: dpq}. We prove by induction on $(p, q)$. The base case is
\[
    |\CG_+^0(0, 0)| = d_+(0, 0) = 1 \quad \text{and} \quad |\CG_-^0(0, 0)| = d_-(0, 0) = 1.
\]
Then the result follows from \Cref{lem: recursion of cgpm}, \Cref{lem: cpq by dpq}, and $|\CG(p, q)| = c(p, q)$ from \Cref{thm: generalized greedy compatible gradings}.
\end{proof}

\begin{cor}\label{cor: recursion for dpm}
    The polynomials $d_\pm(p, q)$ satisfy
    \[
        \sum_{s=0}^p \pi_{\atwo-q, s}(p_{1, \bullet})d_+(p-s, q) = \sum_{s=0}^q \pi_{\aone-p, s}(p_{2, \bullet}) d_-(p, q-s).
    \]
    When $(p, q) \neq (0, 0)$, we have
    \begin{align*}
        d_+(p, q) = 0 \quad \text{when}\quad \aone q \leq \atwo p;\\
        d_-(p, q) = 0 \quad \text{when}\quad \aone q \geq \atwo p.
    \end{align*}
\end{cor}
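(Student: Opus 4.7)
The plan is to derive this corollary directly as a consequence of the two results immediately preceding it, namely \Cref{lem: recursion of cgpm} and \Cref{prop: cgpm = dpm}, together with the vanishing statement of \Cref{lem: one of cgpm vanishes}. Since all the combinatorial content has already been established, the proof amounts to translating identities about $|\CG^0_{\pm}(p,q)|$ into identities about $d_\pm(p,q)$.

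First I would address the recursion. \Cref{lem: recursion of cgpm} asserts
\[
    |\CG(p,q)| = \sum_{s=0}^p \pi_{\atwo-q, s}(p_{1,\bullet})\,|\CG^0_+(p-s, q)| = \sum_{s=0}^q \pi_{\aone-p, s}(p_{2,\bullet})\,|\CG^0_-(p, q-s)|,
\]
so the two indicated sums over $s$ are equal (both equal to $|\CG(p,q)|$). Substituting $|\CG^0_\pm(p,q)| = d_\pm(p,q)$ from \Cref{prop: cgpm = dpm} into each sum yields the first identity in the statement.

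Next I would handle the vanishing. \Cref{lem: one of cgpm vanishes} shows that for $(p,q) \neq (0,0)$ one has $\CG^0_+(p,q) = \varnothing$ whenever $q\aone \leq p\atwo$, and $\CG^0_-(p,q) = \varnothing$ whenever $q\aone \geq p\atwo$. Since an empty set of compatible gradings contributes $0$ to its weighted count, invoking \Cref{prop: cgpm = dpm} once more converts these statements directly into $d_+(p,q) = 0$ and $d_-(p,q) = 0$ under the respective hypotheses.

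Since every ingredient is already proved, there is no real obstacle here; the corollary is essentially a bookkeeping step. The only mild care needed is to verify that the inequality conventions match between \Cref{lem: one of cgpm vanishes} (phrased as $q\aone \lessgtr p\atwo$) and the corollary (phrased as $\aone q \lessgtr \atwo p$), which they do verbatim.
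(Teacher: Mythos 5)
Your proof is correct and follows exactly the same route as the paper, which simply cites \Cref{lem: one of cgpm vanishes}, \Cref{lem: recursion of cgpm}, and \Cref{prop: cgpm = dpm} without elaboration; you have just unpacked the same bookkeeping in full.
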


\begin{proof}
    This is a direct corollary of \Cref{lem: one of cgpm vanishes}, \Cref{lem: recursion of cgpm}, and \Cref{prop: cgpm = dpm}.
\end{proof}

As a consequence of \Cref{prop: cgpm = dpm}, we present the following sharpened recursive formula of $c(p, q)$, which generalizes \cite[Proposition 1.6]{LLZ} and \cite[Proposition 2.13]{Rupgengreed}.

\begin{prop}\label{prop: sharpened recursion cpq}
  Fix $(d_1, d_2)\in \mathbb N^2$. The polynomials $c(p, q)$ satisfy the following recursion
  \[
    c(p, q) = \begin{dcases}
      \sum_{k=1}^p -c(p-k, q)\varsigma_{\atwo -q, k}(p_{1, \bullet}) \quad & \text{if $\aone q \leq \atwo p$;} \\
      \sum_{k=1}^q -c(p, q-k) \varsigma_{\aone -p, k}(p_{2, \bullet}) \quad & \text{if $\aone q \geq \atwo p$.}
    \end{dcases}
  \]
\end{prop}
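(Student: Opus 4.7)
The plan is to deduce this from the vanishing results for $d_\pm(p,q)$ established in \Cref{cor: recursion for dpm}. Recall from \Cref{def: dpq} that, by construction,
\[
    c(p,q) = d_+(p,q) + \sum_{k=1}^p -c(p-k,q)\varsigma_{\atwo-q,k}(p_{1,\bullet}) = d_-(p,q) + \sum_{k=1}^q -c(p,q-k)\varsigma_{\aone-p,k}(p_{2,\bullet}).
\]
So the sharpened recursion is equivalent to the statement that $d_+(p,q) = 0$ whenever $\aone q \leq \atwo p$ and $d_-(p,q) = 0$ whenever $\aone q \geq \atwo p$ (modulo the trivial case $(p,q)=(0,0)$, where both sums are empty and $c(0,0)=1$, which is excluded from the recursion).

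The first step is simply to invoke \Cref{cor: recursion for dpm}, which gives exactly these vanishings. Substituting into the two identities above yields the two cases of the sharpened recursion. The content, therefore, is entirely in having the combinatorial identity $|\CG^0_\pm(p,q)| = d_\pm(p,q)$ from \Cref{prop: cgpm = dpm} together with the geometric/combinatorial emptiness of $\CG^0_+(p,q)$ when $\aone q \leq \atwo p$ and of $\CG^0_-(p,q)$ when $\aone q \geq \atwo p$ from \Cref{lem: one of cgpm vanishes}.

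The main (and only) obstacle has already been overcome upstream: the vanishing lemma for $\CG^0_{\pm}$, which uses the modular characterization of maximal Dyck paths to show that a compatible grading forcing all of $S_\north$ (respectively $S_\east$) into $\sh(\calP_\east)$ (respectively $\sh(\calP_\north)$) cannot exist when the slope $q/p$ lies on the wrong side of $\atwo/\aone$. Once that is in hand, the proposition follows in one line from the definitions of $d_\pm$.
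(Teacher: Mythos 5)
Your proposal is correct and coincides with the paper's own one-line argument: \Cref{def: dpq} rewrites $c(p,q)$ as $d_\pm(p,q)$ plus the corresponding sum, and \Cref{cor: recursion for dpm} (which in turn rests on \Cref{lem: one of cgpm vanishes} and \Cref{prop: cgpm = dpm}) gives the vanishing of the appropriate $d_\pm(p,q)$ in each half-plane. Your explicit remark excluding the trivial case $(p,q)=(0,0)$ is a sensible clarification that the paper leaves implicit.
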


\begin{proof}
    With the emphasis on $d_\pm(p, q)$, the stated recursion of $c(p, q)$ follows immediately from \Cref{cor: recursion for dpm} and \Cref{def: dpq}. We also see that $c(p, q)$ is attained by one of two polynomials in \Cref{def: cpq}.
\end{proof}

Fix $(d_1, d_2)\in \mathbb N^2$. Let $(p, q)\in \mathbb N^2$ and $t\in \mathbb N$. Let 
\[
    \BL^t_+(p, q) = \BL^t_+(\aone, \atwo, p, q) \quad \text{(resp. $\BL^t_-(p, q) = \BL^t_-(\aone, \atwo, p, q)$)}
\]
denote the set of broken lines with initial exponent $(-\aone, -\atwo)$, final exponent $(-\aone + p, -\atwo + q)$, and endpoint $Q_+\in \mathbb R_{>0}^2$ (resp. $Q_-\in \mathbb R_{>0}^2$) chosen close enough to the positive $x$-axis (resp. positive $y$-axis) such that the bend at the positive $x$-axis (resp. positive $y$-axis) corresponds to the monomial $x^t$ (resp. $y^t$).

\begin{lem}\label{lem: erase last bending}
    We have
    \begin{align*}
        |\BL_+^t(p, q)| & = \pi_{\atwo-q, t}(p_{1, \bullet}) |\BL_+^0(p-t, q)|,\\
        |\BL_-^t(p, q)| & = \pi_{\aone-p, t}(p_{2, \bullet}) |\BL_-^0(p, q-t)|.
    \end{align*}
\end{lem}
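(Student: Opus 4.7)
The plan is to establish, for the first equality, a weighted bijection between $\BL_+^t(p, q)$ and the product of $\BL_+^0(p - t, q)$ with the set of monomial terms of $x^t$ in the expansion of $P_1^{d_2 - q}$. The second equality then follows by a completely symmetric argument, interchanging the roles of the two coordinate axes.

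First, I would observe that because $Q_+$ is chosen close enough to the positive $x$-axis, every broken line $\beta \in \BL_+^t(p, q)$ approaches $Q_+$ from just below the $x$-axis: its final segment has velocity $-r(m_\ell) = (d_1 - p, d_2 - q)$ with positive $y$-component. Consequently, the last bending of $\beta$ (if any) must occur at the positive $x$-axis, and the contribution at this crossing is a monomial of the form $c \cdot x^t$ arising in the expansion of $P_1^{d_2 - q}$, where $d_2 - q$ is $|n \cdot r(m_\ell^{\mathrm{in}})|$ for $m_\ell^{\mathrm{in}} = (-d_1 + p - t, -d_2 + q)$ the exponent immediately before the bending.

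Next, I would construct the map explicitly. Starting from $\beta \in \BL_+^t(p, q)$, I erase its last bending at the positive $x$-axis and extend the previous segment past the $x$-axis to obtain a broken line $\beta'$ with final exponent $(-d_1 + p - t, -d_2 + q)$, terminating at some nearby point $Q'_+$ also close to the positive $x$-axis; this $\beta'$ belongs to $\BL_+^0(p - t, q)$. Conversely, given any $\beta' \in \BL_+^0(p - t, q)$, its last segment crosses the positive $x$-axis at a unique point $P$, and inserting a bending at $P$ with any chosen monomial term of $x^t$ in the expansion of $P_1^{d_2 - q}$ produces a broken line in $\BL_+^t(p, q)$. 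Since the total weight of such insertions is exactly $\pi_{d_2 - q, t}(p_{1, \bullet})$ by \Cref{defn: pi zeta}, summing $c(\beta') \cdot \pi_{d_2 - q, t}(p_{1, \bullet})$ over $\beta' \in \BL_+^0(p - t, q)$ recovers $|\BL_+^t(p, q)|$, yielding the first equality.

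The main technical subtlety I anticipate is that the endpoints on the two sides of the bijection are not literally the same point: broken lines in $\BL_+^t(p, q)$ terminate at $Q_+$ while those in $\BL_+^0(p - t, q)$ terminate at a possibly shifted point $Q'_+$. I would resolve this using \Cref{cor: CPS consistency non strict}, which guarantees that the weighted sum over broken lines with a fixed final exponent is invariant under continuous deformation of the endpoint within a single cluster chamber; since both $Q_+$ and $Q'_+$ lie in the same chamber close to the positive $x$-axis, this invariance legitimizes the identification of weighted counts and completes the argument.
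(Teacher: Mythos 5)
Your proof is correct and takes essentially the same approach as the paper: erase the last bending at the positive $x$-axis, obtaining a weighted surjection onto $\BL_+^0(p-t,q)$ whose fibers carry the weight $\pi_{\atwo-q,t}(p_{1,\bullet})$ coming from the initial wall-function $P_1^{\atwo-q}$. Your explicit resolution of the endpoint discrepancy (the extended broken line $\beta'$ terminates at a nearby $Q'_+$ rather than at $Q_+$ itself) via \Cref{cor: CPS consistency non strict} is a worthwhile clarification that the paper's one-line proof leaves implicit.
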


\begin{proof}
    When it is non-empty, erasing the last bending of any broken line in $\BL_+^t(p, q)$ (resp. $\BL_-^t(p, q)$) induces a surjection to $\BL_+^0(p-t, q)$ (resp. $\BL_-^0(p, q-t)$). The computation of the weights follows from the initial wall-functions on the axes.

    If $\BL_+^t(p, q)$ is empty, then either $\pi_{d_2-q, t}(p_{1, \bullet}) = 0$ or $\BL_+^0(p-t, q)$ is empty. The same is true for the $-$ case.
\end{proof}

\begin{lem}\label{lem: one of blpm vanishes}
    One of $\BL^0_{\pm}(p, q)$ is empty unless $p = q = 0$. Precisely, when $(p, q)\neq (0, 0)$,
    \begin{enumerate}
        \item[\emph{(1)}] $\BL^0_+(p, q) = \varnothing$ if $q\aone \leq \atwo p$, \\
        \item[\emph{(2)}] $\BL^0_-(p, q) = \varnothing$ if $q\aone \geq \atwo p$.
    \end{enumerate}
\end{lem}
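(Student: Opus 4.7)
By the symmetry exchanging the two coordinates, it suffices to prove statement (1); statement (2) then follows by swapping $x \leftrightarrow y$, $\aone \leftrightarrow \atwo$, and $p \leftrightarrow q$. The plan is to suppose for contradiction that some broken line $\beta \in \BL^0_+(p,q)$ exists under the hypotheses $q\aone \leq p\atwo$ and $(p,q) \neq (0,0)$, then derive a contradiction from a conserved \emph{angular momentum} invariant.

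For each linear segment of $\beta$ with velocity $v^{(i)} = -r(m_i)$ and any point $P = (P_x,P_y)$ on the segment, set
\[
    L_i \coloneqq v^{(i)}_2 P_x - v^{(i)}_1 P_y,
\]
which is manifestly independent of $P$. Since every wall of $\Scat(P_1,P_2)$ is supported on a line through the origin whose direction is tangent to that line, a short computation shows $L_i$ is preserved across every bending, so $L \coloneqq L_0 = L_\ell$. Evaluating on the final segment at $Q_+ = (Q_1,Q_2)$ gives $L = Q_1(\atwo - q) - Q_2(\aone - p)$.

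If $p \geq \aone$, running $\beta$ backward from $Q_+$ with direction $(p - \aone, q - \atwo)$ keeps the trajectory in the closed right half-plane, where the only wall that can possibly be met is the positive $x$-axis; but bending there is excluded in $\BL^0_+$. Hence $\beta$ cannot bend at all, forcing $(p,q) = (0,0)$, a contradiction. The degenerate subcases $p = \aone$ and those where a coordinate of $v^{(\ell)}$ vanishes are handled in the same spirit by exhibiting no available wall along the backward trajectory.

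In the main case $p < \aone$, the inequality $q\aone \leq p\atwo$ forces $q < \atwo$, and I would choose $Q_+$ close enough to the positive $x$-axis so that $L > 0$. Because $v_1$ (resp.\ $v_2$) only decreases across bendings and ends at the positive value $\aone - p$ (resp.\ $\atwo - q$), we have $v^{(i)}_1, v^{(i)}_2 > 0$ throughout. The key claim is that the slope $v^{(i)}_2/v^{(i)}_1$ strictly decreases at every bending: at a third-quadrant ray $\R_{\leq 0}(a,b)$ with bending point $-s(a,b)$, positivity of $L$ forces $b/a > v^{(i)}_2/v^{(i)}_1$, from which a short calculation yields $(v^{(i)}_2 - kb)/(v^{(i)}_1 - ka) < v^{(i)}_2/v^{(i)}_1$; at a $y$-axis bending the slope decrease is immediate from $v_2 \mapsto v_2 - k$; and bending along the $x$-axis cannot occur, since the positive half is excluded by $\BL^0_+$ while the negative half would require $v^{(i)}_2 < 0$. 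This strict monotone decrease contradicts the inequality $\atwo/\aone \leq (\atwo - q)/(\aone - p)$ (equivalent to $q\aone \leq p\atwo$) between the initial and final slopes, unless $\beta$ has no bendings at all, in which case $(p,q) = (0,0)$ again. The principal obstacle in executing this plan will be keeping the boundary-case analysis airtight and verifying that the slope computation at rays indeed rules out every admissible wall, including bendings that could in principle leave the slope unchanged in degenerate configurations.
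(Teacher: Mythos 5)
Your argument is correct and is essentially the same as the paper's, which condenses the reasoning into one sentence by invoking the monotonicity of slopes of broken lines (the angular-momentum computation you carry out is exactly the content of \cite[Lemmas 5.3--5.4]{CGM}, already cited in \Cref{sec: greedy = theta}). Your more detailed case analysis is sound: the one compressed spot is the dismissal of bendings at the negative $x$-axis, where what you really use is that a bending point $(P_x,0)$ with $P_x<0$ together with $L=v_2P_x>0$ forces $v_2<0$, contradicting the established positivity of all velocity components.
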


\begin{proof}
    Since the broken lines in consideration do not bend at positive axes, the slopes of their linearity domains can only continue to increase or decrease from the start. Therefore, the order of the terminal slope and the initial slope determines which one of the two sets $\BL_+^0(p, q)$ and $\BL_-^0(p, q)$ is empty, unless the broken lines do not bend at all, which is the case $p = q = 0$.
\end{proof}

\begin{lem}\label{lem: recursion of blpm}
    \[
        \sum_{t=0}^p \pi_{\atwo - q, t}(p_{1, \bullet})|\BL^0_+(p-t, q)| =
        \sum_{t=0}^q \pi_{\aone - p, t}(p_{2, \bullet})|\BL^0_-(p, q-t)|.
    \]
\end{lem}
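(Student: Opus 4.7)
My plan is to recognize both sides of the identity as computing the same coefficient of the theta function $\vartheta_{Q, (-d_1,-d_2)}$, evaluated at two different but equivalent endpoints inside the positive chamber $\mathcal C_0$.

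First, I would apply \Cref{lem: erase last bending} termwise. This rewrites the left-hand side as $\sum_{t=0}^p |\BL^t_+(p,q)|$ and the right-hand side as $\sum_{t=0}^q |\BL^t_-(p,q)|$. Next I would verify that $\bigsqcup_{t=0}^p \BL^t_+(p,q)$ is precisely the set of broken lines $\beta$ with initial exponent $(-d_1,-d_2)$, final exponent $(-d_1+p,-d_2+q)$, and endpoint $Q_+$: any such broken line ending close to the positive $x$-axis crosses that wall at most once, by angular-momentum monotonicity (the property underlying \Cref{lem: broken line slope bounds}), so the parameter $t$ recording the $x$-contribution at the positive $x$-axis is well-defined and the decomposition is exhaustive. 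The weight $c(\beta)$ assembled from Definition~\ref{eq: weight of broken line} then matches $\pi_{d_2-q,t}(p_{1,\bullet})$ times the weight of the broken line obtained by erasing this final bending, justifying the equality. The analogous statement holds for $\bigsqcup_{t=0}^q\BL^t_-(p,q)$ with $Q_-$ close to the positive $y$-axis.

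Second, I would invoke \Cref{cor: CPS consistency non strict}. Both $Q_+$ and $Q_-$ are general points in the interior of $\mathcal C_0$ (the first quadrant), so the total broken-line weight for a fixed final exponent $m_{\mathrm{fin}} = (-d_1+p,-d_2+q)$ is independent of the endpoint. Therefore
\[
\sum_{t=0}^p |\BL^t_+(p,q)| \;=\; \sum_{t=0}^q |\BL^t_-(p,q)|,
\]
and substituting back via \Cref{lem: erase last bending} gives the claimed identity.

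I do not anticipate a genuine obstacle: the lemma is essentially a bookkeeping form of CPS-consistency for $\vartheta_{(-d_1,-d_2)}$ viewed from two asymptotic directions inside $\mathcal C_0$. The only delicate point is to confirm that $\bigsqcup_t \BL^t_\pm(p,q)$ is neither deficient nor overcounted as an enumeration of all broken lines terminating at $Q_\pm$, which is handled by choosing $Q_\pm$ sufficiently close (but generically) to the respective coordinate axis and by the single-crossing property noted above.
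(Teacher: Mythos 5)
Your proof is correct and matches the paper's own argument: both rewrite each side via \Cref{lem: erase last bending} as $\sum_t |\BL^t_\pm(p,q)|$, recognize these as the coefficient of $x^{p-d_1}y^{q-d_2}$ in $\vartheta_{Q_\pm,(-d_1,-d_2)}$, and conclude by \Cref{cor: CPS consistency non strict} since $Q_+$ and $Q_-$ lie in the same chamber $\mathcal C_0$. Your write-up is a bit more careful about verifying that $\bigsqcup_t \BL^t_\pm(p,q)$ gives a disjoint, exhaustive enumeration of the relevant broken lines, but this is the same step the paper performs implicitly in its one-line identification with theta-function coefficients.
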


\begin{proof}
    By Lemma \ref{lem: erase last bending}, it is equivalent to show
    \[
        \sum_{t = 0}^q |\BL_+^t(p, q)| = \sum_{t = 0}^p |\BL_-^t(p, q)|.
    \]
    The left-hand side computes the coefficient of $x^{p - \aone}y^{q - \atwo}$ in the theta function $\vartheta_{Q_+, (-\aone, -\atwo)}$. The right-hand side does the same for $\vartheta_{Q_-, (-\aone, -\atwo)}$. By \Cref{cor: CPS consistency non strict}, they equal.
\end{proof}

\begin{prop}\label{prop: blpm = dpm}
    Fix $(\aone, \atwo)$. For any $(p, q)\in \mathbb N^2$, we have
    \[
        |\BL_+^0(p, q)| = d_+(p, q) \quad \text{and} \quad |\BL_-^0(p, q)| = d_-(p, q).
    \]
\end{prop}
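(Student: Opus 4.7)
My plan is to adapt the inductive argument of \Cref{prop: cgpm = dpm} to the broken line setting, proceeding by strong induction on $p + q$. The base case $(p,q) = (0,0)$ is immediate: the only element of $\BL_\pm^0(0,0)$ is the static (unbending) broken line, which contributes weight $1$, matching $d_\pm(0,0) = 1$ by definition.

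For the inductive step, assume the result holds for all $(p', q')$ with $p' + q' < p + q$, and without loss of generality treat the case $d_1 q \leq d_2 p$ (the other being symmetric). By \Cref{lem: one of blpm vanishes} we have $|\BL_+^0(p,q)| = 0$, and by \Cref{cor: recursion for dpm} we have $d_+(p,q) = 0$, so the $+$ equality at $(p,q)$ holds. For the $-$ equality, I apply \Cref{lem: recursion of blpm}:
\[
    \sum_{t=0}^p \pi_{d_2-q,t}(p_{1,\bullet}) |\BL_+^0(p-t,q)| = \sum_{t=0}^q \pi_{d_1-p,t}(p_{2,\bullet}) |\BL_-^0(p,q-t)|.
\]
The inductive hypothesis, together with the just-established vanishing $|\BL_+^0(p,q)| = 0$, reduces the left-hand side to $\sum_{t=1}^p \pi_{d_2-q,t}(p_{1,\bullet}) d_+(p-t,q)$, which equals $c(p,q)$ by \Cref{lem: cpq by dpq} combined with $d_+(p,q) = 0$. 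Applying the inductive hypothesis on the right-hand side for $t \geq 1$ and solving yields
\[
    |\BL_-^0(p,q)| = c(p,q) - \sum_{t=1}^q \pi_{d_1-p,t}(p_{2,\bullet}) d_-(p,q-t),
\]
which equals $d_-(p,q)$ by the other identity in \Cref{lem: cpq by dpq}. This closes the induction.

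The main conceptual difference from \Cref{prop: cgpm = dpm} is organizational rather than substantive: there, a single input, $|\CG(p,q)| = c(p,q)$ from \Cref{thm: generalized greedy compatible gradings}, immediately unlocks both equalities via \Cref{lem: recursion of cgpm}. Here, \Cref{lem: recursion of blpm} only relates the two sides to one another without naming their common value, so we first exploit the vanishing from \Cref{lem: one of blpm vanishes} to pin the common value down as $c(p,q)$ at each inductive step, and only then extract the nontrivial equality on the opposite side. No new combinatorial ingredients are needed beyond the already established lemmas and corollaries.
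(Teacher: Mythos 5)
Your proof is correct and takes essentially the same approach as the paper: the paper's one-line proof is precisely the observation that $|\BL_\pm^0|$ and $d_\pm$ satisfy the same cross-recursion, the same vanishing dichotomy, and the same initial condition, hence coincide by induction on $p+q$. Your detour through $c(p,q)$ via \Cref{lem: cpq by dpq} is harmless and just makes explicit the common value of the two sides of the recursion; it does not change the substance of the argument.
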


\begin{proof}
    Compare the recursion of $|\BL_\pm^0(p, q)|$ determined by \Cref{lem: one of blpm vanishes} and \Cref{lem: recursion of blpm} with the recursion of $d_\pm(p, q)$ given in \Cref{cor: recursion for dpm}.
\end{proof}

Let $(\aone, \atwo)$ and $(ka, kb)$ satisfy the one-bending property \Cref{def: one-bending} with $\aone \geq ka$ and $\atwo \geq kb$.

\begin{proof}[Proof of \Cref{thm: power shadow grading formula}]
    We first deal with the case where $d_1>ka$ and $d_2>kb$. Suppose that
    \[
        m = \aone b - \atwo a > 0.
    \]
    By the one-bending assumption, every broken line $\beta$ in $\BL_+^0(ka, kb)$ is of the same bend-type, that is, the only bend is at the ray $\mathbb R_{\leq 0}(a, b)$ with multiplicity $m$. Notice that $\beta$ indeed reaches the first quadrant. The sum of weight monomials of every $\beta$ in $\BL_+^0(ka, kb)$ gives the coefficient $\lambda^{(m)}(ka, kb)$ of $x^{ka}y^{kb}$ in $(f_{\mathbb R_{\leq 0}(a, b)})^m$. Hence by \Cref{prop: blpm = dpm}
    \[
        \lambda^{(m)}(ka, kb) = |\BL_+^0(ka, kb)| = d_+(ka, kb).
    \]
    The situation where $m = \atwo a - \aone b > 0$ is similar.

    Suppose that $\aone = ka$ or $\atwo = kb$. A one-bending broken line $\beta$ at $\mathbb R_{\leq 0}(a, b)$ with initial exponent $(-\aone, -\atwo)$ and final exponent $(ka-\aone, kb-\atwo)$ does not reach the first quadrant. So it does not belong to $\BL_\pm^0(ka, kb)$. Suppose $\atwo = kb$. Then again
    \[
        m = \aone b - \atwo a >0.
    \]
    Consider the theta function $\vartheta_{Q, (-\aone, -\atwo)}$ for $Q$ chosen generally in the fourth quadrant. By the one-bending property, the sum of the weight of the aforementioned one-bending $\beta$ with endpoint $Q$ is the coefficient $\lambda^{(m)}(ka, kb)$. Meanwhile, it also computes the coefficient of $x^{ka-\aone}y^{kb-\atwo} = x^{ka-\aone}$ in $\vartheta_{Q, (-\aone, -\atwo)}$. By \Cref{cor: CPS consistency non strict}, $\lambda^{(m)}(ka, kb)$ equals the coefficient of $x^{ka-\aone}$ in $\vartheta_{Q_+, (-\aone, -\atwo)}$. The latter actually equals
    \[
        |\BL_+^0(ka, kb)| = d_+(ka, kb)
    \]
    because any broken line contributes to $x^{ka-\aone}$ must not bend at positive $x$-axis, thus belonging to $\BL_+^0(ka, kb)$. The situation when $\aone = ka$ is similar.

    Summarizing we have shown
    \[
        \lambda^{(m)}(ka, kb) = \begin{cases}
            d_+(ka, kb) \quad &\text{if}\ d_1b - d_2a > 0,\\
            d_-(ka, kb) \quad &\text{if}\ d_1b - d_2a < 0.
        \end{cases}
    \]
    Finally the polynomials $d_\pm(ka, kb)$ are related to weighted counts of shadowed gradings by \Cref{lem: one of cgpm vanishes} and \Cref{prop: cgpm = dpm}.
\end{proof}

\subsection{A universal scattering diagram}\label{subsec: universal sd rk 2}

Recall the monoid $\mndP_{\ell_1, \ell_2}$ in \Cref{subsec: gen cluster sd rk 2}. For $\ell_1'\geq \ell_1$ and $\ell_2'\geq \ell_2$, consider the algebra map
\[
    \varphi \colon \widehat{\Bbbk[\mndP_{\ell_1', \ell_2'}]} \rightarrow \widehat{\Bbbk[\mndP_{\ell_1, \ell_2}]}
\]
that evaluates $p_{1, i} = p_{2, j} = 0$ for $i>\ell_1$ and $j>\ell_2$.

For any $\ell\geq 1$, denote
\[
    P_1^{(\ell)} = 1 + p_{1, 1}x + \dots + p_{1, \ell}x^{\ell} \quad \text{and} \quad
    P_2^{(\ell)} = 1 + p_{2, 1}y + \dots + p_{2, \ell}y^{\ell}.
\]
By the functoriality (\ref{eq: functoriality complete}), we have
\[
    \varphi\left(\Scat\left(P_1^{(\ell_1')}, P_2^{(\ell_2')}\right)\right) \equiv \Scat\left(P_1^{(\ell_1)}, P_2^{(\ell_2)}\right).
\]
The scattering diagrams associated to all $(\ell_1, \ell_2)\in \mathbb N^2$ thus have a (projective) limit, viewed as a universal scattering diagram. By the tight grading formula \Cref{thm: power shadow grading formula}, the limiting wall-function on $\mathbb R_{\leq 0}(a, b)$ is given by
\begin{equation}
    \sum_{k\geq 0}\sum_{w(\mathbf Q_1)=ka}\sum_{w(\mathbf Q_2)=kb}\lambda_{a, b}(\mathbf Q_1, \mathbf Q_2)p_1^{\mathbf Q_1}p_2^{\mathbf Q_2} x^{ka} y^{kb},
\end{equation}
where $\lambda_{a, b}(\mathbf Q_1, \mathbf Q_2)$ is the number of tight gradings of weight polynomial $p_1^{\mathbf Q_1}p_2^{\mathbf Q_2}$. To recover the wall-function in $\Scat(P_1^{(\ell_1)}, P_2^{(\ell_2)})$, simply let the variables $p_{1, i}$, $i> \ell_1$ and $p_{2, j}$, $j>\ell_2$ be zero. This amounts to considering the subset of tight gradings $\omega$ whose value on horizontal edges is bounded above by $\ell_2$ and on vertical edges is bounded above by $\ell_1$.

We note that the limiting structure is more general than the scattering diagrams in \Cref{def: sd rk 2} as the monoid in the limit is not finitely generated. However, one can nonetheless assign degree $k$ to the variable $p_{i, k}$ and have a filtration of ideals $\frakm^{(k)}\subseteq \Bbbk[\mndP]$ generated by elements of degree at least $k$. Then we can still define scattering diagrams with finitely many walls over $\Bbbk[\mndP]/\frakm^{(k)}$ and a limiting object of those is regarded as a scattering diagram over the completion $\widehat{\Bbbk[\mndP]} = \lim\limits_{\longleftarrow}\Bbbk[\mndP]/\frakm^{(k)}$ just as in \Cref{def: sd rk 2}. Wall-crossing automorphisms and consistency can be discussed analogously.

\section{Applications of the tight and shadowed grading formulas}\label{sec: application}

Since rank-2 scattering diagrams are known to encode interesting invariants \cite{GPS, GP, Rei08, RW}, we explore these links in terms of tight and shadowed gradings in this section. We give formulas for computing Euler characteristics of quiver moduli spaces in \Cref{subsec: euler char quiver} and relative Gromov--Witten invariants on toric surfaces in \Cref{subsec: GW invariants}.

\subsection{Euler characteristics of quiver moduli}\label{subsec: euler char quiver}

We review a result of Reineke and Weist \cite{RW} expressing wall-function coefficients in terms of Euler characteristics of certain moduli spaces of quiver representations. Then we establish the link to tight gradings in \Cref{cor: euler char by shadowed grading}.

In \cite{RW} (as well as \cite{GPS, GP}), the coefficients in the initial wall-functions differ with those in \Cref{subsec: gen cluster sd rk 2}. Precisely, we consider polynomials
\begin{equation}\label{eq: product of binomials}
    P_1 = \prod_{i=1}^{\ell_1}(1+s_ix) \quad \text{and} \quad P_2 = \prod_{j=1}^{\ell_2}(1 + t_jy)
\end{equation}
and the scattering diagram $\Scat(P_1, P_2)$.
This setup can be accommodated in the framework of \Cref{subsec: sd in rk 2} and \Cref{subsec: gen cluster sd rk 2} by choosing the monoid 
$\mndP = M \oplus \mnd_1 \oplus \mnd_2$ where $\mnd_1\cong \mathbb N^{\ell_1}$ is the (multiplicative) monoid of monomials in $\{s_1, \dots, s_{\ell_1}\}$ and $\mnd_2\cong \mathbb N^{\ell_2}$ is the monoid of monomials in $\{t_1, \dots, t_{\ell_2}\}$. The coefficients $p_{1,k}$ and $p_{2,k}$ used in \Cref{subsec: gen cluster sd rk 2} are elementary symmetric polynomials respectively in $s_i$ and in $t_j$ by comparing (\ref{eq: product of binomials}) with (\ref{eq: initial function rk 2}).

Let $K = K(\ell_1, \ell_2)$ be the complete bipartite quiver with $\ell_1$ sinks $\{s_1, \dots, s_{\ell_1}\}$ and $\ell_2$ sources $\{t_1, \dots, t_{\ell_2}\}$. For example, $K(3, 2)$ is depicted in \Cref{fig: K32}.
\begin{figure}[ht]
    \begin{tikzcd}[row sep = tiny]
    	{s_1} \\
    	&& {t_1} \\
    	{s_2} \\
    	&& {t_2} \\
    	{s_3}
    	\arrow[from=2-3, to=1-1]
    	\arrow[from=2-3, to=3-1]
    	\arrow[from=2-3, to=5-1]
    	\arrow[from=4-3, to=1-1]
    	\arrow[from=4-3, to=3-1]
    	\arrow[from=4-3, to=5-1]
    \end{tikzcd}
    \caption{The quiver $K(3,2)$}
    \label{fig: K32}
\end{figure}
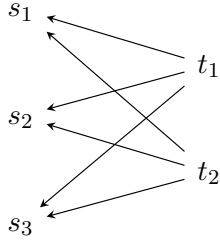

Let $\mathbf P_1 = (d_{1, i})_{1\leq i\leq \ell_1}$ and $\mathbf P_2 = (d_{2, j})_{1\leq j \leq \ell_2}$ be tuples of nonnegative integers. They will be referred to as \emph{ordered partitions} once we fix the sums $|\mathbf P_1| = \sum d_{1, i}$ and $|\mathbf P_2| = \sum d_{2, j}$. As the notation suggests, $\mathbf P_1$ and $\mathbf P_2$ represent two monomials
\[
    s^{\mathbf P_1}\coloneqq \prod_{i=1}^{\ell_1} s_i^{d_{1,i}}\in \mnd_1 \quad \text{and} \quad t^{\mathbf P_2} \coloneqq \prod_{j=1}^{\ell_2} t_j^{d_{2,j}}\in \mnd_2,
\]
Meanwhile they also represent a dimension vector $(\mathbf P_1, \mathbf P_2)$ on the quiver $K$ such that the dimension on the sink $s_i$ is $d_{1, i}$ and on the source $t_j$ is $d_{2, j}$.

Let $V_i = \mathbb C^{d_{1, i}}$ for $i = 1, \dots, \ell_1$ and $W_j = \mathbb C^{d_{2, j}}$ for $j = 1, \dots \ell_2$. Consider the following affine space of representations of $K$ (in $\mathbb C$-vector spaces)
\[
  \operatorname{rep}(\mathbf P_1, \mathbf P_2) \coloneqq \{(f_{i,j})_{1\leq i \leq \ell_1, 1\leq j\leq \ell_2}\mid f_{i, j}\in \operatorname{Hom}_{\mathbb C}(W_j, V_i)\}.
\]

Let $G = \prod_{i=1}^{\ell_1}\operatorname{GL}(V_i) \times \prod_{j=1}^{\ell_2}\operatorname{GL}(W_j)$. Then $G$ acts on the two spaces 
\[
  \operatorname{rep}^\mathrm{f}(\mathbf P_1, \mathbf P_2) \coloneqq \operatorname{rep}(\mathbf P_1, \mathbf P_2) \times \prod_{i=1}^{\ell_1} V_i
  \quad \text{and} \quad
  \operatorname{rep}^\mathrm{b}(\mathbf P_1, \mathbf P_2) \coloneqq \operatorname{rep}(\mathbf P_1, \mathbf P_2) \times \prod_{j=1}^{\ell_2} W_j
\]
respectively by
\[
  (g_i, h_j) \cdot (f_{i,j}, v_i) = (g_if_{i,j}h_j^{-1}, g_iv_i) 
  \quad \text{and} \quad 
  (g_i, h_j) \cdot (f_{i,j}, w_j) = (g_if_{i,j}h_j^{-1}, h_jw_j).
\]
The superscripts $^\mathrm{f}$ and $^\mathrm{b}$ stand for respectively \emph{front} and \emph{back framing}, where the vectors $v_i$ and $w_j$ are called \emph{framing vectors}. Elements in $\operatorname{rep}^\bullet(\mathbf P_1, \mathbf P_2)$ are called \emph{framed representations}.

We further consider back or front framed representations that are \emph{stable} in the following sense. We call $(f_{i,j}, w_j)\in \operatorname{rep}^\mathrm{b}(\mathbf P_1, \mathbf P_2)$ (resp. $(f_{i,j}, v_i)\in \operatorname{rep}^\mathrm{f}(\mathbf P_1, \mathbf P_2)$) \emph{stable} if for every tuple $U_j\subseteq W_j$ of subspaces, we have
\[
  \sum_{i=1}^{\ell_1} \sum_{j=1}^{\ell_2} \dim f_{i, j}(U_j) \geq \frac{|\mathbf P_1|}{|\mathbf P_2|} \sum_{j=1}^{\ell_2} U_j,
\]
with strictly inequality if $w_j\in U_j$ (resp. $v_i\in \sum_{j=1}^{\ell_2} f_{i,j}(U_j)$) for every $j$ (resp. $i$). It is clear that framed stable representations are closed under the action of $G$.

By \cite[Corollary 5.4]{RW} (based on the construction of \cite{ER}), there are smooth irreducible varieties $\mathcal M^\mathrm{b}(\mathbf P_1, \mathbf P_2)$ and $\mathcal M^\mathrm{f}(\mathbf P_1, \mathbf P_2)$ as moduli spaces parametrizing respectively back and front framed stable representations up to the action of $G$. We use $\chi(\cdot)$ to denote the Euler characteristic of a complex variety in analytic topology.

\begin{thm}[{\cite[Theorem 6.1]{RW}}]\label{thm: RW}
  For any coprime $(a, b)\in \mathbb N^2$, we have
  \begin{align*}
    \left(f_{\mathbb R_{\leq 0}(a, b)}\right)^b & = \sum_{k\geq 0} \sum_{|\mathbf P_1| = ka}\sum_{|\mathbf P_2| = kb} \chi(\mathcal M^\mathrm{b}(\mathbf P_1, \mathbf P_2))s^{\mathbf P_1}t^{\mathbf P_2} x^{ka}y^{kb}, \\
    \left(f_{\mathbb R_{\leq 0}(a, b)}\right)^a & = \sum_{k\geq 0} \sum_{|\mathbf P_1| = ka}\sum_{|\mathbf P_2| = kb} \chi(\mathcal M^\mathrm{f}(\mathbf P_1, \mathbf P_2))s^{\mathbf P_1}t^{\mathbf P_2} x^{ka}y^{kb} .
  \end{align*}
\end{thm}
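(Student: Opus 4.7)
\emph{Proof sketch.} The plan is to realize $\Scat(P_1,P_2)$ as the Donaldson--Thomas wall-crossing structure attached to the complete bipartite quiver $K=K(\ell_1,\ell_2)$, and then pass from unframed to framed Euler characteristics.

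First I would match initial data: the factor $(1+s_ix)$ of $P_1$ is identified with the simple $K$-representation supported at the sink $s_i$, and $(1+t_jy)$ of $P_2$ with the simple at the source $t_j$. With the projection $r\colon \mndP\to M$ sending a dimension vector $(\mathbf{P}_1,\mathbf{P}_2)$ to $(|\mathbf{P}_1|,|\mathbf{P}_2|)$, the ray $\mathbb R_{\leq 0}(a,b)$ corresponds exactly to representations of slope $|\mathbf{P}_2|/|\mathbf{P}_1|=b/a$. The Harder--Narasimhan decomposition of the universal Hall series of $K$ into slope-homogeneous factors then matches the $b/a$-decreasing ordered product appearing on the right-hand side of the Kontsevich--Soibelman identity \eqref{eq: factorization tropical vertex}.

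Next, I would invoke the Reineke (Joyce/Kontsevich--Soibelman) integration map from the Hall algebra of $K$ to the tropical vertex group: under this map, the slope-$\mu$ factor of the Hall series sends to a wall-function, and by uniqueness of the ordered factorization this forces
\[
f_{\mathbb R_{\leq 0}(a,b)}=1+\sum_{k\geq 1}\sum_{|\mathbf P_1|=ka,\,|\mathbf P_2|=kb}\chi(\mathcal M^{st}(\mathbf P_1,\mathbf P_2))\,s^{\mathbf P_1}t^{\mathbf P_2}x^{ka}y^{kb},
\]
where $\mathcal M^{st}$ is the unframed stable moduli space (which equals the semistable one because $\gcd(a,b)=1$, so slope-$b/a$ dimension vectors $(ka,kb)$ are coprime in slope).

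The last and hardest step is the passage from unframed stable to framed stable moduli. Back framing can be encoded by enlarging $K$ with an auxiliary vertex $\infty_\mathrm b$ of dimension $1$ carrying $\dim W_j=d_{2,j}$ arrows $\infty_\mathrm b\to t_j$; King stability on the enlarged quiver recovers the stability condition defining $\mathcal M^\mathrm b(\mathbf P_1,\mathbf P_2)$. An Engel--Reineke-type stratification of $\mathcal M^\mathrm b$ by the HN type of the underlying unframed representation, together with the resulting plethystic identity between the generating series, should upgrade the unframed formula to
\[
(f_{\mathbb R_{\leq 0}(a,b)})^{b}=\sum_{k\geq 0}\sum_{|\mathbf P_1|=ka,\,|\mathbf P_2|=kb}\chi(\mathcal M^\mathrm b(\mathbf P_1,\mathbf P_2))\,s^{\mathbf P_1}t^{\mathbf P_2}x^{ka}y^{kb},
\]
the exponent $b$ arising because the framing dimension per unit of slope is $|\mathbf P_2|/k=b$. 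The front-framed case is symmetric, giving the $a$-th power. The main obstacle will be pinning down the precise exponent $b$ (resp.\ $a$) rather than, say, $|\mathbf P_2|$ or some other dimension-dependent function; I would address this by an inductive HN-stratification computation of $\chi(\mathcal M^\mathrm b)$ and comparing the resulting recursion with the binomial expansion of $f^b_{\mathbb R_{\leq 0}(a,b)}$ coefficient by coefficient.
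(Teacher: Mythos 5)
The paper does not prove \Cref{thm: RW}; it is cited from Reineke--Weist \cite[Theorem 6.1]{RW}, so there is no in-paper proof to compare against. Your sketch reconstructs the general flavour of the RW argument (Hall algebra of $K$, integration map to the tropical vertex group, HN filtration, framing by an extra vertex), but the pivotal intermediate step is false and cannot be repaired by the proposed ``upgrade to framed moduli''.

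You assert
\[
f_{\mathbb R_{\leq 0}(a,b)}=1+\sum_{k\geq 1}\sum_{|\mathbf P_1|=ka,\,|\mathbf P_2|=kb}\chi\bigl(\mathcal M^{st}(\mathbf P_1,\mathbf P_2)\bigr)\,s^{\mathbf P_1}t^{\mathbf P_2}x^{ka}y^{kb},
\]
with the justification that ``slope-$b/a$ dimension vectors $(ka,kb)$ are coprime in slope'', hence semistable equals stable. This fails for every $k\geq 2$: coprimality of $(a,b)$ does not make the dimension vectors with $|\mathbf P_1|=ka$, $|\mathbf P_2|=kb$ primitive, and there are strictly semistable representations (direct sums of lower-level slope-$b/a$ stables), so stable and semistable moduli differ and neither Euler characteristic equals the coefficient of $f_{\mathbb R_{\leq 0}(a,b)}$. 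In the Hall-algebra/wall-crossing dictionary the unframed Euler characteristics are attached to $\log f_{\mathbb R_{\leq 0}(a,b)}$ (DT-type invariants), not to $f_{\mathbb R_{\leq 0}(a,b)}$ itself, so there is no unframed identity for your HN stratification or plethystic argument to latch onto. The proof in \cite{RW} works with the framed moduli $\mathcal M^\mathrm b$, $\mathcal M^\mathrm f$ (Reineke's smooth models) from the outset: the extra-vertex encoding of the framing is exactly as you describe, but the exponents $b$ and $a$ come from a functional equation for the framed generating series combined with the Kontsevich--Soibelman ordered-product identity, with the exponent produced by the Euler-form pairing of the framing vertex against the primitive vector of the ray, not by the heuristic ``framing dimension per unit of $k$''. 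Drop the unframed detour and derive the framed functional equation directly, as in \cite{RW}.
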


For coprime $(a, b)\in \mathbb N^2$ and $k\geq 1$, consider $(\aone, \atwo) = (kab+1, kb^2)$. By \Cref{cor: vectors one-bending}(2), the vectors $(\aone, \atwo)$ and $(ka, kb)$ satisfy the one-bending property (\Cref{def: one-bending}). Notice that $a\cdot kb^2 - b\cdot (kab + 1) = -b$.
Recall from \Cref{subsec: formula wall function} that by \Cref{thm: power shadow grading formula}, $\lambda^{(b)}_{a, b}(\mathbf Q_1, \mathbf Q_2)$ is the number of shadowed gradings on $\mathcal P(kab + 1, kb^2)$ of weight monomial $p_1^{\mathbf Q_1}p_2^{\mathbf Q_2}$ where $w(\mathbf Q_1) = ka$ and $w(\mathbf Q_2) = kb$. Similarly, $\lambda^{(a)}_{a, b}(\mathbf Q_1, \mathbf Q_2)$ is the number of shadowed gradings on $\mathcal P(ka^2, kab+1)$ of weight monomial $p_1^{\mathbf Q_1}p_2^{\mathbf Q_2}$.

We now write down explicit formulas of Euler characteristics of framed stable quiver moduli in terms of counts of shadowed gradings.

\begin{cor}\label{cor: euler char by shadowed grading}
    For any coprime $(a, b)\in \mathbb N^2$, ordered partitions $|\mathbf P_1| = ka$, $|\mathbf P_2| = kb$, we have
      \begin{align*}
        \chi(\mathcal M^\mathrm{b}(\mathbf P_1, \mathbf P_2)) & = \sum_{w(\mathbf Q_1)=ka}\sum_{w(\mathbf Q_2)=kb} \lambda^{(b)}_{a, b}(\mathbf Q_1, \mathbf Q_2) \mu^{\mathbf Q_1}_{\mathbf P_1} \mu^{\mathbf Q_2}_{\mathbf P_2},\\
        \chi(\mathcal M^\mathrm{f}(\mathbf P_1, \mathbf P_2)) & = \sum_{w(\mathbf Q_1)=ka}\sum_{w(\mathbf Q_2)=kb} \lambda^{(a)}_{a, b}(\mathbf Q_1, \mathbf Q_2) \mu^{\mathbf Q_1}_{\mathbf P_1} \mu^{\mathbf Q_2}_{\mathbf P_2},
      \end{align*}
      where $\mu_{\mathbf P_i}^{\mathbf Q_i}$ for $i=1,2$ are nonnegative integers defined by the expansions
    \begin{equation}\label{eq: expand p with s t}
      p_1^{\mathbf Q_1} = \sum_{|\mathbf P_1| = w(\mathbf Q_1)} \mu^{\mathbf Q_1}_{\mathbf P_1} s^{\mathbf P_1} \quad \text{and} \quad
      p_2^{\mathbf Q_2} = \sum_{|\mathbf P_2| = w(\mathbf Q_2)} \mu^{\mathbf Q_2}_{\mathbf P_2} t^{\mathbf P_2}.
    \end{equation}
\end{cor}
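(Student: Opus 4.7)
The plan is to equate two independently derived expressions for the $b$-th (resp.\ $a$-th) power of the wall-function $f_{\mathbb R_{\leq 0}(a,b)}$: the Euler-characteristic generating series from \Cref{thm: RW} and the shadowed-grading generating series from \Cref{thm: power shadow grading formula}. After re-expanding the $p$-variables in the $s,t$ variables via \eqref{eq: expand p with s t}, comparing the coefficient of $s^{\mathbf P_1} t^{\mathbf P_2} x^{ka} y^{kb}$ on the two sides will yield the stated equality.

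For the back-framed moduli, I would specialize \Cref{thm: power shadow grading formula} to $m = b$ with $(d_{1,k}, d_{2,k}) = (kab+1, kb^2)$. The inequalities $d_{1,k} \geq ka$ and $d_{2,k} \geq kb$ are immediate, the identity $|a d_{2,k} - b d_{1,k}| = |akb^2 - b(kab+1)| = b$ is a direct calculation, and the one-bending property is supplied by \Cref{cor: vectors one-bending}(1). As a consequence,
\[
    (f_{\mathbb R_{\leq 0}(a,b)})^b = 1 + \sum_{k \geq 1} \sum_{\substack{w(\mathbf Q_1) = ka \\ w(\mathbf Q_2) = kb}} \lambda^{(b)}_{a,b}(\mathbf Q_1, \mathbf Q_2)\, p_1^{\mathbf Q_1} p_2^{\mathbf Q_2}\, x^{ka} y^{kb},
\]
where $\lambda^{(b)}_{a,b}(\mathbf Q_1, \mathbf Q_2)$ records the number of shadowed gradings on $\mathcal P(kab+1, kb^2)$ whose weight monomial is $p_1^{\mathbf Q_1}p_2^{\mathbf Q_2}$. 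Substituting \eqref{eq: expand p with s t} and reading off the coefficient of $s^{\mathbf P_1} t^{\mathbf P_2} x^{ka} y^{kb}$ on both sides of the equality between this identity and the back-framed half of \Cref{thm: RW} produces the first formula.

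The front-framed case is parallel: apply \Cref{thm: power shadow grading formula} instead with $m = a$ and $(d_{1,k}, d_{2,k}) = (ka^2, kab+1)$, verify the hypotheses via \Cref{cor: vectors one-bending}(1) together with the computation $|a d_{2,k} - b d_{1,k}| = a$, and repeat the coefficient comparison against the front-framed half of \Cref{thm: RW}. The only point requiring any care is the pairing of each power $m$ of the wall-function with Dyck-path dimensions $(d_{1,k}, d_{2,k})$ whose one-bending property returns exactly that power; once such a pairing is fixed, the remainder is a routine change of basis between the $p$-variables and the $(s,t)$ elementary-symmetric variables, so I do not anticipate any substantive obstacle.
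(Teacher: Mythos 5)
Your proposal is correct and matches the paper's argument: both derive the result by writing $(f_{\mathbb R_{\leq 0}(a,b)})^m$ (with $m = b$ resp.\ $a$) via the shadowed-grading formula for the specific Dyck-path dimensions $(kab{+}1,kb^2)$ resp.\ $(ka^2,kab{+}1)$, re-expanding the $p$-monomials into $s,t$-monomials via \eqref{eq: expand p with s t}, and comparing coefficients with \Cref{thm: RW}. The only cosmetic difference is that the paper invokes \Cref{cor: vectors one-bending}(2) to supply the one-bending property for these dimension vectors, whereas you invoke part (1), which applies directly to these exact vectors; both are valid.
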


\begin{proof}
  By \Cref{thm: power shadow grading formula}, we can express $\left(f_{\mathbb R_{\leq 0}(a, b)}\right)^b$ as
  \begin{align*}
      & \sum_{k\geq 0}\sum_{w(\mathbf Q_1)=ka}\sum_{w(\mathbf Q_2)=kb}\lambda^{(b)}_{a, b}(\mathbf Q_1, \mathbf Q_2)p_1^{\mathbf Q_1}p_2^{\mathbf Q_2} x^{ka} y^{kb}.  \\
      = & \sum_{k\geq 0}\sum_{\substack{|\mathbf P_1|=ka\\ |\mathbf P_2|=kb}}\left(\sum_{\substack{w(\mathbf Q_1)=ka\\ w(\mathbf Q_2)=kb}}\lambda^{(b)}_{a, b}(\mathbf Q_1, \mathbf Q_2) \mu^{\mathbf Q_1}_{\mathbf P_1} \mu^{\mathbf Q_2}_{\mathbf P_2} \right) s^{\mathbf P_1}t^{\mathbf P_2} x^{ka} y^{kb}.
  \end{align*}
  Comparing the above coefficients with the ones in \Cref{thm: RW} we have the desired equation. The other case is similar.
\end{proof}

\subsection{Formula for relative Gromov--Witten invariants}\label{subsec: GW invariants}

In \cite{GPS}, Gross, Pandharipande, and Siebert expressed the wall-function $f_{\mathbb R_{\leq 0}(a, b)}$ in terms of relative Gromov--Witten invariants on toric surfaces. We briefly recall their results and give a combinatorial formula for these invariants in \Cref{thm: gw by shadowed grading}.

For $(a, b)$ positive and coprime, let $X = X_{a, b}$ denote the toric surface over $\mathbb C$ associated to the complete fan generated by rays $\mathbb R_{\geq 0}(0, 1)$, $\mathbb R_{\geq 0}(1, 0)$, and $\mathbb R_{\leq 0}(a, b)$. The three rays correspond to three toric divisors in $X$ which we denote as $D_1$, $D_2$, and $D_\mathrm{out}$.

Fix two positive integers $\ell_1$ and $\ell_2$. As in \Cref{subsec: euler char quiver}, let $\mathbf P_1 = (d_{1, i})_{1\leq i\leq \ell_1}$ and $\mathbf P_2 = (d_{2, j})_{1\leq j\leq \ell_2}$ be ordered partitions such that
\[
  |\mathbf P_1| = \sum_{i=1}^{\ell_1} d_{1, i} = ka \quad \text{and} \quad |\mathbf P_2| = \sum_{j=1}^{\ell_2} d_{2, j} = kb
\]
for some $k\geq 1$.
Consider $\nu \colon X[(\mathbf P_1, \mathbf P_2)] \rightarrow X$ the blow-up of $X$ along $\ell_1$ and $\ell_2$ general points respectively of $D_1$ and $D_2$ away from the $0$-dimensional toric stratum $S$ (i.e., three toric fixed points). Denote
\[
  X^\circ[(\mathbf P_1, \mathbf P_2)] = X[(\mathbf P_1, \mathbf P_2)] \cut \nu^{-1}(S)
  \quad \text{and} \quad 
  D_\mathrm{out}^\circ = D_\mathrm{out} \cut \nu^{-1}(S)\,.
\]

Let $\beta\in H_2(X, \mathbb Z)$ be the homology class determined by the intersection numbers
\[
  \beta \cdot D_1 = ak, \quad \beta \cdot D_2 = bk, \quad \beta \cdot D_\mathrm{out} = k.
\]
Define the homology class
\[
  \beta[(\mathbf P_1, \mathbf P_2)] = \nu^*(\beta) - \sum_{i=1}^{\ell_1} d_{1,i}[E_{1, i}] - \sum_{j=1}^{\ell_2} d_{2, j}[E_{2, j}] \in H_2(X[(\mathbf P_1, \mathbf P_2)], \mathbb Z).
\]
where $E_{1, i}$ and $E_{2, j}$ are the exceptional divisors over the blown-up points on $D_1$ and $D_2$.

There is a moduli space $\mathfrak M$ of relative stable maps of genus 0 curves into $X^\circ[(\mathbf P_1, \mathbf P_2)]$ representing the homology class $\beta[(\mathbf P_1, \mathbf P_2)]$ with full tangency order $k$ at an unspecified point in $D_\mathrm{out}^\circ$. The moduli space $\mathfrak M$ is proper of virtual dimension zero and thus the integration over the virtual fundamental class defines a Gromov--Witten type invariant
\[
  N_{a, b}[(\mathbf P_1, \mathbf P_2)] \coloneqq \int_{\mathfrak M^\mathrm{vir}} 1 \in \mathbb Q.
\]
We refer the reader to \cite[Section 4]{GPS} for more details on the construction of the moduli space and the definition of $N_{a, b}[(\mathbf P_1, \mathbf P_2)]$.

\begin{thm}[{\cite[Theorem 5.4]{GPS}}]\label{thm: GPS}
  For any positive coprime $(a, b)$, we have
  \begin{equation}\label{eq: GPS}
    \log f_{\mathbb R_{\leq 0}(a, b)} = \sum_{k\geq 1}\sum_{\substack{|\mathbf P_1| = ka\\ |\mathbf P_2| = kb}} k N_{a, b}[(\mathbf P_1, \mathbf P_2)] s^{\mathbf P_1}t^{\mathbf P_2} x^{ka} y^{kb}.
  \end{equation}
\end{thm}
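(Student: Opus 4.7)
The plan is to follow the strategy of Gross--Pandharipande--Siebert, which passes through tropical geometry. The first step is to reduce to the ordinary cluster (binomial) scattering diagram: using the factorizations $P_1 = \prod_{i=1}^{\ell_1}(1 + s_ix)$ and $P_2 = \prod_{j=1}^{\ell_2}(1 + t_jy)$, I would replace $\Scat(P_1, P_2)$ by the consistent completion of the $\ell_1 + \ell_2$ initial lines carrying binomial wall-functions $\{(\mathbb{R}e_1, 1 + s_ix)\} \cup \{(\mathbb{R}e_2, 1 + t_jy)\}$. Applying the Kontsevich--Soibelman algorithm of \Cref{thm: rank 2 consistent} to this refined initial data, each coefficient of $s^{\mathbf{P}_1}t^{\mathbf{P}_2}x^{ka}y^{kb}$ in $f_{\mathbb{R}_{\leq 0}(a,b)}$ is expressible as a sum over \emph{scattering trees}, equivalently rational tropical curves in $\mathbb{R}^2$ whose unbounded legs of weights $d_{1,i}$ and $d_{2,j}$ point in directions $-e_1$ and $-e_2$ respectively, and whose unique outgoing leg of weight $k$ points in direction $(a,b)$. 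The weight attached to each tropical curve is the product of vertex multiplicities given by the Mikhalkin--Kontsevich--Soibelman balancing rule.

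Next I would invoke the Nishinou--Siebert tropical-to-algebraic correspondence to lift each such tropical curve to a one-parameter family of rational curves in the toric surface $X_{a,b}$, meeting $D_{\mathrm{out}}$ with multiplicity $k$ at a free point and meeting $D_1$ (resp.\ $D_2$) at the blown-up points with the prescribed multiplicities $d_{1,i}$ (resp.\ $d_{2,j}$). Equivalently, these lifts become relative maps into the blowup $X[(\mathbf{P}_1, \mathbf{P}_2)]$ representing the class $\beta[(\mathbf{P}_1, \mathbf{P}_2)]$ with precisely the tangency data defining $\mathfrak{M}$. The total tropical multiplicity in each homology class then recovers the relative Gromov--Witten invariant $N_{a,b}[(\mathbf{P}_1, \mathbf{P}_2)]$, giving an explicit formula for each coefficient of $f_{\mathbb{R}_{\leq 0}(a,b)}$ as a weighted sum of such invariants. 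Passing to the logarithm converts the \emph{a priori} disconnected count encoded by $f_{\mathbb{R}_{\leq 0}(a,b)}$ itself into the genuinely connected one, which on the outgoing side accounts for the factor of $k$ as the standard multiple-cover correction for a single leg of weight $k$.

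The principal obstacle is the matching of multiplicities: one must show that the combinatorial tropical vertex multiplicities coincide, as rational numbers, with the virtual contributions from $\mathfrak{M}^{\mathrm{vir}}$. The cleanest route is to degenerate $X[(\mathbf{P}_1, \mathbf{P}_2)]$ to a normal crossings scheme whose dual complex realizes the tropical curves, and then apply the logarithmic Gromov--Witten degeneration formula of Abramovich--Chen--Gross--Siebert. An alternative is to follow GPS's original orbifold/relative construction, whose core is an induction on the combinatorial complexity of the tropical curve using the fact that both sides satisfy the same wall-crossing recursion dictated by \Cref{thm: rank 2 consistent}. Either approach requires care in handling the orbifold locus along $D_{\mathrm{out}}$ needed to encode the full tangency condition of order $k$.
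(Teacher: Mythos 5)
The paper does not prove this statement: it is quoted verbatim from Gross--Pandharipande--Siebert \cite[Theorem 5.4]{GPS} and used as a black box. Your proposal is therefore solving a different problem than the paper's authors set themselves; there is no ``paper's own proof'' to compare against, only the citation.

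That said, your sketch is a reasonable high-level reconstruction of the original GPS argument, with two remarks worth making. First, your opening ``reduction to the binomial case'' via the factorizations $P_1 = \prod_i(1+s_ix)$ and $P_2 = \prod_j(1+t_jy)$ is vacuous in this context: the paper already states \Cref{thm: GPS} with $P_1$ and $P_2$ given in exactly that factored form (see the setup in \Cref{subsec: GW invariants} and equation \eqref{eq: product of binomials}), and the ordered partitions $\mathbf P_1, \mathbf P_2$ record the multiplicities against the individual linear factors. So the KS algorithm already produces binomial scattering data from the start; there is no preliminary refinement step. Second, your suggested ``cleanest route'' through the Abramovich--Chen--Gross--Siebert degeneration formula is anachronistic for GPS's own proof, which works with relative and orbifold Gromov--Witten invariants and establishes the tropical--holomorphic matching through the Nishinou--Siebert correspondence and a careful deformation/degeneration of the relevant open toric surface; you acknowledge this alternative, but it is worth being clear that the log-GW route is a later re-derivation, not the original. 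The remaining pieces of your outline (tropical vertex multiplicities, the lifting to rational curves representing $\beta[(\mathbf P_1,\mathbf P_2)]$, and the role of $\log f_{\mathbb{R}_{\leq 0}(a,b)}$ in converting disconnected to connected counts with the factor $k$ from the weight of the outgoing leg) are in the right spirit, though each is itself a substantial theorem and none is carried out in the present paper.
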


Combined the above theorem of Gross--Pandharipande--Siebert with the tight grading formula \Cref{thm: power shadow grading formula}, we have the following direct corollary.

\begin{cor}\label{cor: log tight grading GW}
  We have for any $m\geq 1$,
  \begin{align*}
      &\sum_{k\geq 1}\sum_{\substack{|\mathbf P_1| = ka\\ |\mathbf P_2| = kb}} k N_{a, b}[(\mathbf P_1, \mathbf P_2)] s^{\mathbf P_1}t^{\mathbf P_2} x^{ka} y^{kb} \\
       &\quad=  \frac{1}{m} \log \left(1 + \sum_{k\geq 1}\sum_{\substack{w(\mathbf Q_1)=ka\\ w(\mathbf Q_2) = kb}} \lambda^{(m)}_{a,b}(\mathbf Q_1, \mathbf Q_2) p_1^{\mathbf Q_1} p_2^{\mathbf Q_2} x^{ka}y^{kb} \right)\,.
  \end{align*}
\end{cor}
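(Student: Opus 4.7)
The plan is to recognize that this corollary is simply the direct combination of the two preceding results, with the factor of $1/m$ arising naturally from the identity $\log(f^m) = m \log f$.

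First I would observe that by \Cref{thm: GPS} (the Gross--Pandharipande--Siebert formula), the left-hand side of the claimed identity is precisely
\[
    \log f_{\mathbb R_{\leq 0}(a,b)},
\]
expressed via the initial data $P_1 = \prod_{i=1}^{\ell_1}(1+s_ix)$ and $P_2 = \prod_{j=1}^{\ell_2}(1+t_jy)$ of \eqref{eq: product of binomials}, whose coefficients are the elementary symmetric polynomials encoded by \eqref{eq: expand p with s t}.

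Next I would invoke \Cref{thm: power shadow grading formula}, which identifies
\[
    \bigl(f_{\mathbb R_{\leq 0}(a,b)}\bigr)^m = 1 + \sum_{k\geq 1}\sum_{\substack{w(\mathbf Q_1)=ka\\ w(\mathbf Q_2) = kb}} \lambda^{(m)}_{a,b}(\mathbf Q_1, \mathbf Q_2)\, p_1^{\mathbf Q_1} p_2^{\mathbf Q_2}\, x^{ka}y^{kb},
\]
which is exactly the argument of the logarithm appearing on the right-hand side of the corollary. Taking the formal logarithm of both sides and dividing by $m$ yields
\[
    \log f_{\mathbb R_{\leq 0}(a,b)} \;=\; \tfrac{1}{m}\log\bigl(f_{\mathbb R_{\leq 0}(a,b)}\bigr)^m,
\]
and combining with the GPS identification of the left-hand side finishes the proof.

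The only subtlety worth mentioning explicitly — not a true obstacle — is that the two expressions use different coefficient variables: the LHS is written in the monomials $s^{\mathbf P_1}t^{\mathbf P_2}$ while the RHS is written in $p_1^{\mathbf Q_1}p_2^{\mathbf Q_2}$. These are reconciled by the substitution \eqref{eq: expand p with s t}, under which the $p_{i,k}$ become elementary symmetric polynomials in the $s_i$ (resp.\ $t_j$). All three identities involved — \Cref{thm: GPS}, \Cref{thm: power shadow grading formula}, and $\log(f^m) = m\log f$ — are identities of formal power series in $\Bbbk\llbracket x,y\rrbracket $ with coefficients in the appropriate ring of formal monomials, so the substitution is legitimate and the equality of formal power series follows.
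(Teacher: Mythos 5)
Your proof is correct and is precisely the argument the paper has in mind: the paper introduces \Cref{cor: log tight grading GW} with the phrase ``Combined the above theorem of Gross--Pandharipande--Siebert with the tight grading formula \Cref{thm: power shadow grading formula}, we have the following direct corollary,'' which is exactly the two-step identification you carry out, together with the formal-power-series identity $\log(f^m) = m\log f$. Your explicit remark that the $p_{i,k}$ in the shadowed grading formula and the $s_i, t_j$ in \Cref{thm: GPS} are reconciled via the elementary-symmetric-polynomial substitution \eqref{eq: expand p with s t} is a useful clarification that the paper leaves implicit.
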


Next we present a formula for each single $N_{a, b}[(\mathbf P_1, \mathbf P_2)]$ that is linear on counts of compatible gradings without involving taking logarithms.

\begin{thm}\label{thm: gw by shadowed grading}
  For any $(a, b)$ coprime, $k\geq 1$ and ordered partitions $|\mathbf P_1| = ka$ and $|\mathbf P_2| = kb$, we have
  \[
    N_{a, b}[(\mathbf P_1, \mathbf P_2)] = \frac{1}{k}\sum_{i=1}^{k} \frac{(-1)^{i-1}}{i}\binom{k}{i} \sum_{\substack{w(\mathbf Q_1) = ka \\ w(\mathbf Q_2) = kb}} \lambda^{(i)}_{a, b}(\mathbf Q_1, \mathbf Q_2)\mu_{\mathbf P_1}^{\mathbf Q_1}\mu_{\mathbf P_2}^{\mathbf Q_2}.
  \]
\end{thm}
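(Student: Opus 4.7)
The plan is to derive the formula from Corollary~\ref{cor: log tight grading GW} by extracting the coefficient of $x^{ka}y^{kb} s^{\mathbf P_1} t^{\mathbf P_2}$ from both sides and inverting the resulting binomial transform. Write $f = f_{\mathbb R_{\leq 0}(a,b)}$ for brevity and set
\[
    B_i \coloneqq [x^{ka}y^{kb} s^{\mathbf P_1} t^{\mathbf P_2}]\, f^i, \qquad A_j \coloneqq [x^{ka}y^{kb} s^{\mathbf P_1} t^{\mathbf P_2}]\,(f-1)^j.
\]
By Theorem~\ref{thm: power shadow grading formula} combined with the substitution \eqref{eq: expand p with s t}, the quantity $B_i$ coincides with $\sum_{w(\mathbf Q_1)=ka,\, w(\mathbf Q_2)=kb} \lambda^{(i)}_{a,b}(\mathbf Q_1,\mathbf Q_2)\,\mu^{\mathbf Q_1}_{\mathbf P_1}\mu^{\mathbf Q_2}_{\mathbf P_2}$, so proving the theorem reduces to establishing the power-series identity
\[
    [\log f]_{x^{ka}y^{kb} s^{\mathbf P_1} t^{\mathbf P_2}} \;=\; \sum_{i=1}^k \frac{(-1)^{i-1}}{i}\binom{k}{i} B_i.
\]

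The key observation is that $f-1$ has lowest-order monomial $x^a y^b$, so $(f-1)^j$ cannot contribute to $x^{ka}y^{kb}$ when $j > k$; in particular $A_j = 0$ for $j > k$, which makes the logarithm expansion truncate as $[\log f]_{\,\cdots} = \sum_{j=1}^k \frac{(-1)^{j-1}}{j} A_j$. From the binomial expansion $f^i = (1 + (f-1))^i = \sum_{j} \binom{i}{j} (f-1)^j$ I read off $B_i = \sum_{j=0}^k \binom{i}{j} A_j$, and since $B_0 = 0$, standard binomial inversion yields $A_j = \sum_{i=1}^j (-1)^{j-i}\binom{j}{i} B_i$.

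Substituting back and swapping the order of summation, the coefficient of $B_i$ in $\sum_{j=1}^k \frac{(-1)^{j-1}}{j} A_j$ equals
\[
    (-1)^{i-1}\sum_{j=i}^k \frac{1}{j}\binom{j}{i} \;=\; \frac{(-1)^{i-1}}{i}\sum_{j=i}^k \binom{j-1}{i-1} \;=\; \frac{(-1)^{i-1}}{i}\binom{k}{i},
\]
using the elementary identity $\tfrac{1}{j}\binom{j}{i} = \tfrac{1}{i}\binom{j-1}{i-1}$ and the hockey-stick identity. Combining with Corollary~\ref{cor: log tight grading GW}, which identifies $[\log f]_{x^{ka}y^{kb} s^{\mathbf P_1} t^{\mathbf P_2}}$ with $k N_{a,b}[(\mathbf P_1,\mathbf P_2)]$, and dividing by $k$ yields the desired formula. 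I do not anticipate a genuine obstacle: the argument is formal series manipulation, and the only subtleties are the vanishing $A_j = 0$ for $j > k$ and $B_0 = 0$ required for the binomial inversion, both of which are immediate from the fact that $f-1$ has no constant term and begins in bidegree $(a,b)$.
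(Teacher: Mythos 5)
Your proof is correct and follows essentially the same route as the paper's: both start from Corollary~\ref{cor: log tight grading GW}, isolate the coefficient of $x^{ka}y^{kb}s^{\mathbf P_1}t^{\mathbf P_2}$, observe that the log series truncates at $j=k$ because $f-1$ starts in bidegree $(a,b)$, and then compute the change-of-basis coefficient $\frac{(-1)^{i-1}}{i}\binom{k}{i}$ using $\tfrac{1}{j}\binom{j}{i}=\tfrac{1}{i}\binom{j-1}{i-1}$ together with the hockey-stick identity. The only difference is bookkeeping: the paper writes the truncated logarithm directly as a polynomial identity $\sum_i c_{i,k}(1+Y)^i$ in the formal variable $Y$ and solves for $c_{i,k}$, whereas you extract coefficients first and then apply binomial inversion on the $A_j\leftrightarrow B_i$ relation; the resulting arithmetic is identical.
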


\begin{proof}
    Denote $f_{\mathbb R_{\leq 0}(a, b)} = 1 + Y$. In the infinite series
    \[
        \log f_{\mathbb R_{\leq 0}(a, b)} = Y - \frac{1}{2}Y^2 + \dots + \frac{(-1)^{k-1}}{k} Y^{k} + \cdots
    \]
    every term with $x^{ka}y^{kb}$ is contained in the part
    \begin{equation}\label{eq: generate cik}
        Y - \frac{1}{2}Y^2 + \dots + \frac{(-1)^{k-1}}{k} Y^{k} = c_{0, k} + c_{1, k}(1+Y) + c_{2, k} (1+Y)^2 + \dots + c_{k, k} (1+Y)^{k}.
    \end{equation}
  By \Cref{thm: power shadow grading formula}, combining the terms with $x^{ka}y^{kb}$ we obtain
  \[
    \sum_{i=1}^k c_{i, k}\sum_{\substack{w(\mathbf Q_1) = ka \\ w(\mathbf Q_2) = kb}} \lambda^{(i)}_{a, b}(\mathbf Q_1, \mathbf Q_2) p_1^{\mathbf Q_1} p_2^{\mathbf Q_2} x^{ka} y^{kb}
  \]
    where each $\lambda_{a, b}^{(i)}(\mathbf Q_1, \mathbf Q_2)$ counts shadowed gradings. Expand this sum using the expression (\ref{eq: expand p with s t}) of $p_i^{\mathbf Q_i}$ in terms of $s^{\mathbf P_1}$ and $t^{\mathbf P_2}$ and then compare with the right-hand side of (\ref{eq: GPS}). We see that $N_{(a, b)}[(\mathbf P_1, \mathbf P_2)]$ is expressed as desired modulo that $c_{i, k} = \frac{(-1)^{i-1}}{i}\binom{k}{i}$. To compute these numbers, we write each term on the left-hand side of (\ref{eq: generate cik}) as
    \[
        \frac{(-1)^{j-1}}{j} Y^{j} = \frac{(-1)^{j-1}}{j} (1+Y - 1)^{j} = \sum_{i=0}^{j} \frac{(-1)^{i-1}}{j} \binom{j}{i} (1 + Y)^{i}.
    \]
    Then we have
    \[
        c_{i, k} = \sum_{j=i}^k \frac{(-1)^{i-1}}{j}\binom{j}{i} = \sum_{j=i}^{k} \frac{(-1)^{i-1}}{i}\binom{j-1}{i-1} = \frac{(-1)^{i-1}}{i} \binom{k}{i}.
    \]
\end{proof}

In particular, when $k=1$, we have
\[
  N_{a, b}[(\mathbf P_1, \mathbf P_2)] = \sum_{\substack{w(\mathbf Q_1)=a\\ w(\mathbf Q_2) = b}} \lambda_{a, b}^{(1)}(\mathbf Q_1, \mathbf Q_2) \mu^{\mathbf Q_1}_{\mathbf P_1} \mu^{\mathbf Q_2}_{\mathbf P_2},
\]
where $\lambda_{a, b}^{(1)}$ are honest coefficients of $f_{\mathbb R_{\leq 0}(a, b)}$.

\begin{exmp}
  Let $\ell_1 = 3$ and $\ell_2 = 2$. Consider the ordered partitions
  \[
    \mathbf P_1 = 3 + 0 + 0 \quad \text{and} \quad \mathbf P_2 = 3 + 0.
  \]
  They correspond respectively to the monomials $s_1^3$ and $t_1^3$. Only when $\mathbf Q_1 = p_{1,1}^3$ and $\mathbf Q_2 = p_{2,1}^3$, we have non-zero coefficients
  \[
    \mu_{\mathbf P_1}^{\mathbf Q_1} = 1 \quad \text{and} \quad \mu_{\mathbf P_2}^{\mathbf Q_2} = 1.
  \]
  Then the formula in \Cref{thm: gw by shadowed grading} gives
  \[
    N_{1,1}[(3+0, 3+0+0)] = \frac{1}{3}\left(3\lambda^{(1)}_{1,1}(p_{1,1}^3, p_{2,1}^3) - \frac{3}{2}\lambda_{1,1}^{(2)}(p_{1,1}^3, p_{2,1}^3) + \frac{1}{3}\lambda^{(3)}_{1,1}(p_{1,1}^3, p_{2,1}^3)\right).
  \]
    Using compatible gradings, we can see that only the third term is non-trivial
  \[
    \lambda^{(3)}_{1,1}(p_{1,1}^3, p_{2,1}^3) = 1. 
  \]
  For this particular grading (depicted in \Cref{fig: tight grading multiple cover}), one can choose $(\aone ,\atwo ) = (12, 9)$, which with $(ka, kb) = (3, 3)$ satisfy the one-bending property by \Cref{cor: vectors one-bending}. Notice $\aone b-\atwo a = 3$.
  Therefore, we have computed $N_{1,1}[(3+0, 3+0+0)] = 1/9$, which coincides with the multiple cover formula \cite{BP05} and \cite[Proposition 5.2]{GPS}.
  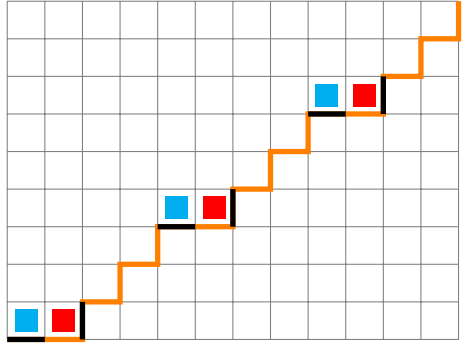
\begin{figure}[ht]
    \centering
    \begin{tikzpicture}[scale=.5]
    \draw[step=1,color=gray] (0,0) grid (12,9);
    \vrec{0}{0}{1}{1}
    \vrec{4}{3}{1}{1}
    \vrec{8}{6}{1}{1}
    \hrec{2}{0}{1}{1}
    \hrec{6}{3}{1}{1}
    \hrec{10}{6}{1}{1}
    \draw[line width=2,color=orange] 
    (0,0)--(2,0)--(2,1)--(3,1)--(3,2)--(4,2)--(4,3)--(6,3)--(6,4)--(7,4)--(7,5)--(8,5)--(8,6)--(10,6)--(10,7)--(11,7)--(11,8)--(12,8)--(12,9);
    \draw[line width=2pt] (0,0)--(1,0);
    \draw[line width=2pt] (4,3)--(5,3);
    \draw[line width=2pt] (8,6)--(9,6);
    \draw[line width=2pt] (2,0)--(2,1);
    \draw[line width=2pt] (6,3)--(6,4);
    \draw[line width=2pt] (10,6)--(10,7);
    \end{tikzpicture}
    \caption{A shadowed grading of weight monomial $p_{1,1}^3p_{2,1}^3$}
    \label{fig: tight grading multiple cover}
    \end{figure}
\end{exmp}

\section{Generalized cluster structures}\label{sec: gen cluster algebra}

\subsection{Generalized cluster structures}\label{subsec: gen cluster struct}

Fomin and Zelevinsky's cluster algebra \cite{FZ} has been generalized by Chekhov and Shapiro \cite{CS} to incorporate polynomial exchange relations (as opposed to binomials). We review the basic notions here.

Let $\mathbb P$ be a torsion-free (multiplicative) abelian group, known as the \emph{coefficient group} \cite{FZ,BFZ}. Let $n$ be a positive integer and $I$ denote the index set $\{1, \dots, n\}$. We take a field $\mathcal F$ isomorphic to the field of rational functions in $n$ independent variables over $\operatorname{Frac}(\mathbb {ZP})$, the fraction field of $\mathbb {ZP}$ the group ring of $\mathbb P$.

\begin{defn}\label{def: gen seed}
    A \emph{seed} in $\mathcal F$ is a triple $\Sigma = (\mathbf x, \mathbf p, B)$, where
    \begin{itemize}
        \item the \emph{cluster} $\mathbf x = \{x_1, \dots, x_n\}$ is an algebraically independent subset that generates $\mathcal F$ over $\operatorname{Frac}(\mathbb {ZP})$;
        \item the \emph{exchange matrix} $B = (b_{i,j})$ is a skew-symmetrizable $n\times n$ integer matrix;
        \item the \emph{coefficient tuples} $\mathbf p = (\mathbf p_1, \dots, \mathbf p_n)$ consist of, for each $i\in I$, an $(\ell_i+1)$-tuple $\mathbf p_i = (p_{i,0}, \dots, p_{i, \ell_i})$ of elements in $\mathbb P$ for $\ell_i = \gcd(b_{j,i}\mid j\in I)$.
    \end{itemize}
\end{defn}

For the above data, we refer to any $x_i\in \mathbf x$ a \emph{cluster variable} and a monomial in $\mathbf x$ a \emph{cluster monomial}. For $k\in I$, we define $x_k'\in \mathcal F$ by the \emph{exchange relation}
\begin{equation}\label{eq: gen exchange reln}
    x_kx_k' = P_k(\mathbf x) = \sum_{s=0}^{\ell_k}p_{k,s} \prod_{b_{i,k}>0}x_i^{sb_{i,k}/\ell_k}\prod_{b_{j,k}<0}x_j^{(s-\ell_k)b_{j,k}/\ell_k}.
\end{equation}
The set $\mathbf x - \{x_k\} \cup \{x_k'\}$ is clearly again an algebraically independent generating set for $\mathcal F$.

\begin{defn}\label{def: gen seed mutation}
    We say that a seed $\Sigma' = (\mathbf x', \mathbf p', B')$ in $\mathcal F$ is obtained from $\Sigma$ by a \emph{seed mutation} in direction $k\in I$ if
    \begin{itemize}
        \item $\mathbf x' = \mathbf x - \{x_k\} \cup \{x_k'\}$ where $x_k'$ is defined as in (\ref{eq: gen exchange reln});
        \item the tuples $\mathbf p'_i = (p'_{i,0}, \dots, p'_{i, \ell_i})$ satisfy when $i = k$
        \[
            p'_{k, s} = p_{k, \ell_k-s}, \quad \text{for $s = 0, \dots, \ell_k$}
        \]
        and when $i\neq k$, for $s = 0, \dots, \ell_i$
        \begin{equation}\label{eq: coefficient mutation not k}
            \frac{p'_{i,s}}{p'_{i, 0}} = \begin{dcases}
                p_{k, \ell_k}^{sb_{k,i}/\ell_i} \frac{p_{i,s}}{p_{i,0}} \quad & \text{if $b_{k,i}\geq 0$}, \\
                p_{k, 0}^{sb_{k,i}/\ell_i} \frac{p_{i,s}}{p_{i,0}} \quad & \text{if $b_{k,i}\leq  0$};
            \end{dcases}
        \end{equation}
        \item $B' = (b_{i,j}')$ is the \emph{Fomin--Zelevinsky matrix mutation} of $B$ in direction $k$ (denoted as $B' = \mu_k(B)$) given by
        \begin{equation}\label{eq: matrix mutation}
            b_{i,j}' = \begin{cases}
                -b_{i,j} \quad & \text{if $i = k$ or $j = k$}, \\
                b_{i,j} + [-b_{i,k}]_+b_{k,j} + b_{i,k}[b_{k,j}]_+ \quad & \text{otherwise}.
            \end{cases}
        \end{equation}
    \end{itemize}
\end{defn}

It is straightforward to check that a seed mutation is involutive, that is, in the situation of \Cref{def: gen seed mutation}, the seed $\Sigma$ is also obtained from $\Sigma'$ by a seed mutation in direction $k$. It is worth mentioning that a seed mutation is non-deterministic. For example one sees in (\ref{eq: coefficient mutation not k}) that uniformly multiplying an element in $\mathbb P$ to every $p'_{i; s}$, $s=0, \dots, \ell_i$ does not affect the equality.

Let $\mathbb T_n$ be the $n$-regular (infinite) tree such that the $n$ edges incident to any vertex are distinctively labeled by $\{1, \dots, n\}$.

\begin{defn}\label{def: cluster structure}
    A \emph{generalized cluster structure} in $\mathcal F$ is an association of seeds
    \[
        v \mapsto \Sigma_v,\quad v\in \mathbb T_n,
    \]
    such that for any $k$-labeled edge $v\frac{k}{\quad\quad}v'$ in $\mathbb T_n$, the seeds $\Sigma_{v}$ and $\Sigma_{v'}$ are obtained from each other by a seed mutation in direction $k$. When $\ell_i = 1$ for every $i\in I$, this is what we refer to as Fomin--Zelevinsky's \emph{ordinary cluster structure} \cite{FZ,BFZ}.
\end{defn}

For a seed $\Sigma_v = (\mathbf x_v, \mathbf p_v, B_v)$ associated with $v\in \mathbb T_v$, we often use the following notations to denote the gadgets in \Cref{def: gen seed}:
\begin{equation}\label{eq: notation seed pattern}
    \mathbf x_v = \{x_{1;v}, \dots, x_{n;v}\},\quad \mathbf p_v = (\mathbf p_{i;v})_{i=1}^{n},\quad \mathbf p_{i;v} = (p_{i,s}^v)_{s=0}^{\ell_i},\quad B_v = (b^{v}_{i,j}).
\end{equation}

The first property one would hope a generalized cluster structure to inherit from that of ordinary cluster structures is the Laurent phenomenon \cite[Theorem 3.1]{FZ}. This is indeed the case.

\begin{thm}[{\cite[Theorem 2.5]{CS}}]\label{thm: gen laurent CS}
    In a generalized cluster structure, any cluster variable is expressed in terms of any given cluster as a Laurent polynomial with coefficients in $\mathbb{ZP}$, that is, for any $v$ and $v'$ in $\mathbb T_n$ and any $i\in I$ we have
    \[
        x_{i;v'}\in \mathbb {ZP}[x_{1;v}^{\pm1}, \dots, x_{n;v}^{\pm 1}].
    \]
\end{thm}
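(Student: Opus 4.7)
The plan is to adapt Fomin and Zelevinsky's original caterpillar-lemma proof of the Laurent phenomenon for ordinary cluster algebras to the generalized setting. Fix an initial vertex $v_0 \in \mathbb{T}_n$ and induct on the distance $d(v_0, v')$ in the tree. The case $d = 0$ is trivial, and for $d = 1$ along a $k$-labeled edge, the exchange relation \eqref{eq: gen exchange reln} gives $x_{k;v'} = P_k(\mathbf{x}_{v_0})/x_{k;v_0}$, which is manifestly a Laurent polynomial in $\mathbf{x}_{v_0}$ with coefficients in $\mathbb{ZP}$, while the other cluster variables at $v'$ coincide with those at $v_0$.

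For the inductive step, consider a path $v_0 - v_1 - \cdots - v_d = v'$ with directions $k_1,\ldots,k_d$ and the one genuinely new variable $x_{k_d;v_d} = P_{k_d}(\mathbf{x}_{v_{d-1}})/x_{k_d;v_{d-1}}$. By induction both the numerator and the denominator on the right lie in $\mathbb{ZP}[\mathbf{x}_{v_0}^{\pm 1}]$, but a priori the quotient is only a rational function in $\mathbf{x}_{v_0}$. The caterpillar lemma of Fomin--Zelevinsky reduces the task to a finite local check: one verifies the Laurent property along every ``caterpillar'' configuration of three or four consecutive mutations that visits each direction a bounded number of times, and provided the exchange polynomials appearing there are pairwise coprime in the appropriate Laurent ring, the Laurent property propagates through the whole tree.

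The principal obstacle is verifying these coprimality conditions: in the ordinary setting each $P_k$ is a binomial $1 + \mathbf{x}^{w}$ and coprimality between exchange polynomials in different directions is essentially immediate, whereas here each $P_k$ has degree $\ell_k$, so one must show that $P_k(\mathbf{x}_v)$ and the exchange polynomial appearing after a subsequent mutation at a different direction $j$ share no common factor in $\mathbb{ZP}[\mathbf{x}_v^{\pm 1}]$. A clean route is to work formally over an extension $\mathbb{P}'\supseteq \mathbb{P}$ in which each $P_k$ factors as $\prod_{s=1}^{\ell_k}(1+q_{k,s}\mathbf{x}^{w_k})$ with $q_{k,s}$ formal roots, realize a single generalized mutation in direction $k$ as a composite of $\ell_k$ ordinary binomial mutations, and then deduce the generalized Laurent phenomenon from the classical one by specializing $q_{k,s}$ to the corresponding elementary symmetric functions producing $p_{k,s}$. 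The subtle point is to confirm that this specialization is compatible with the coefficient-mutation rule \eqref{eq: coefficient mutation not k}; once compatibility is established, the Laurent property descends because the specialization map $\mathbb{Z}[q_{k,s}][\mathbf{x}_{v_0}^{\pm 1}] \to \mathbb{ZP}[\mathbf{x}_{v_0}^{\pm 1}]$ is a ring homomorphism and in particular preserves Laurentness. This compatibility identity is the main technical input that requires care.
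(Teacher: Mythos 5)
The paper does not give a proof of this theorem: it is stated with the attribution \texttt{[\{\textbackslash cite[Theorem 2.5]\{CS\}\}]} and is taken as an external result of Chekhov--Shapiro. So there is no ``paper's own proof'' to compare against here; what can be evaluated is whether your sketch is sound on its own terms.

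Your first route --- adapting the Fomin--Zelevinsky caterpillar lemma --- is in the right spirit and in fact matches the flavor of Chekhov--Shapiro's actual argument (as well as the rank-$2$ version the paper does prove in \Cref{thm: strong laurent}, where the strong Laurent phenomenon is obtained by controlling what happens in three consecutive clusters). The coprimality concern you flag is the genuine technical content, but you stop short of addressing it, so as written the first route is only an outline.

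Your second route, however, contains a real gap. You propose to ``realize a single generalized mutation in direction $k$ as a composite of $\ell_k$ ordinary binomial mutations'' and then specialize. Taken literally, composing $\ell_k$ ordinary mutations \emph{in the same direction} $k$ cannot produce the generalized exchange: ordinary mutation is an involution, so a composite in one direction is either the identity or a single ordinary mutation. If instead you intend to introduce $\ell_k$ auxiliary vertices $k_1,\dots,k_{\ell_k}$ (each carrying one factor $1+q_{k,s}x^{w_k}$) and set $x_k = y_{k_1}\cdots y_{k_{\ell_k}}$, then the generalized exchange relation $x_k x_k' = \prod_s(1+q_{k,s}x^{w_k})$ does hold for $x_k' = y_{k_1}'\cdots y_{k_{\ell_k}}'$, but Laurentness of each $y_{k_s}'$ in the $y$-variables does not descend to Laurentness of $x_k'$ in the $x$-variables: $x_k$ is a \emph{product} $y_{k_1}\cdots y_{k_{\ell_k}}$, and you cannot recover the individual $y_{k_s}$ from $x_k$, so a Laurent polynomial in $y_{k_1}^{\pm 1},\dots,y_{k_{\ell_k}}^{\pm 1}$ need not be a Laurent polynomial in $x_k^{\pm 1}$. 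The specialization $q_{k,s}\mapsto$ elementary symmetric functions is a ring homomorphism and does preserve Laurentness, but only \emph{after} you have already expressed things as Laurent in the original cluster, which is exactly what the argument has not yet established. Until you explain how Laurentness in the auxiliary $y$'s pushes down to Laurentness in the generalized cluster $\mathbf x$, the reduction to the ordinary Laurent phenomenon does not close.
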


The following is a main result of this paper, the positivity of Laurent phenomenon. Its proof will be given in \Cref{subsec: theta and positive}.

\begin{thm}\label{thm: positive Laurent}
    Any cluster variable, when expressed as a Laurent polynomial in any given cluster, has coefficients in $\mathbb {NP}$, the subset in $\mathbb {ZP}$ of linear combinations in $\mathbb P$ with nonnegative integer coefficients.
\end{thm}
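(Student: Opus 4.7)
The plan is to deduce \Cref{thm: positive Laurent} as a corollary of the positivity of wall-functions in the associated higher-rank scattering diagram (\Cref{thm: positive wall-function intro}) via the theta function machinery announced in \Cref{thm: struct thm intro}(2). First, I would attach to the generalized cluster structure a seed data $\mathbf{s}$ whose initial wall-functions encode the exchange polynomials $P_k$ of \eqref{eq: gen exchange reln} and whose coefficient group $\mathbb{P}$ specializes the formal variables $p_{i,s}$ appearing in $\frakD_\mathbf{s}$ to actual elements of $\mathbb{P}$. Under this specialization, each cluster seed $\Sigma_v$ corresponds to a cluster chamber $\mathcal{C}_v$ in $\frakD_\mathbf{s}$, with $\mathbf{x}_v$-monomials identified with the $g$-vector (or $d$-vector) monomials at lattice points of $\mathcal{C}_v$, just as in the rank-$2$ picture of \Cref{subsec: gen cluster sd rk 2}.

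Next, I would promote every cluster monomial to a theta function. For each lattice point $m \in \mathcal{C}_v \cap \Z^N$, the analog of \Cref{prop: theta function as monomial} shows that if $Q_v$ is a generic point in $\operatorname{int}(\mathcal{C}_v)$, then $\vartheta_{Q_v,m} = x^m$, and by \Cref{thm: CPS consistency} this extends via path-ordered products to any other cluster chamber $\mathcal{C}_{v'}$:
\begin{equation*}
    \vartheta_{Q_{v'}, m} \;=\; \mathfrak{p}_{\gamma, \frakD_\mathbf{s}}(x^m)
\end{equation*}
for any regular path $\gamma$ from $Q_v$ to $Q_{v'}$. In particular, every cluster monomial from $\Sigma_v$, expressed in the cluster variables of $\Sigma_{v'}$, is computed by summing monomial weights $\mathrm{Mono}(\beta)$ over all broken lines $\beta$ for $m$ with endpoint $Q_{v'}$. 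The bijection between seeds and cluster chambers, which generalizes the rank-$2$ case via iterated mutation, is functorial because the scattering diagram transforms compatibly with Chekhov--Shapiro mutation; this is what \Cref{subsec: cluster complex high rk} formalizes.

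The positivity then falls out: each broken line weight is a product of coefficients extracted from wall-functions encountered at bending points, and by \Cref{thm: positive wall-function intro} all such coefficients lie in $\N[p_{i,s}\mid 1\leq i\leq n, 1\leq s\leq \ell_i]$. Hence each $\mathrm{Mono}(\beta)$ is a monomial in the $\mathbf{x}_{v'}$-variables with coefficient in $\N[p_{i,s}]$, and their (locally finite) sum yields a Laurent polynomial whose coefficients, after specializing the formal $p_{i,s}$ to the elements of $\mathbb{P}$ prescribed by $\mathbf{s}$, lie in $\N\mathbb{P}$.

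The main obstacle I expect is the foundational bookkeeping in the first paragraph: one must carefully construct $\frakD_\mathbf{s}$ so that its cluster complex realizes the combinatorial cluster pattern of \Cref{def: cluster structure}, including the asymmetric coefficient mutation rule \eqref{eq: coefficient mutation not k} and the frozen/principal-coefficient conventions. Once this dictionary is in place and the analog of \Cref{prop: theta universal Laurent} is established in higher rank so that $\vartheta_m \in \mathcal{U}$, the positivity statement is a direct consequence of the broken-line construction and \Cref{thm: positive wall-function intro}, and \Cref{thm: positive Laurent} follows by taking $m$ to be the $g$- or $d$-vector of the given cluster monomial.
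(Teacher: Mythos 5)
Your proposal is correct and matches the paper's proof in both strategy and structure: reduce to a full-rank seed with $p_{i,0}=1$ normalized away (\Cref{rem: pi0 is trivial}, \Cref{rem: recover seed data}), identify each cluster monomial with a theta function through the cluster-complex dictionary of \Cref{thm: cluster complex struct any rk} and \Cref{thm: cluster monomial theta}, and deduce positivity from \Cref{thm: positive wall-function} since every broken-line weight $c(\gamma)$ is a product of wall-function coefficients and hence lies in $\N[\mathbb P^\oplus]$. One small streamlining in the paper worth noting: rather than establishing a higher-rank analog of \Cref{prop: theta universal Laurent} to show $\vartheta_m\in\mathcal U$, it simply invokes Chekhov--Shapiro's Laurent phenomenon (\Cref{thm: gen laurent CS}) to already know the cluster monomial expands as a finite Laurent polynomial, so the broken-line sum only needs to be read off as its (necessarily nonnegative) coefficient list.
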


\begin{rem}
    In the case of ordinary cluster structures, the positivity was conjectured originally by Fomin and Zelevinsky in \cite{FZ}, which was first proven by Lee and Schiffler \cite{LSpositive} when $B$ is skew-symmetric and then by Gross, Hacking, Keel, and Kontsevich \cite{GHKK} when $B$ is in general skew-symmetrizable. A less general version of \Cref{thm: positive Laurent} was conjectured by Chekhov and Shapiro \cite[Section 5]{CS} where the coefficient tuples are required to be reciprocal.
\end{rem}

\subsection{Frozen directions and coefficients}

In this subsection, every cluster structure is by default generalized. The cluster structure introduced so far can be slightly upgraded by introducing \emph{frozen directions} of mutations. This amounts to choosing a subset $I_\mathrm{uf}\subseteq I$ for \emph{unfrozen directions} and forbidding mutations in \emph{frozen directions} $I_\mathrm{fr} = I\setminus I_\mathrm{uf}$. By permuting the indices, we can always assume
\[
    I_\mathrm{uf} = \{1, \dots, |I_\mathrm{uf}|\} \subseteq I.
\]
So a cluster structure with frozen directions will be defined on the tree $\mathbb T_{|I_\mathrm{uf}|}$. Only the first $|I_\mathrm{uf}|$ columns of the exchange matrices will be relevant in exchange relations (\ref{eq: gen exchange reln}) and matrix mutations (\ref{eq: matrix mutation}).

Since no mutation is performed in frozen directions, all clusters share the same set of \emph{frozen cluster variables}
\[
    \{x_i\mid i\in I_\mathrm{fr}\}.
\]
Notice that in an exchange polynomial $P_k(\mathbf x)$ we can single out a monomial in frozen variables, combine it with $p_{k,s}$, and treat them together as a coefficient. The new coefficients
\[
    p_{k,s}\prod_{\{i\in I_\mathrm{fr}\mid b_{i,k}>0\}} x_i^{sb_{i,k}/\ell_k}
    \prod_{\{j\in I_\mathrm{fr}\mid b_{j,k}<0\}} x_i^{(s-\ell_k)b_{j,k}/\ell_k}
\]
follow the same mutation rule in \Cref{def: gen seed mutation}. Hence a cluster structure with frozen directions can be regarded as a special case of \Cref{def: cluster structure} with the square exchange matrices $(b_{i,j})_{i,j\in I_\mathrm{uf}}$. One nuance is to invert frozen variables so that the new coefficients naturally live in the product of $\mathbb P$ and the (multiplicative) group of all Laurent monomials in frozen variables.

\begin{rem}
    Starting with a cluster structure without frozen directions, we can add frozen directions by extending a square exchange matrix $B$ by adding rows below it, obtaining a rectangular exchange matrix $\widetilde B$. This construction will be useful as later the scattering diagram technique behaves particularly well when $\widetilde B$ is of full rank, which can always be achieved by extension.
\end{rem}

We now discuss how much that a seed mutation is non-deterministic on coefficients influences cluster variables following Fraser \cite[Section 2.4]{Fr20}. For a seed $\Sigma = (\mathbf x, \mathbf p, B)$ (with possible frozen directions), define for $i\in I_\mathrm{uf}$, $s = 1, \dots, \ell_i$, monomials
\begin{equation}\label{eq: y hat variable}
    \hat y_{i,s}\coloneqq \frac{p_{i,s}}{p_{i,0}} \prod_{j\in I} x_j^{sb_{j,i}/\ell_i}.
\end{equation}

\begin{prop}[{\cite[Lemma 2.10]{Fr20}}]\label{prop: Fraser lemma}
    If for two cluster structures
    \[
        v\mapsto \Sigma_v \quad \text{and} \quad v\mapsto \Sigma_v', \quad v\in \mathbb T_{|I_\mathrm{uf}|},
    \]
    there is some vertex $v_0$ such that $\Sigma_{v_0} \sim \Sigma'_{v_0}$ meaning
    \[
        \text{$\frac{x_{i;v_0}}{x_{i;v_0}'}\in \mathbb P$ for any $i\in I$ and $\hat y_{i,s;v_0} = \hat y_{i,s;v_0}'$ for any $i\in I_\mathrm{uf}$ and $1\leq s\leq \ell_i$,}
    \]
    then we have $\Sigma_v\sim \Sigma_v'$ for any $v\in \mathbb T_{|I_\mathrm{uf}|}$.
\end{prop}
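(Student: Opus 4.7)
The plan is to induct on the distance from $v_0$ in the tree $\mathbb T_{|I_\mathrm{uf}|}$. The base case is the hypothesis. For the inductive step, assume $\Sigma_v \sim \Sigma_v'$ and fix $k \in I_\mathrm{uf}$; I will show $\Sigma_{\mu_k(v)} \sim \Sigma_{\mu_k(v)}'$. A preliminary observation is that the equivalence $\Sigma_v \sim \Sigma_v'$ forces $B_v = B_v'$: writing $\hat y_{i,s;v} = (p_{i,s}^v/p_{i,0}^v)\prod_j x_{j;v}^{sb_{j,i}^v/\ell_i}$ and similarly with primes, the hypotheses $\hat y_{i,s;v}=\hat y_{i,s;v}'$ and $x_{j;v}/x_{j;v}' \in \mathbb P$ pin down the exponent $sb_{j,i}^v/\ell_i$ since the $x_{j;v}$ are algebraically independent over $\operatorname{Frac}(\mathbb{ZP})$. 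Because matrix mutation \eqref{eq: matrix mutation} is purely a function of $B$, this gives $B_{\mu_k(v)} = B_{\mu_k(v)}'$ as well.

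Next I will verify $x_{i;\mu_k(v)}/x_{i;\mu_k(v)}' \in \mathbb P$ for all $i \in I$. For $i \neq k$ nothing changes, so this is the inductive hypothesis. For $i=k$, the key step is to refactor the exchange relation as
\[
  x_{k;v}\, x_{k;\mu_k(v)} \;=\; p_{k,0}^v \prod_{b_{j,k}^v<0} x_{j;v}^{-b_{j,k}^v}\cdot F_k(v), \qquad F_k(v) \coloneqq 1 + \sum_{s=1}^{\ell_k}\hat y_{k,s;v},
\]
so $x_{k;\mu_k(v)}/x_{k;\mu_k(v)}'$ equals $(x_{k;v}'/x_{k;v})\cdot(p_{k,0}^v/p_{k,0}^{'v})\cdot\prod_{b_{j,k}^v<0}(x_{j;v}/x_{j;v}')^{-b_{j,k}^v}\cdot F_k(v)/F_k(v)'$. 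The $F_k$ ratio is $1$ because $F_k$ is a function of the $\hat y_{k,s;v}$ alone, and every other factor lies in $\mathbb P$ by inductive hypothesis.

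The main obstacle is verifying $\hat y_{i,s;\mu_k(v)} = \hat y_{i,s;\mu_k(v)}'$ for every $i \in I_\mathrm{uf}$ and $1\le s\le \ell_i$. I will derive an explicit ``$\hat y$-mutation'' formula depending only on $\{\hat y_{j,t;v}\}$ and $B_v$, which then implies the required equality. For $i=k$, unwinding definitions using $p_{k,s}^{\mu_k(v)}=p_{k,\ell_k-s}^v$ and $b_{j,k}^{\mu_k(v)}=-b_{j,k}^v$ gives
\[
  \hat y_{k,s;\mu_k(v)} \;=\; \hat y_{k,\ell_k-s;v}/\hat y_{k,\ell_k;v}.
\]
For $i\neq k$, substituting \eqref{eq: coefficient mutation not k}, the matrix mutation \eqref{eq: matrix mutation}, and the exchange expression for $x_{k;\mu_k(v)}$ from the previous paragraph into the definition of $\hat y_{i,s;\mu_k(v)}$, and carefully tracking the two sign cases $b_{k,i}^v\ge 0$ versus $b_{k,i}^v\le 0$, I expect to obtain the unified formula
\[
  \hat y_{i,s;\mu_k(v)} \;=\; \hat y_{i,s;v}\cdot \hat y_{k,\ell_k;v}^{\,s[b_{k,i}^v]_+/\ell_i}\cdot F_k(v)^{-sb_{k,i}^v/\ell_i}.
\]
The sign split is the technical nuisance: the coefficient mutation uses either $p_{k,\ell_k}^v$ or $p_{k,0}^v$ and the identity $[b_{j,k}^v]_+ = [-b_{j,k}^v]_+ + b_{j,k}^v$ is what reconciles the contribution of the monomial prefactor in $x_{k;\mu_k(v)}$ with the $[b_{j,k}^v]_+ b_{k,i}^v$ term in matrix mutation. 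Once the formula is in hand, both sides depend only on quantities that agree between $\Sigma_v$ and $\Sigma_v'$, completing the induction.
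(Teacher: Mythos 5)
Your proof is correct, and the key $\hat y$-mutation formula you ``expect to obtain'' does check out in both sign cases: writing $[-b_{j,k}]_+ + b_{j,k} = [b_{j,k}]_+$ reconciles the exponents on $x_{j;v}$ coming from $x_{k;\mu_k(v)}$ with the matrix-mutation term, and the prefactors $(p_{k,\ell_k}^v/p_{k,0}^v)^{sb_{k,i}^v/\ell_i}$ and $\prod_{j\ne k}x_{j;v}^{sb_{j,k}^vb_{k,i}^v/\ell_i}$ assemble precisely into $\hat y_{k,\ell_k;v}^{s[b_{k,i}^v]_+/\ell_i}$. All exponents involved are integers because $\ell_i = \gcd_j(b_{j,i})$, so the powers of $\hat y$ and $F_k$ are well-defined in $\mathcal F$ (and $F_k\ne 0$ since its constant term is $1$); you may wish to say this explicitly.

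The paper does not give a proof of \Cref{prop: Fraser lemma}; it simply cites Fraser's Lemma 2.10, which is stated in the ordinary ($\ell_i\equiv 1$) setting, and tacitly asserts it extends to Chekhov--Shapiro exchange polynomials. Your argument is therefore not merely an alternative to ``the paper's proof''---it actually supplies the generalized version the paper relies on. The structure (induction along $\mathbb T_{|I_\mathrm{uf}|}$, refactoring $P_k$ through $F_k = 1 + \sum_s \hat y_{k,s}$ to control the cluster-variable ratio, and an intrinsic mutation rule for the $\hat y$'s) is the same mechanism Fraser uses in rank-agnostic form, so the proof is a faithful and complete extension rather than a new route. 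One small stylistic point: you frame $B_v = B_v'$ as a preliminary observation inside the inductive step, but it is cleaner to carry it as part of the inductive hypothesis, since the relation $\Sigma_v\sim\Sigma_v'$ already implies it and the equality of $B$-matrices is what lets you speak of ``the'' sign of $b_{k,i}^v$ unambiguously.
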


\begin{defn}\label{def: cluster algebra higher rk}
    Given a (generalized) cluster structure, the associated \emph{(generalized) cluster algebra} $\mathcal A$ is the $\mathbb {ZP}$-subalgebra of $\mathcal F$ generated by all cluster variables
    \[
        \bigcup_{v\in \mathbb T_n} \mathbf x_v = \{x_{i;v}\mid i\in I, v\in \mathbb T_n\}.
    \]
    The \emph{(generalized) upper cluster algebra} $\mathcal U$ is defined to be the intersection
    \[
        \bigcap_{v\in \mathbb T_n} \mathbb{ZP}[x_{i;v}^{\pm 1}\mid i\in I].
    \]
\end{defn}

It follows from \Cref{prop: Fraser lemma} that in any cluster structure where $\Sigma$ is associated to a given vertex $v_0$, any cluster variable in any seed is well-defined up to a factor in $\mathbb P$. Therefore we shall use the notations $\mathcal A = \mathcal A(\Sigma)$ and $\mathcal U = \mathcal U(\Sigma)$ for any seed $\Sigma$ in the cluster structure since $\Sigma$ is sufficient to determine both $\mathcal A$ and $\mathcal U$. By \Cref{thm: gen laurent CS}, $\mathcal A$ is contained in $\mathcal U$.

\begin{rem}\label{rem: pi0 is trivial}
    Another manipulation on a seed $\Sigma$ that does not change $\mathcal A(\Sigma)$ is to scale the coefficient tuple $\mathbf p_i = (p_{i, 0}, \dots, p_{i, \ell_i})$ by some uniform factor $p\in \mathbb P$. It follows from \Cref{prop: Fraser lemma} that any cluster variable is again well-defined up to a factor in $\mathbb P$. Thus to address the positivity in \Cref{thm: positive Laurent}, we can always assume that there is some seed $\Sigma$ where every $p_{i,0} = 1$ for $i\in I$. Moreover, we can take the free abelian group $\mathbb P'$ generated by symbols
    \[
        \{p_{i,s}\mid i\in I_\mathrm{uf}, 1\leq s\leq \ell_i\},
    \]
    with the natural group homomorphism $\psi$ to $\mathbb P$ suggested by the notation. Cluster variables generated in the new cluster structure are subtraction-free rational expressions in $|I|$ variables with coefficients in $\mathbb P'$. Then $\psi$ induces a map that sends a cluster variable in the new cluster structure to the corresponding cluster variable in the old cluster structure. Therefore once the positivity of Laurent phenomenon is established for $\mathbb P'$, it also holds for $\mathbb P$.
\end{rem}

\section{Generalized cluster scattering diagrams}\label{sec: gen cluster sd higher rk}

\subsection{Definitions}\label{subsec: gen cluster sd higher rk}

We now introduce a construction generalizing Gross--Hacking--Keel--Kontsevich's cluster scattering diagram \cite{GHKK} mostly carried out by Mou \cite{Mou} and Cheung, Kelley, and Musiker \cite{CKM}. Here we take a perspective of \cite{BFFMNC} that both coefficients and frozen directions are present. We impose no reciprocal conditions on coefficients to match the generality in \Cref{subsec: gen cluster struct}.

The \emph{generalized cluster scattering diagram} $\frakD_\mathbf s$ we are about to define is essentially attached to a seed $\Sigma = (\mathbf x, \mathbf p, \widetilde B)$ where $\widetilde B$ is a $|I|\times |I_\mathrm{uf}|$ rectangular exchange matrix and $p_{i, 0} = 1$ for any $i\in I_\mathrm{uf}$. The coefficient group $\mathbb P$ is taken to be the (multiplicative) abelian group freely generated by
\[
    \{p_{i, j}\mid i\in I_\mathrm{uf}, 1\leq j\leq \ell_i\}.
\]
For easier comparison with \cite{GHKK}, we closely follow the language therein (see also \cite{GHK}) where the information in $\Sigma$ can be easily extracted from the so-called \emph{fixed data} and \emph{seed data}.

\begin{defn}\label{def: fixed data}
    The fixed data $\Gamma$ consists of 
	\begin{itemize}
		\item a lattice $N$ of finite rank with a skew-symmetric bilinear form $\omega \colon N\times N \rightarrow \mathbb Q$;
		\item an \emph{unfrozen sublattice} $N_\mathrm{uf}\subset N$, a saturated sublattice of $N$;
		\item an index set $I = \{1, \dots, \operatorname{rank} N\}$ and a subset $I_\mathrm{uf} = \{1, \dots, \operatorname{rank} N_\mathrm{uf}\}$;
		\item positive integers $d_i$ for $i\in I$ with greatest common divisor $1$;
		\item a sublattice $N^\circ \subset N$ of finite index such that $\omega( N_\mathrm{uf}, N^\circ )\subseteq \mathbb Z$ and $\omega( N, N_\mathrm{uf}\cap N^\circ )\subseteq \mathbb Z$;
		\item $M = \operatorname{Hom}(N, \mathbb Z)$; 
        \item a free (multiplicative) abelian group $\mathbb P$ of finite rank.
	\end{itemize}
\end{defn}

\begin{defn}\label{def: seed data}
	Given the fixed data $\Gamma$, a seed data $\mathbf s = ( \mathbf e, \mathbf p)$ consists of $\mathbf e = (e_i)_{i\in I}\in N^I$ and $\mathbf p = (\mathbf p_i)_{i\in I_\mathrm{uf}}$ where $\mathbf p_i = (p_{i,j})_{1\leq j \leq \ell_i}$ and $p_{i,j}\in \mathbb P$ such that
	\begin{enumerate}
		\item $\{ e_i \mid i\in I\}$ is a basis for $N$;
		\item $\{ e_i \mid i\in I_\mathrm{uf}\}$ is a basis for $N_\mathrm{uf}$;
		\item $\{d_ie_i\mid i\in I\}$ is a basis for $N^\circ$;
		\item for $i\in I_\mathrm{uf}$, the element $w_i \coloneqq \omega(-,d_i e_i)/\ell_i$ is primitive in $M$;
        \item $\{p_{i,j}\mid i\in I_\mathrm{uf}, 1\leq j\leq \ell_i\}$ is a free generating set for $\mathbb P$.
	\end{enumerate}
\end{defn}

We impose the following \emph{injectivity assumption}.

\noindent\textbf{Injectivity assumption:} The map
\begin{equation}\label{eq: inj assumption}
    N_\mathrm{uf}\cap N^\circ \rightarrow M, \quad v\mapsto \omega(-, v)
\end{equation}
is injective.

Before defining scattering diagrams, we briefly explain how to obtain a seed $\Sigma = (\mathbf x, \mathbf p, \widetilde B)$ from $\Gamma$ and $\mathbf s = (\mathbf e, \mathbf p)$. The ambient field $\mathcal F$ is the fraction field of $\operatorname{Frac}(\mathbb {ZP})[M]$. Let $\{e_i^*\in M\mid i\in I\}$ denote the dual basis of $\mathbf e$. The cluster $\mathbf x$ in $\Sigma$ is
\[
    \{x_i\mid i\in I\} = \{x^{e_i^*}\mid i\in I\}.
\]
The coefficient tuples $\mathbf p = (\mathbf p_i)_{i\in I_\mathrm{uf}}$ are copied from $\mathbf s$. For $i\in I$ and $j\in I_\mathrm{uf}$, let
\[
    b_{i, j} = \omega(e_i, d_je_j)\in\mathbb Z.
\]
Then we put $\widetilde B = (b_{i, j})\in \mathbb Z^{|I|\times |I_\mathrm{uf}|}$. The condition that each $\ell_i$ divides the $i$-th column of $\widetilde B$ is ensured by (4) of \Cref{def: seed data}. The injectivity assumption is then equivalent to that $\widetilde B$ is of full rank.

\begin{rem}\label{rem: recover seed data}
    It is straightforward to construct fixed and seed data from a seed $\Sigma$. Let $N = \mathbb Z^{|I|}$ and $\{e_i\mid i\in I\}$ be the standard basis. For $i\in I_\mathrm{uf}$, let $d_i$ be integers with greatest common divisor $1$ such that the diagonal matrix $(d_i\delta_{i,j})_{i,j\in I_\mathrm{uf}}$ is a left skew-symmetrizer for the first $|I_\mathrm{uf}|$ rows of $\widetilde B$. We specify the form $\omega$, which is determined by $\omega(e_i, e_j)$ for $i, j\in I$. Let
    \[
        \omega(e_i, e_j) = \begin{dcases}
            b_{i,j}/d_j \quad & \text{$i\in I$ and $j\in I_\mathrm{uf}$}, \\
            -b_{j,i}/d_i \quad & \text{$i\in I_\mathrm{uf}$ and $j\in I_\mathrm{fr}$}, \\
            0 \quad & \text{$i\in I_\mathrm{fr}$ and $j\in I_\mathrm{fr}$}.
        \end{dcases}
    \]
    For any $j\in I_\mathrm{fr} = I\setminus I_\mathrm{uf}$, set $d_j = \operatorname{lcm}(d_i\mid i\in I_\mathrm{uf})$. With these choices, the conditions in \Cref{def: fixed data} and \Cref{def: seed data} are all satisfied.
\end{rem}

The scattering diagram $\frakD_\mathbf s$ will live in the vector space $M_\mathbb R = M\otimes \mathbb R$. As in the rank-$2$ case, we need a monoid $\mndP$ with a monoid map $r\colon \mndP \rightarrow M$. For our purposes in higher ranks, the choice will soon be specified.

Denote by $\mathbb P^\oplus = \mathbb P^\oplus_\mathbf s$ the sub-monoid of $\mathbb P$ generated by all $p_{i,s}$, which is isomorphic to $\bigoplus_{i\in I_\mathrm{uf}}\mathbb N^{\ell_i}$ (written additively). Let
\[
    \mndP = \mndP_\mathbf s \coloneqq M \oplus \mathbb P^\oplus
\]
equipped with the monoid map
\[
    r\colon \mndP \rightarrow M, \quad (m, p) \mapsto m.
\]
Recall that $\Bbbk$ denotes a ground field of characteristic zero. Analogous to the rank-2 situation, define ideals (the same notation for both)
\[
    \frakm \coloneqq \mndP \setminus \mndP^\times \quad \text{and} \quad \frakm \coloneqq \Bbbk[\mndP\setminus \mndP^\times]\subseteq \Bbbk[\mndP],
\]
where $\mndP^\times$ is all units in $\mndP$, which is $\{(m, 1)\mid m\in M\}$ in the current case. Monomials in $\Bbbk[\mndP]$ are associated with elements in $\mndP$, thus simply denoted as $px^m$ for $(m, p)\in \mndP$.

The following definitions of a wall and a scattering diagram are with respect to the fixed data $\Gamma$ and a seed data $\mathbf s$.

\begin{defn}
    A \emph{wall} is the data $\frakd$ of
    \begin{itemize}
        \item the \emph{support} $S_\frakd$, a convex rational polyhedral cone of codimension one in $M_{\mathbb R}$
        with primitive normal vector 
        \[
            n_\frakd \in N^+ = N^+_\mathbf s \coloneqq \left\{\sum_{i\in I_\mathrm{uf}}a_ie_i \,\middle\vert\, a_i\in \mathbb N \right\}\setminus\{0\};
        \]
        \item the \emph{wall-function} $f_\frakd$, an element in $\Bbbk[\mndP]/I$ of an $\frakm$-primary ideal $I$ or in $\widehat{\Bbbk[\mndP]}\coloneqq \lim\limits_{\longleftarrow} \Bbbk[\mndP]/{\mathfrak m^k}$ satisfying
        \[
            f_{\frakd} = 1 + \sum_{(m, p)\in \mndP} c_{(m, p)} px^m,
        \]
        where $r(m, p) = m \in \mathbb R_{>0}\omega(-, n_\frakd)\cap M$.
    \end{itemize}
\end{defn}

For a wall $\frakd$, the primitive generator $m_\frakd$ of $\mathbb R_{\geq 0}\omega(-, n_\frakd)\cap M$ is called its \emph{direction}. Any monomial $px^m$ that appears in $f_\frakd$ then has $m = km_\frakd$ for some integer $k\geq 1$.

\begin{defn}
    A \emph{scattering diagram} over $\Bbbk[\mndP]/I$ for an $\frakm$-primary ideal $I$ is a finite collection of walls whose wall-functions are in $\Bbbk[\mndP]/I$. A \emph{scattering diagram} $\frakD$ over $\widehat{\Bbbk[\mndP]}$ is a countable collection of walls whose wall-functions are in $\widehat{\Bbbk[\mndP]}$ such that for each $k\geq 0$, the set
    \[
        \frakD_k \coloneqq \{\frakd\in \frakD\mid f_\frakd \not\equiv 1 \mod \frakm^{k+1}\}
    \]
    is a scattering diagram over $\Bbbk[\mndP]/\frakm^{k+1}$.
\end{defn}

As in the rank-2 situation, we write
\begin{equation}\label{eq: support and sing locus}
    \operatorname{Supp}(\frakD) \coloneqq \bigcup_{\frakd\in \frakD} S_\frakd \quad \text{and}\quad
    \operatorname{Sing}(\frakD) \coloneqq \bigcup_{\frakd\in \frakD} \partial S_\frakd \cup \bigcup_{\frakd, \frakd'}(S_\frakd \cap S_{\frakd'})
\end{equation}
for the \emph{support} and \emph{singular locus} of $\frakD$ where the last union is taken over all pairs of walls such that the intersection of their supports is of dimension $\dim M_\mathbb R - 2$.

Wall-crossing automorphisms are defined by the higher-rank analogue of \eqref{eq: wcs automorphism}, which act on the complete algebra $\widehat{\Bbbk[\mndP]}$. The notion of a path-ordered product $\mathfrak p_{\gamma} = \mathfrak p_{\gamma, \frakD}$ is also extended verbatim to the higher-rank situation for a \emph{regular path} $\gamma$ that avoids $\operatorname{Sing}(\frakD)$ and whose endpoints avoid $\operatorname{Supp}(\frakD)$. The equivalence relation between scattering diagrams and the consistency of a scattering diagram are defined in the same way as respectively in \Cref{def: equiv sd rk 2} and \Cref{def: consistent sd rk 2}.

\begin{defn}
    A wall $\frakd$ is said to be \emph{incoming} if its direction $m_\frakd$ (or equivalently $\omega(-, n_\frakd)$) is contained in the support $S_\frakd$. Otherwise it is called \emph{non-incoming} or \emph{outgoing}.
\end{defn}

Consider the initial scattering diagram
\[
    \frakD_{\mathrm{in}, \mathbf s} \coloneqq \{\frakd_i\mid i\in I_\mathrm{uf}\} \quad \text{where} \quad S_{\frakd_i} \coloneqq e_i^\perp \quad \text{and} \quad f_{\frakd_i} \coloneqq 1 + \sum_{j=1}^{\ell_i} p_{i, 1}x^{jw_i}.
\]
Clearly every wall in $\frakD_{\mathrm{in}, \mathbf s}$ is incoming.

The following result slightly generalizes \cite[Theorem 1.12]{GHKK} and is again a consequence of \cite[2.1.6]{KSwcs}; see \cite[Section 6.2]{Mou} and \cite[Section 3.2]{CKM}.

\begin{thm}\label{thm: gen cluster sd}
    There is a scattering diagram $\frakD_{\mathbf s}$ satisfying:
    \begin{enumerate}
        \item $\frakD_{\mathbf s}$ is consistent,
        \item $\frakD_{\mathbf s} \subseteq \frakD_{\mathrm{in}, \mathbf s}$,
        \item $\frakD_{\mathbf s}\setminus \frakD_{\mathrm{in}, \mathbf s}$ consists only of outgoing walls.
    \end{enumerate}
    Moreover, $\frakD_{\mathbf s}$ is unique up to equivalence.
\end{thm}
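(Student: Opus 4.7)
The plan is to construct $\frakD_\mathbf s$ by an order-by-order algorithm analogous to the one behind \Cref{thm: rank 2 consistent}, adapted to higher rank by using the rank-2 construction as a local model at each codimension-two stratum. Throughout, we grade $\widehat{\Bbbk[\mndP]}$ using the $\frakm$-adic filtration (assigning $\deg p_{i,s} = s$ and $\deg x^m = 0$), so that it suffices to build a compatible sequence of consistent scattering diagrams $\frakD^{(k)}$ over $\Bbbk[\mndP]/\frakm^{k+1}$ such that $\frakD^{(k)}$ restricts to $\frakD_{\mathrm{in},\mathbf s}$ modulo the incoming walls and $\frakD^{(k)}\setminus \frakD_{\mathrm{in},\mathbf s}$ consists only of outgoing walls of order $\leq k$.

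For the base case $k=1$, one takes $\frakD^{(1)}$ to be $\frakD_{\mathrm{in},\mathbf s}$ truncated mod $\frakm^2$. This is consistent because at order $1$ every wall-crossing automorphism is unipotent of order one, so all pairwise commutators vanish mod $\frakm^2$, exactly as in \Cref{rem: D1 is consistent}. For the inductive step, suppose $\frakD^{(k)}$ has been constructed. Lift each of its walls arbitrarily to a wall over $\Bbbk[\mndP]/\frakm^{k+2}$; the resulting collection $\widetilde{\frakD}^{(k)}$ generally fails consistency at order $k+1$. The failure is detected by path-ordered products around small loops linking each codimension-two stratum $\tau \subseteq \operatorname{Sing}(\widetilde\frakD^{(k)})$: for a small loop $\gamma_\tau$ around such a stratum, the element $\mathfrak p_{\gamma_\tau,\widetilde\frakD^{(k)}} - \mathrm{id}$ has entries in $\frakm^{k+1}/\frakm^{k+2}$ and is supported on the two-dimensional quotient $M_\R/(\tau\cdot\R)$.

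The key point is that, in the transverse slice to $\tau$, the restriction of $\widetilde\frakD^{(k)}$ is a rank-2 scattering diagram whose initial data are precisely the wall-functions of $\widetilde\frakD^{(k)}$ whose supports contain $\tau$. By \Cref{thm: rank 2 consistent}, there is a unique (up to equivalence) set of outgoing rays in the slice, living in the open half-space opposite to $\tau$, that repairs consistency at order $k+1$. These rays determine outgoing walls of $\widetilde\frakD^{(k)}$ whose supports contain $\tau$ and have direction not tangent to $\tau$; adding all of them, for every codimension-two stratum, produces $\frakD^{(k+1)}$. Because any interaction of two walls of orders $s$ and $t$ produces corrections of order $s+t$, walls added at this step do not interact with each other at order $\leq k+1$, so consistency at codimension-two strata entails full consistency at order $k+1$ by the standard argument of \cite[Theorem 6.38]{G11}.

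Uniqueness follows by the same inductive scheme: any two candidates $\frakD_\mathbf s$ and $\frakD_\mathbf s'$ satisfying (1)--(3) agree modulo $\frakm^2$ by (2), and if they agree modulo $\frakm^{k+1}$, the difference at order $k+1$ is an outgoing scattering diagram supported on codimension-one cones that is consistent by itself; by the rank-2 uniqueness applied at each codimension-two stratum, this difference is trivial. The injectivity assumption \eqref{eq: inj assumption} is used here to ensure that each wall-function in a single ray is determined by its image under the pairing with $N_\mathrm{uf}$, so that rank-2 uniqueness transfers unambiguously back to $\frakD_\mathbf s$. The main obstacle in carrying out this plan is verifying that the local rank-2 reduction genuinely applies in the present polynomial (rather than binomial) setting; however, the rank-2 results of \Cref{subsec: gen cluster sd rk 2} have been proven in exactly this generality, so this reduction goes through. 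The remaining bookkeeping --- grading conventions, the fact that non-incoming walls in the slice lift to outgoing walls of $\frakD_\mathbf s$, and compatibility of the $\frakD^{(k)}$ as $k$ grows --- is entirely parallel to \cite[Appendix C]{GHKK}, cited in the body of the paper.
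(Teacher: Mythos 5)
Your proposal takes a genuinely different route from the paper: the paper does not give a constructive proof of \Cref{thm: gen cluster sd} at all, but instead cites it as a slight generalization of \cite[Theorem~1.12]{GHKK}, which in turn derives from Kontsevich--Soibelman's general existence/uniqueness theorem for wall-crossing structures (\cite[2.1.6]{KSwcs}), as worked out in \cite{Mou, CKM}. You instead sketch the order-by-order algorithm of \cite[Appendix~C]{GHKK}, which is actually the strategy the paper itself deploys later (in the proof of \Cref{thm: positive wall-function}), where the rank-$2$ reduction at codimension-two strata is used to propagate positivity, not merely existence. So your approach is defensible and even thematically aligned with the rest of the paper; it buys a self-contained construction, at the cost of the technical care that the citation-based route outsources.

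That said, the sketch as written has real gaps that go beyond ``bookkeeping.'' First, you treat every codimension-two stratum $\tau\subseteq\operatorname{Sing}(\widetilde\frakD^{(k)})$ uniformly, but the correct construction distinguishes \emph{perpendicular} joints (where no wall through the joint has direction tangent to it) from \emph{parallel} joints (where all directions are tangent). Your rank-$2$ repair step only applies at perpendicular joints; at parallel joints no new walls are created and consistency must be argued separately (the wall-crossing automorphisms there commute for a different reason). You implicitly assume the perpendicular case throughout, e.g.\ when you say the newly added outgoing walls ``have direction not tangent to $\tau$.'' Second, the construction requires walls to be in general position near each joint, which at higher orders fails; GHKK (and this paper's own proof of \Cref{thm: positive wall-function}) fixes this with a subdivision-plus-perturbation trick that does not appear in your argument. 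Third, the rank-$2$ model at a perpendicular joint lives not in the quotient $M_\R/(\tau\cdot\R)$ you describe but in the $2$-plane $\omega(-,\Lambda_\mathfrak{j}^\perp)$ and the associated sublattice; the injectivity assumption \eqref{eq: inj assumption} enters precisely to guarantee this is rank $2$, not (as you suggest) to make wall-functions determined by pairing with $N_\mathrm{uf}$. Finally, a small reading note: your argument correctly treats hypothesis (2) as requiring $\frakD_{\mathrm{in},\mathbf s}\subseteq\frakD_\mathbf s$; the stated inclusion in the theorem is reversed and should be taken as a typo.
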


When $p_{i,0}$ and $p_{i, \ell_i}$ specialize to $1$ and any $p_{i,j}$, $j\notin \{0,\ell_i\}$ specializes to $0$ (then $f_{\frakd_i}$ becoming a binomial) for any $i\in I_\mathrm{uf}$, then $\frakD_{\mathbf s}$ is what now known as the \emph{cluster scattering diagram} of Gross--Hacking--Keel--Kontsevich \cite[Section 1.1]{GHKK}. We call $\frakD_{\mathbf s}$ defined by \Cref{thm: gen cluster sd} a \emph{generalized cluster scattering diagram}.

\begin{rem}
    When $I_\mathrm{uf} = I$ and $|I| = 2$, then $\frakD_\mathbf s$ can be easily transformed to the generalized cluster scattering diagram $\frakD$ considered in \Cref{subsec: gen cluster sd rk 2} by a piecewise linear transformation as in \Cref{thm: mutation of sd rk 2}.
\end{rem}

Our main novel result concerning $\frakD_{\mathbf s}$ is the positivity of its wall-functions, \Cref{thm: positive wall-function}, from which all remaining positivity results follow. The proof will be given in \Cref{sec: proof of positivity}.

\begin{thm}\label{thm: positive wall-function}
    There is a representative (in the equivalence class) of $\frakD_{\mathbf s}$ such that for any wall $\frakd\in \frakD_{\mathbf s}$, the wall-function $f_\frakd$ belongs to $\widehat{\mathbb N[\mndP]}$, that is,
    \[
        f_\frakd = 1 + \sum_{(m, p)\in \mndP} c_{(m,p)}px^m
    \]
    has nonnegative integer coefficients $c_{(m,p)}$.
\end{thm}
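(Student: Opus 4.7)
The plan is to construct $\frakD_{\mathbf s}$ order by order with respect to a filtration on $\widehat{\Bbbk[\mndP]}$, paralleling the Kontsevich--Soibelman algorithm of \cite[Appendix C]{GHKK}, and to verify positivity inductively at each order using the rank-2 full positivity result \Cref{thm: positivity rk 2 intro}. Concretely, grade $\mndP = M \oplus \mathbb P^\oplus$ by $\deg p_{i,j} = j$ and $\deg m = 0$ for $m \in M$, and let $\mathfrak{n}_N$ denote the monomial ideal generated by coefficient monomials of total degree $\geq N+1$. Each quotient $\widehat{\Bbbk[\mndP]}/\mathfrak{n}_N$ admits only finitely many nontrivial walls, so we build representatives $\frakD_{\mathbf s}^{(N)} \equiv \frakD_{\mathbf s} \bmod \mathfrak{n}_N$ inductively, starting from $\frakD_{\mathbf s}^{(0)} = \frakD_{\mathrm{in}, \mathbf s}$ whose wall-functions manifestly have coefficients in $\mathbb N[\mndP]$.

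The inductive step is the core of the argument. Given $\frakD_{\mathbf s}^{(N)}$ consistent modulo $\mathfrak{n}_N$ with positive wall-functions, the failure of consistency modulo $\mathfrak{n}_{N+1}$ is concentrated at the codimension-two joints of $\operatorname{Supp}(\frakD_{\mathbf s}^{(N)})$. After a generic perturbation of wall supports so that at each joint $J$ only walls in two distinct families meet, restriction to a small transverse 2-plane $\Pi$ at $J$ produces a finite collection of lines through the origin in $\Pi$ together with wall-functions pulled back from those of $\frakD_{\mathbf s}^{(N)}$. Applying the rank-2 completion \Cref{thm: rank 2 consistent} to this data produces a consistent rank-2 scattering diagram, whose additional rays, extended along the direction tangent to $J$, become the outgoing walls one must adjoin to $\frakD_{\mathbf s}^{(N)}$ to produce $\frakD_{\mathbf s}^{(N+1)}$. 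Only walls of the newly added degree $N+1$ are relevant since lower-order consistency holds by hypothesis.

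Positivity is preserved precisely because the initial lines of the local rank-2 problem at each joint have wall-functions with coefficients in $\mathbb N[\mndP]$ (the induction hypothesis), and \Cref{thm: positivity rk 2 intro} asserts that any wall-function in the resulting consistent rank-2 completion is a polynomial with nonnegative integer coefficients in the coefficients of these initial wall-functions. Therefore every newly added wall-function at order $N+1$ again has coefficients in $\mathbb N[\mndP]$, and pushing forward from $\Pi$ to a codimension-one cone in $M_{\mathbb R}$ does not disturb positivity. Passing to the inverse limit over $N$ yields the desired representative of $\frakD_{\mathbf s}$ with all wall-functions in $\widehat{\mathbb N[\mndP]}$.

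The main obstacle is the local-to-global reduction at joints. A priori a joint $J$ may lie in the intersection of many walls whose transverse restrictions cannot all be packaged as a single rank-2 initial datum without loss of information; the perturbation argument (in the spirit of \cite[Appendix C]{GHKK}) must be set up so that every joint becomes a transverse intersection of exactly two wall-families, and one must check that the positivity of the rank-2 completion is independent of the chosen perturbation so that passage to the limit yields a well-defined equivalence class. The injectivity assumption \eqref{eq: inj assumption} is used here to guarantee that joints admit unambiguous transverse 2-planes at leading order. A secondary point is that the representative chosen at stage $N$ must be compatible with later subdivisions and perturbations at stage $N+1$; this is handled, as in \emph{loc.~cit}., by working consistently with a fixed ordering on walls of each degree and appealing to the uniqueness up to equivalence granted by \Cref{thm: gen cluster sd}.
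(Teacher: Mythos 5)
Your overall strategy — an order-by-order construction in which the failure of consistency at each order is localized to codimension-two joints and resolved by a rank-2 scattering at each joint, with positivity propagated by \Cref{thm: full positivity rk 2} — is exactly the paper's. However, there is a genuine gap in the middle step, precisely at the part you yourself flag as ``the main obstacle.''

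You propose to perturb wall supports generically so that at each joint only two wall families meet, and then apply the rank-2 completion to two lines. This is neither necessary nor easy to justify in the higher-rank setting. It is unnecessary because \Cref{thm: full positivity rk 2} is already stated for an arbitrary countable collection of initial lines (not required to pass through a common point), so one can directly reduce to a rank-2 problem at a joint even when many incoming walls contain it. It is problematic because in higher rank the walls are codimension-one \emph{cones}, not lines, and a generic translation of their supports destroys the cone structure; the resulting perturbed object is not a scattering diagram in the sense of \Cref{subsec: gen cluster sd higher rk}, and after un-perturbing, the outgoing walls added at two adjacent (perturbed) joints need not glue back into cones based at the correct faces. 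You do not address how the un-perturbation recovers a consistent cone complex, and the GHKK Appendix C argument you invoke is structured around binomial initial walls and a specific ``asymptotic'' limit; transplanting it here without new work is a gap, not a citation.

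The paper avoids perturbation in the higher-rank induction entirely. It instead maintains two structural invariants through the induction: every non-initial wall is outgoing with support $F_\frakd + \mathbb R_{\leq 0}\omega(-,n_\frakd)$ for a single facet $F_\frakd$ that is a perpendicular joint (assumption (A3), enforced by a subdivision step before each inductive stage), and each perpendicular joint lies in the facet of at most one outgoing wall in any given direction (assumption (A4)). With these in place, the collection $\{\frakd : \mathfrak j \subseteq S_\frakd\}$ at a perpendicular joint $\mathfrak j$ restricts to a genuine rank-2 scattering diagram over $\Bbbk[\mndP_{\mathfrak j}]$ in the 2-plane $\omega(-,\Lambda_{\mathfrak j}^\perp)$ (this is where the injectivity assumption enters, making this a rank-2 lattice, not merely ``an unambiguous transverse 2-plane''), and the full rank-2 positivity applies verbatim. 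The perturbation trick does appear in the paper, but only inside the \emph{proof} of \Cref{thm: full positivity rk 2} itself, i.e.\ in rank 2 where walls are lines and translation is harmless. You also omit the paper's separate argument (its Lemma on overall consistency) showing that the local modifications at perpendicular joints produce a scattering diagram equivalent to $(\frakD_\mathbf s)_{k+1}$; this is non-trivial and does not follow merely from ``uniqueness up to equivalence'' of $\frakD_\mathbf s$, because one must first know that what one has constructed \emph{is} consistent.

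Two minor points: your filtration $\deg p_{i,j} = j$ differs from the paper's $\frakm$-adic filtration (which effectively sets $\deg p_{i,j} = 1$), but either works for an order-by-order induction. And you do not distinguish parallel from perpendicular joints; at parallel joints there is nothing to scatter, and one should say so before reducing to the perpendicular case.
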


\subsection{The cluster complex structure}\label{subsec: cluster complex high rk}

The scattering diagram $\frakD_\mathbf s$ possesses a \emph{cluster complex structure} as in the rank-2 situation. This structure is important for uncovering the connection with the corresponding generalized cluster algebra and is thus essential to the proof of \Cref{thm: positive Laurent}. In the ordinary case, the cluster complex structure is described in \cite{GHKK}. In the generalized case, the underlying cone complex is exactly the same \cite{Mou, CKM}, which only depends on the exchange matrix $\widetilde B$. The more involved part in this generalization is the description of wall-functions (\Cref{thm: cluster complex struct any rk}) due to the complication of coefficients.

We need some preparations. Let $B$ be a skew-symmetrizable $n\times n$ matrix associated to a chosen root $v_0\in \mathbb T_n$. Then for each $v\in \mathbb T_n$, we define inductively $\mathbf c$-vectors
\[
    \mathbf c_{v}^{(i)} = \left(c_{j;v}^{(i)} \,\middle\vert\, j=1, \dots, n \right) \in \N^n \ \text{or}\ \mathbb Z_{\leq 0}^n,\quad \text{for} \quad i=1, \dots, n
\]
by the following rule. 
Set initially
\[
    \mathbf c_{v_0}^{(i)} \coloneqq (\delta_{i,j}\mid j=1,\dots, n), \quad i = 1, \dots, n,
\]
the standard unit vectors in $\mathbb Z^n$.
For an edge $v\frac{k}{\quad\quad}v'$ in $\mathbb T_n$, their associated $\mathbf c$-vectors satisfy
\begin{equation}\label{eq: def c vector}
    \mathbf c_{v'}^{(i)} = \begin{dcases}
         - \mathbf c_{v}^{(k)} \quad & \text{if $i = k$;} \\
         \mathbf c_{v}^{(i)} + b_{k,i}^v [\mathbf c_{v}^{(k)}]_+ \quad & \text{if $i\neq k$ and $b_{i, k}^v \leq 0$;} \\
         \mathbf c_{v}^{(i)} + b_{k,i}^v [-\mathbf c_{v}^{(k)}]_+ \quad & \text{if $i\neq k$ and $b_{i, k}^v > 0$}.
    \end{dcases}
\end{equation}
Notice that the relation (\ref{eq: def c vector}) is reflexive for $v$ and $v'$. Therefore, we have well-defined $\mathbf c_v^{(i)}$ for any $v$ by propagating (\ref{eq: def c vector}) along $\mathbb T_n$ starting from $v_0$. The $\mathbf c$-vectors were developed by Fomin and Zelevinsky in \cite{FZco} as a useful tool to study the dynamics of coefficients.

\begin{thm}[{\cite[Corollary 5.5]{GHKK}}]\label{thm: sign coherence c}
    The $\mathbf c$-vectors are always sign coherent, that is, each $\mathbf c$-vector $\mathbf c_v^{(i)}$ for $i = 1, \dots, n$ and $v\in \mathbb T_n$ is either in $\N^n$ or $\mathbb Z_{\leq 0}^n$.
\end{thm}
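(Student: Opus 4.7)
The plan is to deduce sign coherence from the cluster complex structure of the generalized cluster scattering diagram $\frakD_\mathbf{s}$ constructed in \Cref{thm: gen cluster sd}, following the template of the rank-2 argument in \Cref{subsec: gen cluster sd rk 2} but now in arbitrary rank. The underlying idea is that the $\mathbf{c}$-vectors record, up to signs, the primitive normal vectors of the walls that bound cluster chambers, and such normals lie in $N^+$ by construction.

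First I would set up the higher-rank cluster complex. Analogously to the cones $\mathcal{C}_i \subseteq M_\mathbb{R}$ defined via iterated mutations in \Cref{thm: mutation of sd rk 2}, one associates to each vertex $v \in \mathbb{T}_n$ (based at the root $v_0$ corresponding to the seed determining $\mathbf{s}$) a cluster chamber $\mathcal{C}_v \subseteq M_\mathbb{R}$ by pulling back the positive chamber of a mutated scattering diagram via a piecewise-linear transformation $T^{(v)} \colon M_\mathbb{R} \to M_\mathbb{R}$ built from iterated scattering-diagram mutations along the path from $v_0$ to $v$. At $v_0$, $\mathcal{C}_{v_0}$ is the positive orthant with respect to the dual basis $\{e_i^*\}$, and each bounding hyperplane has primitive normal $e_i \in N^+$, matching the initial $\mathbf{c}$-vectors $\mathbf{c}_{v_0}^{(i)} = \delta_{i,\bullet}$. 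For an edge $v \xrightarrow{k} v'$ in $\mathbb{T}_n$, let $\frakd_{v,k}$ denote the wall of $\frakD_{\mathbf{s}}$ separating $\mathcal{C}_v$ from $\mathcal{C}_{v'}$, with primitive normal $n_{\frakd_{v,k}} \in N^+$.

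The heart of the proof is the identification
\[
    \mathbf{c}_v^{(i)} = \epsilon_{v,i}\, n_{\frakd_{v,i}} \in N \quad \text{for some} \quad \epsilon_{v,i} \in \{+1, -1\},
\]
for every $v \in \mathbb{T}_n$ and every $i \in I_\mathrm{uf}$. I would prove this by induction on the distance from $v_0$. The base case is immediate from the description of $\mathcal{C}_{v_0}$ above, with all $\epsilon_{v_0,i} = +1$. For the inductive step, I would verify that the transformation of the normals $n_{\frakd_{v,i}}$ under the scattering-diagram mutation $T_k$ (carrying $\frakD_{\mathbf{s}}$ to the scattering diagram based at $v'$) satisfies exactly the same recursion \eqref{eq: def c vector} that governs the $\mathbf{c}$-vectors. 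The $i = k$ case amounts to flipping the separating wall's normal, matching $\mathbf{c}_{v'}^{(k)} = -\mathbf{c}_v^{(k)}$. The $i \neq k$ cases require tracking how the piecewise linear map $T_k$ acts on normals in the two half-spaces determined by the sign of $\omega(e_i, n_{\frakd_{v,k}})$, which produces precisely the $b_{k,i}^v\bigl[\pm \mathbf{c}_v^{(k)}\bigr]_+$ correction term with signs dictated by $\operatorname{sign}(b_{i,k}^v)$.

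Once this identification is in hand, sign coherence is immediate: since $n_{\frakd_{v,i}} \in N^+ \subseteq \N e_1 \oplus \cdots \oplus \N e_n$ by the very definition of a wall of $\frakD_{\mathbf{s}}$, the vector $\mathbf{c}_v^{(i)} = \pm n_{\frakd_{v,i}}$ lies in $\N^n$ or $-\N^n$ according to $\epsilon_{v,i}$. The main obstacle I anticipate is the careful bookkeeping needed in the inductive step: one must align the sign conventions for wall normals, the choice of orientation of the mutation map $T_k$ across the wall $\frakd_{v,k}$, and the asymmetric case split in \eqref{eq: def c vector} between $b_{i,k}^v \leq 0$ and $b_{i,k}^v > 0$. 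Establishing that the scattering-diagram mutations really do propagate wall normals according to the $\mathbf{c}$-vector recursion (and that the $\mathcal{C}_v$ genuinely form distinct maximal chambers of $\frakD_{\mathbf{s}}$, so that $\frakd_{v,i}$ is unambiguously defined) is the substantive content, while the positivity $n_{\frakd} \in N^+$ is built into the framework and hence costs nothing once the identification is set up.
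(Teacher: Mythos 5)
The paper does not supply its own proof of this theorem; it is quoted verbatim from \cite[Corollary 5.5]{GHKK}, as the remark following it makes explicit. Your proposal reconstructs the GHKK scattering-diagram argument, so the strategy is the right one and is the same one the paper relies on.

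There is, however, a conflation you would hit in the inductive step as written. In the paper's conventions, the facet normals of the cluster chamber $\mathcal C_v^+$ are the \emph{dual $g$-vectors} $g^*_{i;v}$ (the cone is literally defined as $\{m : \langle m, g^*_{i;v}\rangle \geq 0\}$, so $S_{i;v} = (g^*_{i;v})^\perp \cap \mathcal C_v^+$), and those satisfy the recursion \eqref{eq: def dual g vector}, which is the $\mathbf c$-vector rule applied to $-B^\intercal$ rather than to $B$. The $\mathbf c$-vectors $\mathbf c_v^{(i)}$ themselves satisfy \eqref{eq: def c vector} for $B$. Your proposed identification $\mathbf c_v^{(i)} = \pm\, n_{\frakd_{v,i}}$, together with the claim that the wall-normal recursion ``is exactly \eqref{eq: def c vector},'' therefore does not check out under the paper's conventions: carrying out the bookkeeping you flag at the end would produce \eqref{eq: def dual g vector} instead, and you would stall. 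Two standard repairs are available: (i) prove sign coherence of $g^*_{i;v}$ by exactly your argument (the normals lie in $N^+$ or $-N^+$, end of story) and then transfer to $\mathbf c$-vectors using the tropical duality statement that $g^*_{i;v}$ and $\mathbf c^{(i)}_v$ always share the same sign, which the paper invokes from \cite[(2.7)]{NZ}; or (ii) rerun the scattering-diagram argument with the initial exchange matrix replaced by $-B^\intercal$ (skew-symmetrizable iff $B$ is), whose dual $g$-vectors are literally the $\mathbf c$-vectors for $B$. Either repair is routine, but your draft asserts the wrong recursion for the normals, and pinning this down is not mere bookkeeping -- it is where the transpose asymmetry of the $B$ versus $-B^\intercal$ setup enters.
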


\begin{rem}
    The sign coherence of $\mathbf c$-vectors was conjectured in \cite[Conjecture 5.4 and Proposition 5.6(iii)]{FZco}. In the skew-symmetric case, it was proven by Derksen, Weyman and Zelevinsky \cite{DWZ10} using quiver representations. More generally, for any skew-symmetrizable exchange matrix, the sign coherence was proven in \cite{GHKK} using scattering diagram techniques. It also follows that the vectors $\mathbf c_v^{(1)}, \dots, \mathbf c_v^{(n)}$ form a basis in $\mathbb Z^n$; see the proof of \cite[Proposition 5.6]{FZco}.
\end{rem}

It is straightforward from the recurrence (\ref{eq: def c vector}) that 
\[
    c_{i; v}^{(i)} + 1 \quad \text{or} \quad c_{i;v}^{(i)} - 1
\]
is divisible by $\ell_i$, and $c_{j;v}^{(i)}$ is divisible by $\ell_i$ when $j\neq i$.

\begin{defn}
    Define non-negative integers
    \begin{equation}
        h_{j;v}^{(i)} \coloneqq \begin{dcases}
            \frac{|c_{i;v}^{(i)}\pm 1|}{\ell_i}\in \mathbb N\quad & \text{if $i = j$};\\
            \frac{|c_{j;v}^{(i)}|}{\ell_i}\in \mathbb N\quad & \text{if $i\neq j$}.
        \end{dcases}
    \end{equation}
\end{defn}

\begin{rem}
    When $\ell_i = 1$ or $2$, there is ambiguity on $h_{i;v}^{(i)}$. However, it does not affect the description in \Cref{thm: cluster complex struct any rk}.
\end{rem}

For given seed data $\mathbf s$ with $\mathbf e = \{e_1, \dots, e_n\}$ and $B = (b_{i,j}) = \omega(e_i, d_je_j)$ for $i,j\in I_\mathrm{uf}$. we define another set of vectors $g^*_{i;v}\in N_\mathrm{uf} \cong \mathbb Z^n$ again indexed by $i\in I_\mathrm{uf} = \{1, \dots, n\}$ and $v\in \mathbb T_n$. These are called \emph{dual $g$-vectors}. The relation they need to satisfy is analogous to (\ref{eq: def c vector}) but with respect to $-B^\intercal$. Explicitly we set initially
\[
    g^*_{i;v_0} \coloneqq e_i, \quad i=1, \dots, n,
\]
and for $v\frac{k}{\quad\quad}v'$ in $\mathbb T_n$, put the relation

\begin{equation}\label{eq: def dual g vector}
    g_{i; v'}^* = \begin{dcases}
         - g_{k; v}^* \quad & \text{if $i = k$;} \\
         g_{i; v}^* - b_{i, k}^v [g_{k; v}^*]_+ \quad & \text{if $i\neq k$ and $b_{i, k}^v \leq 0$;} \\
         g_{i; v}^* - b_{i, k}^v [-g_{k; v}^*]_+ \quad & \text{if $i\neq k$ and $b_{i, k}^v > 0$}.
    \end{dcases}
\end{equation}
For the same reason as $\mathbf c$-vectors, they are sign-coherent by \Cref{thm: sign coherence c} and $(g_{1;v}^*, \dots, g_{n;v}^*)$ form a basis of $N_\mathrm{uf}$. Moreover, $g_{i;v}^*$ (in the basis $\mathbf e$) and $\mathbf c^{(i)}_{v}$ always have the same sign (see \cite[(2,7)]{NZ}), which we denote by
\[
    \varepsilon_{i;v} \coloneqq \begin{cases}
            1 \quad & \text{if $\mathbf c^{(i)}_v\in \N^n$}, \\
            -1 \quad & \text{if $\mathbf c^{(i)}_v\in \mathbb Z_{\leq 0}^n$}.
        \end{cases}
\]

Define for each $v\in \mathbb T_n$, the convex cone 
\[
    \mathcal C_v^+ \coloneqq \{m\in M_\mathbb R\mid \langle m, g_{i;v}^* \rangle \geq 0,\ i\in I_\mathrm{uf}\} \subseteq M_\mathbb R.
\]
It has $n$ facets 
\[
    S_{i;v}\coloneqq (g_{i;v}^*)^\perp \cap \mathcal C_v^+, \quad i\in I_\mathrm{uf}.
\]
One can verify inductively that
\begin{equation}\label{eq: w vector no sign}
    w_{i;v} \coloneqq \omega(-, \frac{d_i}{\ell_i} \varepsilon_{i;v}g_{i;v}^*)\in M.
\end{equation}

Now we are ready to state the \emph{cluster complex structure} of $\frakD_\mathbf s$.

\begin{thm}\label{thm: cluster complex struct any rk}
    There is a pure $|I|$-dimensional cone complex $\Delta^+_\mathbf s$ in $M_\mathbb R$ whose top-dimensional cones are
    \[
        \{\mathcal C_v^+\mid v\in \mathbb T_n\}.
    \]
    In its equivalent class, $\frakD_\mathbf s$ has a representative containing the collection of walls 
    \[
        \frakD(\Delta_\mathbf s^+) \coloneqq \{(S_{i;v}, f_{i;v})\mid i\in I_\mathrm{uf}, v\in \mathbb T_n\}
    \]
    where
    \begin{equation}\label{eq: cluster wall-function}
        f_{i;v} = \begin{dcases}
            1 + \sum_{s=1}^{\ell_i} p_{i,s} \prod_{j=1}^{n} p_{j,\ell_j}^{sh_{j;v}^{(i)}}x^{sw_{i;v}} \quad & \text{if $\ell_ih_{i;v}^{(i)} = |c_{i;v}^{(i)}|-1$} \\
            1 + \sum_{s=1}^{\ell_i} \frac{p_{i,\ell_i-s}}{p_{i,\ell_i}} \prod_{j=1}^n p_{j,\ell_j}^{sh_{j;v}^{(i)}} x^{sw_{i;v}} \quad & \text{if $\ell_ih_{i;v}^{(i)} = |c_{i;v}^{(i)}|+1$} 
        \end{dcases}
    \end{equation}
    such that $\frakD_\mathbf s\setminus \frakD(\Delta_\mathbf s^+)$ has support disjoint from (the codimension $\leq 1$ strata of) $\Delta_\mathbf s^+$.
\end{thm}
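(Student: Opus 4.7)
The plan is to prove the theorem by induction on the graph distance in $\mathbb{T}_n$ from a root $v_0$. As noted in the discussion preceding the theorem, the underlying simplicial cone complex $\Delta_\mathbf{s}^+$ depends only on the exchange matrix $\widetilde B$ and has been established in \cite{GHKK,Mou,CKM}; thus the genuinely new content is the explicit description \eqref{eq: cluster wall-function} of the wall-functions $f_{i;v}$, which I will obtain by tracking the initial wall-functions through iterated mutations of scattering diagrams. The base case $v = v_0$ is immediate from the definition of $\frakD_{\mathrm{in},\mathbf{s}}$: since $g^*_{i;v_0} = e_i$ and $\mathbf{c}_{v_0}^{(i)} = e_i$, we have $\ell_i h^{(i)}_{i;v_0} = 0 = |c_{i;v_0}^{(i)}|-1$ and $h^{(i)}_{j;v_0} = 0$ for $j \neq i$, so \eqref{eq: cluster wall-function} specializes to $f_{i;v_0} = 1 + \sum_{s=1}^{\ell_i} p_{i,s} x^{sw_i}$.

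For the inductive step, I would invoke a higher-rank analog of \Cref{thm: mutation of sd rk 2}: for each $k \in I_\mathrm{uf}$ there is a piecewise linear transform $T_k \colon M_\mathbb{R} \to M_\mathbb{R}$ that fixes the half-space $\{m : \langle m, e_k\rangle \geq 0\}$ and acts by an integral shear on the complementary half-space, together with a monoid homomorphism on $\mndP$ substituting $p_{k,s} \mapsto p_{k,\ell_k - s}$ and $p_{i,s} \mapsto p_{i,s}\cdot p_{k,\ell_k}^{s\cdot c_{i,k}}$ for $i \neq k$ (where the exponent is dictated by the sign of $b_{k,i}$, as in \eqref{eq: coefficient mutation not k}). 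The mutation invariance of generalized cluster scattering diagrams, proven in \cite{Mou,CKM}, gives $T_k(\frakD_\mathbf{s}) \equiv \frakD_{\mathbf{s}'}$ where $\mathbf{s}'$ is the seed data mutated at $k$. Pulling back the initial walls of $\frakD_{\mathbf{s}'}$ under $T_k$ yields walls of the form $(S_{i;v}, f_{i;v})$ in $\frakD_\mathbf{s}$ for $v$ adjacent to $v_0$ in direction $k$. Iterating this along any path $v_0 \to \cdots \to v$ in $\mathbb{T}_n$ produces all the desired walls; the requirement that $\frakD_\mathbf{s} \setminus \frakD(\Delta_\mathbf{s}^+)$ has support disjoint from $\Delta_\mathbf{s}^+$ follows because each $\mathcal{C}_v^+$ is a cluster chamber, which by construction contains no non-initial walls of $\frakD_{\mathbf{s}_v}$.

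The main technical difficulty lies in the recursive bookkeeping of two data: the support direction $w_{i;v}$ and the coefficient factor $\prod_j p_{j,\ell_j}^{sh^{(i)}_{j;v}}$. The evolution of $w_{i;v}$ under $T_k$ is governed by the dual $\mathbf{g}$-vector recursion \eqref{eq: def dual g vector}, matching \eqref{eq: w vector no sign}. The coefficient factors accumulate exactly according to the $\mathbf{c}$-vector recursion \eqref{eq: def c vector}: each time a mutation is performed in a direction $k$ for which the relevant $\mathbf{c}$-vector entry is nonzero, a factor of $p_{k,\ell_k}$ to an appropriate power is introduced, and the total exponent after reaching $v$ is precisely $\ell_i h^{(i)}_{j;v} = |c^{(i)}_{j;v}|$ (with the adjustment of $\pm 1$ in position $j=i$). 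The dichotomy in \eqref{eq: cluster wall-function} based on whether $\ell_i h^{(i)}_{i;v}$ equals $|c^{(i)}_{i;v}| - 1$ or $|c^{(i)}_{i;v}|+1$ reflects the parity of the number of mutations in direction $i$ taken along the path from $v_0$ to $v$: an even number preserves the form $1 + \sum p_{i,s} x^{sw_{i;v}}(\cdots)$, while an odd number substitutes $p_{i,s}$ by $p_{i,\ell_i - s}/p_{i,\ell_i}$ via the direction-$k$ part of the coefficient mutation rule. Verifying this last bookkeeping cleanly will be the most delicate step, but it reduces to a direct induction using the compatibility between the $\mathbf{c}$-vector and dual $\mathbf{g}$-vector mutation rules and the sign-coherence \Cref{thm: sign coherence c}.
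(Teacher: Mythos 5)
Your proposal takes a genuinely different route from the paper. The paper does not re-derive the wall-functions by iterated mutation; instead it quotes the result of \cite[Theorem 7.10]{Mou}, which expresses each $f_{i;v}$ as a \emph{product of binomials} $\prod_{j=1}^{\ell_i}\bigl(1 + t_{i,j;v}^{\varepsilon_{i;v}} x^{w_{i;v}}\bigr)$ in auxiliary variables $t_{i,j}$ (of which the $p_{i,s}$ are the elementary symmetric functions), and then performs a change of variables using the explicit monomial form of $t_{i,j;v}$ from \cite[Lemmas 3.15--3.16]{Mou}. This is a real simplification: each binomial factor evolves independently under mutation, and the exponent vector of $t_{i,j;v}$ is a so-called generalized $\mathbf{c}$-vector with a clean, sign-coherent recursion, whereas tracking the polynomial $1 + \sum_s p_{i,s} x^{sw_i}$ directly through the mutation of Definition \ref{def: gen seed mutation} is more awkward because that mutation rule only determines the ratios $p'_{i,s}/p'_{i,0}$ (not the coefficients themselves). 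Your plan is to redo the full induction in the $p$-variables; this can be made to work, but you would need to commit to a specific normalization of the coefficient mutation (the one implicit in forcing constant term $1$ in every $f_{i;v}$) and verify by hand that it propagates correctly, a step the paper entirely sidesteps.

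One concrete inaccuracy in the sketch: the dichotomy in \eqref{eq: cluster wall-function} is \emph{not} simply the parity of the number of mutations in direction $i$ along the path from $v_0$ to $v$. It depends on a combination of two binary data: the residue of $c^{(i)}_{i;v}$ modulo $\ell_i$ (which is the parity you describe, and which determines whether $t_{i,j;v}$ involves $t_{i,j}$ or $t_{i,j}^{-1}$ in Mou's Lemma 3.15/3.16), \emph{and} the overall sign $\varepsilon_{i;v}$ of the $\mathbf{c}$-vector $\mathbf{c}_v^{(i)}$ guaranteed by sign coherence (Theorem \ref{thm: sign coherence c}). The paper's proof explicitly separates these into ``former/latter'' cases and then ``Case 1/Case 2,'' and shows that the four combinations collapse to the two scenarios in \eqref{eq: cluster wall-function}. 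If you pursue the direct induction, you will need the analogous case analysis; the parity heuristic alone does not produce the correct formula.
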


\begin{proof}
    The cone complex $\Delta_\mathbf s^+$ is understood and has equivalent description in \cite[Section 4 and 5]{GHKK}. We thus focus on deriving the claimed expression of wall-functions from \cite[Theorem 7.10]{Mou}. There $\frakD_\mathbf s$ has initial wall-functions for $i = 1, \dots, n$
    \[
        \prod_{j=1}^{\ell_i} (1 + t_{i,j}x^{w_i})
    \]
    where $t_{i,j}$ are formal variables. While these were denoted as $p_{i,j}$ in \cite{Mou}, we change everything there from the letter $p$ to $t$ in order to avoid conflict with our coefficients $p_{i,s}$. In fact, we can regard $p_{i, s}$ as symmetric polynomials of $t_{i,j}$ by equating
    \begin{equation}\label{eq: tij and pis}
        1 + \sum_{s=1}^{\ell_i} p_{i, s} x^{sw_i} = \prod_{j=1}^{\ell_i} (1 + t_{i,j}x^{w_i}).
    \end{equation}
    In particular we have $p_{i, \ell_i} = \prod_{j=1}^{\ell_i} t_{i, j}$.
    
    The wall-function $f_{i;v}$ is expressed in \cite[Theorem 7.10]{Mou} as
    \[
        f_{i;v} = \prod_{j=1}^{\ell_i}\left(1 + t_{i,j;v}^{\varepsilon_{i;v}} x^{w_{i;v}} \right) \quad \text{where} \quad \varepsilon_{i;v} = \begin{cases}
            1 \quad & \text{if $\mathbf c^{(i)}_v\in \N^n$}, \\
            -1 \quad & \text{if $\mathbf c^{(i)}_v\in \mathbb Z_{\leq 0}^n$},
        \end{cases}
    \]
    and each $t_{i,j;v}$ is a monomial in $\{t_{i, j}\mid i\in I_\mathrm{uf}, j=1, \dots, \ell_i\}$. The exponent vector of $t_{i,j;v}$ is termed as a \emph{generalized $c$-vector} and has the same sign $\varepsilon_{i;v}$ as $\mathbf c_{v}^{(i)}$ (\cite[Proposition 3.17]{Mou}). By \cite[Lemma 3.15 and Lemma 3.16]{Mou}, the monomial $t_{i,j;v}$ (simultaneously for all $j=1, \dots, \ell_i$) is of the form
    \[
        t_{i,j} \cdot p_{i,\ell_i}^{\frac{c_{i;v}^{(i)}- 1}{\ell_i}}\cdot \prod_{j\neq i}p_{j,\ell_j}^{\frac{c_{j;v}^{(i)}}{\ell_j}} \quad \text{or} \quad
        t_{i,j}^{-1} \cdot p_{i,\ell_i}^{\frac{c_{i;v}^{(i)}+ 1}{\ell_i}}\cdot \prod_{j\neq i}p_{j,\ell_j}^{\frac{c_{j;v}^{(i)}}{\ell_j}}\,.
    \]
    We assume the former case and the latter one can be similarly treated. Now there are two further cases.

    \emph{Case 1:} $\mathbf c_{v}^{(i)}\in \mathbb Z_{\geq 0}^n$. This means
    \[
        \ell_ih_{i;v}^{(i)} = c_{i;v}^{(i)} - 1 \quad \text{and} \quad \ell_jh_{j;v}^{(i)} = c_{i;v}^{(i)}.
    \]
    Then we can express $f_{i;v}$ as
    \[
        \prod_{k=1}^{\ell_i}\left(1 + t_{i,k} \prod_{j=1}^n p_{j, \ell_j}^{h_{j;v}^{(i)}} x^{w_{i;v}} \right) = 1 + \sum_{s=1}^{\ell_i} p_{i, s}\prod_{j=1}^n p_{j, \ell_j}^{sh_{j;v}^{(i)}} x^{sw_{i;v}},
    \]
    proving the first scenario of (\ref{eq: cluster wall-function}).

    \emph{Case 2:} $\mathbf c_{v}^{(i)}\in \mathbb Z_{\leq 0}^n$. In this case, we have
    \[
        \ell_ih_{i;v}^{(i)} = -c_{i;v}^{(i)} + 1 \quad \text{and} \quad \ell_jh_{j;v}^{(i)} = -c_{i;v}^{(i)}.
    \]
    We can then express $f_{i;v}$ as
    \[
        \prod_{k=1}^{\ell_i} \left(1 + t_{i,k}^{-1} \prod_{j=1}^n p_{j,\ell_j}^{h_{j;v}^{(i)}} x^{w_{i;v}} \right)
        = 1 + \sum_{s=1}^{\ell_i} \frac{p_{i,\ell_i-s}}{p_{i,\ell_i}} \prod_{j=1}^n p_{j, \ell_j}^{sh_{j;v}^{(i)}} x^{sw_{i;v}}.
    \]
    This verifies the second scenario of (\ref{eq: cluster wall-function}).
\end{proof}

\subsection{Theta functions and positivity of the Laurent phenomenon}\label{subsec: theta and positive}

Broken lines can also be defined in higher ranks. It suffices for our purposes to consider the initial exponent in $M$. We treat $\Bbbk[\mathbb P^\oplus]$ as the coefficient ring and thus express a monomial (with coefficient) in $\Bbbk[\mndP]$ as $cx^m$ where $c\in \Bbbk[\mathbb P^\oplus]$ and $m\in M$.

\begin{defn}
   Let $m_0 \in M\setminus \{0\}$ and $Q \in M_\R \cut \Supp(\frakD)$. A \emph{broken line} $\bline$ for $m_0$ with endpoint $Q$ is a piecewise linear continuous proper path 
   \[
    \bline\colon (-\infty,0] \to M_\R \cut \operatorname{Sing}(\mathfrak D)
   \]
   with values $-\infty = \tau_0 < \tau_1 < \cdots < \tau_{\ell+1} = 0$ and an associated monomial $c_i x^{m_i}\in \widehat{\Bbbk[\mndP]}$ for each interval $(\tau_{i}, \tau_{i+1})$, $i = 0, \dots, \ell$ such that 
  \begin{enumerate}
    \item $\bline(0) = Q$ and $c_0 = 1$;
    \item $\dot{\bline}(\tau) = -m_i$ for any $\tau \in (\tau_{i},\tau_{i+1})$ for each $i = 0, \dots, \ell$;
    \item the subpath $\bline\vert_{(\tau_{i-1}, \tau_{i+1})}$ transversally crosses the support of some wall at $\tau_i$ for $i = 1, \dots, \ell-1$;
    \item for $i = 0, \dots, \ell -1$, $c_{i+1}x^{m_{i+1}}$ is a monomial term (other than $c_ix^{m_i}$) of the power series
    \[
      c_ix^{m_i} \prod_{\{\mathfrak d\in \mathfrak D\mid \bline(\tau_i)\in S_\mathfrak d\}} f_\mathfrak d^{n_i\cdot m_i}
    \]
    where $n_i$ is primitive in $N$ and orthogonal to any $\frakd$ with $\bline(\tau_i)\in S_\mathfrak d$ such that $n_i \cdot r(m_i)>0$.
  \end{enumerate}
\end{defn}

As in rank $2$, a theta function is defined to be the formal sum of the final monomial terms of broken lines. Precisely, the theta function associated to $m_0$ and $Q$ is
\begin{equation}\label{eq: theta higher rk}
    \vartheta_{Q, m_0} \coloneqq \sum_\bline c(\bline)x^{m(\bline)},
\end{equation}
the sum of all final monomials over all broken line for $m_0$ with endpoint $Q$. 

We summarize in below the main properties of theta functions that we will need.

\begin{prop}\label{prop: theta function properties}
    Let $\frakD$ be a scattering diagram. Let $m_0\in M\setminus \{0\}$ and $Q\in M_\mathbb R\setminus \operatorname{Supp}(\frakD)$.
    \begin{enumerate}
        \item The theta function $\vartheta_{Q, m_0}$ lives in $x^{m_0}\widehat{\Bbbk[\mndP]}$.
        \item Let $\frakD = \frakD_\mathbf s$, and let $Q \in \operatorname{Int}(\mathcal C_v^+)$ and $m\in \mathcal C_v^+\cap M$. Then $\vartheta_{Q, m} = x^m$.
    \end{enumerate}
\end{prop}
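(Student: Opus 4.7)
For part (1), I would argue order by order. Fix $k \geq 1$. Since every wall-function in $\frakD_\mathbf s$ is congruent to $1$ modulo $\frakm$, each bending of a broken line raises the $\frakm$-adic order of the attached coefficient $c(\beta)$ by at least one. Thus any broken line contributing a nonzero term modulo $\frakm^{k+1}$ undergoes at most $k$ bendings, at walls of the finite subdiagram $\frakD_{\mathbf s, k} = \{\frakd \in \frakD_\mathbf s \mid f_\frakd \not\equiv 1 \bmod \frakm^{k+1}\}$. At each bending there are only finitely many monomial terms available in $f_\frakd^{\,j}$ modulo $\frakm^{k+1}$, and once a bend-type is fixed, the shape of $\beta$ is determined by integrating backwards from $Q$. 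This yields a finite sum in $x^{m_0}\Bbbk[\mndP]/\frakm^{k+1}$. These truncations are compatible under the natural projections $\Bbbk[\mndP]/\frakm^{k+2}\twoheadrightarrow \Bbbk[\mndP]/\frakm^{k+1}$, and passing to the limit gives a well-defined element of $x^{m_0}\widehat{\Bbbk[\mndP]}$.

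For part (2), the plan is a two-step reduction. First, I would reduce to the initial seed case $v = v_0$ by invoking the mutation invariance of $\frakD_\mathbf s$ (established in \cite{Mou,CKM}, generalizing Theorem~\ref{thm: mutation of sd rk 2}): for each edge $v\frac{k}{\quad}v'$ in $\mathbb T_n$, there is a piecewise linear transformation of $M_\mathbb R$ together with a monoid automorphism that intertwines $\frakD_\mathbf s$ with the scattering diagram associated to the mutated seed and carries $\mathcal C_{v'}^+$ to the positive chamber of the mutated diagram. A broken-line analog of Proposition~\ref{prop: mutation broken line} then provides a bijection between broken lines for $m$ ending at $Q\in\mathcal C_{v}^+$ and broken lines for the image of $m$ ending at the image of $Q$. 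Iterating the mutations along the unique path from $v$ to $v_0$ in $\mathbb T_n$ reduces the claim to $v = v_0$.

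Second, I would prove the $v = v_0$ case directly. The straight path $\tau\mapsto Q-\tau m$ stays in $\mathcal C_{v_0}^+$ because $m\in\mathcal C_{v_0}^+$, and it contributes the monomial $x^m$ to $\vartheta_{Q,m}$. To rule out any other broken line, observe that by Theorem~\ref{thm: cluster complex struct any rk} the interior of $\mathcal C_{v_0}^+$ is disjoint from $\operatorname{Supp}(\frakD_\mathbf s)$, and walls lying outside $\frakD(\Delta_\mathbf s^+)$ meet $\overline{\mathcal C_{v_0}^+}$ only in codimension $\geq 2$. Hence the last bend of any nontrivial broken line $\beta$ must occur on one of the initial walls $S_{i;v_0} = e_i^\perp\cap\mathcal C_{v_0}^+$. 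Pairing the bend relation $m_\ell = m_{\ell-1}+jw_i$ with $e_i$ and using $\langle w_i,e_i\rangle = 0$, together with the requirement that $\beta$ enters $\operatorname{Int}(\mathcal C_{v_0}^+)$ from $\beta(\tau_\ell)\in e_i^\perp$, forces $\langle m_{\ell-1},e_i\rangle < 0$. Propagating this backward through the bend relations $m_{i}-m_{i-1}\in\mathbb Z_{\geq 1}\cdot\omega(-,n_i)$ with $n_i\in N^+$ and invoking the sign-coherence embodied in the cluster complex structure, one obtains a contradiction with $\langle m_0,e_i\rangle\geq 0$, along the lines of \cite[Proposition~3.8]{GHKK} and Proposition~\ref{prop: theta function as monomial}.

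The main obstacle will be making the last inductive / sign-tracking argument rigorous in the presence of general coefficients and frozen directions; however, since the underlying cone complex $\Delta_\mathbf s^+$ and the support of $\frakD_\mathbf s$ depend only on the exchange matrix $\widetilde B$ and not on the coefficient choices, the combinatorial skeleton of the argument is exactly as in the ordinary case, and the mutation reduction absorbs all the nontriviality of general coefficients.
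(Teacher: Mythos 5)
Your proposal follows essentially the same route the paper takes: part (1) is the order-by-order finiteness argument of \cite[Proposition 3.4]{GHKK}, and part (2) is the two-step reduction via the broken-line mutation bijection of \cite[Proposition 3.6]{GHKK} (cf. \cite[Proposition 8.9]{Mou}) to the $v=v_0$ case, which is then \cite[Proposition 3.8]{GHKK}. One small caveat on your sketch of the base case: a nontrivial broken line need not bend last on an initial wall $S_{i;v_0}$ (it can bend last on a wall further out and then travel a straight segment across the facet into $\operatorname{Int}(\mathcal C_{v_0}^+)$); the actual argument of \cite[Proposition 3.8]{GHKK} tracks the sign of $\langle m_i, e_j\rangle$ along the entire broken line using $m_i - m_{i-1}\in\Z_{>0}\cdot\omega(-,n)$ with $n\in N^+$, rather than hanging everything on the location of the last bend — but since you explicitly defer to that argument (as does the paper), this is a presentational point rather than a gap.
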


\begin{proof}
    (1) generalizes \cite[Proposition 3.4]{GHKK}. The settings slightly differ as our monoid $\mndP$ contains the coefficient part $\mathbb P^\oplus$. The proof nonetheless does not change.

    (2) generalizes \cite[Corollary 3.9]{GHKK}. It follows from generalizing \cite[Proposition 3.8]{GHKK} and \cite[Proposition 3.6]{GHKK} to our setting. The former is the special case when $v = v_0$ and carries over by the same argument. The latter is a bijection between certain broken lines in $\frakD_\mathbf s$ and $\frakD_{\mu_k(\mathbf s)}$. We have not defined the mutation $\mu_k(\mathbf s)$ but it can be understood as in \cite[Definition 6.3]{Mou} by unveiling the relation between the variables $t_{i,j}$ and $p_{i,s}$ (\ref{eq: tij and pis}). Then we can refer to \cite[Proposition 8.9]{Mou}.
\end{proof}

Let $\Sigma$ be a seed arising from $\Gamma$ and $\mathbf s = (\mathbf e, \mathbf p)$ as in \Cref{subsec: gen cluster sd higher rk}. Consider the generalized cluster algebra $\mathcal A = \mathcal A(\Sigma)$.

\begin{thm}\label{thm: cluster monomial theta}
    There is a generalized cluster structure such that $\Sigma_{v_0} = \Sigma$ and the cluster monomials in $\mathcal A$ (with frozen variables inverted)
    \[
        \left\{\prod_{i\in I_\mathrm{uf}}x_{i;v}^{a_i}\prod_{j\in I_\mathrm{fr}}x_{j;v}^{b_j}\,\middle\vert\, a_i\in \mathbb N, b_j\in \mathbb Z, v\in \mathbb T_{|I_\mathrm{uf}|}\right\}    
    \]
    are precisely the theta functions 
    \[
        \{\vartheta_{Q, m}\mid m\in \Delta_\mathbf s^+ \cap M\}
    \]
    for a general $Q\in \mathcal C_{v_0}^+$.
\end{thm}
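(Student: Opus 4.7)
The plan is to define the generalized cluster structure $v \mapsto \Sigma_v$ by placing theta functions on the cluster cones of $\Delta_{\mathbf s}^+$ and to verify the generalized cluster exchange relations through wall-crossing. To set things up, I would associate to each vertex $v \in \mathbb T_{|I_{\mathrm{uf}}|}$ and each $i \in I$ a distinguished lattice point $m_{i;v} \in M$: for $i \in I_{\mathrm{uf}}$, let $m_{i;v}$ be the primitive generator of the ray in $\mathcal C_v^+$ dual to the facet $S_{i;v}$, and for $j \in I_{\mathrm{fr}}$ let $m_{j;v}$ generate the $j$-th lineality direction of $\mathcal C_v^+$. Set $x_{i;v} \coloneqq \vartheta_{Q_0, m_{i;v}}$ for a fixed general $Q_0 \in \mathcal C_{v_0}^+$. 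By \Cref{prop: theta function properties}(2) applied at $v_0$, we recover $x_{i;v_0} = x^{e_i^*}$, matching the input seed $\Sigma$.

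Next, I would verify that for every edge $v\,\frac{k}{\quad\quad}\,v'$ in $\mathbb T_{|I_{\mathrm{uf}}|}$, the tuples $\mathbf x_v$ and $\mathbf x_{v'}$ are related by a generalized cluster mutation of the form \eqref{eq: gen exchange reln} with the correct coefficient mutation \eqref{eq: coefficient mutation not k}. The key point is that $\mathcal C_v^+$ and $\mathcal C_{v'}^+$ share precisely the facet $S_{k;v}$, and a representative of $\frakD_\mathbf s$ in a neighborhood of that facet (by \Cref{thm: cluster complex struct any rk}) is given by the single wall $(S_{k;v}, f_{k;v})$ with $f_{k;v}$ explicit. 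Choose generic endpoints $Q_v \in \mathrm{Int}(\mathcal C_v^+)$ and $Q_{v'} \in \mathrm{Int}(\mathcal C_{v'}^+)$; \Cref{prop: theta function properties}(2) gives $\vartheta_{Q_v, m_{k;v}} = x^{m_{k;v}}$ and $\vartheta_{Q_{v'}, m_{k;v'}} = x^{m_{k;v'}}$. Applying the path-ordered product across $S_{k;v}$ and multiplying out (using that $m_{k;v} + m_{k;v'}$ is tangent to $S_{k;v}$, so the associated theta function is defined unambiguously on the shared wall) yields the polynomial identity $x_{k;v} x_{k;v'} = x^{m_{k;v}+m_{k;v'}} \cdot (\text{truncation of } f_{k;v})$. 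A direct unwinding of the explicit form of $f_{k;v}$ in both sign cases of \Cref{thm: cluster complex struct any rk} confirms this identity matches \eqref{eq: gen exchange reln} up to an overall factor in $\mathbb P$; \Cref{prop: Fraser lemma} then upgrades this equivalence to the full cluster structure.

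With the cluster structure in hand, I would conclude the identification of cluster monomials with theta functions as follows. For any vertex $v$ and choice of exponents, the lattice point $m = \sum_{i\in I_{\mathrm{uf}}} a_i m_{i;v} + \sum_{j\in I_{\mathrm{fr}}} b_j m_{j;v}$ lies in $\mathcal C_v^+ \cap M$ precisely when $a_i \in \mathbb N$ and $b_j \in \mathbb Z$. Applying \Cref{prop: theta function properties}(2) at a generic $Q_v \in \mathrm{Int}(\mathcal C_v^+)$ gives $\vartheta_{Q_v, m} = x^m = \prod_i x^{a_i m_{i;v}} \prod_j x^{b_j m_{j;v}}$, and transporting via the path-ordered product along a regular path $\gamma$ from $Q_v$ to $Q_0$, together with the multiplicativity of $\mathfrak p_{\gamma, \frakD_\mathbf s}$ and the definition of each $x_{i;v}$, identifies $\vartheta_{Q_0, m}$ with the cluster monomial $\prod_i x_{i;v}^{a_i}\prod_j x_{j;v}^{b_j}$. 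Conversely, every $m \in \Delta_\mathbf s^+ \cap M$ lies in some $\mathcal C_v^+$ and admits a unique such decomposition, so the two indexed sets coincide.

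The main obstacle will be the exchange-relation matching in the second step: the wall-functions of \Cref{thm: cluster complex struct any rk} are encoded in terms of the intricate quantities $h_{j;v}^{(i)}$ and the sign $\varepsilon_{i;v}$ of the $\mathbf c$-vectors, whereas the coefficient mutation \eqref{eq: coefficient mutation not k} is defined purely recursively on $\mathbb T_{|I_{\mathrm{uf}}|}$, so identifying these formulas requires tracking $\mathbf c$-vectors along paths in the tree. Here \Cref{prop: Fraser lemma} is essential: it reduces the verification to matching the $\hat y$-variables \eqref{eq: y hat variable}, which are controlled by the primitive wall directions $w_{i;v}$ and the top-degree coefficients $p_{i,\ell_i}$ --- precisely the data that appear transparently in the wall-functions of \Cref{thm: cluster complex struct any rk}.
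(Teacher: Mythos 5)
Your proposal takes a genuinely different route from the paper's. The paper's proof does not re-derive the generalized cluster mutation rule from the wall-functions at all: it appeals to \cite[Proposition 8.1]{Mou} as a black box, which already identifies the unfrozen cluster variable $x_{i;v}$ with the path-ordered product $\frakp_\gamma(x^{g_{i;v}})$ for a regular path $\gamma$ from $\mathcal C_v^+$ to $\mathcal C_{v_0}^+$, and then simply observes that any $m\in\Delta_\mathbf s^+\cap M$ decomposes as $\sum a_i g_{i;v} + m_\mathrm{fr}$ with $a_i\in\mathbb N$, that wall-crossing acts trivially on the frozen direction $x^{m_\mathrm{fr}}$, and that \Cref{thm: CPS consistency} converts $\frakp_\gamma(x^m) = \frakp_\gamma(\vartheta_{Q_v,m}) = \vartheta_{Q_0,m}$. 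You instead propose to construct the cluster structure directly by setting $x_{i;v}\coloneqq\vartheta_{Q_0,m_{i;v}}$ and verifying the generalized exchange relation at each edge $v\,\frac{k}{\quad\quad}\,v'$ via the explicit wall-function $f_{k;v}$ from \Cref{thm: cluster complex struct any rk}. This is a more self-contained alternative: it buys you independence from \cite[Proposition 8.1]{Mou}, at the price of re-doing that argument.

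The gap is precisely in the step you flag as ``the main obstacle.'' The identity $x_{k;v}\,x_{k;v'} = \frakp_\gamma\!\left(x^{m_{k;v}+m_{k;v'}}\,f_{k;v}^{\pm c}\right)$ (for the appropriate exponent $c = |n_k\cdot m_{k;v'}|$) needs three things you assert but do not establish: (i) that $m_{k;v}+m_{k;v'}$ really does lie in $S_{k;v}$, which is a tropical exchange identity among $g$-vectors; (ii) that the exponent $c$ equals $1$ (with sign determined by $\varepsilon_{k;v}$), so that the exchange polynomial has the expected degree $\ell_k$; and (iii) that expanding $f_{k;v}^{\pm 1}$ from \eqref{eq: cluster wall-function} and applying $\frakp_\gamma$ reproduces the coefficients from \eqref{eq: coefficient mutation not k}, tracked over the whole tree via the recursion on $h_{j;v}^{(i)}$ and $\varepsilon_{i;v}$. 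Invoking \Cref{prop: Fraser lemma} to reduce to matching the $\hat y$-variables \eqref{eq: y hat variable} is the right move and does simplify (iii) considerably, but the matching itself is a nontrivial computation with the generalized $\mathbf c$-vectors --- it is essentially the content of \cite[Proposition 8.1]{Mou}. As written, your proof therefore has a real hole at its central step: either carry out the $\hat y$-matching explicitly in both sign cases of \Cref{thm: cluster complex struct any rk}, or cite the existing result as the paper does.
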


\begin{proof}
    In the ordinary case, this is \cite[Theorem 4.9]{GHKK}. The generalized case is discussed in \cite[Section 8.4]{Mou} (and also in \cite{CKM} with reciprocal coefficients) and essentially relies on \Cref{thm: cluster complex struct any rk} the cluster complex structure of $\frakD_\mathbf s$. We elaborate more as the setup here differs slightly.

    In \cite{Mou}, we assume $I_\mathrm{uf} = I$ namely without any frozen directions. Hence in that case the maximal cones in $\Delta_\mathbf s^+$ are all simplicial whose (primitive) generators are called $g$-vectors. With frozen directions, we can still define $g$-vectors (using the same recurrence)
    \[
        \{g_{i;v}\in M\mid i\in I_\mathrm{uf}\}.
    \]
    The difference caused by frozen directions is that the recurrence to define $g$-vectors only uses the rectangular exchange matrix $\widetilde B$ (and its mutations). Another option is to extend $I_\mathrm{uf}$ to $I$ per the construction in \Cref{rem: recover seed data}. Now we have a $|I|\times |I|$ square exchange matrix which can be used to define the dual $g$-vectors $\{g_{i;v}^*\in N\mid i\in I\}$ for any $v\in \mathbb T_{|I|}$. Then $\{g_{i;v}\in M\mid i\in I\}$ can be defined to be the dual basis in $M$ and we only take the ones with $i\in I_\mathrm{uf}$. 
    
    The vectors $\{g_{i;v}\mid i\in I_\mathrm{uf}\}$ are dual to $\{g_{i;v}^*\mid i\in I_\mathrm{uf}\}$ in the sense that $\langle g_{i;v}, g_{j;v}^*\rangle = \delta_{i,j}$. The cone $\mathcal C_v^+$ has the description
    \[
        \operatorname{span}_{\mathbb R_{\geq 0}}\{g_{i;v}\mid i\in I_\mathrm{uf}\} \oplus \bigoplus_{i\in I_\mathrm{fr}} \mathbb Re_{i}^*.
    \]
    Any $m\in \mathcal C_v^+\cap M$ is a sum of a nonnegative integral sum of $\{g_{i;v}\mid i\in I_\mathrm{uf}\}$ and a vector $m_\mathrm{fr}\in \bigoplus_{i\in I_\mathrm{fr}} \mathbb Z e_i^*$. In \cite{Mou}, the mutation of a seed is deterministic so we can generate a cluster structure from $\Sigma_{v_0} = \Sigma$, which can be extracted from \Cref{thm: cluster complex struct any rk}. In particular, by \cite[Proposition 8.1]{Mou}, the unfrozen cluster variable $x_{i;v}$ is given by
    \begin{equation}\label{eq: pop cluster var high}
        x_{i;v} = \frakp_{\gamma}(x^{g_{i;v}})\in \widehat{\Bbbk[\mndP]}
    \end{equation}
    for a regular path $\gamma$ going from $\mathcal C_v^+$ to $\mathcal C_{v_0}^+$.
    Any wall-crossing automorphism acts trivially on $x^{m_\mathrm{fr}}$, which is a Laurent monomial of frozen variables $\{x^{e_i^*}\mid i\in I_\mathrm{fr}\}$. Now we see that any cluster monomial (with frozen variables inverted) equals
    \[
        \frakp_{\gamma}(x^m)\in \widehat{\Bbbk[\mndP]}
    \]
    for $m$ in $\Delta_\mathbf s^+ \cap M$.

    To see that any $\frakp_\gamma(x^m)$ for $m\in \Delta_\mathbf s^+\cap M$ is a theta function, we first have by \Cref{prop: theta function properties}
    \[
        \vartheta_{Q_v, m} = x^m
    \]
    for any $m\in \mathcal C_{v}^+\cap M$ and any $Q_v$ in the interior of $\mathcal C_{v}^+$. Then using a higher-rank analogue of \Cref{thm: CPS consistency} (see \cite[Theorem 3.5]{GHKK} and \cite{CPS}), we have
    \begin{equation}\label{eq: change endpoint theta high}
        \frakp_\gamma(x^m) = \frakp_\gamma(\vartheta_{Q_v, m}) = \vartheta_{Q, m}.
    \end{equation}
    for a general $Q\in \mathcal C_{v_0}^+$.
\end{proof}

Now we show the positivity of Laurent phenomenon for cluster monomials in any generalized cluster structure (with possible frozen directions). See the statement in \Cref{thm: positive Laurent} and terminologies in \Cref{subsec: gen cluster struct}.

\begin{proof}[Proof of \Cref{thm: positive Laurent}]
    If suffices to show this when the (rectangular) exchange matrix $B$ has full rank. If not, we can add more frozen directions by adding rows to $B$ to get a full rank matrix. Evaluating added frozen variables at $1$ recovers original cluster monomials.

    Another simplification is to assume $p_{i,0} = 1$ for any $i\in I_\mathrm{uf}$ as discussed in \Cref{rem: pi0 is trivial}. Now with the injectivity assumption (\ref{eq: inj assumption}) ensured by the full rank of $B$, the generalized cluster structure is contained within the scattering diagram $\frakD_\mathbf s$ where $\mathbf s$ can be constructed from a seed $\Sigma = (\mathbf x, \mathbf p, B)$ as in \Cref{rem: recover seed data}.

    Now every cluster monomial is a theta function $\vartheta_{Q, m}$ by \Cref{thm: cluster monomial theta}. Now the positivity of the Laurent phenomenon follows from \Cref{thm: gen laurent CS} and that the weight $c(\gamma)$ of every broken line $\gamma$ is in $\mathbb N\mathbb P^\oplus$ due to the positivity of wall-functions \Cref{thm: positive wall-function}.
\end{proof}

There are variants of Chekhov--Shapiro's generalized cluster structures, for example \cite{Nak, GSV18, Mou}, where certain restrictions are imposed on coefficients so that coefficients are ``normalized'' and thus mutations are deterministic. These settings can nonetheless be viewed as special cases of \Cref{def: cluster structure} as pointed out in \emph{loc.~cit}. Therefore, the positivity \Cref{thm: positive Laurent} is still valid in those settings.

\subsection{Strong positivity of theta functions}\label{subsec: strong positivity}

We define structure constants for theta functions following \cite[Definition-Lemma 6.2]{GHKK}, with the difference that now they are not just integers but in the coefficient ring $\mathbb Z[\mathbb P^\oplus]$. The argument carries over and the positivity follows directly from \Cref{thm: positive wall-function}. Remind that we are given a generalized cluster scattering diagram $\frakD_\mathbf s$.

\begin{defnlem}\label{def: struct constant}
    Let $p_1, p_2, q\in M$. Let $z\in M_\mathbb R$ be chosen generally. There are at most finitely many pairs of broken lines $\gamma_1, \gamma_2$ with $m_0(\gamma_1) = p_1$, $m_0(\gamma_2) = p_2$ and both ending at $z$, and $m(\gamma_1) + m(\gamma_2) = q$. Define
    \[
        \alpha_z(p_1, p_2; q) = \sum_{(\gamma_1, \gamma_2)} c(\gamma_1)c(\gamma_2)
    \]
    where the finite sum is over the aforementioned pairs of broken lines. Hence $\alpha_z(p_1, p_2; q)$ is in $\mathbb N[\mathbb P^\oplus]$ by \Cref{thm: positive wall-function}.
\end{defnlem}

Now we can describe the product structure of theta functions using $\alpha_z(p_1, p_2; q)$. Let $Q$ be general in $\mathcal C_{v_0}^+$. Write $\vartheta_{m} = \vartheta_{m, Q}$ for any $m\in M$. The fact that the structure constants $\alpha(p_1, p_2; q)$ are in $\mathbb N[\mathbb P^\oplus]$ is what we refer to as the \emph{strong positivity}.

\begin{thm}[{cf. \cite[Proposition 6.4(3)]{GHKK}}]\label{thm: strong positivity}
    The theta functions $\{\vartheta_q\mid q\in M\}$ form a topological basis of certain formal completion of a variant $\mathcal U^\oplus$ of the upper cluster algebra $\mathcal U$. For any $p_1, p_2\in M$, we have
    \begin{equation}\label{eq: product struct theta}
        \vartheta_{p_1} \cdot \vartheta_{p_2} = \sum_{q\in M} \alpha(p_1, p_2; q)\vartheta_q \in \widehat{\Bbbk[\mndP]}
    \end{equation}
    where $\alpha(p_1, p_2, q) \coloneqq \alpha_{z(q)}(p_1, p_2; q)$ for $z(q)$ chosen sufficiently near $q$ and is independent of this choice.
\end{thm}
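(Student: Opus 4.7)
The plan is to adapt GHKK's proof of Proposition 6.4(3) to the generalized setting, relying on \Cref{thm: positive wall-function} to upgrade the structure constants from integers to elements of $\mathbb{N}[\mathbb{P}^\oplus]$. I would first set up $\mathcal{U}^\oplus$ as the variant of $\mathcal{U}$ whose coefficient ring is $\mathbb{Z}[\mathbb{P}^\oplus]$ rather than $\mathbb{Z}[\mathbb{P}]$, with $\widehat{\mathcal{U}^\oplus}$ its $\mathfrak{m}$-adic completion. Each $\vartheta_q$ lies in $\widehat{\mathcal{U}^\oplus}$ by transporting \Cref{prop: theta function properties}(2) along path-ordered products: in every cluster chamber $\mathcal{C}_v^+$ the theta function $\vartheta_{Q_v, q}$ equals $x^q$, so $\vartheta_q = \frakp_{\gamma, \frakD_\mathbf{s}}(x^q)$ lies in the appropriate Laurent polynomial ring over $\widehat{\mathbb{Z}[\mathbb{P}^\oplus]}$, exactly as in \Cref{prop: theta universal Laurent}. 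Pointedness (the leading term of $\vartheta_q$ being $x^q$) gives linear independence; topological density in $\widehat{\mathcal{U}^\oplus}$ follows by inductively subtracting leading terms to reduce $\mathfrak{m}$-adic order.

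Next I would verify that $\alpha(p_1, p_2; q)$ is well-defined, i.e., finite and independent of the choice of generic $z(q)$ sufficiently near $q$. Finiteness of the sum in \Cref{def: struct constant} follows from the standard fact that modulo $\mathfrak{m}^{k+1}$ there are only finitely many broken lines with a given starting exponent. Independence of $z(q)$ follows by consistency of $\frakD_\mathbf{s}$: as $z(q)$ crosses a wall $\frakd$, pairs $(\gamma_1, \gamma_2)$ with $m(\gamma_1) + m(\gamma_2) = q$ transform coherently via the wall-crossing automorphism applied to each factor, and the total weighted count of such pairs is preserved (the higher-rank analogue of \Cref{thm: CPS consistency} applied to the product $\vartheta_{p_1} \vartheta_{p_2}$).

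For the multiplicative formula, I would evaluate both sides at a point $z = z(q_0)$ chosen generic and sufficiently near $q_0 \in M$. The left-hand side equals $\sum_{(\gamma_1, \gamma_2)} c(\gamma_1) c(\gamma_2) x^{m(\gamma_1) + m(\gamma_2)}$, summed over pairs of broken lines ending at $z$. Reindexing by the target $q = m(\gamma_1) + m(\gamma_2)$, the coefficient of $x^{q_0}$ is exactly $\alpha_z(p_1, p_2; q_0)$. On the right, pointedness of each $\vartheta_q$ together with the choice of $z$ near $q_0$ ensures that only $\vartheta_{q_0}$ contributes $x^{q_0}$ at leading $\mathfrak{m}$-adic order; contributions from other $\vartheta_q$'s live in strictly deeper layers of the filtration, and an induction on $\mathfrak{m}$-adic order then forces the expansion coefficients to be $\alpha(p_1, p_2; q)$. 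Positivity $\alpha(p_1, p_2; q) \in \mathbb{N}[\mathbb{P}^\oplus]$ is automatic from \Cref{thm: positive wall-function}, which guarantees every broken-line weight $c(\gamma_i)$ is a polynomial in the $p_{i,s}$ with nonnegative integer coefficients.

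The main obstacle will be setting up the completion $\widehat{\mathcal{U}^\oplus}$ carefully enough that the infinite sum in \eqref{eq: product struct theta} converges and the theta basis expansion of $\vartheta_{p_1}\vartheta_{p_2}$ is unique. In the ordinary cluster case of \cite{GHKK}, the $\mathfrak{m}$-adic filtration behaves simply because initial wall-functions are binomials $1 + x^{\ell_i w_i}$; here the extra coefficient variables $p_{i,s}$ enlarge $\mathfrak{m}$ and complicate degree-tracking, so one must take care that the $m$-pointed filtration is compatible with the $\mathfrak{m}$-adic topology on $\widehat{\mathbb{Z}[\mathbb{P}^\oplus]}$. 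Once these compatibilities are arranged, the rest of the argument is a direct transcription of GHKK's method.
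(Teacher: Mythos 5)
Your plan matches the paper's proof essentially point for point: you set up $\mathcal U^\oplus$ over $\Z[\mathbb P^\oplus]$ and its completion (the paper uses the ideal $I = (p_{i,s})$, which after inverting the cluster variables coincides with what you call the $\mathfrak m$-adic filtration), establish that theta functions form a topological basis via pointedness and a leading-term/Nakayama argument over the central fiber $\mathcal U^\oplus/I \cong \Bbbk[M]$, and read off $\alpha(p_1,p_2;q)$ by evaluating the product at a terminal point $z$ chosen near $q$, with positivity delegated to \Cref{thm: positive wall-function}. The only stylistic difference is that the paper extracts the coefficient of $\vartheta_q$ more directly by observing that the straight broken line for $\vartheta_{z,q}=x^q$ is the unique contributor to the monomial $x^q$, rather than running an explicit induction on filtration degree, but both arguments rest on the same combination of \Cref{prop: theta function properties}(2), the CPS consistency theorem, and finiteness of relevant wall-function terms at each order.
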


\begin{proof}
    The proof essentially follows that of \cite[Proposition 6.4(3)]{GHKK}. We first understand conceptually where the theta functions live. In fact, for each $v\in \mathbb T_v$, we have the algebra
    \[
        \Z[\mathbb P^\oplus][x_{i;v}^{\pm 1}\mid i\in I] \subseteq \mathcal F.
    \]
    Denote the intersection of all these algebras by $\mathcal U^\oplus$ (by $\mathcal U$ if taking the coefficient ring $\Z[\mathbb P]$ instead). It contains all cluster monomials. In fact, when expressing $x_{i;v}$ in another cluster $\mathbf x_{v'}$, it is the theta function $\vartheta_{Q_{v'}, g_{i;v}}$ for some general $Q_{v'}\in \mathcal C_{v'}^+$ by \eqref{eq: pop cluster var high} and \eqref{eq: change endpoint theta high}, so the coefficients are actually in $\N[\mathbb P^\oplus]$. Take the completion 
    \[
        \widehat{\mathcal U^\oplus}\coloneqq \lim_{\longleftarrow} \mathcal U^\oplus/I^{k}
    \]
    with respect to the ideal $I$ generated by $\{p_{i,s}\mid i\in I_\mathrm{uf},1\leq s\leq \ell_i\}$. For the same reason $\mathcal U$ contains cluster monomials, $\widehat{\mathcal U^\oplus}$ contains the theta functions $\{\vartheta_m\mid m\in M\}$.
    
    We note that $\mathcal U^\oplus/I = \Bbbk[M]$ since letting $p_{i,s} = 0$ makes any $\vartheta_{Q_{v'}, g_{i;v}}$ a single monomial $x^{g_{i;v}}$. This phenomenon is phrased in \cite[Corollary 5.3]{GHKK} as the central fiber of the family $\mathcal U_\mathrm{prin}^\mathbf s\rightarrow \mathbb A_{X_1, \dots, X_n}^n$ is the torus $T_N$. Here we are directly taking the regular functions instead of considering the relevant schemes. The family is in the ordinary case, thus recovered by restricting to all $p_{i,s} = 0$ except for the leading ones $\{p_{i, \ell_i}\mid i\in I_\mathrm{uf}\}$, which correspond to the coordinates $\{X_1, \dots, X_n\}$.

    Back to the product structure, now it suffices to show (\ref{eq: product struct theta}) in $\mathcal U^\oplus/I^{k+1}$ for each $k\geq 0$. Now the theta functions can be defined using the finite scattering diagram $(\frakD_{\mathbf s})_k$ over $\Bbbk[\mndP]/\mathfrak m^{k+1}$. As argued in the first paragraph of the proof of \cite[Proposition 6.4]{GHKK}, theta functions $\{\vartheta_q\mid q\in M\}$ form a $\Z[\mathbb P^\oplus]/I^{k+1}$-basis for $\mathcal U^\oplus/I^{k+1}$; it is clear when $k=0$ as $\vartheta_q = x^q$ and one can use Nakayama's lemma to lift the basis $\{x^q\mid q\in M\}$ to $\{\vartheta_q\mid q\in M\}$ in $\mathcal U^\oplus/I^{k+1}$. We thus say that $\{\vartheta_q\mid q\in M\}$ form a \emph{topological basis} of $\widehat{\mathcal U^\oplus}$.

    For a given $k\geq 1$, express $\vartheta_{p_1}\cdot \vartheta_{p_2}$ in the basis $\{\vartheta_q\mid q\in M\}$. Notice we can choose any general terminal point for this expression because for different terminal points, theta functions are transformed under a path-ordered product which does not change coefficients in $\Bbbk[\mathbb P^\oplus]/I^k$. For a given $q\in M$, choose a general terminal point $z$ close enough to $q$ so that they belong to the same chamber. By the definition of $\alpha_z$, we have
    \begin{equation}\label{eq: def struct const proof}
        \vartheta_{z, p_1}\cdot \vartheta_{z, p_2} = \sum_r \alpha_z(p_1, p_2;r) x^r.
    \end{equation}
    However, among all broken lines for all $\vartheta_{z, r}$, $r\in M$, there is only one broken line (whose image is $z + \mathbb R_{\geq 0}q$) with associated final monomial $x^q$, which gives the theta function $\vartheta_{z, q} = x^q$. Thus the coefficient of $\vartheta_{z, q}$ can be read off by the coefficient of $x^q$ on the right-hand side of (\ref{eq: def struct const proof}), namely $\alpha_z(p_1, p_2; q)$. This proves the product formula in order $k$. To see the independence of $z$ for all $k$, we observe exactly as in the proof of \cite[Definition-Lemma 6.2]{GHKK} that for given $p_1, p_2, q$ only finitely many wall-function terms in $\frakD$ are relevant to the possible pairs of broken lines computing $\alpha_z(p_1, p_2;q)$ (for any general $z$). Hence one can choose a big enough $k$ and use $\frakD_k$ to define $\alpha(p_1, p_2; q)$.
\end{proof}

In general, the cluster complex $\Delta_\mathbf s^+$ is only a proper subset of $M_\mathbb R$. Theta functions outside $\Delta_\mathbf s^+$ may not be Laurent but a formal sum. The following observation generalizes \cite[Proposition 7.1]{GHKK} immediately once we have the positivity of wall-crossing functions \Cref{thm: positive wall-function}.

\begin{prop}\label{prop: finite independent chamber}
    Let $m_0\in M$. If for some general point $Q\in \mathcal C_v^+$ for $v\in \mathbb T_n$ there are only finitely many broken lines for $m_0$ with endpoint $Q$, the same is true for any general point $Q \in \mathcal C_{v'}^+$ for any other $v'\in \mathbb T_n$. In this case, the theta function $\vartheta_{m}$ is a positive universal Laurent polynomial.
\end{prop}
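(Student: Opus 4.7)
The plan is to propagate finiteness of broken lines across adjacent cluster chambers using the mutation invariance of broken lines, iterate along the tree $\mathbb T_n$, and then invoke the positivity \Cref{thm: positive wall-function} to upgrade this finiteness into the desired positive Laurent expansion in every cluster. For the first assertion, consider an edge $v\frac{k}{\quad\quad}v'$ in $\mathbb T_n$. The higher-rank analogue of the mutation $T$ in \Cref{thm: mutation of sd rk 2} (worked out in \cite[Section 8]{Mou}) produces a piecewise linear isomorphism $T_k\colon M_\mathbb R \to M_\mathbb R$ sending $\mathcal C_{v'}^+$ onto the positive chamber of the mutated scattering diagram $\frakD_{\mu_k(\mathbf s)}$, together with a bijection between broken lines for $m_0$ with endpoint $Q\in \mathcal C_{v'}^+$ in $\frakD_\mathbf s$ and broken lines for $T_k(m_0)$ with endpoint $T_k(Q)$ in $\frakD_{\mu_k(\mathbf s)}$, generalizing \Cref{prop: mutation broken line}. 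Since finiteness is preserved across any single mutation, iteration along a path in $\mathbb T_n$ from $v$ to $v'$ shows that the set of broken lines for $m_0$ with endpoint in $\mathcal C_v^+$ is finite if and only if the corresponding set in $\mathcal C_{v'}^+$ is finite.

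For the second assertion, fix any $v$ and a general $Q_v \in \mathcal C_v^+$. By the first assertion and the hypothesis, $\vartheta_{Q_v, m_0}$ is a finite sum $\sum_\beta c(\beta) x^{m(\beta)}$, and by \Cref{thm: positive wall-function} each $c(\beta)$ lies in $\mathbb N[\mathbb P^\oplus]$. Using the basis $\{g_{i;v} \mid i \in I_\mathrm{uf}\}$ dual to $\{g_{i;v}^* \mid i \in I_\mathrm{uf}\}$ (together with the frozen dual vectors $e_i^*$ for $i \in I_\mathrm{fr}$) as in the proof of \Cref{thm: cluster monomial theta}, the finite sum $\vartheta_{Q_v, m_0}$ becomes a Laurent polynomial in the monomials $x^{g_{i;v}}$ and $x^{e_i^*}$ with coefficients in $\mathbb N[\mathbb P^\oplus]$. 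Applying $\mathfrak p_\gamma$ for a regular path $\gamma$ from $Q_v$ to a general $Q_{v_0}\in \mathcal C_{v_0}^+$ yields, via \eqref{eq: pop cluster var high} and the higher-rank analogue of \Cref{thm: CPS consistency}, the identities $\vartheta_{m_0} = \mathfrak p_\gamma(\vartheta_{Q_v, m_0})$ and $x_{i;v} = \mathfrak p_\gamma(x^{g_{i;v}})$. Substituting $x_{i;v}$ for $x^{g_{i;v}}$ (and similarly for frozen monomials) in the finite Laurent expression of $\vartheta_{Q_v, m_0}$ gives a Laurent polynomial expression of $\vartheta_{m_0}$ in the cluster $\mathbf x_v$ with coefficients in $\mathbb N[\mathbb P^\oplus]$. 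Since $v$ is arbitrary, $\vartheta_{m_0}$ is a positive universal Laurent polynomial.

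The hard part will be the higher-rank mutation bijection of broken lines in the presence of the nontrivial coefficient polynomials $f_{k;v}$ appearing in $\frakD_\mathbf s$; the rank-2 argument (\Cref{prop: mutation broken line}) carries over formally through Mou's treatment, but requires tracking the interaction of the piecewise linear map $T_k$ with the mutation wall $S_{k;v}$ and its wall-function. The positivity \Cref{thm: positive wall-function} is essential in the second assertion, as it rules out any cancellation between broken-line contributions: without it, a finite Laurent expression in the $x^m$ variables could conceivably mask infinitely many broken lines summing coherently, and the count of broken lines would not be visible from the simplified expression.
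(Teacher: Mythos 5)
The first assertion is where your argument has a genuine gap. The mutation bijection of broken lines (\Cref{prop: mutation broken line} and its higher-rank analogue) relates broken lines in $\frakD_\mathbf s$ to broken lines in $\frakD_{\mu_k(\mathbf s)}$ via the piecewise linear map $T_k$; it does \emph{not} relate broken lines in two different chambers of the \emph{same} scattering diagram. Concretely, it gives $|\{\beta\text{ for }m_0\text{ at }Q'\in\mathcal C_{v'}^+\text{ in }\frakD_\mathbf s\}| = |\{\beta\text{ for }T_k(m_0)\text{ at }T_k(Q')\in\text{positive chamber of }\frakD_{\mu_k(\mathbf s)}\}|$ and $|\{\beta\text{ for }m_0\text{ at }Q\in\mathcal C_{v}^+\text{ in }\frakD_\mathbf s\}| = |\{\beta\text{ for }T_k(m_0)\text{ at }T_k(Q)\in T_k(\mathcal C_v^+)\text{ in }\frakD_{\mu_k(\mathbf s)}\}|$. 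Passing to $\frakD_{\mu_k(\mathbf s)}$ merely relabels the same finiteness question about two adjacent chambers; the iteration makes no progress. This route does work in rank $2$ (\Cref{prop: theta universal Laurent}) because there finiteness in the \emph{positive} chamber holds unconditionally for every rank-$2$ generalized cluster scattering diagram, and mutation transports that unconditional fact. In higher rank, theta functions computed in the positive chamber need not be finite, so the base of the induction is missing.

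The paper instead defers to the argument of \cite[Proposition 7.1]{GHKK}, which is positivity-based across a single wall, and this is precisely why \Cref{thm: positive wall-function} is the ingredient the paper flags. Sketch: take adjacent cluster chambers $\sigma,\sigma'$ separated by a wall $\frakd$ of $\frakD(\Delta_\mathbf s^+)$ with positive polynomial wall-function $f_\frakd$ of constant term $1$, and let $n$ be the primitive normal pointing into $\sigma$. Decomposing $\vartheta_{Q,m_0}=\sum_d g_d$ and $\vartheta_{Q',m_0}=\sum_d h_d$ by the integer grading $\langle n,-\rangle$ (which $f_\frakd$ preserves), \Cref{thm: CPS consistency} gives $h_d = f_\frakd^{\,d}\,g_d$ for every $d$. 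For $d\ge 0$ this makes $h_d$ finite at once. For $d<0$ one has $g_d = f_\frakd^{\,|d|}\,h_d$ with $g_d$ finite and positive, $f_\frakd^{\,|d|}$ positive with constant term $1$, and $h_d$ positive by \Cref{thm: positive wall-function}; grading by the $\mathbb P^\oplus$-degree and comparing coefficients above the top $\mathbb P^\oplus$-degree of $g_d$ yields $h_d^{(j)} = -\sum_{i\ge 1}(f_\frakd^{\,|d|})^{(i)}h_d^{(j-i)}$, a nonnegative quantity equal to minus a nonnegative quantity, hence zero. So $h_d$ is finite, and since only finitely many $d$ occur in $\vartheta_{Q,m_0}$, finiteness of $\vartheta_{Q',m_0}$ follows; connectedness of the cluster complex completes the first assertion. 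Your second paragraph (finiteness plus \Cref{thm: positive wall-function} plus the identities $\vartheta_{m_0}=\frakp_\gamma(\vartheta_{Q_v,m_0})$ and $x_{i;v}=\frakp_\gamma(x^{g_{i;v}})$ implies positive universal Laurent) is sound, and your remark about the role of positivity there is apt — but positivity is equally indispensable in the first part, where your mutation argument tried to do without it.
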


We can now extend \cite[Theorem 7.5]{GHKK} to the generalized case. 

\begin{thm}\label{thm: middle cluster algebra}
    Let $\Theta\subseteq M$ be the collection of $m$ such that the theta function $\vartheta_{m}$ is a finite sum, that is, by \Cref{prop: finite independent chamber} a universal Laurent polynomial.
    \begin{enumerate}
        \item $\Delta_\mathbf s^+\cap M\subseteq \Theta$.
        \item For $p_1, p_2\in \Theta$,
        \[
            \vartheta_{p_1}\cdot \vartheta_{p_2} = \sum_r \alpha(p_1, p_2; r)\vartheta_r 
        \]
        is a finite sum with coefficients in $\mathbb N[\mathbb P^\oplus]$. If $\alpha(p_1, p_2; r)\neq 0$, then $r\in \Theta$.
        \item $\Theta$ is a saturated monoid in $M$.
        \item The $\Bbbk[\mathbb P]$-module
        \[
            \mathcal M \coloneqq \operatorname{span}_{\Z[\mathbb P]}\{\vartheta_m\mid m\in \Theta\}
        \]
        is an associative $\Z[\mathbb P]$-algebra with a $\Z[\mathbb P]$-basis given by $\{\vartheta_m\mid m\in \Theta\}$. We have natural inclusions
        \[
            \mathcal A\subseteq \mathcal M \subseteq \mathcal U.
        \]
    \end{enumerate}
\end{thm}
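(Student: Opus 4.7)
The strategy parallels the proof of \cite[Theorem 7.5]{GHKK} in the ordinary case, with the key inputs being the product formula and strong positivity of \Cref{thm: strong positivity} together with the positivity of wall-functions (\Cref{thm: positive wall-function}). For (1), I would note that any $m \in \Delta_\mathbf s^+ \cap M$ lies in some cluster cone $\mathcal C_v^+$, so by \Cref{thm: cluster monomial theta} the theta function $\vartheta_m$ equals a cluster monomial in $\mathcal A$ (with frozen variables inverted). The generalized Laurent phenomenon \Cref{thm: gen laurent CS} then guarantees $\vartheta_m$ is a finite Laurent polynomial, whence $m \in \Theta$.

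For (2), given $p_1, p_2 \in \Theta$, both $\vartheta_{p_1}$ and $\vartheta_{p_2}$ are finite Laurent polynomials in $\mathcal U^\oplus$, so their product is also a finite Laurent polynomial. Invoking \Cref{thm: strong positivity}, I would expand $\vartheta_{p_1}\cdot \vartheta_{p_2} = \sum_{r\in M} \alpha(p_1,p_2;r)\vartheta_r$ in the topological theta basis and argue the sum is actually finite. The mechanism is to evaluate at a general point $Q$ in a cluster chamber $\mathcal C_{v_0}^+$: since each $\vartheta_{Q, r}$ has $x^r$ as its unique leading monomial (in the sense of \Cref{def: struct constant}, reading off $\alpha_z(p_1,p_2;r)$ near $r$), and since all $\alpha(p_1,p_2;r)$ lie in $\mathbb N[\mathbb P^\oplus]$, no cancellation can occur. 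The finitely many monomial supports on the left-hand side therefore bound the supports on the right, forcing only finitely many $\alpha(p_1,p_2;r)$ to be nonzero. For any such $r$, positivity of coefficients then implies that every monomial appearing in $\vartheta_r$ must appear in the finite sum $\vartheta_{p_1}\cdot \vartheta_{p_2}$, so $\vartheta_r$ is itself a finite Laurent polynomial and $r \in \Theta$.

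For (3), we have $0 \in \Theta$ since $\vartheta_0 = 1$. Closure under addition follows from (2) and the leading-term computation $\alpha(p_1,p_2;p_1+p_2)=1$, which places $p_1+p_2$ among the finitely many indices appearing in the product expansion. Saturation is the main obstacle: assuming $km \in \Theta$ for some $k \geq 1$, I need to conclude $m \in \Theta$. The plan is to exploit strict positivity: iteratively applying (2) and the product formula gives $\vartheta_m^k$ as a formal expansion in the theta basis with $\vartheta_{km}$ appearing with coefficient $1$. Because the structure constants in $\mathbb N[\mathbb P^\oplus]$ admit no cancellation, if $\vartheta_m$ had infinite monomial support then so would $\vartheta_m^k$, while on the other hand one can compare the supports of $\vartheta_m^k$ and $\vartheta_{km}$ using \Cref{prop: finite independent chamber}: since $\vartheta_{km}$ is finite in some (equivalently every) cluster chamber, and $\vartheta_m^k - \vartheta_{km}$ is a nonnegative combination of other theta functions, finiteness of $\vartheta_m^k$ in one chamber forces $\vartheta_m$ to have finite support in that chamber as well; \Cref{prop: finite independent chamber} then propagates finiteness to all chambers, giving $m \in \Theta$. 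The delicate point here is translating ``no cancellation in $\mathbb N[\mathbb P^\oplus]$'' into a support comparison between $\vartheta_m^k$ and $\vartheta_m$, which I expect to handle via the $g$-vector stratification and standard monomial-ordering arguments.

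Finally (4) is a formal consequence: the multiplicative closedness of $\mathcal M$ is immediate from (2), and the topological basis property of theta functions in \Cref{thm: strong positivity} together with (2) shows $\{\vartheta_m \mid m \in \Theta\}$ forms a $\Z[\mathbb P]$-basis of $\mathcal M$. The inclusion $\mathcal A \subseteq \mathcal M$ combines (1) with \Cref{thm: cluster monomial theta}, and $\mathcal M \subseteq \mathcal U$ is built into the definition of $\Theta$ via \Cref{prop: finite independent chamber}.
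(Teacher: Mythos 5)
Parts (1), (2), and (4) of your proposal follow the same route as the paper, which simply invokes the argument of \cite[Theorem 7.5]{GHKK} with \Cref{thm: positive wall-function} supplying the needed positivity. Your part (2) argument (expanding the finite Laurent polynomial $\vartheta_{p_1}\vartheta_{p_2}$ in the topological theta basis and using positivity plus the distinguished leading term $x^r$ of $\vartheta_r$ to preclude cancellation) is correct and is essentially the GHKK argument.

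The gap is in part (3), saturation. You want to conclude $m\in\Theta$ from $km\in\Theta$, and your chain of inferences is: ``if $\vartheta_m$ had infinite support then so would $\vartheta_m^k$,'' then ``finiteness of $\vartheta_m^k$ in one chamber forces $\vartheta_m$ finite.'' But you never establish that $\vartheta_m^k$ is finite. You know $\vartheta_{km}$ is finite, and you know $\vartheta_m^k = \vartheta_{km} + (\text{a nonnegative combination of other }\vartheta_r)$, but nothing prevents that nonnegative remainder from being an infinite sum; positivity only rules out cancellation, so if anything it makes it \emph{harder} for $\vartheta_m^k$ to be finite, not easier. The circularity cannot be repaired by a monomial-ordering argument. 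The actual mechanism in \cite[Theorem 7.5]{GHKK} is a direct broken-line scaling: given any broken line $\gamma$ for $m$ with endpoint $Q$, multiply every attached momentum by $k$ to get a broken line $k\gamma$ for $km$ with the same image and the same sequence of wall crossings. At each bend, if $\gamma$ picks up a positive term of $f_\frakd^{t}$ (with $t = n_i\cdot r(m_i) > 0$), then $k\gamma$ needs the corresponding term in $f_\frakd^{kt} = (f_\frakd^{t})^k$, whose coefficient is at least the $k$-th power of the original (positive) coefficient because $f_\frakd$ has nonnegative coefficients by \Cref{thm: positive wall-function} — so no cancellation and $c(k\gamma)\neq 0$. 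The map $\gamma\mapsto k\gamma$ is injective, so a finite count of broken lines for $km$ (i.e.\ $km\in\Theta$) forces a finite count for $m$, then \Cref{prop: finite independent chamber} gives $m\in\Theta$. Your proposal skips this scaling construction, which is the crux of the saturation claim.
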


\begin{proof}
    The proof uses the exact same arguments as in \cite[Theorem 7.5]{GHKK} but relies on the generalized positivity \Cref{thm: positive wall-function}. We note that the inclusion $\mathcal A\subseteq \mathcal M$ sends a cluster monomial to the corresponding theta function given by \Cref{thm: cluster monomial theta} and the second inclusion $\mathcal M\subseteq \mathcal U$ is guaranteed by \Cref{prop: finite independent chamber}.
\end{proof}

\begin{rem}
    In the ordinary case, the algebra $\mathcal M$ is denoted as $\operatorname{mid}(\mathcal A)$ (where $\mathcal A$ denotes the cluster variety) and called the \emph{middle cluster algebra} in \cite{GHKK}. The set $\Theta$ a priori depends on the choices of $\ell_i$. In particular, when $\ell_i=1$ for any $i\in I_\mathrm{uf}$, it is the corresponding set for ordinary cluster algebras. It is interesting to ask whether the generalized and ordinary situations share the same $\Theta$.
\end{rem}

\subsection{Theta functions are atomic}\label{subsec: atomicity}

A non-zero element in the upper cluster algebra $\mathcal U$ is called \emph{universally positive} \cite{FG09} if it is a Laurent polynomial with coefficients in $\mathbb N[\mathbb P]$ in every cluster, that is, it is in the subset
\[
    \bigcap_{v\in \mathbb T_{|I_\mathrm{uf}|}} \mathbb N[\mathbb P][x_{i;v}^{\pm 1}\mid i\in I] \subseteq \mathcal U.
\]
Such an element is furthermore called \emph{indecomposable} or \emph{atomic} if it cannot be written as a sum of two universally positive elements. Any cluster monomial is universally positive by \Cref{thm: positive Laurent} and is atomic since it is a monomial in some cluster.

Theta functions are not necessarily in $\mathcal U$ but live in the completion $\widehat{\mathcal U^\oplus}$. We follow Mandel's definition \cite{Man17} of atomicity using the scattering diagram $\frakD_\mathbf s$.

\begin{defn}
    A non-zero element $f\in \widehat{\mathcal U^\oplus}$ is called \emph{universally positive} with respect to the \emph{scattering atlas} if regarded as an element in $\widehat{\Bbbk[\mndP]}$, for any general $Q\in \mathcal C_\mathbf s^+$, any general $Q'\in M_\mathbb R\setminus \operatorname{Supp}(\frakD_\mathbf s)$, and a regular path $\gamma$ from $Q$ to $Q'$, the element
    \[
        \frakp_{\gamma}(f)\in \widehat{\Bbbk[\mndP]}
    \]
    is a formal Laurent series with coefficients in $\mathbb N[\mathbb P^\oplus]$. It is in addition called \emph{atomic} if it cannot be written as a sum of two elements that are universally positive with respect to the scattering atlas.
\end{defn}

\begin{thm}[{cf. \cite[Theorem 1]{Man17}}]\label{thm: atomicity}
    The theta functions $\{\vartheta_q\mid q\in M\}$ (up to a scalar in $\mathbb P$) are exactly the atomic elements of $\widehat{\mathcal U^\oplus}$ with respect to the scattering atlas.
\end{thm}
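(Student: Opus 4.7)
The plan is to adapt Mandel's strategy from the ordinary cluster case \cite{Man17} to the generalized setting, with the positivity of wall-functions in \Cref{thm: positive wall-function} playing the role that binomial wall-functions play in \cite{Man17}. First, I would verify that each theta function $\vartheta_q$ is itself universally positive with respect to the scattering atlas. By the higher-rank analogue of \Cref{thm: CPS consistency} (due to \cite{CPS}), transporting $\vartheta_{Q,q}$ along any regular path $\gamma$ produces the theta function $\vartheta_{\gamma(1), q}$, which is a formal sum $\sum_\beta c(\beta)\, x^{m(\beta)}$ over broken lines. Each weight $c(\beta)$ is a product of coefficients drawn from wall-functions along the bendings of $\beta$, and \Cref{thm: positive wall-function} ensures that all wall-function coefficients lie in $\mathbb N[\mathbb P^\oplus]$; hence each $c(\beta) \in \mathbb N[\mathbb P^\oplus]$.

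The heart of the argument is the following key lemma: any $f \in \widehat{\mathcal U^\oplus}$ that is universally positive with respect to the scattering atlas admits, in the topological theta basis from \Cref{thm: strong positivity}, an expansion $f = \sum_q \alpha_q \vartheta_q$ with every $\alpha_q \in \mathbb N[\mathbb P^\oplus]$. Following Mandel, I would extract each $\alpha_q$ by choosing a general point $z_q \in M_\R$ close enough to $q$ that the straight broken line is the unique one for $q$ terminating at $z_q$, so $\vartheta_{z_q, q}$ contains $x^q$ as its initial monomial with coefficient $1$. Working modulo successive powers of the maximal ideal $I = (p_{i,s})$ so that only finitely many $\alpha_r$ remain relevant at each order, I would impose a partial order on $M$ refining the $I$-adic filtration and read off $\alpha_q$ from the coefficient of $x^q$ in the path-ordered expansion $\mathfrak p_{\gamma_q}(f)$ at $z_q$, using that for $r \neq q$ any broken line for $r$ ending at $z_q$ with final exponent $q$ must bend at least once and so contributes a monomial lying in $I$. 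Universal positivity of $f$ then forces $\alpha_q \in \mathbb N[\mathbb P^\oplus]$ order by order.

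With the key lemma in place, the two directions of the theorem follow formally. If $\vartheta_q = f_1 + f_2$ with both $f_i$ nonzero and universally positive with respect to the scattering atlas, expanding $f_i = \sum_r \alpha_r^i \vartheta_r$ yields $\alpha_r^1 + \alpha_r^2 = \delta_{r,q}$ in $\mathbb N[\mathbb P^\oplus]$, which forces one of $f_1, f_2$ to vanish; the same argument shows $p\vartheta_q$ is atomic for any monomial $p \in \mathbb P^\oplus$. Conversely, if $f$ is any atomic universally positive element, its theta expansion $f = \sum_q \alpha_q \vartheta_q$ has $\alpha_q \in \mathbb N[\mathbb P^\oplus]$, and indecomposability forces a single surviving index $q$ and a single monomial coefficient, so $f = p\vartheta_q$.

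The principal obstacle is the key lemma: verifying that the coefficient extraction respects both the $I$-adic filtration on $\Bbbk[\mathbb P^\oplus]$ and the potentially infinite supports of $f$ and of the individual $\vartheta_r$. One must check that at each order modulo $I^{k+1}$ only finitely many $\alpha_r$ remain relevant, and that the extraction isolates $\alpha_q$ without interference from neighboring theta functions in a way compatible with the induction. This is the technical core of Mandel's original argument, and its generalization to our non-binomial setting rests entirely on \Cref{thm: positive wall-function}.
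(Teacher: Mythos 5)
Your proposal matches the paper's proof: both establish that theta functions are universally positive via \Cref{thm: positive wall-function} together with the Carl--Pumperla--Siebert consistency result, then prove the key lemma that any universally positive $f$ expands as $\sum_q a_q \vartheta_q$ with $a_q \in \mathbb{N}[\mathbb{P}^\oplus]$ by extracting $a_q$ at a general basepoint near $q$, exactly as in the proof of \Cref{thm: strong positivity}. The only substantive difference is cosmetic: the paper stops after establishing the key lemma (treating the deduction of the two directions of the atomicity statement as immediate), while you spell out that formal final step explicitly.
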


\begin{proof}
    It follows directly from \Cref{thm: positive wall-function} and (\cite[Theorem 3.5]{GHKK} and \cite{CPS})
    \[
        \mathfrak p_{\gamma}(\vartheta_{Q, q}) = \vartheta_{Q', q}
    \]
    that theta functions are universally positive with respect to the scattering atlas. Then it remains to show that for any $f$ universally positive with respect to the scattering atlas, the expansion
    \[
        f = \sum_{q\in M} a_q \vartheta_q
    \]
    has $a_q\in \mathbb N[\mathbb P^\oplus]$ for any $q\in M$. However, for $r\in M$, we can take $Q'$ sufficiently close to $r$ as in the proof of \Cref{thm: strong positivity} so that in the expansion
    \[
        \frakp_{\gamma}(f) = \sum_{q\in M} c_q x^q,
    \]
    we have $a_r = c_r\in \mathbb N[\mathbb P^\oplus]$ by the universal positive assumption on $f$.
\end{proof}

\section{Positivity of wall-functions}\label{sec: proof of positivity}

This section is devoted to positivity of scattering diagrams. We prove in \Cref{subsec: full positivity rk 2} that in rank 2, any consistent scattering diagram is positive with respect to the initial coefficients. In \Cref{subsec: positivity higher rank}, we prove \Cref{thm: positive wall-function}, the positivity of higher-rank generalized cluster scattering diagrams.

\subsection{The non-standard case}\label{subsec: non-standard}

The scattering diagrams considered in \Cref{subsec: gen cluster sd rk 2} are regarded as \emph{standard}, as the directions of the two initial walls are standard unit vectors $e_1$ and $e_2$. The non-standard case can be treated by the \emph{change-of-lattice trick} already mentioned in the proof of \cite[Proposition C.13]{GHKK}.

Let $M = \mathbb Z^2$. Suppose that $m_1, m_2\in M$ are primitive, linearly independent, but may only generate a rank-2 proper sublattice $M^\circ \subsetneq M$. The dual $N^\circ \coloneqq \operatorname{Hom}_\mathbb Z(M^\circ, \mathbb Z)$ naturally contains $N \coloneqq \operatorname{Hom}_\mathbb Z(M, \mathbb Z)$ as a sublattice.

For any $m\in M$, denote $m^\perp \coloneqq \{n\in N\otimes \mathbb R \mid n\cdot m = 0\}$. We define the \emph{index} of $m$ to be
\[
    \operatorname{ind}(m) \coloneqq |(m^\perp \cap N^\circ)/(m^\perp \cap N)|, 
\]
that is, the order of the finite cyclic group $(m^\perp \cap N^\circ)/(m^\perp \cap N)$. For example, we have
\begin{equation}\label{eq: def of index}
    \operatorname{ind}(m_1) = \operatorname{ind}(m_2) = |M/M^\circ| \eqqcolon \ell.
\end{equation}
Notice that $\operatorname{ind}(m)$ only depends on the line $\mathbb Rm$. We then define $\operatorname{ind}(\frakd) \coloneqq \mathrm{ind}(m_\frakd)$,
where $m_\frakd$ is the direction of either a ray or a line $\frakd$.

Take $\mndP = M\oplus \mnd$ to be the monoid in \Cref{subsec: gen cluster sd rk 2} with the projection $r\colon \mndP \rightarrow M$. Let $\frakD_\mathrm{in} = \{\frakd_1, \frakd_2\}$ where
\[
    S_{\frakd_i} = \mathbb Rm_i \quad \text{and} \quad f_{\frakd_i} = f_i \coloneqq 1 + \sum_{k = 1}^{\ell_i} p_{i, k} x^{km_i}, \quad i = 1, 2.
\]
By \Cref{thm: rank 2 consistent}, we can assume that there is at most one added ray in $\frakD$ with support $\mathbb R_{\leq 0}(pm_1 + qm_2)$ for any positive coprime pair $(p, q)$. Assuming that $m_1, m_2$ form a basis of $M$, we can express the attached wall-function as
\begin{equation}\label{eq: standard commutator}
    f_{\mathbb R_{\leq 0}(pm_1 + qm_2)} = 1 + \sum_{d\geq 1} \lambda(dp, dq)x^{dpm_1 + dqm_2},
\end{equation}
where $\lambda(dp, dq)\in \N[\mnd]$ is computed in \Cref{thm: power shadow grading formula}.

\begin{defn}\label{def: scale by l}
    Define $\clco{dp}{dq}{\ell}$ for coprime $(p, q)\in \mathbb N^2$, $d\in \mathbb N$, $\ell\in \mathbb N$ by the following procedure. First we evaluate $\lambda(\ell dp, \ell dq)$ at $p_{i,k}=0$ for any $k$ not divisible by $\ell$. Then we substitute any $p_{i, \ell k}$ by $p_{i, k}$.
\end{defn}

\begin{exmp}
    Let $(p, q)=(1, 1)$, $d=1$, $\ell=2$. For $\ell_1 = \ell_2 = 2$, we compute $\clco{1}{1}{2}$. By \Cref{thm: power shadow grading formula}, we have 
    \[
        \lambda(2, 2) = p_{1,1}^2p_{2,2} + p_{2,1}^2p_{1,2} + 2p_{1,2}p_{2,2}.
    \]
    After the evaluation at $p_{1,1}=p_{2,1}=0$, we get $2p_{1,2}p_{2,2}$. Substituting $p_{i,2}$ by $p_{i,1}$, we obtain $\clco{1}{1}{2} = 2p_{1,1}p_{2,1}$.
\end{exmp}

The following formula computes wall-functions in the non-standard case.

\begin{thm}\label{thm: positivity rk 2 non-standard}
    The wall-function in $\mathfrak D$ on any ray $\mathbb R_{\leq 0}m^\circ$ with $m^\circ = pm_1 + qm_2$ and $(p, q)$ positive coprime is
    \[
        f_{\mathbb R_{\leq 0}m^\circ} = \left(1 + \sum_{d\geq 1} \clco{dp}{dq}{\ell} x^{dpm_1 + dqm_2}\right)^{\frac{\ell}{\mathrm{ind}(m^\circ)}}.
    \]
    where $\ell$ is defined in \eqref{eq: def of index} and $\ell/\operatorname{ind}(m^\circ)$ belongs to $\mathbb N$. Hence every coefficient in the expansion of $f_{\mathbb R_{\leq 0}m^\circ}$ is a polynomial in $p_{i, j}$ with nonnegative integer coefficients.
\end{thm}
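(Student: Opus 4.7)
The plan is to apply the change-of-lattice trick twice.  First, introduce the auxiliary scattering diagram $\tilde{\mathfrak D}$ over $M^\circ$ (treating $m_1, m_2$ as a basis of $M^\circ$) with initial walls $(\R m_i, f_i^\ell)$.  The scatterings $\mathfrak D$ over $M$ and $\tilde{\mathfrak D}$ over $M^\circ$ are connected as follows: for each wall $\frakd$ common to both with direction $m_\frakd$, the primitive normals $n_\frakd \in N$ and $n^\circ_\frakd \in N^\circ$ satisfy $n_\frakd = \operatorname{ind}(m_\frakd)\cdot n^\circ_\frakd$, so the wall-crossing automorphism $x^m \mapsto x^m f_\frakd^{n_\frakd \cdot m}$ over $M$ equals the corresponding one over $M^\circ$ with wall-function raised to the $\operatorname{ind}(m_\frakd)$-th power.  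Since $\operatorname{ind}(m_i)=\ell$, this matches the initial data of $\tilde{\mathfrak D}$, and on the ray $\R_{\leq 0}m^\circ$ yields
\[
    \tilde f_{\R_{\leq 0}m^\circ} = f_{\R_{\leq 0}m^\circ}^{\operatorname{ind}(m^\circ)}.
\]

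Next, introduce the finer lattice $M^{\#\#}$ with basis $\mu_i := m_i/\ell$, so that $M^\circ = \ell M^{\#\#}$ (of index $\ell^2$ in $M^{\#\#}$), and let $\mathfrak D^{\#\#}$ be the consistent scattering over $M^{\#\#}$ with initial walls $(\R \mu_i, f_i)$.  In the $M^{\#\#}$-basis, $f_i = 1 + \sum_k p_{i,k}\, x^{\ell k\mu_i}$ has monomials only at multiples of $\ell$, realizing the Specialization B setup from \Cref{subsec: universal sd rk 2}.  By the tight grading formula (\Cref{thm: power shadow grading formula}) combined with \Cref{def: scale by l}, the wall-function of $\mathfrak D^{\#\#}$ on $\R_{\leq 0}(p\mu_1+q\mu_2) = \R_{\leq 0}m^\circ$ equals $\mathrm{base} := 1 + \sum_{d \geq 1}\clco{dp}{dq}{\ell}\, x^{dm^\circ}$, using that $x^{\ell d(p\mu_1+q\mu_2)} = x^{dm^\circ}$.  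The same change-of-lattice trick applied to the inclusion $M^\circ \subset M^{\#\#}$ then identifies each $M^\circ$-wall-function as the $\ell$-th power of the corresponding $M^{\#\#}$-wall-function; this uniform factor $\ell$ comes from the snake lemma identity $\ell^2 = [N^\circ : N^{\#\#}] = C \cdot I$, where the image ratio $I = \ell$ on any primitive $v \in M^\circ$ (because $v/\ell$ is primitive in $M^{\#\#}$) forces the ratio of normals $C = \ell$.  Applied to the ray $\R_{\leq 0}m^\circ$, this yields $\tilde f_{\R_{\leq 0}m^\circ} = \mathrm{base}^\ell$.

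Combining the two change-of-lattice identities gives $f_{\R_{\leq 0}m^\circ}^{\operatorname{ind}(m^\circ)} = \mathrm{base}^\ell$, from which $f_{\R_{\leq 0}m^\circ} = \mathrm{base}^{\ell/\operatorname{ind}(m^\circ)}$ follows by taking an $\operatorname{ind}(m^\circ)$-th root (well-defined in the completion since $\mathrm{base}$ has leading term $1$).  The integrality of $\ell/\operatorname{ind}(m^\circ)$ comes from the analogous factorization $\ell = [N^\circ:N] = (\text{image ratio})\cdot\operatorname{ind}(m^\circ)$ applied to the inclusion $N \subset N^\circ$.  Positivity is then immediate because each $\clco{dp}{dq}{\ell}\in \N[p_{i,j}]$ by the tight grading formula, and integer powers preserve positivity.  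The main obstacle is verifying that lifting $\mathfrak D^{\#\#}$ via the second change-of-lattice yields a scattering equivalent to $\tilde{\mathfrak D} = \Scat(f_1^\ell, f_2^\ell)$; this follows from the uniqueness of $\Scat$ (\Cref{thm: rank 2 consistent}) once one checks that path-ordered products over $M^\circ$ and $M^{\#\#}$ coincide on the common domain of $M^\circ$-monomials, so consistency is preserved across the lattice change.
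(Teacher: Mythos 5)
Your proof is correct and follows essentially the same strategy as the paper: two applications of the change-of-lattice trick, first passing from $M$ to $M^\circ$ via the $\operatorname{ind}$-th powers to get $\Scat(f_1^\ell, f_2^\ell)$, and then relating this to a ``standard'' scaled scattering diagram computable via the tight/shadowed grading formula. The only (cosmetic) divergence is that the paper realizes the intermediate object as $\Scat(f_1', f_2')$ over $M^\circ$ with scaled monomials and then performs a substitution $x^{\ell m_i}\mapsto x^{m_i}$, whereas you instead pass to the super-lattice $M^{\#\#} = (1/\ell)M^\circ$ so that the exponents land directly at $x^{dm^\circ}$ without any substitution step -- the two bookkeeping schemes are equivalent, and your uniform index computation $n^{\#\#}=\ell n^\circ$ for $N^{\#\#}=\ell N^\circ \subset N^\circ$ is correct (though there is no ``Specialization B'' in \Cref{subsec: universal sd rk 2}; you mean the general functoriality/specialization of wall-functions discussed there).
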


\begin{proof}

Let $\frakd$ be a wall in $\mathfrak D$. The associated wall-crossing automorphism is
\[
    x^{m'} \mapsto x^{m'} f_\frakd^{\pm n \cdot m'},
\]
where $n$ is primitive in $N$ and orthogonal to $m_\mathfrak d$. Notice that the index of $\frakd$ is equivalently
\[
    \mathrm{ind}(\frakd) \coloneqq \max\{d\in \mathbb N \mid n/d \in N^\circ \}.
\]
Namely $n^\circ \coloneqq n/\mathrm{ind}(\frakd)$ is primitive in $N^\circ$.

Consider the scattering diagram
\[
    \mathfrak D^\circ \coloneqq \left\{\left(S_\frakd, f_\frakd^{\operatorname{ind}(\frakd)}\right)\,\middle\vert\, \frakd \in \mathfrak D \right\}.
\]
We claim that it is the consistent scattering diagram $\Scat(f_1^\ell, f_2^\ell)$ for the lattice $M^\circ$. In fact, for $m'\in M^\circ$, the wall-crossing automorphism associated to $(S_\frakd, f_\frakd^{\operatorname{ind}(\frakd)})$ can be expressed as
\[
    x^{m'} \mapsto x^{m'} \left( f_\frakd^{\mathrm{ind}(\frakd)} \right)^{\pm n^\circ\cdot m'} = x^{m'} f_\frakd^{\pm n \cdot m'}
\]
where $n$ and $n^\circ$ are respectively the primitive normal vectors of $S_\frakd$ in $N$ and $N^\circ$. It follows immediately that $\frakD^\circ$ is consistent because so is $\frakD$. Thus by the uniqueness, we have $\frakD^\circ = \Scat(f_1^\ell, f_2^\ell)$.

In retrospect, we can use $\frakD^\circ$ to represent $\mathfrak D$. This in particular implies that any wall-function $f$ of $\mathfrak D$ is always a power series of $x^{m^\circ}$ for some primitive $m^\circ$ in $M^\circ$. Precisely, consider $(p, q)$ positive coprime and $m^\circ = pm_1 + qm_2\in M^\circ$. The wall-function $f_{\mathbb R_{\leq 0} m^\circ}$ in $\mathfrak D$ is then recovered by
\begin{equation}\label{eq: reconstruct change lattice}
    f_{\mathbb R_{\leq 0} m^\circ} = \left(f_{\mathbb R_{\leq 0} m^\circ}^\circ \right)^{\frac{1}{\mathrm{ind}(m^\circ)}}
\end{equation}
where $f^\circ_{\mathbb R_{\leq 0} m^\circ}$ denotes the wall-function in $\frakD^\circ$. This is not so ideal as taking the power of a fraction may break the integrality and the positivity.

However, we can \emph{change the lattice} one more time. Consider the scattering diagram
\[
    \mathfrak D' = \Scat(f'_1, f'_2)
\]
again with respect to $M^\circ$ where for $i=1, 2$
\[
    f_i' \coloneqq f_i(x^{\ell m_i}) =  1 + \sum_{d\geq 1} p_{i, d}x^{\ell d m_i}.
\]
For the ray with support $\mathbb R_{\leq 0}m^\circ $ where $m^\circ = pm_1 + qm_2$ for $(p, q)$ positive coprime, denote the wall-function by $f'_{\mathbb R_{\leq 0}m^\circ}$. By \eqref{eq: standard commutator} and the substitution \Cref{def: scale by l}, we have
\[
    f'_{\mathbb R_{\leq 0}m^\circ} = 1 + \sum_{d\geq 1} \clco{dp}{dq}{\ell} x^{dpm_1 + dqm_2}.
\]

Now consider the wall-crossing automorphism associated to the above ray in $\mathfrak D'$ on $x^{a_1\ell m_1 + a_2 \ell m_2}$ with $a_i\in \mathbb Z$:
\[
    x^{a_1\ell m_1 + a_2 \ell m_2} \mapsto x^{a_1\ell m_1 + a_2 \ell m_2} (f'_{\mathbb R_{\leq 0}m^\circ})^{\pm \ell(pa_2 - qa_1)}.
\]
Observe that these wall-crossings can be viewed as in a scattering diagram for $M'$ generated by $\ell m_1$ and $\ell m_2$. Therefore, by turning every $f'_{\mathbb R_{\leq 0}m^\circ}$ into
\begin{equation}\label{eq: change lattice again}
    f^\circ_{\mathbb R_{\leq 0}m^\circ} \coloneqq \left(f'_{\mathbb R_{\leq 0}m^\circ} \mid x^{m_i} \leftarrow x^{\ell m_i}\right)^\ell,
\end{equation}
we obtain a consistent scattering diagram for $M^\circ$ with the initial wall-functions $f_1^\ell$ and $f_2^\ell$. By the uniqueness, this is exactly $\mathfrak D^\circ$, hence justifying the previously defined notation $f_\frakd^\circ$.

Now combining (\ref{eq: reconstruct change lattice}) with (\ref{eq: change lattice again}), the wall-function for $\mathfrak D$ can be recovered as
\[
    f_{\mathbb R_{\leq 0} m^\circ} = \left(1 + \sum_{d\geq 1} \clco{dp}{dq}{\ell} x^{\ell d (pm_1 + qm_2)}\right)^{\frac{\ell}{\mathrm{ind}(m^\circ)}},
\]
as stated in the theorem. Since for any $m\in M$, we have
\[
    \ell(m^\perp \cap N^\circ) \subseteq m^\perp \cap N \subseteq m^\perp \cap N^\circ,
\]
the index $\operatorname{ind}(m^\circ)$ divides $\ell$.
\end{proof}

\begin{cor}\label{cor: two lines any monoid}
    Let $\frakD_\mathrm{in} = \{\frakd_1, \frakd_2\}$ be a scattering diagram of two lines, where
    \[
        f_{\frakd_i} = 1 + \sum_{m\in \mndP(m_i)} c_{i,m} x^m \in \widehat{\Bbbk[\mndP]},\quad i = 1,2.
    \]
    Then there is a consistent scattering diagram in the equivalent class of $\Scat(\frakD_\mathrm{in})$ such that any coefficient $c_m\in \Bbbk$ of any wall-function
    \[
        f_\frakd = 1 + \sum_{m\in\mndP(m_\frakd)} c_{m} x^m
    \]
    can be expressed as a polynomial in $\{c_{i,m}\mid i=1,2, m\in \mndP\}$ with nonnegative integer coefficients.
\end{cor}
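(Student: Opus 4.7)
The plan is to deduce the corollary from Theorem \ref{thm: positivity rk 2 non-standard} via a universality argument. We may first assume $m_1$ and $m_2$ are $\R$-linearly independent, for otherwise the two initial lines share a common support and $\Scat(\frakD_\mathrm{in})$ is equivalent to the single wall $(\R m_1,\, f_{\frakd_1}\cdot f_{\frakd_2})$, for which the claim is immediate. Since $\Scat(\frakD_\mathrm{in})$ is constructed order by order modulo $\frakm$-primary ideals with only finitely many initial terms contributing at each order, we may also assume that each $f_{\frakd_i}$ has finitely many nonzero monomial terms, the general case following by taking an inverse limit.

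Next, we set up a universal model. For each pair $(i,m)$ with $c_{i,m}\neq 0$, introduce a formal variable $T_{(i,m)}$ and form
\[
  \widetilde{\mndP} \coloneqq \mndP \oplus \bigoplus_{(i,m)} \N\, T_{(i,m)},
\]
extending $r$ by $\tilde r(T_{(i,m)}) = 0$. Let
\[
  \widetilde{f}_{\frakd_i} \coloneqq 1 + \sum_{m\in \mndP(m_i)} T_{(i,m)}\, x^{m} \in \widehat{\Bbbk[\widetilde{\mndP}]}
\]
and $\widetilde{\frakD}_\mathrm{in} = \{(\R m_i,\, \widetilde{f}_{\frakd_i}) : i=1,2\}$. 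The algebra homomorphism $\varphi \colon \widehat{\Bbbk[\widetilde{\mndP}]} \to \widehat{\Bbbk[\mndP]}$ that restricts to the identity on $\widehat{\Bbbk[\mndP]}$ and sends $T_{(i,m)} \mapsto c_{i,m}$ preserves the $M$-grading, so by the functoriality identity \eqref{eq: functoriality complete} we obtain $\Scat(\frakD_\mathrm{in}) \equiv \varphi\bigl(\Scat(\widetilde{\frakD}_\mathrm{in})\bigr)$.

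It therefore suffices to show that every wall-function in $\Scat(\widetilde{\frakD}_\mathrm{in})$ has coefficients in $\N[\widetilde{\mndP}]$. Regrouping each universal initial wall-function by its $M$-direction yields
\[
  \widetilde{f}_{\frakd_i} = 1 + \sum_{k\geq 1} \widetilde{p}_{i,k}\, x^{km_i}, \qquad \widetilde{p}_{i,k} \coloneqq \sum_{m : r(m) = km_i} T_{(i,m)}\, x^{m - km_i},
\]
where the factors $x^{m-km_i}$ lie in the group completion of $\widetilde{\mndP}$ and have trivial $r$-image. In this form, $\widetilde{\frakD}_\mathrm{in}$ matches exactly the input of Theorem \ref{thm: positivity rk 2 non-standard}, with the $\widetilde{p}_{i,k}$ playing the role of formal coefficient variables. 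The change-of-lattice argument used to prove Theorem \ref{thm: positivity rk 2 non-standard} and the shadowed grading count (Theorem \ref{thm: power shadow grading formula}) depend only on the $M$-directional structure of the scattering diagram and carry over verbatim, giving wall-function coefficients that are nonnegative integer combinations of monomials in the $\widetilde{p}_{i,k}$'s. Expanding in the $T_{(i,m)}$'s and $\mndP$-monomials preserves this nonnegativity, and applying $\varphi$ yields the required polynomial expressions in the $c_{i,m}$'s with nonnegative integer coefficients.

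The principal obstacle is the verification that Theorem \ref{thm: positivity rk 2 non-standard}, formulated for the split monoid $M\oplus \mnd$ with free generators $p_{i,k}$, remains valid once the coefficient part is enlarged to accommodate nontrivial fibers of $r$ and the generalized coefficient expressions $\widetilde{p}_{i,k}$. This is essentially a bookkeeping check: the proof of that theorem factors through the shadowed grading formula and the change-of-lattice trick, both of which act only on the $M$-directional data and are insensitive to the identity of the commutative, $\frakm$-adically complete coefficient ring.
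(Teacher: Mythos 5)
Your preliminary reductions (to linearly independent directions and to initial wall-functions with finitely many terms) are fine, and the universality device of introducing the formal variables $T_{(i,m)}$ is indeed the right kind of move. The problem is the step where you convert $\widetilde{\frakD}_\mathrm{in}$ into the shape required by \Cref{thm: positivity rk 2 non-standard}. You regroup $\widetilde{f}_{\frakd_i} = 1 + \sum_k \widetilde{p}_{i,k}\, x^{km_i}$ with $\widetilde{p}_{i,k} = \sum_{m} T_{(i,m)}\, x^{m - km_i}$, but the exponent $m - km_i$ is not a well-defined element of $\widetilde{\mndP}$. The corollary makes no splitting assumption: $\mndP$ is an arbitrary finitely generated monoid equipped with a map $r \colon \mndP \to M$, and $M$ need not embed in $\mndP$ (nor need $km_i$ lie in the image of $r$), so $m - km_i$ does not even live in the group completion you invoke, and in any case the scattering diagram is defined over $\widehat{\Bbbk[\widetilde{\mndP}]}$, not over a localization. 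Furthermore, even in the special case $\mndP = M \oplus \mnd$ where $m - km_i$ does make sense, $\widetilde{p}_{i,k}$ is a \emph{sum} of monomials rather than a free monoid generator, so the hypotheses of \Cref{thm: positivity rk 2 non-standard} are still not literally satisfied; your claim that this is "a bookkeeping check" significantly understates the remaining work, since one must argue that the conclusion survives a non-monomial substitution into the free generators.

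The paper runs the specialization in the opposite direction and thereby avoids the ill-defined exponents. It works order by order modulo $\frakm^{k+1}$, sets up the universal rank-$2$ scattering diagram $\frakD = \Scat(f_1, f_2)$ over the free monoid $\mnd_{\ell_1,\ell_2}$ with one generator $p_{i,k}$ per direction--degree pair and $r'(p_{i,k}) = km_i$ (this \emph{is} a legitimate instance of \Cref{thm: positivity rk 2 non-standard}), and then specializes via the algebra map $\varphi \colon \widehat{\Bbbk[\mnd_{\ell_1,\ell_2}]} \to \widehat{\Bbbk[\mndP]}$, $\varphi(p_{i,k}) = \sum_{r(m) = km_i} c_{i,m} x^m$, which visibly preserves the $M$-grading. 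Functoriality \eqref{eq: functoriality complete} gives $\varphi(\frakD) \equiv \Scat(\varphi(f_1), \varphi(f_2))$, which agrees with $\Scat(\frakD_\mathrm{in})$ modulo $\frakm^{k+1}$ for $\ell_1,\ell_2$ large, and since $\varphi$ sends each free generator to a nonnegative linear combination of monomials, it carries positive integer polynomials in the $p_{i,k}$ to positive integer polynomials in the $c_{i,m}$. If you want to keep your $T_{(i,m)}$'s you can still repair the argument by using them inside this $\varphi$ (i.e.\ $\varphi(p_{i,k}) = \sum_m T_{(i,m)} x^m$ over $\widetilde{\mndP}$), rather than trying to pull $\widetilde{\frakD}_\mathrm{in}$ back into the shape of the theorem.
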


\begin{proof}
    Notice that we do not assume $\mndP = M\oplus \mnd$ and thus $\frakD_\mathrm{in}$ is not exactly of the form in \Cref{thm: positivity rk 2 non-standard}. It suffices to show the positivity at each order $k\geq 1$ for $\Scat(\frakD_\mathrm{in})_k$. For fixed $k$, we can choose large $\ell_1, \ell_2$ and consider a close variant $\frakD = \Scat(f_1, f_2)$ of the scattering diagram in \Cref{thm: positivity rk 2 non-standard} where
    \[
        f_i = 1 + \sum_{k=1}^{\ell_i}p_{i,k},    
    \]
    with the monoid $\mnd_{\ell_1, \ell_2}$ the monomials in $\{p_{i,k}\mid i = 1, 2, 1\leq k\leq \ell_i\}$ equipped with $r'\colon \mnd_{\ell_1, \ell_2}\rightarrow M$ such that $r'(p_{i,k})=km_i$. Then consider the algebra homomorphism
    \[
        \varphi\colon \widehat{\Bbbk[\mnd_{\ell_1,\ell_2}]} \rightarrow \widehat{\Bbbk[\mndP]},\quad \varphi(p_{i,k}) = \sum_{r(m)=km_i} c_{i,m}x^m.
    \]
    By the functoriality \eqref{eq: functoriality complete}, we have $\varphi(\frakD) \equiv \Scat(\varphi(f_1), \varphi(f_2))$. The positive integers $\ell_1, \ell_2$ are sufficiently large so that $f_{\frakd_i} \equiv \varphi(f_i)\mod \frakm^{k+1}$. Then the positivity of $\Scat(\frakD_\mathrm{in})_k \equiv \varphi(\frakD)_k$ follows from that of $\frakD$ by \Cref{thm: positivity rk 2 non-standard}.
\end{proof}

\subsection{Full positivity in rank 2}\label{subsec: full positivity rk 2} 

We now show that any rank-2 consistent scattering diagram is positive with respect to the initial coefficients.

Let $\frakD_\mathrm{in} = \{\frakd_s\mid s\in S\}$ be a scattering diagram (as defined in \Cref{subsec: sd in rk 2}) of lines, where $S$ is a countable index set. We do not require the lines to pass through the origin. The initial wall-functions are expressed as
\[
    f_{\frakd_s} = 1 + \sum_{m\in \mndP(m_{\frakd_s})} c_{s,m} x^m\in \widehat{\Bbbk[\mndP]}.
\]

\begin{thm}\label{thm: full positivity rk 2}
    There exists a consistent scattering diagram $\frakD$ in the equivalent class of $\Scat(\frakD_\mathrm{in})$ such that any coefficient $c_m\in \Bbbk$ of any wall-function
    \[
        f_\frakd = 1 + \sum_{m\in \mndP} c_m x^m
    \]
    can be expressed as a polynomial in $\{c_{s,m}\mid s\in S, m\in \mndP\}$ with nonnegative integer coefficients, which we call a positive polynomial.
\end{thm}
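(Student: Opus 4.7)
The plan is to reduce everything to the two-line case already handled in \Cref{cor: two lines any monoid}, using the standard order-by-order Kontsevich--Soibelman construction together with a local-to-global translation argument. First I would restrict attention to a fixed order: since positivity of $f_\frakd$ in $\widehat{\Bbbk[\mndP]}$ can be checked modulo $\frakm^{k+1}$ for every $k$, and since each $\frakD_k$ only involves finitely many walls from $\frakD$ (and from $\frakD_\mathrm{in}$), it suffices to prove the statement when $\frakD_\mathrm{in}$ is a finite collection of lines and one works over $\Bbbk[\mndP]/\frakm^{k+1}$.

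In this finite setting I would proceed by induction on $k$. The base case is vacuous since $\frakD_0 = \varnothing$. For the inductive step, assume that $\Scat(\frakD_\mathrm{in})_{k-1}$ has been realized with every wall-function a positive polynomial in the $\{c_{s,m}\}$. To obtain $\Scat(\frakD_\mathrm{in})_k$, one must add finitely many rays of order exactly $k$ so that the path-ordered product around each singular point $p \in \operatorname{Sing}(\Scat(\frakD_\mathrm{in})_{k-1})$ becomes trivial modulo $\frakm^{k+1}$. Fix such a $p$ and translate it to the origin, so the walls passing through $p$ become lines through $0$ (possibly of any rational slope, with any wall-functions carried along from the previous step). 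By \Cref{cor: two lines any monoid}, when only two lines meet at $p$, the consistency completion introduces rays whose wall-functions are positive polynomials in the coefficients of these two wall-functions. Composing this with the inductive hypothesis preserves positivity, since a positive polynomial in positive polynomials is positive.

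The main obstacle is handling singular points where three or more walls meet, since \Cref{cor: two lines any monoid} is stated for two lines. I would resolve this by a perturbation/limit argument that is routine in the KS setting: for generic small translations of the finitely many lines of $\frakD_\mathrm{in}$, all pairwise intersections are distinct, so every singular point of the perturbed diagram sits on exactly two walls and the two-line result applies. The resulting positive scattering diagram can then be interpreted as a scattering diagram for the unperturbed $\frakD_\mathrm{in}$ by passing to the limit, since passing to the limit preserves the property that every coefficient is a polynomial with nonnegative integer coefficients in the $\{c_{s, m}\}$. Alternatively, one can induct on the number of lines through $p$: group the incoming walls into two ``sides'' of a line, collapse each side into a single effective wall (whose wall-function is obtained from a lower-order scattering computation and is thus a positive polynomial by induction), and then apply the two-line case to the resulting pair. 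Either variant suffices, and in both cases the positivity propagates cleanly across orders.

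Finally, extending from finite to countable $\frakD_\mathrm{in}$ is immediate: for each fixed $k$, only finitely many of the $\frakd_s$ contribute non-trivially modulo $\frakm^{k+1}$, so the finite case constructs $\Scat(\frakD_\mathrm{in})_k$ with positive coefficients, and the $\frakm$-adic limit of these finite positive scattering diagrams yields the desired $\frakD \equiv \Scat(\frakD_\mathrm{in})$ in $\widehat{\Bbbk[\mndP]}$ with every wall-function a positive polynomial in $\{c_{s, m}\}$.
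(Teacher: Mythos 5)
Your high-level strategy matches the paper's: order-by-order induction, localize at singular points and translate to the origin (the ``cone case''), perturb so that at most two initial lines meet at a point, and then invoke \Cref{cor: two lines any monoid}. However, there is a genuine gap at the critical juncture where you pass from perturbation to the two-line corollary.

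The gap is your claim that after perturbing the initial lines, ``every singular point of the perturbed diagram sits on exactly two walls.'' This is false. Perturbation ensures that at most two \emph{initial lines} of $\frakD_\mathrm{in}$ pass through any given point, but the scattering process generates new \emph{rays}, and these rays can pass through singular points of later orders, possibly several at a time, possibly alongside the two initial lines. So when you translate a singular point $p$ of $\frakD'_k$ to the origin, the resulting local scattering diagram $\frakD^{\prime,\mathrm{loc}}_{k,\mathrm{in}}(p)$ generally has many ``lines,'' not two. What the perturbation actually buys is only that at most two of these lines are non-trivial modulo $\frakm^2$; the others have wall-functions in $1 + \frakm^2$. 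The paper explicitly records this weaker fact and then needs an entire additional step (its Step~3) to reduce from this many-line, two-nontrivial-mod-$\frakm^2$ situation to the genuine two-line case. That step is the ``change of monoid trick'': any wall-function term $c_p x^p$ with $p = \sum_{i,s} a_{i,s} p_{i,s}$ involving a coefficient from a line $\frakd_s$ with $s \neq 1,2$ is re-interpreted, via a change of monoid that assigns degree $1$ to every $p_{i,s}$, as a term of order $\leq k$ in an auxiliary scattering diagram, so its positivity follows from the induction hypothesis at order $k$; only the terms purely in the coefficients of $\frakd_1, \frakd_2$ are then handled by \Cref{cor: two lines any monoid}. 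Your proposal contains no analogue of this. Your alternative suggestion of ``collapsing each side into a single effective wall'' does not clearly fit the framework either: a product of wall-crossing automorphisms for walls on one side is not of the form $T_{(a,b),f}$ for a single wall, and the claim that the result is ``a positive polynomial by induction'' has no justification at the order you need. You need something like the paper's change-of-monoid reduction to close this gap.
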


\begin{proof}
    We prove by inductively constructing a finite scattering diagram $\frakD_k$ over $\Bbbk[\mndP]/\frakm^{k+1}$ for each $k\geq 1$ such that $\frakD_k \equiv \Scat(\frakD_\mathrm{in})_k$ and that the positivity holds for $\frakD_k$. This is clearly true for $k = 1$ as we simply take $\frakD_1 = (\Din)_1$. We add to the induction hypothesis that $\frakD_k$ is a union of $(\Din)_k$ and a set of rays such that no two rays have the same support.

    \emph{Step 1: Reduction to the cone case.} 
    
    We first reduce to the case where every initial line passes through the origin, which we refer to as the \emph{cone case}.
    
    For a singular point $p\in \operatorname{Sing}(\frakD_k)$, define
    \[
        \frakD_k(p) \coloneqq \{\frakd\in \frakD_k\mid p\in S_\frakd\}.
    \]
    When viewed as over $\Bbbk[\mndP]/\frakm^{k+2}$, $\frakD_k(p)$ may fail to be consistent at $p$. Translating the support of every wall in $\frakD_k(p)$ by $-p$, we obtain a scattering diagram $\frakD_k^\mathrm{loc}(p)$ centered at the origin, which can be viewed as a local version of $\frakD_k(p)$ at $p$. Denote the set of lines in $\frakD_k^\mathrm{loc}(p)$ by $\frakD_{k, \mathrm{in}}^\mathrm{loc}(p)$.

    Assuming the inductive construction is valid in the cone case, we can modify $\frakD_k(p)$ as follows. We can add a finite collection $\frakD_k[p]$ of new rays based at $p$  whose wall-functions are in $1 + \frakm^{k+1}\setminus \frakm^{k+2}$ and modify existing rays $\frakd\mapsto \frakd'$ by only updating their wall-functions $f_\frakd$ by adding monomials in $\frakm^{k+1}\setminus \frakm^{k+2}$ where the coefficients of add terms are positive polynomials such that
    \[
        \widetilde \frakD_{k}(p)\coloneqq \frakD_k[p] \cup \{\frakd'\mid \frakd\in \frakD_k(p), p = \partial S_\frakd \}\cup\{\frakd\in \frakD_k(p)\mid p\in S_\frakd\cut \partial S_\frakd\}
    \]
    is consistent at $p$ over $\Bbbk[\mndP]/\frakm^{k+2}$. Notice that this means $\widetilde \frakD_{k}^\mathrm{loc}(p) \equiv \Scat(\frakD_{k,\mathrm{in}}^\mathrm{loc}(p)) \mod \frakm^{k+2}$.

    Globally, we construct
    \[
        \frakD_{k+1} \coloneqq (\frakD_\mathrm{in})_{k+1} \cup \bigcup_{p\,\in \,\operatorname{Sing}(\frakD_k)} \left(\frakD_k[p] \cup \{\frakd'\mid \frakd\in \frakD_k,\, p = \partial S_\frakd\} \right).
    \]
    We show that the induction hypothesis holds for $\frakD_{k+1}$.
    
    Clearly $\frakD_{k+1}$ is positive. To show $\frakD_{k+1}\equiv \Scat(\Din)_{k+1}$, it suffices to show the consistency for $\frakD_{k+1}$ because they share the same set of lines. Then we can check locally at each singular point $p\in \Sing(\frakD_{k+1})$. Suppose that $p\in \Sing(\frakD_k)$. For each $\frakd\in \frakD_{k+1}(p)$ such that $p\in S_\frakd\cut \partial S_\frakd$, we write
    \[
        f_{\frakd} = f'_\frakd \cdot (1 + \xi) \mod \frakm^{k+2},
    \]
    where $\xi$ is the part of $f_\frakd$ in $\frakm^{k+1}\cut\frakm^{k+2}$. The wall-crossing automorphism (and its inverse) associated to $1 + \xi$ commutes with others modulo $\frakm^{k+2}$. In the path-ordered product of a small loop around $p$, such wall-crossing automorphism and its inverse both appear once and thus cancel out. Therefore to check the consistency at $p$, we can replace $f_\frakd$ with $f_\frakd'$. After the replacement for every such $\frakd$, $\frakD_{k+1}(p)$ is equivalent to $\widetilde{\frakD}_k(p)$, hence consistent. A new singular point $p\in \Sing(\frakD_{k+1})\cut \Sing(\frakD_k)$ must be in the interior of some newly added ray. There is no ray based at $p$. For consistency at $p$, we can again ignore any newly added ray that contains $p$ in its interior. The rest walls containing $p$ are all in the same or the opposite direction, whose wall-crossing automorphisms commute and cancel out in the path-ordered product of a small loop around $p$. Thus $\frakD_{k+1}$ is consistent at $p$.

    Then the induction hypothesis is proven for $k+1$ by assuming that this can be done for the cone case.

    \emph{Step 2: Perturbation trick.} 
    
    We use the \emph{perturbation trick} to handle the cone case. Choose for each $\frakd\in \frakD_\mathrm{in}$ a sufficiently general vector $v_\frakd\in M_\mathbb R$ and consider the \emph{perturbed scattering diagram} $\frakD'_\mathrm{in}$ with supports $S_\frakd + v_\frakd$ but the same wall-functions. After the perturbation, at most two initial lines intersect at any given point.

    By the induction hypothesis, we have constructed positive $\frakD'_k$ equivalent to $\Scat(\frakD_\mathrm{in}')_k$, where $\frakD'_1 = (\frakD'_\mathrm{in})_1$. Because of the perturbation, for each $p\in \Sing(\frakD'_k)$, there are at most two lines in $\left(\frakD^{\prime, \mathrm{loc}}_{k,\mathrm{in}}(p)\right)_1$, that is, at most two lines are non-trivial modulo $\frakm^2$. We want to show $\widetilde \frakD'_k(p)$ is positive at each $p$. Notice that this is equivalent to show the induction hypothesis at $k+1$ for the initial scattering diagram $\frakD^{\prime, \mathrm{loc}}_{k,\mathrm{in}}(p)$. Once this is established, we can construct $\frakD'_{k+1}$ as in \emph{Step 1}. To get $\frakD_{k+1}$, we then translate every wall in $\frakD'_{k+1}$ back to pass through or based at the origin. Therefore it suffices to prove the induction step in the cone case with at most two non-trivial lines in $(\frakD_\mathrm{in})_1$

    \emph{Step 3: Reduction to two initial lines.} 

    By \emph{Step 2}, we assume in the cone case that only $\frakd_1$ and $\frakd_2$ are non-trivial in $(\frakD_\mathrm{in})_1$. We may assume that $S = \{1, \dots, N\}$ since only finitely many initial lines are non-trivial up to a sufficiently large order. We want to show the induction hypothesis at $k+1$, that is, on any ray in $\frakD_{k+1} = \Scat(\Din)_{k+1}$, every term $c_px^p$ in $\frakm^{k+1}\cut\frakm^{k+2}$ has positive polynomial coefficient $c_p$. Notice that
    \[
        p = \sum_{i, s} a_{i,s}p_{i,s}
    \]
    for $a_{i,s}\in \N$, where $p_{i,s}\in \mndP$ appears in $f_{\frakd_s}$. If any $a_{i,s}\neq 0$ for some $s\neq 1, 2$, then at most $k$ of $a_{i,s}$ are non-zero. The crucial point is that $c_px^p$ can be viewed as a term of order at most $k$ after the \emph{change of monoid trick} as follows. 
    Express each $f_{\frakd_s}$ as
    \[
        f_{\frakd_s} = 1 + \sum_{i=1}^{\ell_s} c_{i,s} x^{p_{i,s}} \mod \frakm^{k+2}.
    \]
    Consider the monoid
    \[
        \mnd = \bigoplus_{s=1}^N \bigoplus_{i=1}^{\ell_s} \N \cdot e_{i,s}
    \]
    with $r'\colon \mnd \rightarrow M$ such that $r'(e_{i,s}) = r(p_{i,s})$. Denote $\frakm' = \mnd\cut \{0\}$. Consider the scattering diagram
    \[
        \frakD' = \Scat\left(1 + \sum_{i=1}^{\ell_s} c_{i,s} x^{e_{i,s}}\,\middle \vert\, s=1, \dots, N\right).
    \]
    By the functoriality \eqref{eq: functoriality complete}, we can obtain $c_px^p$ by a wall-function term $c_px^{\sum_{i,s}a_{i,s}e_{i,s}}$ in $\frakD'$. Notice that now $\sum_{i,s} a_{i,s}e_{i,s}$ is in $(\frakm')^j\cut (\frakm')^{j+1}$ for some $j\leq k$. So we apply the induction hypothesis at $k$ to this situation to conclude that $c_p$ is positive with respect to the initial coefficients.
    
    Now we are left with $p$ as a linear combination of $p_{i,s}$ for $s=1, 2$ only. Notice that these same terms appear in $\Scat(\frakd_1, \frakd_2)$ as one can simply let $x^{p_{i,s}} = 0$ for $s\neq 1, 2$. Therefore we have reduced the problem to prove the induction hypothesis at $k+1$ for the case of two initial lines.

    \emph{Step 4.} When $\frakD_\mathrm{in}$ only consists of two initial lines, the positivity follows immediately from \Cref{cor: two lines any monoid}.

    Now the induction is complete.
\end{proof}

\subsection{Positivity in higher ranks}\label{subsec: positivity higher rank}

We proceed to prove \Cref{thm: positive wall-function} the positivity of wall-functions in $\frakD_\mathbf s$ as defined in \Cref{subsec: gen cluster sd higher rk}. The strategy is to construct a sequence of scattering diagrams $(\frakD_k)_{k\geq 1}$ such that each $\frakD_k$ is over $\widehat{\Bbbk[\mndP]}/\frakm^{k+1}$, has positive wall-functions, and is equivalent to $(\frakD_{\mathbf s})_k$.

We first realize the singular locus (\ref{eq: support and sing locus}) of a finite scattering diagram as a cone complex. The support of each wall, as a polyhedral cone, is a finite intersection of closed linear half-spaces. Consider the collection of these half-spaces and their other halves from all the walls in $\frakD$. Some of their intersections are contained in $\operatorname{Sing}(\frakD)$ and $\operatorname{Sing}(\frakD)$ is a union of these intersections. They give rise to a cone complex structure on $\operatorname{Sing}(\frakD)$.

\begin{defn}\label{def: cone complex sing}
    For any scattering diagram $\frakD$ of finitely many walls, the singular locus $\operatorname{Sing}(\frakD)$ can be realized as the union of the maximal cones of a cone complex of dimension $\dim M_\mathbb R - 2$ such that for each such cone $\mathfrak j$ and $x\in \operatorname{Int}(\mathfrak j)$ (the interior points of $\mathfrak j$), the set of walls 
    \[
        \{\frakd \in \mathfrak D \mid x\in S_\frakd \}
    \]
    is independent of $x$. With such a realization, the maximal cones are called \emph{joints}. In particular, any facet of $S_\frakd$ is a union of joints with disjoint interiors.
\end{defn}

The aforementioned cone complex structure on $\operatorname{Sing}(\frakD)$ clearly satisfies the requirement in the above definition, thus giving a choice of joints.

Let $\mathfrak D_1$ be $(\mathfrak D_{\mathrm{in}, \mathbf s})_1$. Concretely, the walls in $(\mathfrak D_{\mathrm{in}, \mathbf s})_1$ are still $\frakd_i$ for $i\in I_\mathrm{uf}$ with the support $S_{\frakd_i} = e_i^\perp$ and the wall-function 
\[
    f_{\frakd_i} = 1 + p_{i,1}x^{w_i} + \dots + p_{i, \ell_i} x^{\ell_iw_i}
\]
because each monomial is in $\frakm\setminus \frakm^2$. It is consistent as the wall-crossing automorphisms commute over $\Bbbk[\mndP]/\mathfrak m^2$.

\begin{exmp}
    For $\frakD_1$, we have
    \[
        \operatorname{Supp}(\frakD_1) = \bigcup_{i=1}^{n} e_i^\perp \quad \text{and} \quad \operatorname{Sing}(\frakD_1) = \bigcup_{i\neq j} e_i^\perp \cap e_j^\perp.
    \]
    The joints of $\mathfrak D_1$ are as follows. Let $H_k^\varepsilon$ for $\varepsilon\in \{+, -\}$ be the half space
    \[
        H_k^\varepsilon \coloneqq \{m\in M_\mathbb R\mid \mathrm{sgn}(\langle e_k, m\rangle) = \varepsilon \}.
    \]
    A joint is the closure of
    \[
        e_i^\perp \cap e_j^\perp \cap \bigcap_{k\neq i, j} H_k^{\varepsilon_k}
    \]
    for $i\neq j$ and $\varepsilon_k\in \{+, -\}$.
\end{exmp}

\begin{defn}\label{def: consistent at joint}
    A scattering diagram $\mathfrak D$ is called \emph{consistent} at a joint $\mathfrak j$ if the path-ordered product associated to a small loop around $\mathfrak j$ is identity. It is called \emph{consistent} if it is consistent at each of its joint. This is equivalent to the previous definition in \Cref{subsec: gen cluster sd higher rk} where the consistency means the path-ordered product of any regular path only depends on the endpoints.
\end{defn}

\begin{rem}
    By the definition of joints, the set of walls (whose support) containing a given joint is well-defined. There are two situations that a wall can contain a joint $\mathfrak j$: (1) $\mathfrak j$ is contained in $\partial S_\frakd$; (2)  $\mathrm{int}(\mathfrak j)$ is contained in $\mathrm{int}(S_\frakd)$. To check the consistency at $\mathfrak j$, the small loop around $\mathfrak j$ is chosen to cross a wall of the first kind once and a wall of the second kind twice.
\end{rem}

\begin{lem}[{\cite[Definition-Lemma C.2]{GHKK}}]
    Let $\frakD$ be a finite scattering diagram as in \Cref{subsec: gen cluster sd higher rk}. Let $\mathfrak j$ be a joint of $\frakD$. Then either every wall containing $\mathfrak j$ has direction tangent to $\mathfrak j$ or every wall containing $\mathfrak j$ has direction not tangent to $\mathfrak j$. In the first case we call the joint parallel; in the second case, perpendicular.
\end{lem}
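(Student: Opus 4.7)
The plan is to show that the dichotomy comes from examining the restriction of the skew-symmetric form $\omega$ to a certain $2$-dimensional subspace of $N_\R$ determined by $\mathfrak j$, and exploiting the fact that every wall-direction in $\frakD_\mathbf s$ is proportional to $\omega(-,n_\frakd)$ for $n_\frakd$ the primitive normal.

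First, I would set up the linear algebra. Since $\mathfrak j$ is a joint, it is a rational polyhedral cone of codimension $2$ in $M_\R$; let $L \coloneqq \langle \mathfrak j\rangle \subseteq M_\R$ denote its linear span and let $L^\perp \subseteq N_\R$ denote its annihilator, a $2$-dimensional subspace. A wall $\frakd$ containing $\mathfrak j$ satisfies $\mathfrak j \subseteq S_\frakd \subseteq n_\frakd^\perp$, so $n_\frakd \in L^\perp$. By the definition of walls in $\frakD_\mathbf s$ (see \Cref{subsec: gen cluster sd higher rk}), the direction $m_\frakd$ is a positive scalar multiple of $\omega(-,n_\frakd) \in M$. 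Thus $m_\frakd$ is tangent to $\mathfrak j$, i.e.\ lies in $L$, if and only if $\langle m_\frakd, n\rangle = 0$ for all $n \in L^\perp$, which is equivalent to
\[
    \omega(n, n_\frakd) = 0 \quad \text{for all } n \in L^\perp.
\]

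Next I would invoke the dichotomy for skew-symmetric forms on a $2$-dimensional space. Consider $\omega|_{L^\perp \times L^\perp}$. If this restriction vanishes identically, then for every wall $\frakd$ containing $\mathfrak j$ we have $n_\frakd \in L^\perp$ and $\omega(n, n_\frakd) = 0$ for all $n \in L^\perp$; hence every such wall-direction is tangent to $\mathfrak j$, giving the parallel case. If the restriction is nonzero, then being skew-symmetric on a $2$-dimensional space it is automatically non-degenerate, so for every nonzero $n_\frakd \in L^\perp$ there exists $n \in L^\perp$ with $\omega(n, n_\frakd) \neq 0$; this forces every wall-direction to not be tangent to $\mathfrak j$, giving the perpendicular case. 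Since $n_\frakd \in N^+ \subseteq N_{\mathrm{uf}}$ is nonzero, the two cases are mutually exclusive and exhaustive, and which one occurs depends only on $\mathfrak j$.

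There is no serious obstacle here; the content is essentially the observation that a nonzero alternating form on a $2$-dimensional vector space is non-degenerate. The only minor point to verify is that the map $n_\frakd \mapsto \omega(-, n_\frakd)$ is the correct characterization of the wall-direction (which follows from condition (4) of \Cref{def: seed data} together with the construction of walls in $\frakD_\mathbf s$), and that $\mathfrak j^\perp$ in the sense above has dimension $2$ (which holds because $\mathfrak j$ spans a codimension-$2$ subspace of $M_\R$, by \Cref{def: cone complex sing}).
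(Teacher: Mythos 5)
Your proof is correct and follows the same route as the cited \cite[Definition-Lemma C.2]{GHKK}: identify the two-dimensional annihilator $\Lambda_{\mathfrak j}^\perp\subseteq N_\R$ (which this paper also introduces in \Cref{subsec: positivity higher rank}), observe that every wall through $\mathfrak j$ has $n_\frakd\in\Lambda_{\mathfrak j}^\perp$ with direction $\omega(-,n_\frakd)$, and apply the fact that a skew form on a two-dimensional space is either identically zero or non-degenerate. The paper itself offers no independent proof of this lemma, so there is nothing to compare beyond noting that you have correctly reconstructed the GHKK argument.
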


Now we inductively construct for any $k\geq 1$, a finite scattering diagram $\frakD_k$ over $\Bbbk[\mndP]/\frakm^{k+1}$ to satisfy the following four assumptions.
\begin{enumerate}
    \item[(A1)] $\frakD_k$ is consistent over $\Bbbk[\mndP]/\mathfrak m^{k+1}$, that is, consistent at each joint.
    \item [(A2)] Any wall-function has positive coefficients, i.e., in $\mathbb N[\mndP]/\frakm^{k+1}$.
    \item [(A3)] Any non-initial wall $\frakd$ is outgoing and always has support of the form 
    \[
        S_\frakd = F_\frakd + \mathbb R_{\leq 0}\omega(-, n_\frakd)
    \]
    where $F_\frakd\subseteq \partial S_\frakd$ is a facet. In particular, $F_\frakd$ is the only facet not tangent to the direction of $\frakd$. We say that $\frakd$ or $S_\frakd$ is \emph{based} at $F_\frakd$.
    \item [(A4)] Any perpendicular joint $\mathfrak j$ is contained in the facet of at most one outgoing wall in each possible direction.
\end{enumerate}

The base case $\mathfrak D_1$ clearly satisfies all four assumptions.

Now we construct $\mathfrak D_{k+1}$ by essentially modifying existing walls in $\mathfrak D_k$ and adding new walls based at perpendicular joints of $\frakD_k$. Precisely, the construction comprises three steps.

\emph{Step 1: subdivision of outgoing walls.} 

By the assumption (A3), every outgoing wall $\frakd\in \frakD_k$ has support
\[
    S_\frakd = F_\frakd + \mathbb R_{\leq 0}\omega(-, n_\frakd)  
\]
where $F_\frakd$ is a facet not tangent to $\omega(-, n_\frakd)$. Being in the boundary of $S_\frakd$, the facet $F_\frakd$ inherits part of the cone complex realization of $\operatorname{Sing}(\frakD_k)$. Thus we can subdivide $F_\frakd$ into joints
\[
    F_\frakd = \mathfrak j_1\cup \cdots \cup \mathfrak j_s.    
\]
Note that these joints are all perpendicular. Accordingly $\frakd$ is subdivided into walls $\frakd_1, \dots, \frakd_s$ with smaller supports
\[
    S_{\frakd_1} = \mathfrak j_1 + \mathbb R_{\leq 0} \omega(-, n_\frakd), \dots, S_{\frakd s} = \mathfrak j_s + \mathbb R_{\leq 0} \omega(-, n_\frakd),
\]
and with wall-functions
\[
    f_{\frakd_1} = \dots = f_{\frakd_s} = f_\frakd.
\]
We replace $\frakD_k$ with an equivalent scattering diagram by implementing the above subdivision on every outgoing wall. Clearly it still satisfies the four assumptions. The set of joints remains unchanged. We also note that an initial wall never gets subdivided as it is incoming.

\emph{Step 2: rank-2 scattering at perpendicular joints.}

Consider at each perpendicular joint $\mathfrak j$ of $\mathfrak D_k$ the scattering diagram
\[
    \mathfrak D_{\mathfrak j} \coloneqq \{\frakd\in \mathfrak D_k\mid \mathfrak j\subseteq S_\frakd\},
\]
that is, the collection of walls whose support contains $\mathfrak j$. Let $\Lambda_\mathfrak j$ denote the tangent space of (any interior point of) $\mathfrak j$. It can thus be viewed as a $(\dim M_\mathbb R - 2)$-dimensional subspace of $M_\mathbb R$.
We next construct a rank-2 scattering diagram $\overline{\frakD}_\mathfrak j$ with respect to the lattice
\[
    M_{\mathfrak j} \coloneqq \omega(-, \Lambda_\mathfrak j^\perp) \cap M,
\]
living in the subspace $M_{\mathfrak j,\mathbb R} \coloneqq \omega(-, \Lambda_\mathfrak j^\perp) \subseteq M_\mathbb R$. These are of rank $2$ because of the injectivity assumption on the form $\omega(-, -)$.

The monoid for $\overline{\frakD}_\mathfrak j$ is 
\[
    \mndP_\mathfrak j \coloneqq \{p\in \mndP\mid r(p)\in M_\mathfrak j\} = M_\mathfrak j \oplus \mathbb P^\oplus = M_\mathfrak j \oplus \bigoplus_{i=1}^{|I_\mathrm{uf}|}\mathbb N^{\ell_i}.
\]
Let $\mathfrak m_{\mathfrak j}$ denote $\mndP_\mathfrak j \setminus \mndP_{\mathfrak j}^\times$ as well as the corresponding monomial ideal in $\Bbbk[\mndP_\mathfrak j]$. For each $\frakd\in \frakD_{\mathfrak j}$, define a wall $\overline{\frakd}$ with the support
\[
    S_{\overline{\frakd}} \coloneqq S_\frakd \cap M_{\mathfrak j, \mathbb R} 
\]
and the wall-function $f_{\overline{\frakd}} \coloneqq f_\frakd$.
Clearly by the particular form of the wall-function, the direction of $\overline{\frakd}$ is $m_{\overline{\frakd}} = m_\frakd \in \mathbb R_{\geq 0} \omega(-, n_\frakd)\cap M$ where $n_\frakd\in N^+$ is the normal vector of $\frakd$. One should carefully check that when $S_{\overline{\frakd}}$ is a ray, it must be of the form $\mathbb R_{\leq 0}m_{\overline{\frakd}}$ instead of $\mathbb R_{\geq 0}m_{\overline{\frakd}}$. By the assumption (A3), this is indeed the case.

Now we define the scattering diagram
\[
    \overline{\frakD}_\mathfrak j \coloneqq \{\overline{\frakd}\mid \frakd\in \frakD_\mathfrak j\}.
\]
The walls in this rank-2 scattering diagram are either lines or rays. By the assumption (A4), rays are in distinct directions (but may overlap with lines).

We then take the consistent completion $\Scat(\overline{\frakD}_\mathfrak j)$ over $\Bbbk[\mndP_\mathfrak j]/\mathfrak m_\mathfrak j^{k+2}$. This is equivalent to the consistent completion of only lines in $\overline{\frakD}_\mathfrak j$, as studied in \Cref{subsec: full positivity rk 2}. Notice that $\overline{\frakD}_\mathfrak j$ is already consistent over $\Bbbk[P_\mathfrak j]/{\mathfrak m}_{\mathfrak j}^{k+1}$ because by (A1) $\frakD_{\mathfrak j}$ is consistent at $\mathfrak j$. Therefore, by the positivity \Cref{thm: full positivity rk 2} we can obtain a representative of $\Scat(\overline{\frakD}_\mathfrak j)$ by 
\begin{enumerate}
    \item adding new rays $\frakd(n_0)$ with support $S_{\frakd(n_0)} = \mathbb R_{\leq 0}\omega(-, n_0)$ and wall-function $f_{\frakd(n_0)}$ with only non-trivial monomial terms in $\mathfrak m^{k+1}\setminus \mathfrak m^{k+2}$ with positive integer coefficients for distinct $n_0\in \Lambda_{\mathfrak j}^\perp \cap N^+$;
    \item for any existing ray $\overline{\frakd}$ in $\overline{\frakD}_\mathfrak j$, adding positive terms in $\mathfrak m^{k+1}\setminus \mathfrak m^{k+2}$ to $f_{\overline\frakd}$ (and calling it $f'_{\overline{\frakd}}$).
\end{enumerate}
The crucial point here is that the positivity is guaranteed by \Cref{thm: full positivity rk 2}.

\emph{Step 3: constructing $\mathfrak D_{k+1}$.}

For each new ray added in (1), define a wall in $M_\mathbb R$ with the support
\[
    \mathfrak j + \mathbb R_{\leq 0} \omega(-, n_0)
\]
and the same wall-function $f_{\frakd(n_0)}$. Since $\mathfrak j$ is perpendicular, such a wall is outgoing as the support must not contain $\omega(-,n_0)$. Denote this (finite) collection of walls by $\frakD(\mathfrak j)$.

Now we define
\[
    \frakD_{k+1} \coloneqq (\frakD_{\mathrm{in}, \mathbf s})_{k+1} \cup \bigcup_{\mathfrak j} \left(\frakD(\mathfrak j) \cup \{(S_\frakd, f'_{\overline\frakd})\mid \mathfrak j \subseteq \partial S_\frakd, \frakd\in \frakD_\mathfrak j \} \right),
\]
where $\bigcup_{\mathfrak j}$ is over all perpendicular joints in $\frakD_k$. We note that each existing outgoing $\frakd\in \mathfrak D_k$ appears exactly once in the union because it only contains a unique perpendicular joint after the subdivision in \emph{Step 1}. We have finished the construction of $\mathfrak D_{k+1}$.

We next elaborate on why $\mathfrak D_{k+1}$ satisfies assumptions (A2) to (A4). We then discuss the consistency assumption (A1), which will be proven in \Cref{lem: consistency overall}.

For (A2), as pointed out earlier, the positivity is preserved by \Cref{thm: full positivity rk 2}. Any wall in $\frakD_{k+1}$ (except the initial ones) is either in some $\frakD(\mathfrak j)$ or of the form $(S_\frakd, f_{\overline{\frakd}}')$ arising from some $\frakd\in \frakD_\mathfrak j$ such that $\mathfrak j\subseteq \partial S_\frakd$. Clearly both kinds are outgoing. Any such $\frakd$ has support of the form
\[
    \mathfrak j + \mathbb R_{\leq 0}\omega(-, n_\frakd)
\]
where $\mathfrak j$ is regarded as $F_\frakd$. So (A3) is satisfied by $\frakD_{k+1}$.

The statement of (A4) involves joints in $\mathfrak D_{k+1}$ upon a cone complex realization of $\operatorname{Sing}(\mathfrak D_{k+1})$. Since $\operatorname{Sing}(\mathfrak D_{k+1})$ contains $\operatorname{Sing}(\mathfrak D_k)$, we can require that the cone complex of $\operatorname{Sing}(\mathfrak D_{k+1})$, when restricted on $\operatorname{Sing}(\mathfrak D_k)$, is a refinement of the cone complex of $\operatorname{Sing}(\mathfrak D_k)$. We then have the following two cases for any perpendicular joint $\mathfrak j'$ in $\mathfrak D_{k+1}$.

\emph{Case 1.} Suppose that $\mathfrak j'$ is contained in a perpendicular joint $\mathfrak j$ of $\mathfrak D_k$. Notice that the outgoing walls in $\mathfrak D(\mathfrak j)$ and the outgoing walls of the form $(S_\frakd, f'_{\overline{\frakd}})$ are all in distinct directions. Thus we only need to show if there is a wall $\frakd'\in \mathfrak D_{k+1}$ based at another perpendicular joint $\mathfrak j_1$ (of $\frakD_k$) such that $S_{\frakd'}$ contains $\mathfrak j'$, the only case can happen is
\[
    \mathrm{int}(\mathfrak j') \subseteq \mathrm{int}(S_{\frakd'}).
\]
Otherwise assume that $\mathfrak j'$ is contained in $\partial S_{\frakd'}$. Notice that we have
\[
    S_{\frakd'} = \mathfrak j_1 + \mathbb R_{\leq 0}\omega(-, n_{\frakd'}),
\]
where $\mathfrak j_1$ is the only facet not tangent to the direction $\omega(-, n_{\frakd'})$. Thus $\mathfrak j'$ is contained in $\mathfrak j_1$. Then $\mathfrak j_1$ and $\mathfrak j$ (both as joints of $\frakD_k$) have maximal dimensional intersection, a contradiction.

\emph{Case 2.} Suppose that $\mathfrak j'$ is not contained in any perpendicular joint of $\frakD_k$. Assume that $\mathfrak j'$ is contained in a facet of a wall $\frakd\in \mathfrak D_{k+1}$. Then it must be contained in the only possible facet that is not tangent to the direction. However, such a facet is always a perpendicular joint of $\mathfrak D_k$, a contradiction. This argument also implies that any such perpendicular joint $\mathfrak j'$ of $\mathfrak D_{k+1}$ is not in the boundary of any wall.

\begin{lem}\label{lem: consistency perp joint}
    $\mathfrak D_{k+1}$ is consistent at each perpendicular joint $\mathfrak j'$ over $\Bbbk[\mndP]/\mathfrak m^{k+2}$.
\end{lem}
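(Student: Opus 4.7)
The plan is to reduce the consistency at $\mathfrak{j}'$ to a rank-$2$ consistency statement in a $2$-plane transverse to $\mathfrak{j}'$, and ultimately invoke \Cref{thm: full positivity rk 2}. I will handle separately the two configurations identified in the paragraphs preceding the lemma: Case~1 where $\mathfrak{j}' \subseteq \mathfrak{j}$ for some perpendicular joint $\mathfrak{j}$ of $\frakD_k$, and Case~2 where $\mathfrak{j}'$ is not contained in any perpendicular joint of $\frakD_k$.

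For Case~1, I will identify the walls of $\frakD_{k+1}$ whose support contains $\mathfrak{j}'$ with the walls of the rank-$2$ consistent scattering diagram $\Scat(\overline{\frakD}_\mathfrak{j})$ constructed in Step~2: the walls $\overline{\frakd_i}$ for initial $\frakd_i$ with $\mathfrak{j} \subseteq e_i^\perp$ correspond to the initial lines; the modified walls $(S_\frakd, f'_{\overline{\frakd}})$ for $\frakd \in \frakD_\mathfrak{j}$ with $\mathfrak{j} \subseteq \partial S_\frakd$ correspond to the old rays with updated wall-functions; and the newly added walls in $\frakD(\mathfrak{j})$ correspond to the new rays $\frakd(n_0)$ produced in Step~2. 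Through the standard identification of the path-ordered product of a small loop around $\mathfrak{j}'$ in $M_\mathbb{R}$ with that of a small loop around the origin of the $2$-plane $M_{\mathfrak{j},\mathbb{R}}$, the consistency at $\mathfrak{j}'$ over $\Bbbk[\mndP]/\mathfrak{m}^{k+2}$ follows directly from the rank-$2$ consistency of $\Scat(\overline{\frakD}_\mathfrak{j})$ over $\Bbbk[\mndP_\mathfrak{j}]/\mathfrak{m}_\mathfrak{j}^{k+2}$ guaranteed by Step~2.

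For Case~2, the earlier observations show that $\mathfrak{j}'$ lies in the interior of every wall of $\frakD_{k+1}$ containing it. In the $2$-plane slice these walls become lines through the origin, so a small loop $\gamma$ around $\mathfrak{j}'$ crosses each wall $\frakd$ twice in opposite directions, contributing $\mathfrak{p}_{v,\frakd}$ and its inverse. The order-$(k{+}1)$ modifications added in Steps~2--3 --- whether through the replacement $f_\frakd \mapsto f'_{\overline{\frakd}}$ of existing wall-functions or through new walls in $\frakD(\mathfrak{j}_1)$ --- contribute automorphisms that are identity modulo $\mathfrak{m}^{k+1}$, and the commutator of any such automorphism with an automorphism preserving $\mathfrak{m}$ lands in $\mathfrak{m}^{k+2}$; hence these modifications are central in $\operatorname{Aut}(\Bbbk[\mndP]/\mathfrak{m}^{k+2})$ and cancel pairwise along $\gamma$. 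The residual path-ordered product is computed using only the unmodified $\frakD_k$-wall-functions, and will equal the identity either because $\mathfrak{j}'$ lies outside $\operatorname{Supp}(\frakD_k)$ (so the loop is contractible in the complement) or because $\mathfrak{j}'$ lies inside a parallel joint of $\frakD_k$, at which wall-crossings commute by a tangent-direction argument and consistency lifts from order $k+1$ to order $k+2$ via the induction hypothesis.

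The main obstacle will be justifying the last assertion for Case~2, namely that the path-ordered product of a small loop around $\mathfrak{j}'$ in the unmodified $\frakD_k$ is identity over $\Bbbk[\mndP]/\mathfrak{m}^{k+2}$ whenever $\mathfrak{j}'$ is in the complement of the perpendicular joints of $\frakD_k$. This reduces to a careful comparison of the walls through $\mathfrak{j}'$ with those through a neighboring parallel joint of $\frakD_k$ (or the triviality of the loop when no joint contains $\mathfrak{j}'$), mirroring the classical fact that parallel joints are automatically consistent in the cluster scattering diagrams of~\cite{GHKK}.
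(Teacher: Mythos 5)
Your Case~1 matches the paper's approach: the walls of $\frakD_{k+1}$ whose support contains $\mathfrak j'$ are matched with the walls of $\Scat(\overline{\frakD}_{\mathfrak j})$ and consistency is inherited from the rank-$2$ result. One omission: you only list the walls \emph{based at} $\mathfrak j$ plus the initial walls, but $\frakD_{\mathfrak j'}$ also contains outgoing walls of $\frakD_k$ that pass \emph{through} $\operatorname{int}(\mathfrak j')$ and that may have been updated at a \emph{different} perpendicular joint $\mathfrak j_1 \neq \mathfrak j$ during Step~3, plus new walls from $\frakD(\mathfrak j_1)$ passing through. The paper reconciles these by writing $f'_{\overline\frakd} = f_{\overline\frakd}\cdot(1+\xi)$ with $\xi\in\frakm^{k+1}$ and noting the $(1+\xi)$-factors and the purely order-$(k+1)$ new walls commute with everything mod $\frakm^{k+2}$ and cancel along the loop, so one can roll back to the $\frakD_k$-wall-functions and identify the result with $\Scat(\overline{\frakD}_{\mathfrak j})$. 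You should make that step explicit rather than absorbing it into "the standard identification."

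Case~2 has a genuine gap. Your residual dichotomy --- either $\mathfrak j'$ is outside $\operatorname{Supp}(\frakD_k)$ or it lies inside a \emph{parallel} joint of $\frakD_k$ --- is both non-exhaustive and contains an impossible alternative. The parallel-joint case cannot occur: if $\mathfrak j' \subseteq \mathfrak j_{\mathrm{par}}$ with $\mathfrak j_{\mathrm{par}}$ a parallel joint of $\frakD_k$, then the walls of $\frakD_k$ containing $\mathfrak j_{\mathrm{par}}$ have directions tangent to $\mathfrak j_{\mathrm{par}}$, hence (since $\mathfrak j'$ and $\mathfrak j_{\mathrm{par}}$ are both codimension $2$ with $\mathfrak j'$ full-dimensional inside $\mathfrak j_{\mathrm{par}}$) tangent to $\mathfrak j'$; but these same walls, with possibly updated wall-functions, persist in $\frakD_{k+1}$ and contain $\mathfrak j'$, contradicting $\mathfrak j'$ being a \emph{perpendicular} joint of $\frakD_{k+1}$. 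Moreover "consistency lifts from order $k+1$ to order $k+2$ via the induction hypothesis" is not a valid step --- the induction only gives consistency of $\frakD_k$ mod $\frakm^{k+1}$, and lifting requires an argument. The scenario you are missing (and which is the one that actually happens when $\mathfrak j' \subseteq \operatorname{Supp}(\frakD_k)$) is that all walls of $\frakD_k$ containing $\mathfrak j'$ share a \emph{single direction}: otherwise their intersection would put $\mathfrak j'$ inside a joint of $\frakD_k$, which by the above tangency argument would have to be perpendicular, contradicting the Case~2 hypothesis. So $\mathfrak j'$ need not be in any joint of $\frakD_k$ at all. Since $\mathfrak j'$ lies in the boundary of no wall, these walls all pass through $\mathfrak j'$; the loop crosses each twice, the wall-crossings associated to a common hyperplane commute, and the product is the identity. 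That is the argument the paper makes, and it is what needs to replace your faulty dichotomy.
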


\begin{proof}
    The analysis proceeds by considering the same two cases on whether $\mathfrak j'$ is contained in a perpendicular joint $\mathfrak j$ of $\frakD_k$, as when we show (A4). 
    
    When $\mathfrak j'$ is contained in some perpendicular $\mathfrak j$ of $\mathfrak D_k$, we check all walls $\frakd$ containing $\mathfrak j'$, namely the scattering diagram $\frakD_{\mathfrak j'}$. The idea is then to derive the consistency of $\frakD_{\mathfrak j'}$ from $\Scat(\overline{\frakD}_\mathfrak j)$. There are two kinds of $\frakd$:
    \[
        \mathfrak j'\subseteq \partial S_\frakd \quad \text{or} \quad \mathrm{int}(\mathfrak j')\subseteq \mathrm{int}(S_\frakd).
    \]
    The walls of the first kind are exactly those in 
    \[
        \frakD(\mathfrak j) \cup \{(S_\frakd, f'_{\overline\frakd})\mid \mathfrak j \subseteq \partial S_\frakd, \frakd\in \frakD_\mathfrak j \}.
    \]
    For the second kind, we can get rid of those whose wall-functions are purely in order $k+1$ since their wall-crossing automorphisms commute with others over $\Bbbk[\mndP]/\mathfrak m^{k+2}$. The remaining walls of the second kind are from walls in $\mathfrak D_k$. The possible difference on the wall-function is only in order $k+1$, as mentioned in the construction, namely $f_{\overline{\frakd}}'$ versus $f_{\overline{\frakd}}$. Notice that
    \[
        f_{\overline{\frakd}}' = f_{\overline{\frakd}} \cdot (1 + f_{\overline{\frakd}}'-f_{\overline{\frakd}})\mod \mathfrak m^{k+2}.
    \]
    The wall-crossing automorphism associated to the function $1 + f_{\overline{\frakd}}'-f_{\overline{\frakd}}$ commute with others over $\Bbbk[\mndP]/\mathfrak m^{k+2}$. Therefore, we can replace $f'_{\overline{\frakd}}$ with  $f_{\overline{\frakd}} = f_{\frakd}$ and such a wall rolls back to the version in $\mathfrak D_k$. Now we see that the consistency of $\mathfrak D_{\mathfrak j'}$ is equivalent to that of $\Scat(\overline{\frakD}_\mathfrak j)$.

    The case where $\mathfrak j'$ is not contained in any perpendicular joint of $\mathfrak D_k$ is simpler. We have understood that $\mathfrak j'$ is not in the boundary of any wall. Such a joint is created only because of the intersection with new walls, i.e., the ones in $\frakD(\mathfrak j)$ for some $\mathfrak j$. Those walls always contain $\mathfrak j'$ of the form
    \[
        \mathrm{int}(\mathfrak j') \subseteq \mathrm{int}(S_\frakd).
    \]
    Thus as argued earlier, we can get rid of those walls and are left with only existing walls in $\frakD_k$ containing $\mathfrak j'$. The only possibility is that the walls
    \[
        \{\frakd\in \mathfrak D_k\mid \mathfrak j'\subseteq S_\frakd\}
    \]
    are all in the same direction; otherwise $\mathfrak j'$ would have been in a perpendicular joint of $\mathfrak D_k$. Since $\mathfrak j'$ is not in the boundary of any wall, these walls all pass through $\mathfrak j'$. The consistency around $\mathfrak j'$ is then clear.
\end{proof}

We apply \Cref{lem: consistency perp joint} to show

\begin{lem}\label{lem: consistency overall}
    $\mathfrak D_{k+1}$ is consistent over $\Bbbk[\mndP]/\mathfrak m^{k+2}$.  
\end{lem}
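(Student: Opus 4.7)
The plan is as follows: by Lemma \ref{lem: consistency perp joint}, consistency at every perpendicular joint of $\frakD_{k+1}$ is already established, so it remains only to verify consistency at parallel joints of $\frakD_{k+1}$.

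My first step would be to observe that wall-crossing automorphisms commute at any parallel joint of $\frakD_{k+1}$. Indeed, if $\mathfrak j'$ is a parallel joint and $\frakd$ is a wall through $\mathfrak j'$, then its direction $m_\frakd$ lies in $\Lambda_{\mathfrak j'}$ and its primitive normal $n_\frakd$ lies in $\Lambda_{\mathfrak j'}^\perp \cap N^+$. Every monomial exponent appearing in $f_\frakd$ is a positive multiple of $m_\frakd$, hence lies in $\Lambda_{\mathfrak j'}$; consequently $\langle n_{\frakd'}, m \rangle = 0$ for any such monomial $x^m$ in $f_\frakd$ and any second wall $\frakd'$ through $\mathfrak j'$. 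Therefore the wall-crossing associated to $\frakd'$ fixes $f_\frakd$, and any two wall-crossings at $\mathfrak j'$ commute in $\operatorname{Aut}(\Bbbk[\mndP]/\frakm^{k+2})$.

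Next I would use this commutativity to upgrade consistency from order $k$ to order $k+1$ at $\mathfrak j'$. The inductive hypothesis supplies consistency of $\frakD_k$ modulo $\frakm^{k+1}$ at the parallel joint of $\frakD_k$ containing $\mathfrak j'$. Every wall of $\frakD_{k+1}$ through $\mathfrak j'$ is either (i) an existing wall of $\frakD_k$ whose wall-function was enlarged (in Step 2 of the construction) by a term in $\frakm^{k+1}/\frakm^{k+2}$, or (ii) a new outgoing ray from some $\frakD(\mathfrak j)$ whose wall-function lies in $1 + \frakm^{k+1}/\frakm^{k+2}$. Writing out the path-ordered product around $\mathfrak j'$ and using the commutativity above, the product factors as the corresponding loop product in $\frakD_k$ (which is the identity by the inductive hypothesis) times a product of commuting order-$(k+1)$ correction factors that combine additively.

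The main obstacle will be to verify that the total order-$(k+1)$ correction at $\mathfrak j'$ vanishes. Each such correction is governed by the rank-2 scattering $\Scat(\overline{\frakD}_\mathfrak j)$ at the unique perpendicular joint of $\frakD_k$ where the associated wall is based. The plan for this step is to trace each wall through $\mathfrak j'$ back to its perpendicular base-joint, and to use the rank-2 consistencies enforced in Step 2 together with the cone-complex structure of $\operatorname{Sing}(\frakD_k)$ to show that contributions from distinct perpendicular base-joints pair up and cancel around $\mathfrak j'$. This is the analogue in our setting of the standard verification for parallel joints carried out in \cite[Appendix C]{GHKK}.
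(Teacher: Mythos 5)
Your proposal takes a genuinely different route from the paper, and it has a gap at exactly the point you identify as the ``main obstacle.''

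The observation that wall-crossing automorphisms commute at a parallel joint is correct and is useful. However, this commutativity does not make consistency at parallel joints automatic: even though every wall $\frakd$ through a parallel joint $\mathfrak j'$ has direction in $\Lambda_{\mathfrak j'}$, it may still happen that $\mathfrak j'\subseteq \partial S_\frakd$ (in a facet of $S_\frakd$ tangent to $m_\frakd$, so $\mathfrak j'$ remains parallel), in which case a small loop around $\mathfrak j'$ crosses $\frakd$ only once. The path-ordered product is then a genuine product of commuting wall-crossings, each contributing once, and its vanishing is a nontrivial multiplicative constraint. Your sketch --- trace each such wall back to the perpendicular joint at which it is based, and argue that contributions from different base-joints ``pair up and cancel'' --- is not developed to the point where it is clear it can be carried out, and there is a further conflation in the intermediate step: the inductive hypothesis gives consistency of $\frakD_k$ only modulo $\frakm^{k+1}$, so the ``loop product in $\frakD_k$'' need not be the identity modulo $\frakm^{k+2}$, and the order-$(k+1)$ discrepancy you must kill receives a contribution both from this residual term and from the corrections added in Step 2. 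The cone-complex bookkeeping needed to see that all these contributions cancel at every parallel joint is precisely what would have to be supplied.

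The paper avoids this difficulty entirely by not verifying parallel joints directly. It instead invokes the existence of the consistent $(\frakD_\mathbf s)_{k+1}$ from \Cref{thm: gen cluster sd}, forms the ``difference'' scattering diagram $\frakD'$ with wall-functions $f_{\frakd_i}f_{\mathfrak c_i}^{-1}\in 1+\frakm^{k+1}/\frakm^{k+2}$, and shows $\frakD'$ must be trivial by a global geometric argument: \Cref{lem: consistency perp joint} together with the consistency of $(\frakD_\mathbf s)_{k+1}$ gives consistency of $\frakD'$ at perpendicular joints, which (since the order-$(k+1)$ automorphisms commute) forces walls of $\frakD'$ in a fixed hyperplane $n_0^\perp$ to continue across every non-tangent facet; the union of such walls therefore has all its boundary facets tangent to $\omega(-,n_0)$, forcing the line $\mathbb R\omega(-,n_0)$ into its boundary and producing a forbidden non-trivial incoming wall. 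This sidesteps parallel joints altogether by leaning on the abstract uniqueness statement, whereas your plan would have to re-prove uniqueness in effect by hand at each parallel joint. If you wish to pursue the direct route, the missing step is a precise cancellation lemma at parallel joints, and the appeal to \cite[Appendix C]{GHKK} should be made with care: the perturbation trick appearing there (and in Step~2 of \Cref{subsec: full positivity rk 2}) is used in the rank-$2$ argument, but the higher-rank construction in \Cref{subsec: positivity higher rank} does not perturb, so the parallel-joint situation here can be more degenerate than in the perturbed setting.
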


\begin{proof}
    The strategy is to compare $\mathfrak D_{k+1}$ with $(\frakD_\mathbf s)_{k+1}$ and to show they are equivalent. The scattering diagram $(\frakD_\mathbf s)_{k+1}$ is over $\Bbbk[\mndP]/\mathfrak m^{k+2}$, consistent and contains finitely many outgoing walls besides the initial walls. We note that the existence of $(\frakD_\mathbf s)_{k+1}$ does not depend on the constructive process described here. See \Cref{thm: gen cluster sd} and also \cite[Theorem 1.21]{GHKK}, which are both incarnation of \cite[Theorem 2.1.6]{KSwcs}.

    Firstly, these two scattering diagrams are equivalent over $\mathfrak m^{k+1}$, for they are consistent and share the same collection of incoming walls. It is convenient to consider their difference $\mathfrak D'$, as a scattering diagram. In fact, one can subdivide walls in $\mathfrak D_{k+1}$ and $(\frakD_\mathbf s)_{k+1}$ and add walls with trivial wall-functions so that they have walls labeled by the same index set $S$
     \[
        \mathfrak D_{k+1} = \{\frakd_i\mid i\in S\} \quad \text{and} \quad (\frakD_\mathbf s)_{k+1} = \{\mathfrak c_i\mid i\in S\}
     \]
     such that $S_{\frakd_i} = S_{\mathfrak c_i}$ for every $i\in S$. Their difference is $\frakD' = \{\frakd'_i\mid i\in S\}$ where
     \[
        S_{\frakd'_i} \coloneqq S_{\frakd_i} \quad \text{and} \quad f_{\frakd'_i} \coloneqq f_{\frakd_i}\cdot f_{\mathfrak c_i}^{-1}.
     \]
     The wall-function $f_{\frakd'_i}$ must be of the form $1+\xi$ where $\xi\in \mathfrak m^{k+1}\setminus \mathfrak m^{k+2}$ since it is trivial modulo $\mathfrak m^{k+1}$.

     Take another representative of $\mathfrak D'$ such that there are only non-trivial walls and they intersect only at codimension two. Notice that in this representative, walls are all outgoing since both $\mathfrak D_{k+1}$ and $(\frakD_\mathbf s)_{k+1}$ have the exact same collection of incoming walls.

     Assume that there are non-trivial walls in a hyperplane $n_0^\perp$. Let $\mathfrak j$ be a joint (for $\mathfrak D'$) in $n_0^\perp$. If it is not tangent to $\omega(-, n_0)$, we claim that there are walls in $n_0^\perp$ on both sides of $\mathfrak j$ with the same wall-function. In fact, by \Cref{lem: consistency perp joint}, $\frakD_{k+1}$ is consistent at every perpendicular joint modulo $\mathfrak m^{k+2}$. Hence $\frakD'$ is also consistent at $\mathfrak j$ (as $(\frakD_\mathbf s)_{k+1}$ is consistent overall). Since the wall-crossing automorphisms associated to those $f_{\frakd'_i}$ all commute, the claim follows. Now take the union $U$ (which is a polyhedral cone but not necessarily convex) of all walls in $n_0^\perp$. Each facet of $U$ is tangent to $\omega(-, n_0)$. This forces the whole line $\mathbb R\omega(-, n_0)$ to lie in the boundary of $U$, giving the presence of a non-trivial incoming wall in $\mathfrak D'$, a contradiction. We can then conclude that $\frakD'$ is trivial and therefore $\frakD_{k+1}$ and $(\frakD_\mathbf s)_{k+1}$ are equivalent.
\end{proof}

Now we have obtained a sequence $(\mathfrak D_k)_{k\geq 1}$ of consistent scattering diagram over $\Bbbk[\mndP]/\mathfrak m^{k+1}$ with finitely many walls without any incoming walls besides the initials. For the final step, we ensure that the limit when $k\rightarrow \infty$ exists as a scattering diagram over $\widehat{\Bbbk[\mndP]}$. Notice that in this order by order construction, we either add a new outgoing wall or add higher order terms to the wall-function of an existing outgoing wall. The limit $\mathfrak D_\infty$ is taken to consist of $\mathfrak D_{\mathrm{in}, \mathbf s}$ and any outgoing wall that first appears in some $\frakD_k$ with the wall-function being the limit of the wall-functions on the (support of the) same wall in $\frakD_i$, $i\geq k$. Evidently for each $k\geq 1$ we have
\[
    \left(\mathfrak D_\infty \right)_k = \mathfrak D_k.
\]
It follows that $\mathfrak D_\infty$ is consistent and has every wall-function in $\widehat{\mathbb N[\mndP]}$. By the uniqueness in \Cref{thm: gen cluster sd}, $\mathfrak D_\infty$ is equivalent to $\frakD_\mathbf s$. This is the end of the proof of \Cref{thm: positive wall-function}.

\appendix

\section{Proof of the generalized greedy element formula}\label{app: generalized greedy element formula}

In this appendix, we prove the expansion formula for generalized greedy elements in terms of compatible gradings given by \Cref{thm: generalized greedy compatible gradings}.  The majority of the proof closely follows that of \cite[Theorem 2.15]{Rupgengreed}, and we discuss only those portions that require modification here.  Namely, we generalize some of the necessary lemmas to the case where $P_1$ and $P_2$ are not necessarily palindromic.

\subsection{Cluster variables up to a scalar}

Recall the notations in \Cref{subsec: gen cluster sd rk 2} and the definition of cluster variables $x_i$ from \Cref{defn: cluster variables}. We define elements $\{\widetilde x_k\mid k\in \Z\}\subseteq \mathcal F = \operatorname{Frac}(\Bbbk[\mndP])$ and will show $x_k/\widetilde x_k \in \mathbb P$.
These \emph{scaled cluster variables} will be convenient for our proofs in \Cref{subsec: strong Laurent,subsec: proof of cg formula}.

Let $\widetilde{x}_1=x_1$ and $\widetilde{x}_2=x_2$. 
We inductively define $\widetilde{x}_k\in \calF$ for $k\in\mathbb{Z}$ by the rule:
$$
\widetilde{x}_{k+1}\widetilde{x}_{k-1}=\left\{\begin{array}{ll}
P_1(\widetilde{x}_k)\quad &\text{if }k\equiv 1 \mod 4,\\
P_2(\widetilde{x}_k)\quad &\text{if }k\equiv 2 \mod 4,\\
\overline{P}_1(\widetilde{x}_k)\quad &\text{if }k\equiv 3 \mod 4,\\
\overline{P}_2(\widetilde{x}_k)\quad &\text{if }k\equiv 0 \mod 4.\\
\end{array}\right.
$$

\begin{lem}\label{lem: scaled cluster var}
For any $k\in\mathbb{Z}$, the quotient $x_k/\widetilde{x}_k$ is a Laurent monomial in $p_{1,\ell_1}$ and $p_{2,\ell_2}$.  More precisely, we have
\[
    x_k/\widetilde x_k = \begin{cases}
        W(\dvect_k-e_2) \quad \text{if $k\equiv 0\mod 4$}, \\
        W(\dvect_k+e_1) \quad \text{if $k\equiv 1\mod 4$}, \\
        W(\dvect_k+e_2) \quad \text{if $k\equiv 2\mod 4$}, \\
        W(\dvect_k-e_1) \quad \text{if $k\equiv 3\mod 4$}.
    \end{cases}
\]
\end{lem}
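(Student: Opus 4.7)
The plan is to prove the lemma by induction on $|k|$, with base cases $k = 1, 2$. For $k = 1$ we have $\widetilde{x}_1 = x_1$ and the predicted ratio is $W(\dvect_1 + e_1) = W(-e_1 + e_1) = W(0) = 1$; the case $k = 2$ is analogous. Both directions of the induction (forward for $k \geq 3$ and backward for $k \leq 0$) run identically, so I focus on the forward step.

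For the inductive step, I compare the two exchange relations in parallel: from \Cref{lem: exchange relation rk 2} we have $x_{k-1} x_{k+1} = \bar\zeta_k(x_k)$ with $\zeta_k$ given explicitly in \Cref{thm: cluster complex struct rk 2}, and by definition $\widetilde{x}_{k-1} \widetilde{x}_{k+1} = R_k(\widetilde{x}_k)$ with $R_k \in \{P_1, P_2, \overline{P}_1, \overline{P}_2\}$ depending on $k \bmod 4$. Writing $\lambda_j := x_j/\widetilde{x}_j$ and applying the inductive hypothesis $x_k = \lambda_k \widetilde{x}_k$, I claim that in each of the four residue classes the expression $\bar\zeta_k(\lambda_k \widetilde{x}_k)$ simplifies to (a $W$-monomial) $\cdot\, R_k(\widetilde{x}_k)$. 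For instance, when $k \equiv 1 \pmod 4$, so $\zeta_k(z) = \overline{P}_1(W(\dvect_k + e_1)z)$ and $R_k = P_1$, the identity $\bar{u}^{\ell_1} \overline{P}_1(u^{-1}) = p_{1,\ell_1}^{-1} P_1(u)$ together with $\lambda_k = W(\dvect_k + e_1)$ yields
\[
\bar\zeta_k(x_k) = \frac{W(\dvect_k + e_1)^{\ell_1}}{p_{1, \ell_1}} \, P_1(\widetilde{x}_k) = \frac{W(\dvect_k + e_1)^{\ell_1}}{p_{1, \ell_1}} \, \widetilde{x}_{k-1} \widetilde{x}_{k+1}.
\]
Dividing the two exchange relations then gives $\lambda_{k+1} \lambda_{k-1}$ as an explicit $W$-monomial, and solving for $\lambda_{k+1}$ using the (inductively known) value of $\lambda_{k-1}$ produces the claimed formula.

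In each of the four cases, verification of the formula reduces to a $W$-monomial identity of the shape $W(\dvect_{k+1} \pm e_i) \cdot W(\dvect_{k-1} \mp e_i) = W(\ell_j \dvect_k)$, which follows immediately from the recurrence $\dvect_{k+1} + \dvect_{k-1} = \ell_j \dvect_k$ in \eqref{eq: recursion d vect} combined with $W(\ell_i e_i) = p_{i,\ell_i}$. The substantive content, rather than the main obstacle, is the observation that $\zeta_k$ and the $\widetilde{x}$-recurrence are deliberately built from complementary choices between $P_i$ and $\overline{P}_i$: precisely this pairing ensures that $\bar\zeta_k(x_k)/R_k(\widetilde{x}_k)$ collapses to a pure $W$-monomial rather than a rational function of $\widetilde{x}_k$. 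Everything else is bookkeeping: checking the four congruence classes $k \bmod 4$ (and, for backward induction, running the same computation after solving for $\widetilde{x}_{k-1}$), then matching the $\pm e_i$ shifts appearing in the $W$-arguments on each side.
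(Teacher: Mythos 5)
Your proposal is correct and takes essentially the same approach as the paper: both verify that the ratio $x_k / W(\dvect_k \pm e_i)$ satisfies the defining recursion of $\widetilde{x}_k$ via the identity $\bar\zeta_k(x_k) = \frac{W(\dvect_k+e_1)^{\ell_1}}{p_{1,\ell_1}} P_1\bigl(x_k/W(\dvect_k+e_1)\bigr)$ and the $W$-monomial relation coming from $\dvect_{k-1}+\dvect_{k+1}=\ell_1\dvect_k$. You frame the argument as induction on $|k|$ solving for $\lambda_{k+1}$ from $\lambda_k, \lambda_{k-1}$, while the paper phrases it as checking the candidate quotients satisfy the recurrence; these are the same calculation.
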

\begin{proof}
It suffices to show that $x_k/W(\dvect_k\pm e_i)$ satisfy the exchange relations of $\widetilde x_k$. We check for $k\equiv 1\mod 4$ the following relation and the remaining cases are similar. Notice that by \Cref{lem: exchange relation rk 2}, we have
\begin{align*}
    x_{k-1}x_{k+1} & = \bar \zeta_n(x_k) = \frac{W(\dvect_k + e_1)^{\ell_1}}{p_{1,\ell_1}} P_1(x_k/W(\dvect_k+e_1)), \\
    \widetilde x_{k-1}\widetilde x_{k+1} & = \frac{W(\dvect_k + e_1)^{\ell_1}}{p_{1,\ell_1}W(\dvect_{k-1}-e_2)W(\dvect_{k+1}+e_2)} P_1(\widetilde x_k).
\end{align*}
Now the desired relation follows from $W(\dvect_k + e_1)^{\ell_1} = p_{1,\ell_1}W(\dvect_{k-1}-e_2)W(\dvect_{k+1}+e_2)$, which is result of
\[
    \dvect_{k-1} + \dvect_{k+1} = \ell_1 \dvect_k\,. \qedhere
\]
\end{proof}

\subsection{The strong Laurent phenomenon}\label{subsec: strong Laurent}
We begin by establishing the strong Laurent phenomenon for the rank-$2$ generalized cluster algebras $\mathcal A(P_1, P_2)$, which is the $\rngB$-subalgebra of $\mathcal F$ generated by all scaled cluster variables $\widetilde x_k$. By \Cref{lem: scaled cluster var}, this notion coincides with the $\mathcal A(P_1, P_2)$ defined in \Cref{subsec: theta universal Laurent}.

\begin{thm}[{following \cite[Theorem 2.2]{Rupgengreed}}]\label{thm: cluster algebra four generators}
  For any $k \in \Z$, we have 
  $$\calA(P_1,P_2) = \rngB[x_{k-1}, x_{k}, x_{k+1}, x_{k+2}]\,.$$
\end{thm}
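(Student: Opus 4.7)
The strategy is to show that the subring $R_k := \rngB[x_{k-1}, x_k, x_{k+1}, x_{k+2}]$ of $\calF$ is independent of $k \in \Z$. Since $\calA(P_1, P_2)$ is by definition the $\rngB$-subalgebra generated by $\{x_n \mid n \in \Z\}$ and every $x_n$ lies in $R_k$ as soon as $k-1 \leq n \leq k+2$, this independence will imply $\calA(P_1, P_2) = R_k$ for every $k$. It therefore suffices to prove $R_k = R_{k+1}$ for each $k$, which reduces to the two membership statements $\widetilde{x}_{k+3} \in R_k$ and $\widetilde{x}_{k-1} \in R_{k+1}$ after we pass to the scaled cluster variables.

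The first step is to replace $x_n$ with $\widetilde{x}_n$. By \Cref{lem: scaled cluster var}, the ratio $x_n/\widetilde{x}_n$ is a Laurent monomial in $p_{1,\ell_1}$ and $p_{2,\ell_2}$, hence a unit in $\rngB$, so $R_k = \rngB[\widetilde{x}_{k-1}, \widetilde{x}_k, \widetilde{x}_{k+1}, \widetilde{x}_{k+2}]$. The scaled variables satisfy the cleaner exchange relations $\widetilde{x}_{n-1}\widetilde{x}_{n+1} = F_n(\widetilde{x}_n)$ where $F_n \in \{P_1, P_2, \overline{P}_1, \overline{P}_2\}$ depending on $n \bmod 4$. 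A direct check shows that $F_n(0) = 1$ for every $n$, and that the palindromic duality
\[
F_{n+2}(z) = u_n \cdot z^{\deg F_n} \cdot F_n(1/z)
\]
holds for a unit $u_n \in \mathbb P$ (for instance, $u_n = p_{1,\ell_1}^{-1}$ when $n \equiv 1 \bmod 4$).

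To show $\widetilde{x}_{k+3} \in R_k$, rewrite the exchange relation as $\widetilde{x}_{k+3} = F_{k+2}(\widetilde{x}_{k+2})/\widetilde{x}_{k+1}$ and prove the divisibility $\widetilde{x}_{k+1} \mid F_{k+2}(\widetilde{x}_{k+2})$ inside the integral domain $R_k \subseteq \calF$. Working modulo $\widetilde{x}_{k+1}$, the relations $\widetilde{x}_k\widetilde{x}_{k+2} = F_{k+1}(\widetilde{x}_{k+1}) \equiv F_{k+1}(0) = 1$ and $F_k(\widetilde{x}_k) = \widetilde{x}_{k-1}\widetilde{x}_{k+1} \equiv 0$ show that $\widetilde{x}_k$ is a unit in the quotient $R_k/(\widetilde{x}_{k+1})$ with inverse $\widetilde{x}_{k+2}$, while $F_k(\widetilde{x}_k)$ vanishes there. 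Multiplying $F_{k+2}(\widetilde{x}_{k+2})$ by $\widetilde{x}_k^{\deg F_k}$ and substituting $\widetilde{x}_k\widetilde{x}_{k+2} \equiv 1$ in every term of the palindromic expansion converts this auxiliary element to $u_k F_k(\widetilde{x}_k) \equiv 0$; invertibility of $\widetilde{x}_k$ in the quotient then cancels $\widetilde{x}_k^{\deg F_k}$ and yields $F_{k+2}(\widetilde{x}_{k+2}) \equiv 0$. The proof that $\widetilde{x}_{k-1} \in R_{k+1}$ is entirely symmetric, this time using the palindromic identity $F_k(z) = u_k^{-1} z^{\deg F_k} F_{k+2}(1/z)$ to reduce $F_k(\widetilde{x}_k)$ to $F_{k+2}(\widetilde{x}_{k+2}) \equiv 0$ inside $R_{k+1}/(\widetilde{x}_{k+1})$.

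The main subtlety is that every manipulation must take place inside the polynomial subring rather than its fraction field: the negative powers of $\widetilde{x}_k$ which would appear from naively inverting $\widetilde{x}_{k+2}$ modulo $\widetilde{x}_{k+1}$ are cleared by the pre-multiplication by $\widetilde{x}_k^{\deg F_k}$, and invertibility of $\widetilde{x}_k$ in $R_k/(\widetilde{x}_{k+1})$ allows the resulting factor to be cancelled without leaving $R_k$.
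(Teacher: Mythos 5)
Your proof is correct and follows essentially the same approach as the paper's: both pass to the scaled variables $\cpv_n$ via \Cref{lem: scaled cluster var}, exploit the palindromic relation between the exchange polynomial $F_n$ and $F_{n+2}$, and use that each $F_n$ has constant term $1$. Your quotient-ring packaging in $\rngB[\cpv_{k-1},\cpv_k,\cpv_{k+1},\cpv_{k+2}]/(\cpv_{k+1})$ is the same computation the paper carries out explicitly --- the paper's divisibility of $(\overline{P}_2(\cpv_{m+2}))^t - 1$ by $\cpv_{m+2}$ is precisely your observation that $\cpv_k\cpv_{k+2}\equiv 1$ modulo $\cpv_{k+1}$.
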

\begin{proof}
  By \Cref{lem: scaled cluster var}, it is enough to work with cluster pre-variables instead of cluster variables.  We show via induction on $m$ that $\cpv_m \in \rngB[x_{k-1}, x_{k}, x_{k+1}, x_{k+2}]$, hence $\calA(P_1,P_2)\subseteq \rngB[x_{k-1}, x_{k}, x_{k+1}, x_{k+2}]$. 
  Consider the element $\cpv_{m+1}^{\ell_1}\cpv_{m+4}$, where without loss of generality we assume $m \equiv 2 \mod 4$.
  We have
  \begin{align*}
    \cpv_m - \frac{\cpv_{m+1}^{\ell_1}\cpv_{m+4}}{p_{1,\ell_1}} &= p_{1,\ell_1}\cpv_m - \frac{\cpv_{m+1}^{\ell_1}P_1(\cpv_{m+3})}{p_{1,\ell_1}\cpv_{m+2}} \\
    &= \cpv_m - \frac{\overline{P_1}(\cpv_{m+1})}{\cpv_{m+2}} + \frac{p_{1,\ell_1} \overline{P_1}(\cpv_{m+1}) - \cpv_{m+1}^{\ell_1}P_1(\cpv_{m+3})}{p_{1,\ell_1}\cpv_{m+2}} \\
    &= \frac{p_{1,\ell_1} \overline{P_1}(\cpv_{m+1}) - \cpv_{m+1}^{\ell_1}P_1(\cpv_{m+3})}{p_{1,\ell_1}\cpv_{m+2}} \\
    &= \frac{\sum_{t = 0}^{\ell_1} p_{1,t} (\cpv_{m+1}^t\cpv_{m+3}^t - 1)\cpv_{m+1}^{\ell_1 - t}}{p_{1,\ell_1}\cpv_{m+2}}\,.
  \end{align*}
  
   Using that $\overline{P_2}$ has constant term $1$, we have $\cpv_{m+1}^t\cpv_{m+3}^t - 1 = (\overline{P_2}(\cpv_{m+2}))^t - 1 \in \cpv_{m+2} \rngB[x_{m+2}]$ for every $t \geq 0$. Thus, 
  $$\frac{\sum_{t = 0}^{\ell_1} p_{1,t} (\cpv_{m+1}^t\cpv_{m+3}^t - 1)\cpv_{m+1}^{\ell_1 - t}}{\cpv_{m+2}} \in \rngB[x_{m+1}, x_{m+2}]\,.$$

We then can see that $\cpv_m \in \rngB[x_{m+1}, x_{m+2}, x_{m+3}, x_{m+4}]$, and using a similar argument we obtain $\cpv_m \in \rngB[x_{m-4}, x_{m-3}, x_{m-2}, x_{m-1}]$. We can then conclude that 
\[
    \cpv_m \in \rngB[x_{k-1}, x_{k}, x_{k+1}, x_{k+2}] \subseteq \calT_k
\]
for every $k \in \Z$.  Since we have by definition that $\rngB[x_{k-1}, x_{k}, x_{k+1}, x_{k+2}] \subseteq \calA(P_1,P_2)$, the desired equality follows.
\end{proof}

We now show that the cluster algebra $\calA(P_1,P_2)$ equals the corresponding upper cluster algebra $\mathcal U$, as defined in \Cref{subsec: theta universal Laurent}, and moreover that it is the intersection of the Laurent polynomial ring of three consecutive clusters.

\begin{thm}[{Strong Laurent Phenomenon, following \cite[Theorem 2.5]{Rupgengreed}}]\label{thm: strong laurent}
  For any $m \in \Z$ we have
  $$\calA(P_1,P_2) = \bigcap_{k \in \Z} \calT_k = \bigcap_{k = m-1}^{m+1} \calT_k\,.$$
\end{thm}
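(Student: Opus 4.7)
The plan is to dispatch the easy inclusions first and then concentrate on the hard direction, with \Cref{thm: cluster algebra four generators} as the target. The chain $\calA(P_1, P_2) \subseteq \bigcap_{k\in\Z} \calT_k \subseteq \bigcap_{k=m-1}^{m+1}\calT_k$ is immediate: by \Cref{defn: cluster variables} each cluster variable is a theta function, and by \Cref{prop: theta universal Laurent} every theta function lies in $\mathcal U = \bigcap_{k\in\Z}\calT_k$, so the same is true of every element of $\calA(P_1, P_2)$. The substantive content is the reverse inclusion $\bigcap_{k=m-1}^{m+1}\calT_k \subseteq \calA(P_1,P_2)$, which via \Cref{thm: cluster algebra four generators} reduces to the concrete claim that every $z \in \calT_{m-1}\cap\calT_m\cap\calT_{m+1}$ is a polynomial in $x_{m-1}, x_m, x_{m+1}, x_{m+2}$ with coefficients in $\rngB$. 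The structural inputs will be the exchange relations of \Cref{lem: exchange relation rk 2},
\begin{equation*}
    x_{m-1}\,x_{m+1} = \bar\zeta_m(x_m), \qquad x_m\,x_{m+2} = \bar\zeta_{m+1}(x_{m+1}),
\end{equation*}
together with the observation that both $\bar\zeta_m(x_m)$ and $\bar\zeta_{m+1}(x_{m+1})$ have unit constant term in $\rngB$ (since $\zeta_n(0) = 1$ makes $\bar\zeta_n(0)$ the unit leading coefficient of $\zeta_n$), and are therefore coprime to $x_m$ and to $x_{m+1}$, respectively, in the UFDs $\rngB[x_m]$ and $\rngB[x_{m+1}]$.

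The technical core will be a denominator-clearing lemma asserting $\calT_{m-1}\cap\calT_m = \rngB[x_{m-1}, x_m^{\pm 1}, x_{m+1}]$, together with its symmetric counterpart $\calT_m\cap\calT_{m+1} = \rngB[x_m, x_{m+1}^{\pm 1}, x_{m+2}]$. The containment $\supseteq$ is immediate from the exchange relations. For the reverse, I would take $z \in \calT_{m-1}\cap\calT_m$, write it in reduced form $z = F(x_m, x_{m+1})/(x_m^a x_{m+1}^b)$ with $F \in \rngB[x_m, x_{m+1}]$ coprime to both $x_m$ and $x_{m+1}$ in the UFD $\rngB[x_m, x_{m+1}]$, and substitute $x_{m+1} = \bar\zeta_m(x_m)/x_{m-1}$ to produce
\begin{equation*}
    z\cdot x_m^a\,\bar\zeta_m(x_m)^b \;=\; \sum_{k \geq 0} F_k(x_m)\,\bar\zeta_m(x_m)^k\,x_{m-1}^{b-k},
\end{equation*}
where $F = \sum_k F_k(x_m)\,x_{m+1}^k$. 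Since the left-hand side lies in $\rngB[x_{m-1}^{\pm 1}, x_m^{\pm 1}]$ and $\bar\zeta_m(x_m)$ is coprime to both $x_{m-1}$ and $x_m$ in the UFD $\rngB[x_{m-1}, x_m]$, comparing $\bar\zeta_m$-adic valuations forces $\bar\zeta_m(x_m)^{b-k} \mid F_k(x_m)$ for each $0 \leq k < b$. Applying $\bar\zeta_m(x_m) = x_{m-1}x_{m+1}$ to absorb these divisors rewrites the offending summands with only nonnegative powers of $x_{m+1}$ (and positive powers of $x_{m-1}$), yielding $z \in \rngB[x_{m-1}, x_m^{\pm 1}, x_{m+1}]$.

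To conclude, I would combine the two lemmas and run the same divisibility machinery once more to eliminate the last denominator. Writing $z = G(x_{m-1}, x_m, x_{m+1})/x_m^a$ in reduced form with $x_m \nmid G$ (from the first lemma), and now exploiting $z \in \calT_{m+1}$ via the second exchange relation $x_m = \bar\zeta_{m+1}(x_{m+1})/x_{m+2}$, a third iteration of the UFD argument will show that $\bar\zeta_{m+1}(x_{m+1})^{a}$ divides the appropriate coefficients of $G$ when expanded as a polynomial in $x_m$; substituting $\bar\zeta_{m+1}(x_{m+1}) = x_m x_{m+2}$ then clears the remaining $x_m^a$ in the denominator and places $z$ inside $\rngB[x_{m-1}, x_m, x_{m+1}, x_{m+2}]$, completing the proof. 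The main obstacle I anticipate is the bookkeeping in these three UFD arguments: I must verify that minimality of each reduced form genuinely forces the stated coefficient-wise divisibility at every power of the complementary cluster variable — especially when $\bar\zeta_n$ has repeated irreducible factors — and that the telescoping substitutions $\bar\zeta_n = x_{n-1}x_{n+1}$ actually cancel the denominators rather than merely relocate them.
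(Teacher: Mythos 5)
Your reduction to \Cref{thm: cluster algebra four generators} and the easy inclusions are fine, and your two ``denominator-clearing'' lemmas constitute a genuinely different decomposition from the paper's, which instead writes $\calT_m = \rngB[x_m^{-1},x_{m+1}^{-1}]+\rngB[x_m,x_{m+1}^{\pm 1}]+\rngB[x_m^{\pm 1},x_{m+1}]$ and intersects each summand with $\calT_{m-1}\cap\calT_{m+1}$. Your argument for $\calT_{m-1}\cap\calT_m\subseteq\rngB[x_{m-1},x_m^{\pm 1},x_{m+1}]$ is sound: in $\calT_{m-1}=\rngB[x_{m-1}^{\pm 1},x_m^{\pm 1}]$, comparing $x_{m-1}$-coefficients in $z\,x_m^a\bar\zeta_m(x_m)^b=\sum_k F_k(x_m)\bar\zeta_m(x_m)^k x_{m-1}^{b-k}$ gives $F_k = z_{b-k}(x_m)\,x_m^a\,\bar\zeta_m(x_m)^{b-k}$, and since $\bar\zeta_m(0)\in\mathbb P$ is a unit, $\bar\zeta_m(x_m)$ is coprime to $x_m$ in the UFD $\rngB[x_m]$, which upgrades the divisibility from $\rngB[x_m^{\pm 1}]$ to $\rngB[x_m]$; substituting $\bar\zeta_m(x_m)=x_{m-1}x_{m+1}$ then trades negative $x_{m+1}$-powers for positive $x_{m-1}$-powers as you say. (The coprimality normalization on $F$ is actually not used; any expression $z=F/(x_m^ax_{m+1}^b)$ works.) The symmetric lemma is identical.

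The gap is the combining step. After the first lemma, $z$ is expressed with $x_{m-1}$ present and possibly negative powers of $x_m$, and you propose to clear the latter by ``a third iteration of the UFD argument,'' expanding $G$ in powers of $x_m$ and claiming $\bar\zeta_{m+1}(x_{m+1})^{a}$ divides the appropriate coefficients. But your two lemmas produce divisibility statements in \emph{different coordinate systems}: the second lemma constrains the Laurent coefficients $\psi_i(x_{m+1})$ of $z$ in the basis $\{x_m^i\}$ inside $\calT_m$, whereas the $x_m$-coefficients $G_j$ of your $G$ live in $\rngB[x_{m-1},x_m,x_{m+1}]$, a quotient of the polynomial ring by $x_{m-1}x_{m+1}-\bar\zeta_m(x_m)$, and are neither canonical nor elements of $\rngB[x_{m+1}]$. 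Translating $G_j$ into $\psi_i$ requires expanding $x_{m-1}^p\mapsto\bar\zeta_m(x_m)^p\,x_{m+1}^{-p}$, which smears a single $G_j$ across $x_m$-degrees $j,\dots,j+p\deg\bar\zeta_m$, so $\bar\zeta_{m+1}^{-i}\mid\psi_i$ becomes a coupled constraint across many $j$ rather than a termwise divisibility of $G_j$. In particular a lone monomial $\alpha\,x_{m-1}^p x_m^{j-a}$ with $\alpha\in\rngB$, $0\le j<a$, cannot be rewritten via $x_m^{-1}=x_{m+2}/\bar\zeta_{m+1}(x_{m+1})$, since $\bar\zeta_{m+1}^{a-j}$ cannot divide a nonzero element of $\rngB$; the cancellation you need is global. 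You flag this as ``bookkeeping,'' but it is the substantive core of the theorem and is exactly what the paper's finer, summand-by-summand decomposition of $\calT_m$ is designed to resolve --- including the degenerate regime $\ell_1\ell_2\le 1$, where, as in \eqref{eq: ring equality}, genuinely new generators such as $x_{m-1}x_{m+2}$ appear. To close the gap, either adopt the paper's decomposition or carry out a direct simultaneous analysis of the $x_m^ix_{m+1}^j$-coefficients of $z$ subject to both the row and column divisibility constraints.
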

\begin{proof}
   By \Cref{thm: cluster algebra four generators}, it is enough to show that
  $$\rngB[x_{k-1}, x_{k}, x_{k+1}, x_{k+2}] = \calT_{m-1} \cap \calT_m \cap \calT_{m+1}\,.$$
  The containment of $\rngB[x_{k-1}, x_{k}, x_{k+1}, x_{k+2}]$ into $\calT_{m-1} \cap \calT_m \cap \calT_{m+1}$ follows from \Cref{prop: theta universal Laurent}. We show the other containment by taking the decomposition 
  $$\calT_m = \rngB[x_{m}^{-1}, x_{m+1}^{-1}] + \rngB[x_{m}, x_{m+1}^{\pm1}] + \rngB[\cpv_{m}^{\pm1}, \cpv_{m+1}]$$
  and proving containment of the intersection of each summand with $\calT_{m-1} \cap \calT_{m+1}$.

  We begin by establishing the following:
  \begin{equation}\label{eq: ring equality} \calT_{m-1} \cap \rngB[x_{m}^{-1}, x_{m+1}^{-1}] \cap \calT_{m+1} = \begin{cases} \rngB & \text{ if } \ell_1\ell_2 \geq 2\\ \rngB[x_{m-1}x_{m+2}] & \text{ if } \ell_1 = \ell_2 = 1\\ \rngB[x_{m-1},x_{m-1}^{\ell_2}x_{m+2}] & \text{ if } \ell_1 = 0\\ \rngB[x_{m-1}x_{m+2}^{\ell_1}, x_{m+2}] &\text{ if $\ell_2 = 0$}\end{cases}\end{equation}

  Without loss of generality, we assume $m \equiv 1 \mod 4$.  In order to simplify the exchange relations, we often work with the scaled cluster pre-variables rather than cluster variables due to \Cref{lem: scaled cluster var}.

   We first show that the ring on the right of \eqref{eq: ring equality} is contained in the intersection on the left of $\eqref{eq: ring equality}$. By \Cref{prop: theta universal Laurent}, each ring on the right is contained in $\calT_{m-1} \cap \calT_{m+1}$, so it remains to show containment in $\rngB[x_m^{-1},x_{m+1}^{-1}]$.  This containment is clear for $\ell_1\ell_2 \geq 2$.  When $\ell_1 = \ell_2 = 1$, we have
  $$\cpv_{m-1}\cpv_{m+2} = P_1(\cpv_m)\cpv_{m+1}^{-1}P_2(\cpv_{m+1})\cpv_m^{-1} \in \rngB[x_m^{-1},x_{m+1}^{-1}]\,.$$
  When $\ell_1 = 0$ or $\ell_2 = 0$, where without loss of generality we assume the former, we have $\cpv_{m-1} = \cpv_{m+1}^{-1}$ and
  $$\cpv_{m-1}^{\ell_2}\cpv_{m+2} = \cpv_{m+1}^{-\ell_2}P_2(\cpv_{m+1})\cpv_m^{-1} = p_{2,\ell_2}\overline{P_2}(\cpv_{m+1}^{-1})\cpv_{m}^{-1} \in \rngB[x_m^{-1},x_{m+1}^{-1}]\,.$$
    In each of these cases, we can now readily see that the generators of the ring on the right of \eqref{eq: ring equality} are contained in $\rngB[x_m^{-1},x_{m+1}^{-1}]$.

  We now handle the other containment of \eqref{eq: ring equality} case-by-case. Consider $y \in \rngB[x_m^{-1},x_{m+1}^{-1}]$, and express $y$ as 
  $$y = \sum_{s,t \geq 0} c_{s,t}\cpv_m^{-s}\cpv_{m+1}^{-t}\,,$$
  with finitely many nonzero coefficients $c_{s,t} \in \rngB$. We then have
  $$y = \sum_{t \geq 0} \frac{\sum_{s \geq 0} c_{s,t}\cpv_m^{-s}}{P_1^t(\cpv_m)} \cpv_{m-1}^t = \sum_{t \geq 0} \frac{\sum_{s \geq 0} c_{s,t}\cpv_m^{-s}}{\overline{P_1}^t(\cpv_m^{-1})} p_{1,\ell_1}^t \cpv_m^{\ell_1 t} \cpv_{m-1}^t\,.$$
  If $y \in \calT_{m-1}$, then for all $t \geq 0$ we have $\frac{\sum_{s \geq 0} c_{s,t}\cpv_m^{-s}}{\overline{P_1}^t(\cpv_m^{-1})} p_{1,\ell_1}^t \in \rngB[x_m^{-1}]$, where we denote this polynomial by $C_t(x_m^{-1})$. So we can now express $y$ as 
  \begin{align}\label{eq: first y expansion} y &= \sum_{t \geq 0} C_t(\cpv_m^{-1})\cpv_m^{-\ell_1 t}\cpv_{m-1}^t\\ \nonumber &= \sum_{t \geq 0} C_t(\cpv_m^{-1})P_1(\cpv_m)^t\cpv_m^{-\ell_1 t}\cpv_{m+1}^{-t}\\ \nonumber &= \sum_{t \geq 0} C_t(\cpv_m^{-1})p_{1,\ell_1}^t\overline{P_1}^t(\cpv_m^{-1})\cpv_{m+1}^{-t}\,.\end{align}
  Similarly, if $y \in \calT_{m+1}$, we have
  \begin{equation} \label{eq: second y expansion} y = \sum_{t \geq 0} C'_s(\cpv_{m+1}^{-1})\cpv_{m+1}^{-\ell_2 s} \cpv_{m+2}^{s} = \sum_{t \geq 0} C'_s(\cpv_{m+1}^{-1})p_{2,\ell_2}^s\overline{P_2}^s(\cpv_{m+1}^{-1}) \cpv_{m}^{s} \end{equation}
  for some polynomials $C_s'(\cpv_{m+1}^{-1}) \in \rngB[x_{m+1}^{-1}]$.

  From this point, the proof follows essentially verbatim from Rupel's work \cite[Theorem 2.5]{Rupgengreed}, but we include the remainder of the argument here for completeness.

  If $\atwo = 0$, then $\cpv_m^{-1} = \cpv_{m+2}$ and \eqref{eq: first y expansion} becomes
  $$y = \sum_{t \geq 0} C_t(\cpv_{m+2})\cpv_{m+2}^{\ell_1 t} \cpv_{m-1}^t \in \rngB[x_{m-1}x_{m+2}^{\ell_1}, x_{m+2}]\,.$$

  Similarly, if $\aone = 0$ then 
  $$y = \sum_{s \geq 0} C_s'(\cpv_{m-1})\cpv_{m-1}^{\ell_2 s} \cpv_{m+2}^s \in \rngB[x_{m-1},x_{m-1}^{\ell_2} x_{m+2}]\,.$$

  We now consider when $\ell_1\ell_2 \neq 0$. Let $\sigma(t)$ be maximal such that $c_{\sigma(t),t} \neq 0$, and using the last equality in \eqref{eq: first y expansion} we must have $\sigma(t) \geq \ell_1 t$. Similarly, let $\tau(s)$ maximal such that $c_{s,\tau(s)} \neq 0$, and we have $\tau(s) \geq \ell_2 s$. 

  Suppose $t$ is maximal such that $c_{s,t} \neq 0$ for some choice of $s$. Then $c_{\sigma(t),\tau(\sigma(t)))} \neq 0$, and $\tau(\sigma(t)) \geq \ell_2 \sigma(t) \geq \ell_1 \ell_2 t$. For $\ell_1 \ell_2 \geq 2$ and $t > 0$ this implies $\tau(\sigma(t)) > t$, which contradicts the maximality of $t$. Thus we must have $t = 0$ and, similarly, $s = 0$ when $\ell_1 \ell_2 \geq 2$. Hence $y \in \rngB$.

  Now suppose $\ell_1 = \ell_2 = 1$. Then we have $\tau(\sigma(t)) \geq \sigma(t) \geq t$, and by the maximality of $t$ these are equalities. Using induction on the maximal value $t$ such that $c_{t,t}\neq 0$, we can see that $y \in \rngB[x_{m-1}x_{m+2}]$. The base case $t = 0$ is clear. When $t > 0$, the element $y - c_{t,t}\cpv_{m-1}^t\cpv_{m+2}^t$ is contained in $\rngB[x_{m-1}x_{m+2}]$, hence $y$ is also contained in $\rngB[x_{m-1}x_{m+2}]$. Thus \eqref{eq: ring equality} is proved.

  It then remains to show the equality
  $$\calT_{m-1} \cap \rngB[x_m,x_{m+1}^{\pm 1}] = \rngB[x_{m-1},x_m,x_{m+1}] \subset \calT_{m+1}\,.$$
  The inclusion of $\rngB[x_{m-1},x_m,x_{m+1}]$ into $\calT_{m-1} \cap \rngB[x_m,x_{m+1}^{\pm 1}]$ is clear. For the other inclusion, note that any $y \in \rngB[x_m,x_{m+1}^{\pm 1}]$ can be written in the form
  $$y = \sum_{t = -N}^N c_tx_{m+2}^t$$
  for some positive integer $N$ and coefficients $c_t \in \rngB[x_m]$. We then have
  $$y = \sum_{t = 0}^N c_t P_1^t(\cpv_m) \cpv_{m-1}^{-t} + \sum_{t = 1}^N \frac{c_{-t}}{P_1^t(\cpv_m)} \cpv_{m-1}^t\,.$$
  If $y \in \calT_{m-1}$, then we must have $\frac{c_{-t}}{P_1^t(\cpv_m)} \in \rngB[x_m^{\pm 1}]$ for all $1 \leq t \leq N$. Since $P_1(\cpv_m)$ has constant term $1$, this implies $\frac{c_{-t}}{P_1^t(\cpv_m)} \in \rngB[x_m]$. Thus we have
  $$y = \sum_{t = 0}^N c_t \cpv_{m+1}^t + \sum_{t = 1}^N \frac{c_{-t}}{P_1^t(\cpv_m)} \cpv_{m-1}^t \in \rngB[x_{m-1},x_m,x_{m+1}]\,.$$
  We can similarly obtain $\rngB[x_m^{\pm 1},x_{m+1}] \cap \calT_{m+1} = \rngB[x_m,x_{m+1},x_{m+2}] \subset \calT_{m-1}$. 
  Combining these inclusions yields
  \begin{align*}
    \calT_{m-1} \cap \calT_m \cap \calT_{m+1} &= \calT_{m-1} \cap \left(\rngB[x_{m}^{-1}, x_{m+1}^{-1}] + \rngB[x_{m}, x_{m+1}^{\pm1}] + \rngB[x_{m}^{\pm1}, x_{m+1}] \right) \cap \calT_{m+1}\\
    &= \calT_{m-1} \cap \rngB[x_{m}^{-1}, x_{m+1}^{-1}] \cap \calT_{m+1} + \rngB[x_{m-1}, x_{m}, x_{m+1}]\\
    &\qquad+ \rngB[x_{m}, x_{m+1}, x_{m+2}]\\
    &\subset \rngB[x_{k-1}, x_{k}, x_{k+1}, x_{k+2}]\,,
  \end{align*}
  as desired.
\end{proof}

\subsection{Proof of the compatible grading formula}\label{subsec: proof of cg formula}

This section closely follows the approach of Rupel in \cite{Rupgengreed}, with necessary adjustments made to extend the argument to the case when $P_1$ and $P_2$ are not palindromic. In the compatible grading formula given by \Cref{thm: generalized greedy compatible gradings}, the horizontal edges of the Dyck path are weighted by the coefficients of $P_2$ and the vertical edges are weighted by the coefficients of $P_1$.  We introduce auxiliary variants of this weighting, replacing $P_i$ with $\overline{P_i}$.

\begin{defn}
  For a grading $\omega:E(\aone ,\atwo )\rightarrow\mathbb{Z}_{\ge0}$, we define
  \begin{align*}
    \text{wt}'(\omega) &=\prod_{i=1}^{\aone} p_{2,\ell_2 - \omega(u_i)} \prod_{j=1}^{\atwo} p_{1, \omega(v_j)}\in \Bbbk[\mathbb Z^{\ell_1}\oplus \mathbb Z^{\ell_2}]\,, \text{ and}\\
    \text{wt}''(\omega) &=\prod_{i=1}^{\aone} p_{2,\omega(u_i)} \prod_{j=1}^{\atwo} p_{1,\ell_1 -\omega(v_j)} \in \Bbbk[\mathbb Z^{\ell_1}\oplus \mathbb Z^{\ell_2}]\,.
  \end{align*}
\end{defn}

We need to introduce the following auxiliary variants of the generalized greedy elements.

\begin{defn}
We define $x'[\aone,\atwo ] \coloneqq x_1^{-\aone }x_2^{-\atwo }\sum_{(S_1,S_2)} \wt'(\omega) x_1^{\omega(\calP_\north)}x_2^{\omega(\calP_\east)}$, and similarly let $x''[\aone,\atwo ] \coloneqq x_1^{-\aone }x_2^{-\atwo }\sum_{(S_1,S_2)} \wt''(\omega) x_1^{\omega(\calP_\north)}x_2^{\omega(\calP_\east)}$.
\end{defn}

We now consider two automorphisms of the field of rational functions $\calF$.  These maps are defined by their behavior on a generating set and given by $\sigma_1: (x_1,x_2) \mapsto (x_1,x_0)$ and $\sigma_2: (x_1,x_2) \mapsto (x_3,x_2)$.  In the case that $P_1$ and $P_2$ are palindromic, these are the usual reflection automorphisms $\sigma_j$, where $\sigma_j(x_m) = x_{2j-m}$.  We show that applying these automorphisms to a varient of some generalized greedy element results in a ``mutated'' generalized greedy element.

\begin{prop}[{following \cite[Proposition 2.17]{Rupgengreed}}]\label{prop: reflection greedy}  For any $(\aone, \atwo) \in \Z^2$ the elements $x[\aone,\atwo]$ defined by \eqref{eq: greedy expansion} satisfy  
$$\sigma_1(x'[\aone ,\atwo ]) = x[\aone, \ell_1[\aone ]_+ - \atwo ] \qquad \text{ and } \qquad\sigma_2(x''[\aone ,\atwo ]) = x[\ell_2[\atwo ]_+ - \aone, \atwo]\,.$$
\end{prop}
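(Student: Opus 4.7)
Our strategy follows Rupel's argument in the palindromic case \cite[Proposition 2.17]{Rupgengreed}, with the observation that the $\wt'$ and $\wt''$ weightings are precisely engineered to accommodate the asymmetry of $P_1$ and $P_2$ in the non-palindromic setting. First, we reduce to proving the first identity: the definitions of $x'[d_1,d_2]$ and $x''[d_1,d_2]$ are interchanged under the exchange of subscripts $1\leftrightarrow 2$, under which $\sigma_1$ becomes $\sigma_2$ and the target $x[d_1, \ell_1[d_1]_+ - d_2]$ becomes $x[\ell_2[d_2]_+ - d_1, d_2]$. Second, we dispose of the boundary cases where $d_1 \leq 0$ or $d_2 \leq 0$, in which both sides admit closed forms via \eqref{eq: cpq easy case} and the identity can be checked by direct expansion of $\sigma_1$.

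For the main case $d_1, d_2 > 0$, I would apply $\sigma_1$ to the definition of $x'[d_1,d_2]$ and use $x_0 x_2 = P_1(x_1)$ to obtain
\[
    \sigma_1(x'[d_1,d_2]) \;=\; x_1^{-d_1} x_2^{d_2} \sum_\omega \wt'(\omega) \, x_1^{\omega(\calP_\north)} x_2^{-\omega(\calP_\east)} \, P_1(x_1)^{\omega(\calP_\east) - d_2},
\]
where the sum ranges over compatible gradings on $\calP(d_1, d_2)$. The goal is to identify this with the compatible grading expansion of $x[d_1, \ell_1 d_1 - d_2]$ on $\calP(d_1, [\ell_1 d_1 - d_2]_+)$. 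The plan is to exhibit a weight-preserving bijection: given $\omega$ on $\calP(d_1, d_2)$ with $\wt'$ weight, together with a choice of monomial term from the expansion of $P_1(x_1)^{\omega(\calP_\east) - d_2}$, one manufactures a compatible grading $\omega'$ on $\calP(d_1, \ell_1 d_1 - d_2)$ with weight $\wt(\omega')$. The construction will follow Rupel's palindromic recipe: contract or extend the horizontal portion of the Dyck path, and redistribute the weight according to the shadows of the vertical edges in the new path.

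The principal obstacle is the non-palindromic asymmetry. In Rupel's setting $P_i = \overline{P_i}$, so that substituting $\omega(u_i)$ by $\ell_2 - \omega(u_i)$ (which is implicit in the $\wt'$ weighting) leaves the weight unchanged. When this symmetry fails, the $\wt'$ convention is exactly what allows the monomials produced by expanding $P_1(x_1)^{\omega(\calP_\east) - d_2}$ to pair with the modified horizontal weights $p_{2,\ell_2 - \omega(u_i)}$ and reconstitute the usual $\wt$ weighting on the larger Dyck path. Making this cancellation precise, when $\omega(\calP_\east) < d_2$ and the factor $P_1^{\omega(\calP_\east)-d_2}$ is a genuine negative power expanded via the coefficients $\varsigma_{m,n}$ from \Cref{defn: pi zeta}, will require careful bookkeeping: the negative-index contributions must telescope against certain sub-gradings in a way that has no counterpart in the palindromic case. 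This combinatorial matching is the main novelty and is expected to occupy the bulk of the proof.
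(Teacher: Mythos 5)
Your reduction by symmetry and your treatment of the degenerate cases ($d_1 \leq 0$ or $d_2 \leq 0$) align with the paper, and you have correctly identified that the $\wt'$, $\wt''$ weightings are the device that makes the identity hold when $P_1$, $P_2$ are not palindromic. However, your plan for the main case $d_1, d_2 > 0$ is more labor-intensive than necessary and, as written, contains a misdiagnosis of where the difficulty lies.

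The paper does \emph{not} construct a new bijection. It restricts the sum to the set $\calC^{\sh}_{\aone,\atwo}$ of compatible gradings with $S_\east \subseteq \sh(S_\north)$, then invokes the map $\theta$ already built by Rupel together with Rupel's \cite[Lemma~4.4]{Rupgengreed} and \cite[Proposition~4.20]{Rupgengreed}; the only genuinely new observation needed is the one-line weight identity $\wt(\theta(\omega)) = \wt''(\omega)$, which is immediate from the definition of $\wt''$. Your claim that the negative powers of $P_1$ (resp. $P_2$) produce ``negative-index contributions \dots\ in a way that has no counterpart in the palindromic case'' is incorrect: those negative powers already occur in Rupel's palindromic argument, and the combinatorial identity that absorbs them (redistributing the expansion of $P_2(x_2)^{\omega(\calP_\north)-d_1}$ across shadow sub-paths) is exactly Rupel's Proposition~4.20, unchanged. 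The non-palindromic asymmetry enters only through the scalars attached to edges, and that is precisely what $\wt''$ records. So the ``careful bookkeeping'' you anticipate has already been done; you should cite it, not redo it.

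As a concrete gap: your proposal stops at ``the construction will follow Rupel's palindromic recipe'' without establishing the bijection or verifying that it is weight-preserving for the modified weights. To complete the argument you would need either (i) to reduce the sum to shadowed gradings and cite Rupel's Lemma~4.4 and Proposition~4.20, noting $\wt(\theta(\omega)) = \wt''(\omega)$, or (ii) to re-prove Rupel's combinatorial lemmas from scratch in the non-palindromic weighting. Option (i) is a few lines; option (ii) would be a substantial and unnecessary detour.
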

\begin{proof}
By symmetry, it is enough to establish the second identity. We do so by handling several cases.

 First, suppose $\aone ,\atwo \leq 0$. Then $\calP([\aone ]_+,[\atwo ]_+) = \calP(0,0)$ is a single point, so we have 
  $$\sigma_2(x''[\aone ,\atwo ]) = \sigma_2(x_1^{-\aone }x_2^{-\atwo }) = x_3^{-\aone }x_2^{-\atwo } = x_1^{\aone }x_2^{-\atwo }P_2(x_2)^{-\aone }\,.$$
  We can also directly compute $x[\ell_2[\atwo ]_+ - \aone, \atwo] = x[-\aone , \atwo]$ using \Cref{thm: generalized greedy compatible gradings}. Since $\calP_{[-\aone ]_+,[\atwo ]_+} = \calP_{-\aone ,0}$, we have $x[-\aone , \atwo] = x_1^{\aone }x_2^{-\atwo }P_2(x_2)^{-\aone }$, as desired.
  
  Next, suppose $\aone  \leq 0 < \atwo $. Then $\calP([\aone ]_+,[\atwo ]_+)$ consists of $\atwo $ vertical edges. Similar to the previous case, we have $x''[\aone ,\atwo ] = x_1^{-\aone }x_2^{-\atwo }p_{1,\ell_1}^{\atwo} \overline{P_1}(x_1)^{\atwo }$. Applying $\sigma_2$, we have
  \begin{align*}
  \sigma_2(x''[\aone ,\atwo ]) &= x_3^{-\aone }x_2^{-\atwo }p_{1,\ell_1}^{\atwo} \overline{P_1}(x_3)^{\atwo }\\
  &= x_1^{\aone }x_2^{-\atwo }p_{1,\ell_1}^{\atwo}  P_2(x_2)^{-\aone }\overline{P_1}(P_2(x_2)x_1^{-1})^{\atwo }
  \end{align*}
  Note that every vertical edge in $\calD_{\ell_2a_2-\aone ,\atwo }$ has at least $\ell_2$ vertical edges in front of it. Thus any compatible grading with vertical weights bounded by $\ell_2$ can be constructed by taking a set of vertical edges and any horizontal edges not contained in the shadow of the vertical edges. Focusing just a vertical edge and the edges in its shadow, which is identical to $\calD_{\ell_2,1}$, we can see that 
  $$\sum_{\omega \in \calC_{\ell_2,1}} \wt(\omega) x_1^{\omega(\calP_\north)}x_2^{\omega(\calP_\east)} = p_{1,\ell_1}\overline{P_1}(P_2(x_2)x_1^{-1}) x_1^{\ell_2}\,.$$ We can view compatible gradings on $\calD_{\ell_2\atwo -\aone ,\atwo }$ as a collection of $\atwo $ compatible gradings on $\calD_{\ell_2,1}$ along with compatible gradings on $-\aone $ additional horizontal edges. So, using the compatible grading definition, we have
  \begin{align*}x[\ell_2\atwo - \aone, \atwo ] &= x_1^{-\ell_2\atwo+ \aone}x_2^{-\atwo } \left(p_{1,\ell_1}\overline{P_1}(P_2(x_2)x_1^{-1}) \right)^{\atwo } P_2(x_2)^{-\aone }\\
  &= x_1^{\aone }x_2^{-\atwo }p_{1,\ell_1}^{\atwo} P_2(x_2)^{-\aone }\overline{P_1}(P_2(x_2)x_1^{-1})^{\atwo }\,.
  \end{align*}

  Lastly, suppose $0 < \aone,\atwo $. Then we have $\displaystyle x''[\aone ,\atwo ] = x_1^{-\aone }x_2^{-\atwo } \sum_{\omega \in \calC_{\aone ,\atwo }} \wt''(\omega) x_1^{\omega(\calP_\north)}x_2^{\omega(\calP_\east)}$. Hence
  \begin{align*}\sigma_2(x''[\aone ,\atwo ]) &= x_3^{-\aone }x_2^{-\atwo } \sum_{\omega \in \calC_{\aone ,\atwo }} \wt''(\omega) x_3^{\omega(\calP_\north)}x_2^{\omega(\calP_\east)}\\
  &= x_1^{-\ell_2\atwo  + \aone}x_2^{-\atwo } \sum_{\omega \in \calC_{\aone ,\atwo }} \wt''(\omega) x_1^{\ell_2 \atwo - \omega(\calP_\north)} P_2(x_2)^{\omega(\calP_\north) - \aone}x_2^{\omega(\calP_\east)}\,.
  \end{align*}
  
  Let $\calC^{\sh}_{\aone,\atwo}$ denote the set of compatible gradings on $\calP(\aone,\atwo)$ such that $S_\east \subseteq \sh(S_\north)$, where $S$ is the set of edges with nonzero value in the grading.   Given a grading $\omega \in \calC^{\sh}_{\aone,\atwo}$, define the grading $\theta(\omega) \coloneqq (\Omega(S_\east), \phi_{\ell_2}^*S_\north)$ on $\calD_{\ell_2\atwo -\aone ,\atwo }$ as in \cite[Section 4.2]{Rupgengreed}. By definition, we have $\wt(\theta(\omega)) = \wt''(\omega)$.  Then by \cite[Lemma 4.4 and Proposition 4.20]{Rupgengreed}, we have
  \begin{align*}
       x[\ell_2\atwo -\aone ,\atwo] &=  x_1^{-\ell_2 \atwo + \aone} x_2^{-\atwo}\sum_{\omega' \in \calC_{[\ell_2\atwo -\aone]_+,\atwo}} \wt(\omega') x_1^{\omega'(\calP_\north)}x_2^{\omega'(\calP_\east)}\\
       &=  x_1^{-\ell_2 \atwo + \aone} x_2^{-\atwo} \sum_{\omega \in \calC^{\sh}_{\aone,\atwo}} \wt(\theta(\omega)) x_1^{\ell_2\atwo - \omega(\calP_\north)}x_2^{\omega(\calP_\east)} \\
      &= x_1^{-\ell_2 \atwo + \aone} x_2^{-\atwo} \sum_{\omega \in \calC_{\aone,\atwo}} P_2(x_2)^{\omega(\calP_\east) - \aone}\wt''(\omega) x_1^{\ell_2\atwo - \omega(\calP_\north)}x_2^{\omega(\calP_\east)}.
      \end{align*}
Thus in all cases, we can conclude $\sigma_2(x''[\aone,\atwo]) = x[\ell_2\atwo -\aone ,\atwo]$.
\end{proof}

\begin{cor}[{following \cite[Corollary 2.18]{Rupgengreed}}]\label{cor: gen greedy containment}  For any $(\aone, \atwo) \in \Z^2$ the generalized greedy element $x[\aone,\atwo] \in \calT_1$ defined by \eqref{eq: greedy expansion} is contained in $\calA(P_1,P_2)$.
\end{cor}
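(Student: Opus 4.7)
The plan is to combine the Strong Laurent Phenomenon \Cref{thm: strong laurent} with the reflection identities of \Cref{prop: reflection greedy}. By \Cref{thm: strong laurent} applied with $m = 1$, we have $\calA(P_1,P_2) = \calT_0 \cap \calT_1 \cap \calT_2$. The definition of $x[d_1, d_2]$ immediately exhibits it as a Laurent polynomial in $x_1, x_2$ with coefficients in $\rngB$ (only finitely many compatible gradings contribute nonzero weights, since $\wt(\omega) = 0$ whenever $\omega(u_i) > \ell_2$ or $\omega(v_j) > \ell_1$), so $x[d_1, d_2] \in \calT_1$ by construction. Thus it remains to verify containment in $\calT_0$ and $\calT_2$.

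For containment in $\calT_0$, I would set $\tilde d_2 \coloneqq \ell_1 [d_1]_+ - d_2$ and apply \Cref{prop: reflection greedy}, which yields
\[
    \sigma_1\bigl(x'[d_1, \tilde d_2]\bigr) \;=\; x\bigl[d_1,\ \ell_1[d_1]_+ - \tilde d_2\bigr] \;=\; x[d_1, d_2].
\]
The element $x'[d_1, \tilde d_2]$ is, by its defining formula, a Laurent polynomial in $x_1, x_2$ whose coefficients $\wt'(\omega)$ lie in $\rngB$ (again using that only gradings with $0 \leq \omega(u_i) \leq \ell_2$ and $0 \leq \omega(v_j) \leq \ell_1$ contribute nontrivially, so $p_{2,\ell_2 - \omega(u_i)}$ and $p_{1,\omega(v_j)}$ are bona fide coefficient monomials). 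Hence $x'[d_1, \tilde d_2] \in \calT_1$. Since $\sigma_1$ is a $\rngB$-algebra automorphism of $\calF$ sending $(x_1, x_2)$ to $(x_1, x_0)$, it carries $\calT_1 = \rngB[x_1^{\pm 1}, x_2^{\pm 1}]$ onto $\rngB[x_1^{\pm 1}, x_0^{\pm 1}] = \calT_0$. Therefore $x[d_1, d_2] \in \calT_0$.

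The same strategy, with $\tilde d_1 \coloneqq \ell_2[d_2]_+ - d_1$ and the identity $\sigma_2(x''[\tilde d_1, d_2]) = x[d_1, d_2]$, shows $x[d_1, d_2] \in \sigma_2(\calT_1) = \calT_2$. Intersecting the three Laurent rings gives $x[d_1, d_2] \in \calT_0 \cap \calT_1 \cap \calT_2 = \calA(P_1, P_2)$.

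The only potential subtlety lies in confirming that $\sigma_1, \sigma_2$ are genuine $\rngB$-automorphisms of $\calF$ that restrict to the claimed isomorphisms of Laurent rings; this is immediate once one notes that $\{x_0, x_1\}$ and $\{x_2, x_3\}$ are each algebraically independent generating sets for $\calF$ over $\operatorname{Frac}(\rngB)$, a consequence of the exchange relations $x_0 x_2 = P_1(x_1)$ and $x_1 x_3 = P_2(x_2)$ from \Cref{lem: exchange relation rk 2}. All the combinatorial heavy lifting has already been done in \Cref{prop: reflection greedy}, so the corollary itself is a short bookkeeping argument.
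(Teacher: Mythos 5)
Your proof is correct and follows exactly the same route as the paper: apply \Cref{thm: strong laurent} to write $\calA(P_1,P_2) = \calT_0 \cap \calT_1 \cap \calT_2$, note that the auxiliary elements $x'[\aone,\atwo]$ and $x''[\aone,\atwo]$ lie in $\calT_1$, and then use the reflection identities of \Cref{prop: reflection greedy} together with $\sigma_1(\calT_1)=\calT_0$ and $\sigma_2(\calT_1)=\calT_2$ to obtain the remaining containments. You simply spell out the bookkeeping that the paper leaves implicit.
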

\begin{proof}
  First note that $x'[d_1,d_2]$ and $x''[d_1,d_2]$ are contained in $\calT_1$ for any $d_1,d_2 \in \Z^2$.  By definition, for any $z \in \calT_1$, we have $\sigma_1(z) \in \calT_0$ and $\sigma_2(z) \in \calT_2$.  The result then follows directly from \Cref{thm: strong laurent} and \Cref{prop: reflection greedy}.
\end{proof}

\begin{proof}[Proof of \Cref{thm: generalized greedy compatible gradings}]
This can be proved in the same way as \cite[Theorem 2.15]{Rupgengreed}, except with the above results replacing their more specialized analogues in \cite{Rupgengreed}.  In particular, we replace references to \cite[Proposition 2.17]{Rupgengreed} with \Cref{prop: reflection greedy} and \cite[Corollary 2.18]{Rupgengreed} with \Cref{cor: gen greedy containment}, and the remainder of the argument proceeds unchanged.
\end{proof}

\section{Every wall coefficient in the Badlands is non-trivial}\label{app: Badlands nonvanishing}

As a direct application of our results, we can prove that for any rank-$2$ generalized cluster scattering diagram, every wall-function coefficient of every wall in the Badlands is strictly positive. The Badlands of the scattering diagram $\frakD \coloneqq \Scat(P_1,P_2)$ is the same region as the Badlands of $\frakD_{\ell_1,\ell_2}$.

The density of the Badlands in $\frakD_{\ell_1,\ell_2}$ was suggested by various sources, including \cite[Example 1.15]{GHKK}, \cite[Example 1.6]{GPS}, and \cite[Example 1.4]{GS}, before recently being proved by Gr{\" a}fnitz and Luo \cite{GL2023dense}. In fact, Gr{\" a}fnitz and Luo prove the stronger result that every wall-function coefficient of walls in the Badlands is strictly positive (except those that trivially vanish by the choice of $\ell_1,\ell_2$). In the skew-symmetric case, density was proven in 2010 by Gross--Pandharipande \cite{GP} and strict positivity of every wall-function coefficient was proven in 2021 by Davison--Mandel \cite{DM}. All the previous approaches use either representation theory or deformations of scattering diagrams. Using our tight grading formula, we give a lower bound on every wall-function coefficient in the Badlands of generalized scattering diagrams, which in particular establishes strict positivity. As a consequence, we recover the results of \cite{GP, DM, GL2023dense} using elementary combinatorics.

\begin{thm}\label{thm: Badlands coefficients bound}
Suppose the coefficients of $P_1$ and $P_2$ are contained in $\R_{\geq 0}$. Fix $a,b$ coprime such that the wall of slope $\frac{b}{a}$ is in the Badlands of $\frakD \coloneqq \Scat(P_1,P_2)$, and fix $k \in \Z_{> 0}$ such that $\ell_1$ divides $ka$ and $\ell_2$ divides $kb$. Then the wall-function coefficient $\lambda(ka,kb)$ in $\frakD$ is at least $k\cdot p_{1,\ell_1}^{ka/\ell_1} p_{2,\ell_2}^{kb/\ell_2}$.
\end{thm}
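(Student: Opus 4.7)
The plan is to apply the tight grading formula (\Cref{thm: power shadow grading formula} in the case $m=1$, cf.\ \Cref{rem: tight grading case}) to rewrite $\lambda(ka,kb)$ as a weighted count of tight gradings on a suitably chosen Dyck path $\mathcal{P}(d_1,d_2)$, and then to exhibit $k$ distinct tight gradings each carrying the weight monomial $p_{1,\ell_1}^{ka/\ell_1}p_{2,\ell_2}^{kb/\ell_2}$.

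First I would reduce the problem. Since the coefficients of $P_1,P_2$ are assumed nonnegative, every $\mathrm{wt}(\omega)$ appearing in the tight grading formula is a nonnegative monomial in the $p_{i,j}$, and the target monomial $p_{1,\ell_1}^{ka/\ell_1}p_{2,\ell_2}^{kb/\ell_2}$ arises exactly from tight gradings that assign value $\ell_1$ to precisely $ka/\ell_1$ vertical edges, value $\ell_2$ to precisely $kb/\ell_2$ horizontal edges, and $0$ elsewhere (note that such counts are integers by the divisibility hypotheses). It therefore suffices to produce $k$ distinct such gradings. I would choose $(a',b')\in\Z_{>0}^2$ with $ab'-a'b=1$ and $a'$ large enough that \Cref{cor: vectors one-bending}(2) guarantees the one-bending property, and then set $(d_1,d_2):=(ka+a',kb+b')$; this yields $ad_2-bd_1=1$ together with $d_1\geq ka$ and $d_2\geq kb$, so the tight grading formula applies on $\mathcal{P}(d_1,d_2)$.

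Next I would construct the desired collection of tight gradings. Building on the patterns visible in \Cref{exmp: affine 2 2 gradings} and \Cref{exmp: affine 4 1 gradings}, the idea is to place a canonical ``block'' configuration of $ka/\ell_1$ weighted vertical edges and $kb/\ell_2$ weighted horizontal edges at the start of $\mathcal{P}(d_1,d_2)$, producing a base grading $\omega^{(0)}$, and then generate $\omega^{(1)},\dots,\omega^{(k-1)}$ by translating this block configuration cyclically along the path in increments associated to the primitive vector $(a,b)$. The slack $(a',b')=(d_1-ka,d_2-kb)$ supplies the room required for these $k$ translations, and distinctness is clear because distinct shifts support weight on distinct edges.

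The main obstacle is the verification that (i) $\omega^{(0)}$ is shadowed — hence tight, given the constraints $d_1\geq ka$, $d_2\geq kb$, and $|ad_2-bd_1|=1$ — and (ii) each cyclic shift preserves the shadow property. Both reduce to a modular-arithmetic check on the Christoffel-word structure of $\mathcal{P}(d_1,d_2)$: one must show that for every weighted horizontal edge $u$ of value $\ell_2$, the shadow $\mathrm{sh}(u)$ consists of exactly $\ell_2$ weighted vertical edges, and symmetrically for vertical edges, using the divisibility $\ell_1\mid ka$ and $\ell_2\mid kb$ together with the mediant-type decomposition $(d_1,d_2)=k(a,b)+(a',b')$. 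Once these combinatorial checks are carried out, the $k$ gradings give $\lambda(ka,kb)\geq k\cdot p_{1,\ell_1}^{ka/\ell_1}p_{2,\ell_2}^{kb/\ell_2}$ in the partial order on polynomials with nonnegative coefficients.
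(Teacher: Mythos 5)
Your overall strategy matches the paper's: reduce $\lambda(ka,kb)$ to a weighted count of tight gradings on a suitable $\calP(d_1,d_2)$ via \Cref{thm: power shadow grading formula}, then exhibit $k$ distinct tight gradings, each of weight $p_{1,\ell_1}^{ka/\ell_1}p_{2,\ell_2}^{kb/\ell_2}$, obtained by cyclically shifting a single ``block'' configuration. The setup of $(d_1,d_2)$ via \Cref{cor: vectors one-bending}(2) and the reduction to counting the gradings with that particular weight monomial are fine.

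The gap is in your last paragraph. You state that verifying $\omega^{(0)}$ is shadowed and that each shift preserves shadowedness ``reduce to a modular-arithmetic check on the Christoffel-word structure'' using only the divisibilities $\ell_1\mid ka$, $\ell_2\mid kb$ and the decomposition $(d_1,d_2)=k(a,b)+(a',b')$. This cannot be right as stated, because nowhere do you use the hypothesis that $(a,b)$ lies in the Badlands, and the conclusion is false without it. For instance take $\ell_1=\ell_2=2$ and $(a,b)=(1,2)$: then $\lambda(2,4)=p_{1,2}p_{2,2}^2$, not at least $2\,p_{1,2}p_{2,2}^2$, even though all your stated divisibilities hold (with $k=2$). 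The Badlands hypothesis must enter precisely in the compatibility check: the paper verifies non-overlap of the red and blue rectangles by an explicit line-intersection computation whose sign is controlled by $\ell_2\alpha^2-\ell_1\ell_2\alpha+\ell_1<0$ for $\alpha=d_1/d_2$, i.e.\ by $\alpha$ lying strictly between the two roots, which are the slopes of the Badlands boundary. In your setup this would force you to pick $a'$ large enough so that $d_1/d_2$ is also inside the Badlands, a constraint you do not impose and which is not automatic from the one-bending requirement.

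You also silently skip the case where $(a,b)$ lies on the boundary of the Badlands. When $\ell_1\ell_2>4$ the boundary has irrational slope so there is nothing to do, but when $\ell_1\ell_2=4$ the central ray is on the boundary and the line-intersection inequality is no longer strict; the paper deals with this by appealing to the explicit tight gradings constructed in \Cref{exmp: affine 2 2 gradings} and \Cref{exmp: affine 4 1 gradings}. Finally, your description of the base grading as having both blocks ``at the start'' does not match the layout that actually works; as in the paper (and the affine examples), the horizontal block sits at one end and the vertical block sits at the opposite end, with the cyclic shifts redistributing the intervening slack. The approach is the right one, but as written it conceals the only nontrivial part of the argument and, in particular, never invokes the hypothesis that makes the statement true.
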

\begin{proof}
We proceed by using the tight grading formula \Cref{thm: power shadow grading formula} (with $m = 1$) and constructing $k$ distinct tight gradings contributing to the coefficient $\lambda(ka,kb)$. Fix $c_1,c_2$ such that $ka = c_1 \ell_1$ and $kb = c_2 \ell_2$, and choose $(\aone,\atwo)$ according to \Cref{thm: power shadow grading formula}. Fix $i \in \{0,\dots,k - 1\}$, and define the origin to be the minimal $(x,y) \in \N^2$ such that $\atwo x - \aone y = i$. We then work on the $(x + y)$-th cyclic shift of $\calP(\aone ,\atwo )$, which cyclically shifts the leftmost $x + y$ edges of $\calP(\aone ,\atwo )$ to the right end. We then label the horizontal (resp. vertical) edges by $u_j$ (resp. $v_j$) from left-to-right (resp. bottom-to-top). Let $\omega$ be the grading that assigns value $c$ to horizontal edges $u_{j}$ for $1 \leq j \leq c_2$, value $b$ to vertical edges $v_{j}$ for $c_2 \ell_2 - c_1 + 1 \leq j \leq c_2 \ell_2$, and value $0$ to all other edges.

Consider the top left point of the vertical blocks in each column (call the set of these points $R$), and the top left point of the horizontal blocks in each row (call the set $B$). If the Dyck path starts at the origin $(0,0)$, then the points in $R$ lie to the left of the line
$$y = \left(-\ell_2 + \frac{\atwo}{\aone}\right) x + c_2\ell_2 - \frac{k-1}{\aone}\,.$$
Similarly, the points in $B$ lie to the right of the line
$$y = \left(\frac{1}{\frac{\aone}{\atwo} - \ell_1}\right) x + \frac{\ell_1(c_1 - c_2\ell_2) - \frac{k-1}{n}}{\frac{\aone}{\atwo}-\ell_1}\,.$$
In order to show that points in $R$ are to the left of the point in $B$ of the same height, it is sufficient to show that the intersection point of these lines is to the left of the $y$-axis.
Let $\alpha = \frac{\aone}{\atwo}$. These lines intersect when 
$$x = \frac{\alpha(\alpha c_2 \ell_2 - c_1 \ell_1) + \frac{\ell_1 k}{\atwo}}{\ell_2 \alpha^2 - \ell_1 \ell_2 \alpha + \ell_1}\,.$$
The numerator is always positive by our choice of $\alpha$. Thus, it is sufficient to have $\ell_2 \alpha^2 - \ell_1 \ell_2 \alpha + \ell_1 < 0$. But the roots of this quadratic are precisely 
$$\frac{\ell_1 \ell_2 \pm \sqrt{\ell_1^2 \ell_2^2 - 4\ell_1 \ell_2}}{2\ell_2}\,,$$
The greater of these two roots is the slope of the right boundary of the Badlands.

Repeating the same process swapping $b$ and $c$ handles all walls on the interior of the Badlands. For $\ell_1,\ell_2$ such that $\ell_1\ell_2 \neq 4$, the boundary of the Badlands will have irrational slope. Thus the result is proved for the entirety of the Badlands in all cases except the affine cases. In the affine cases, where $\ell_1\ell_2 = 4$, constructions of tight gradings corresponding to each wall-function coefficient are given in \Cref{exmp: affine 2 2 gradings} and \Cref{exmp: affine 4 1 gradings}.
\end{proof}

By specializing \Cref{thm: Badlands coefficients bound}, we recover the results of \cite[Theorem 1]{GL2023dense}, \cite[Section 4.7]{GP}, and \cite[Example 4.10]{DM}. 

In the case when $P_1$ and $P_2$ have positive coefficients (in an ordered field), we can then guarantee that every wall appears within the Badlands.

\begin{cor}\label{cor: generalized dense badlands}
Let $\Bbbk$ be an ordered field, and suppose the coefficients of $P_1$ and $P_2$ are nonnegative elements of $\Bbbk$. Then for $\frakD \coloneqq \Scat(P_1,P_2)$, there are walls of every rational slope within the Badlands. 
\end{cor}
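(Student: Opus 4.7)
The plan is to reduce this corollary directly to \Cref{thm: Badlands coefficients bound}. Given a rational slope $b/a$ with $(a,b)$ a coprime pair of positive integers such that $\R_{\leq 0}(a,b) \subseteq C_{\ell_1,\ell_2}$, I would exhibit an integer $k \geq 1$ satisfying the divisibility hypotheses $\ell_1 \mid ka$ and $\ell_2 \mid kb$ of that theorem, and then invoke the bound on $\lambda(ka,kb)$ to conclude that the wall of slope $b/a$ carries a nontrivial wall-function in $\frakD = \Scat(P_1,P_2)$.

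Producing such a $k$ is elementary divisibility: the choice $k = \ell_1 \ell_2$ trivially works, and one can just as well take the smaller $k = \mathrm{lcm}\!\bigl(\ell_1/\gcd(\ell_1,a),\, \ell_2/\gcd(\ell_2,b)\bigr)$. With such $k$ fixed, \Cref{thm: Badlands coefficients bound} yields the lower bound
\[
  \lambda(ka,kb) \;\geq\; k \cdot p_{1,\ell_1}^{ka/\ell_1}\, p_{2,\ell_2}^{kb/\ell_2}
\]
as an inequality of polynomials with nonnegative integer coefficients, and therefore also after specialization in $\Bbbk$.

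To turn this into strict positivity, I would observe that since $P_i$ is assumed to have degree exactly $\ell_i$, its leading coefficient $p_{i,\ell_i}$ is nonzero, and combined with the nonnegativity hypothesis this forces $p_{i,\ell_i}$ to be strictly positive in the order of $\Bbbk$. The right-hand side is then a product of strictly positive elements of $\Bbbk$, so $\lambda(ka,kb) > 0$, which means the wall of slope $b/a$ is non-trivial. Running this argument over all coprime $(a,b)$ with $b/a$ in the Badlands gives the corollary.

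The hard work has already been done in \Cref{thm: Badlands coefficients bound}, where $k$ distinct tight gradings of weight $p_{1,\ell_1}^{ka/\ell_1} p_{2,\ell_2}^{kb/\ell_2}$ were constructed by cyclically shifting the underlying Dyck path; no further obstacle arises here. The only care needed is to make explicit the convention that the $\ell_i$ appearing in $P_i = 1 + \sum_{k=1}^{\ell_i} p_{i,k} x^k$ is truly the degree of $P_i$, so that strict positivity of the leading coefficients follows from nonnegativity.
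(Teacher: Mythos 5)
Your proposal is correct and follows the same route as the paper: invoke \Cref{thm: Badlands coefficients bound} to produce explicit tight gradings of nonzero weight, then use the nonnegativity hypothesis on the coefficients of $P_1$ and $P_2$ to conclude that the resulting wall-function coefficients are strictly positive. You are a bit more explicit than the paper about selecting a degree $k$ satisfying the divisibility constraints and about noting that $p_{i,\ell_i} > 0$ must hold (since $\ell_i$ is the degree of $P_i$), but the substance is the same.
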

\begin{proof}
As is shown in \Cref{thm: Badlands coefficients bound}, there are tight gradings of non-zero weight corresponding to each wall-function coefficient in the Badlands.  By our assumption on the coefficients of $P_1$ and $P_2$, the weights of these tight gradings are in fact positive. Thus, by \Cref{thm: generalized greedy compatible gradings}, the wall-function coefficient are sums of positive elements and hence are positive.
\end{proof}

An alternative proof of \Cref{cor: generalized dense badlands} can be  obtained using the results of Gr{\" a}fnitz and Luo for cluster scattering diagrams and applying our positivity result \Cref{thm: positive wall-function}. 

\begin{rem}
If we allow the coefficients of $P_1$ and $P_2$ to be negative, the results of \Cref{thm: Badlands coefficients bound}.  While the tight gradings constructed in \Cref{thm: Badlands coefficients bound} still contribute to the wall-function coefficients, the weights of the tight gradings may cancel out.  An explicit example of this phenomenon is given below.  We do not know if \Cref{cor: generalized dense badlands} can also fail in this setting. 
\end{rem}

\begin{exmp}\label{exmp: vanishing wall coefficient}
Using our combinatorial formula for wall-function coefficients, we can show that certain coefficients vanish.  For example, suppose $P_1(x) = 1 + p_{1,3}x^3$ and $P_2(y) = 1 + p_{2,1}y + p_{2,2}y^2$, where we assume $p_{1,3}$, $p_{2,1}$, and $p_{2,2}$ are nonzero.  In order to calculate the degree-$3$ coefficient of $f_{\R_{\leq 0}(1,1)}$ in $\Scat(P_1,P_2)$, which we denote by $\lambda(3,3)$, we can take a weighted sum over tight gradings using \Cref{thm: power shadow grading formula}.  In particular, it is the weighted sum of the seven tight gradings on $\calP(4,3)$ below.  

\begin{figure}[ht]
\centering
\begin{tikzpicture}[scale=.42]
\draw[step=1,color=gray] (0,0) grid (4,3);
\vrec{0}{0}{1}{1}
\vrec{0}{1}{1}{2}
\vrec{1}{0}{1}{2}
\hrec{4}{2}{1}{3}
\draw[line width=2,color=orange] (0,0)--(2,0)--(2,1)--(3,1)--(3,2)--(4,2)--(4,3);
\draw[line width=2pt] (0,0)--(2,0);
\draw[line width=2pt] (4,2)--(4,3);

\begin{scope}[shift={(5.5,0)}]
\draw[step=1,color=gray] (0,0) grid (4,3);
\vrec{0}{0}{1}{2}
\vrec{0}{2}{1}{1}
\vrec{1}{0}{1}{1}
\hrec{4}{2}{1}{3}
\draw[line width=2,color=orange] (0,0)--(2,0)--(2,1)--(3,1)--(3,2)--(4,2)--(4,3);
\draw[line width=2pt] (0,0)--(2,0);
\draw[line width=2pt] (4,2)--(4,3);
\end{scope}

\begin{scope}[shift={(11,0)}]
\draw[step=1,color=gray] (0,0) grid (4,3);
\vrec{0}{0}{1}{2}
\vrec{0}{2}{1}{1}
\vrec{2}{1}{1}{1}
\hrec{4}{2}{1}{3}
\draw[line width=2,color=orange] (0,0)--(2,0)--(2,1)--(3,1)--(3,2)--(4,2)--(4,3);
\draw[line width=2pt] (0,0)--(1,0);
\draw[line width=2pt] (2,1)--(3,1);
\draw[line width=2pt] (4,2)--(4,3);
\end{scope}

\begin{scope}[shift={(16.5,0)}]
\draw[step=1,color=gray] (0,0) grid (4,3);
\vrec{0}{0}{1}{2}
\vrec{0}{2}{1}{1}
\vrec{1}{1}{1}{1}
\hrec{4}{2}{1}{3}
\draw[line width=2,color=orange] (0,0)--(1,0)--(1,1)--(3,1)--(3,2)--(4,2)--(4,3);
\draw[line width=2pt] (0,0)--(1,0);
\draw[line width=2pt] (1,1)--(2,1);
\draw[line width=2pt] (4,2)--(4,3);
\end{scope}

\begin{scope}[shift={(22,0)}]
\draw[step=1,color=gray] (0,0) grid (4,3);
\vrec{0}{0}{1}{2}
\vrec{0}{2}{1}{1}
\vrec{2}{1}{1}{1}
\hrec{4}{2}{1}{3}
\draw[line width=2,color=orange] (0,0)--(1,0)--(1,1)--(3,1)--(3,2)--(4,2)--(4,3);
\draw[line width=2pt] (0,0)--(1,0);
\draw[line width=2pt] (2,1)--(3,1);
\draw[line width=2pt] (4,2)--(4,3);
\end{scope}

\begin{scope}[shift={(27.5,0)}]
\draw[step=1,color=gray] (0,0) grid (4,3);
\vrec{0}{0}{1}{2}
\vrec{0}{2}{1}{1}
\vrec{1}{1}{1}{1}
\hrec{4}{2}{1}{3}
\draw[line width=2,color=orange] (0,0)--(1,0)--(1,1)--(2,1)--(2,2)--(4,2)--(4,3);
\draw[line width=2pt] (0,0)--(1,0);
\draw[line width=2pt] (1,1)--(2,1);
\draw[line width=2pt] (4,2)--(4,3);
\end{scope}

\begin{scope}[shift={(33,0)}]
\draw[step=1,color=gray] (0,0) grid (4,3);
\vrec{0}{0}{3}{1}
\vrec{1}{0}{1}{1}
\vrec{2}{1}{1}{1}
\hrec{4}{2}{1}{3}
\draw[line width=2,color=orange] (0,0)--(2,0)--(2,1)--(3,1)--(3,2)--(4,2)--(4,3);
\draw[line width=2pt] (0,0)--(2,0);
\draw[line width=2pt] (2,1)--(3,1);
\draw[line width=2pt] (4,2)--(4,3);
\end{scope}
\end{tikzpicture}
\caption{For $P_1(x)$ and $P_2(y)$ chosen as in \Cref{exmp: vanishing wall coefficient}, these are the seven tight gradings on $\calP(4,3)$ with both total vertical and horizontal grade equal to $3$ that have nonzero weight.}
\end{figure}
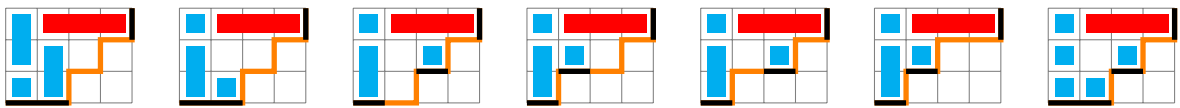

The leftmost six gradings have weight $p_{1,3}p_{2,1}p_{2,2}$ and the rightmost grading has weight $p_{1,3}p_{2,1}^3$.  Thus we have $\lambda(3,3) = 6p_{1,3}p_{2,1}p_{2,2} + p_{1,3}p_{2,1}^3$, so we can conclude that this wall-function coefficient vanishes whenever $p_{2,2} = -\frac{p_{2,1}^2}{6}$. 
\end{exmp}

\bibliographystyle{amsplain}
\bibliography{bibliography.bib} 
\addtocontents{toc}{\protect\setcounter{tocdepth}{1}}

\end{document}